\renewcommand{\eqref}[1]{\hyperref[#1]{(\ref{#1})}}
\newlist{enumlist}{enumerate}{2}
\setlist[enumlist,1]{labelindent=0cm,label=(\roman*),ref=(\roman*),labelwidth=4.5ex,labelsep=1ex,leftmargin=5.5ex,align=right,topsep=0.5ex,itemsep=1ex,parsep=1ex}
\setlist[enumlist,2]{labelindent=0cm,label=\alph*),ref=\arabic*,labelwidth=5ex,labelsep=0.5ex,leftmargin=5.5ex,align=left,topsep=0.5ex,itemsep=1ex,parsep=1ex}
\newlist{enumlistI}{enumerate}{1}
\setlist[enumlistI]{labelindent=0cm,label=(\Roman*),ref=(\Roman*),labelwidth=4.5ex,labelsep=1ex,leftmargin=5.5ex,align=right,topsep=0.5ex,itemsep=1ex,parsep=1ex}
\newlist{enumlistprime}{enumerate}{1}
\setlist[enumlistprime]{labelindent=0cm,label=(\roman*)$^{\prime}$,ref=(\roman*)$^{\prime}$,labelwidth=4.5ex,labelsep=1ex,leftmargin=5.5ex,align=right,topsep=0.5ex,itemsep=1ex,parsep=1ex}
\newlist{enumlistprimeprime}{enumerate}{1}
\setlist[enumlistprimeprime]{labelindent=0cm,label=(\roman*)$^{\prime\prime}$,ref=(\roman*)$^{\prime\prime}$,labelwidth=4.5ex,labelsep=1ex,leftmargin=5.5ex,align=right,topsep=0.5ex,itemsep=1ex,parsep=1ex}
\newlist{enumlistprimeprimeprime}{enumerate}{1}
\setlist[enumlistprimeprimeprime]{labelindent=0cm,label=(\roman*)$^{\prime\prime\prime}$,ref=(\roman*)$^{\prime\prime\prime}$,labelwidth=4.5ex,labelsep=1ex,leftmargin=5.5ex,align=right,topsep=0.5ex,itemsep=1ex,parsep=1ex}
\newlist{enuma}{enumerate}{1}
\setlist[enuma]{labelindent=0cm,label=(\alph*),ref=(\alph*),labelwidth=4.5ex,labelsep=1ex,leftmargin=5.5ex,align=right,topsep=0.5ex,itemsep=1ex,parsep=1ex}
\newlist{itemlist}{itemize}{1}
\setlist[itemlist]{labelindent=0cm,label=$\bullet$,labelwidth=2.5ex,labelsep=0.5ex,leftmargin=3ex,align=left,topsep=0.5ex,itemsep=1ex,parsep=1ex}
\numberwithin{equation}{section}
\theoremstyle{definition}\newtheorem{definitiona}{Definition}[section]
\newtheorem{remarka}[definitiona]{Remark}
\newtheorem{examplea}[definitiona]{Example}}
\newtheorem{propositiona}[definitiona]{Proposition}
\newtheorem{lemmaa}[definitiona]{Lemma}
\newtheorem{theorema}[definitiona]{Theorem}
\newtheorem{corollarya}[definitiona]{Corollary}
\newtheorem{letterthma}{Theorem}
\renewcommand{\theletterthma}{\Alph{letterthma}}
\theoremstyle{definition}
\newtheorem{letterremarka}[letterthma]{Remark}
\renewcommand{\theletterremarka}{\Alph{letterthma}}
\newenvironment{definition}[1][]{\begin{definitiona}[#1]\setlist*[enumlist,1]{label=(\roman*),ref=\thedefinitiona(\roman*)}\setlist*[enuma]{label=(\alph*),ref=\thedefinitiona(\alph*)}}{\end{definitiona}}
\newenvironment{remark}[1][]{\begin{remarka}[#1]\setlist*[enumlist,1]{label=(\roman*),ref=\theremarka(\roman*)}\setlist*[enuma]{label=(\alph*),ref=\theremarka(\alph*)}}{\end{remarka}}
\newenvironment{proposition}[1][]{\begin{propositiona}[#1]\setlist*[enumlist,1]{label=(\roman*),ref=\thepropositiona(\roman*)}\setlist*[enuma]{label=(\alph*),ref=\thepropositiona(\alph*)}}{\end{propositiona}}
\newenvironment{lemma}[1][]{\begin{lemmaa}[#1]\setlist*[enumlist,1]{label=(\roman*),ref=\thelemmaa(\roman*)}\setlist*[enuma]{label=(\alph*),ref=\thelemmaa(\alph*)}}{\end{lemmaa}}
\newenvironment{theorem}[1][]{\begin{theorema}[#1]\setlist*[enumlist,1]{label=(\roman*),ref=\thetheorema(\roman*)}\setlist*[enuma]{label=(\alph*),ref=\thetheorema(\alph*)}}{\end{theorema}}
\newenvironment{corollary}[1][]{\begin{corollarya}[#1]\setlist*[enumlist,1]{label=(\roman*),ref=\thecorollarya(\roman*)}\setlist*[enuma]{label=(\alph*),ref=\thecorollarya(\alph*)}}{\end{corollarya}}
\newenvironment{letterthm}[1][]{\begin{letterthma}[#1]\setlist*[enumlist,1]{label=(\roman*),ref=\theletterthma(\roman*)}\setlist*[enuma]{label=(\alph*),ref=\theletterthma(\alph*)}}{\end{letterthma}}
\newenvironment{letterremark}[1][]{\begin{letterremarka}[#1]\setlist*[enumlist,1]{label=(\roman*),ref=\theletterthma(\roman*)}\setlist*[enuma]{label=(\alph*),ref=\theletterremarka(\alph*)}}{\end{letterremarka}}
\newcommand{\C}{\mathbb{C}}
\newcommand{\Z}{\mathbb{Z}}
\newcommand{\N}{\mathbb{N}}
\newcommand{\F}{\mathbb{F}}
\newcommand{\T}{\mathbb{T}}
\newcommand{\Q}{\mathbb{Q}}
\newcommand{\cC}{\mathcal{C}}
\newcommand{\cH}{\mathcal{H}}
\newcommand{\cZ}{\mathcal{Z}}
\newcommand{\cG}{\mathcal{G}}
\newcommand{\cK}{\mathcal{K}}
\newcommand{\cJ}{\mathcal{J}}
\newcommand{\cF}{\mathcal{F}}
\newcommand{\cA}{\mathcal{A}}
\newcommand{\cB}{\mathcal{B}}
\newcommand{\cU}{\mathcal{U}}
\newcommand{\cM}{\mathcal{M}}
\newcommand{\cN}{\mathcal{N}}
\newcommand{\cV}{\mathcal{V}}
\newcommand{\cP}{\mathcal{P}}
\newcommand{\cQ}{\mathcal{Q}}
\newcommand{\al}{\alpha}
\newcommand{\be}{\beta}
\newcommand{\vphi}{\varphi}
\newcommand{\om}{\omega}
\newcommand{\Om}{\Omega}
\newcommand{\si}{\sigma}
\newcommand{\Ad}{\operatorname{Ad}}
\newcommand{\Ext}{\operatorname{Ext}}
\newcommand{\Ker}{\operatorname{Ker}}
\newcommand{\Hom}{\operatorname{Hom}}
\newcommand{\Aut}{\operatorname{Aut}}
\newcommand{\Stab}{\operatorname{Stab}}
\newcommand{\Norm}{\operatorname{Norm}}
\newcommand{\res}{\operatorname{res}}
\newcommand{\Haar}{\operatorname{Haar}}
\newcommand{\BHom}{\operatorname{BHom}}
\newcommand{\ev}{\operatorname{ev}}
\newcommand{\Gtil}{\widetilde{G}}
\newcommand{\Lambdatil}{\widetilde{\Lambda}}
\newcommand{\cMtil}{\widetilde{\mathcal{M}}}
\newcommand{\Atil}{\widetilde{A}}
\newcommand{\deltatil}{\widetilde{\delta}}
\newcommand{\Chat}{\widehat{C}}
\newcommand{\Dhat}{\widehat{D}}
\newcommand{\ot}{\otimes}
\newcommand{\id}{\mathord{\text{\rm id}}}
\newcommand{\ovt}{\mathbin{\overline{\otimes}}}
\newcommand{\actson}{\curvearrowright}
\newcommand{\dpr}{^{\prime\prime}}
\newcommand{\op}{^\text{\rm op}}
\newcommand{\fc}{_\text{\rm fc}}
\newcommand{\ab}{_\text{\rm ab}}
\newcommand{\sym}{_\text{\rm sym}}
\newcommand{\tor}{_\text{\rm tor}}
\newcommand{\ve}{^\text{\rm v}}
\newcommand{\bim}[3]{\mathord{\raisebox{-0.4ex}[0ex][0ex]{\scriptsize $#1$}{#2}\hspace{-0.2ex}\raisebox{-0.4ex}[0ex][0ex]{\scriptsize $#3$}}}
\begin{document}

\begin{center}
{\boldmath\LARGE\bf W$^*$-superrigidity for groups with infinite center}

\vspace{1ex}

{\sc by Milan Donvil\footnote{KU Leuven, Department of Mathematics, Leuven (Belgium), milan.donvil@kuleuven.be\\ Supported by PhD grant 1162024N funded by the Research Foundation Flanders (FWO)} and Stefaan Vaes\footnote{KU~Leuven, Department of Mathematics, Leuven (Belgium), stefaan.vaes@kuleuven.be\\ Supported by FWO research projects G090420N and G016325N of the Research Foundation Flanders and by Methusalem grant METH/21/03 –- long term structural funding of the Flemish Government.}}
\end{center}

\begin{abstract}\noindent
We construct discrete groups $G$ with infinite center that are nevertheless W$^*$-superrigid, meaning that the group von Neumann algebra $L(G)$ fully remembers the group $G$. We obtain these rigidity results both up to isomorphisms and up to virtual isomorphisms of the groups and their von Neumann algebras. Our methods combine rigidity results for the quotient of these groups by their center with rigidity results for their $2$-cohomology.
\end{abstract}

\section{Introduction}

Starting with \cite{IPV10} and making use of Popa's deformation\slash rigidity theory, several families of discrete groups $G$ were shown to be \emph{W$^*$-superrigid}: their group von Neumann algebra $L(G)$ fully remembers the group $G$, in the sense that $L(G) \cong L(\Lambda)$ for an arbitrary discrete group $\Lambda$ if and only if $G \cong \Lambda$. Such a W$^*$-superrigidity property was established for several generalized wreath product groups $G = (\Z/2\Z)^{(I)} \rtimes \Gamma$ in \cite{IPV10}, for many left-right wreath product groups $G = (\Z/2\Z)^{(\Gamma)} \rtimes (\Gamma \times \Gamma)$ in \cite{BV12}, and in \cite{CIOS21}, for a family of wreath-like products with Kazhdan's property~(T).

The furthest away from W$^*$-superrigidity are infinite abelian groups $G$, because their group von Neumann algebras $L(G)$ are all trivially isomorphic with the unique diffuse abelian von Neumann algebra $L^\infty([0,1])$. As an immediate consequence, no direct product $C \times G$ of an infinite abelian group $C$ with a discrete group $G$ can be W$^*$-superrigid, since $L(C_1 \times G) \cong L(C_2 \times G)$ for all infinite abelian groups $C_i$. Note, however, that it was proven in \cite{CFQT24} that for certain groups $G$, this is the only obstruction to W$^*$-superrigidity: if for such a group $G$ and an infinite abelian group $C_1$, we have $L(C_1 \times G) \cong L(\Lambda)$, there exists an infinite abelian group $C_2$ with $\Lambda \cong C_2 \times G$.

The previous paragraph shows that \emph{trivial central extensions} $e \to C \to C \times G \to G \to e$ are never W$^*$-superrigid when $C$ is infinite. In the main result of this paper, we show that many left-right wreath product groups $G$ admit \emph{nontrivial central extensions} $e \to C \to \Gtil \to G \to e$ such that $\Gtil$ is W$^*$-superrigid. These and the examples in \cite{CFQOT24} are the first W$^*$-superrigid groups with infinite center.

We prove two versions of this result. We consider W$^*$-superrigidity up to isomorphisms, proving that $L(\Gtil) \cong L(\Lambda)$ if and only if $\Gtil \cong \Lambda$, but we also consider W$^*$-superrigidity up to \emph{virtual isomorphisms}. By definition, two II$_1$ factors $M$ and $P$ are called virtually isomorphic if there exists an $M$-$P$-bimodule that is \emph{bifinite}, i.e.\ finitely generated as a left $M$-module and as a right $P$-module. For arbitrary finite von Neumann algebras $M$ and $P$, there are two variants of this notion, requiring the existence of a nonzero, resp.\ faithful, bifinite bimodule. We discuss this in detail in Section \ref{sec.virtual-iso}. Two groups are said to be \emph{commensurable} if they admit isomorphic finite index subgroups, and said to be \emph{virtually isomorphic} if they are commensurable up to finite kernels, i.e.\ isomorphic after passing to finite index subgroups and taking the quotient by finite normal subgroups. We then prove that the existence of a nonzero, resp.\ faithful, bifinite $L(\Gtil)$-$L(\Lambda)$-bimodule is equivalent to $\Lambda$ being virtually isomorphic, resp.\ commensurable, to $\Gtil$.

The first W$^*$-superrigidity result for group von Neumann algebras up to virtual isomorphisms was established in \cite{DV24}. In that paper, we also allowed the group von Neumann algebras to be twisted by a $2$-cocycle, proving that for many left-right wreath product groups $G$, we have that $L_\mu(G)$ is (virtually) isomorphic to any other twisted group von Neumann algebra $L_\om(\Lambda)$ if and only if $(G,\mu)$ is (virtually) isomorphic to $(\Lambda,\om)$. Since the group von Neumann algebra $L(\Gtil)$ of a central extension $e \to C \to \Gtil \to G \to e$ is naturally decomposed as a direct integral of twisted group von Neumann algebras $L_\mu(G)$ (see \eqref{eq.direct-integral-intro}), the results of \cite{DV24} are a key ingredient in the proof of the main result in this paper.

Even more so, in Sections \ref{sec.generic-iso-superrigid} and \ref{sec.generic-virtual-iso-superrigid}, we prove two meta rigidity theorems \ref{thm.inherit-iso-superrigidity} and \ref{thm.inherit-virtual-iso-superrigidity}. For \emph{arbitrary} central extensions $e \to C \to \Gtil \to G \to e$, where $C$ is a torsion free abelian group and the twisted group von Neumann algebras $L_\mu(G)$ satisfy (virtual) W$^*$-superrigidity, we describe all countable groups $\Lambda$ whose group von Neumann algebra $L(\Lambda)$ is (virtually) isomorphic to $L(\Gtil)$. Under extra rigidity assumptions on the $2$-cohomology $H^2(G,C)$, we prove that this forces $\Lambda$ to be (virtually) isomorphic to $\Gtil$. As an application, we then provide the following uncountable family of groups with infinite center that are W$^*$-superrigid, both for isomorphisms and for virtual isomorphisms.

\begin{letterthm}\label{thm.main-A}
Let $\cP$ be a set of prime numbers with $2 \in \cP$. Define $C = \Z[\cP^{-1}]$ as the subring of $\Q$ generated by $\cP^{-1}$. Put $\Gamma = C \ast \Z$ and consider the left-right wreath product group $G = (\Z/2\Z)^{(\Gamma)} \rtimes (\Gamma \times \Gamma)$.

There are infinitely many central extensions $0 \to C \to \Gtil \to G \to e$ such that $\Gtil$ is W$^*$-superrigid for isomorphisms and for virtual isomorphisms: for every countable group $\Lambda$, we have that

\begin{enuma}
\item\label{thm.main-A.1} $p L(\Gtil) p \cong L(\Lambda)$ for a nonzero projection $p \in L(\Gtil)$ iff $p=1$ and $\Lambda$ and $\Gtil$ are isomorphic~;
\item\label{thm.main-A.2} there exists a nonzero bifinite $L(\Gtil)$-$L(\Lambda)$-bimodule iff $\Lambda$ and $\Gtil$ are virtually isomorphic~;
\item\label{thm.main-A.3} there exists a faithful bifinite $L(\Gtil)$-$L(\Lambda)$-bimodule iff $\Lambda$ and $\Gtil$ are commensurable.
\end{enuma}
\end{letterthm}

The infinitely many central extensions $\Gtil$ appearing in Theorem \ref{thm.main-A} are not only distinct as central extensions, but are actually not even virtually isomorphic as groups.

In Section \ref{sec.lack-of-superrigidity}, we explain why Theorem \ref{thm.main-A} is formulated for these specific abelian groups $C = \Z[\cP^{-1}]$. We prove in Proposition \ref{prop.new-no-go-result-2} that left-right wreath product groups $G = (\Z/2\Z)^{(\Gamma)} \rtimes (\Gamma \times \Gamma)$ with $\Gamma$ icc have \emph{no} W$^*$-superrigid central extension $0 \to C \to \Gtil \to G \to e$ unless the group $C$ is $2$-divisible. It is thus natural to use $C = \Z[\cP^{-1}]$ with $2 \in \cP$.

Moreover, we prove in Proposition \ref{prop.new-no-go-result-1} that W$^*$-superrigidity also fails if the $2$-cocycle $\Om \in H^2(G,C)$ associated with a central extension lacks a natural surjectivity property. In order to have $2$-cocycles with such a surjectivity property, we need to have $C$ as a free factor of $\Gamma$.

\begin{letterremark}\label{rem.main}
Since the group $C$ in Theorem \ref{thm.main-A} is an infinite abelian group, the trivial extension $C \times G$ obviously is not W$^*$-superrigid, as discussed above. But, as we explain in Section \ref{sec.proof-main}, more is true for the groups $G$ appearing in Theorem \ref{thm.main-A}.

First assume that $\cP$ is a proper set of prime numbers, i.e.\ $C \neq \Q$. Then,
\begin{enuma}
\item the group $G$ also admits infinitely many central extensions $0 \to C \to \Gtil \to G \to e$ that are W$^*$-superrigid for virtual isomorphisms (as in Theorem \ref{thm.main-A.2} and \ref{thm.main-A.3}), but that are not W$^*$-superrigid for isomorphisms, because there exist groups $\Lambda \not\cong \Gtil$ with $L(\Lambda) \cong L(\Gtil)$~;

\item and the group $G$ admits infinitely many nonisomorphic central extensions $0 \to C \to \Gtil \to G \to e$ that are not W$^*$-superrigid in any sense, because there exist groups $\Lambda$ that are not virtually isomorphic to $\Gtil$, but nevertheless satisfy $L(\Lambda) \cong L(\Gtil)$.
\end{enuma}

Next assume that $\cP$ consists of all prime numbers, so that $C = \Q$.
\begin{enuma}[resume]
\item Apart from the trivial extension $C \times G$, the group $G$ has precisely one other exceptional central extension $0 \to C \to \Gtil \to G \to e$ (up to isomorphism of the group $\Gtil$) that is not W$^*$-superrigid in any sense, but all other central extensions of $G$ by $C$ satisfy all the isomorphism and virtual isomorphism W$^*$-superrigidity properties of Theorem \ref{thm.main-A}.
\end{enuma}

Finally, we replace $\Gamma = C \ast \Z$ by $\Gamma = C \ast C$, where $C = \Z[\cP^{-1}]$ and $\cP$ is a set of prime numbers with $2 \in \cP$.
\begin{enuma}[resume]
\item If $C \neq \Q$, then every nontrivial central extension of $G$ by $C$ satisfies the virtual isomorphism W$^*$-superrigidity properties of Theorem \ref{thm.main-A.2} and \ref{thm.main-A.3}, while there still are infinitely many central extensions that are not isomorphism W$^*$-superrigid.

\item If $C = \Q$, every nontrivial central extension of $G$ by $C$ satisfies all the isomorphism and virtual isomorphism W$^*$-superrigidity properties of Theorem \ref{thm.main-A}.
\end{enuma}
\end{letterremark}

In Theorem \ref{thm.rigidity-trivial-extensions}, we also prove a generic rigidity theorem for trivial central extensions $C \times G$, as in \cite{CFQT24}, and provide new examples in Proposition \ref{prop.examples-rigidity-trivial-extensions}.

We conclude this introduction by outlining the proof of Theorem \ref{thm.main-A}. As we explain in \eqref{eq.central-decomp}, the group von Neumann algebra $L(\Gtil)$ of any central extension of $G$ by $C$, with associated $2$-cocycle $\Om \in H^2(G,C)$, has a direct integral decomposition
\begin{equation}\label{eq.direct-integral-intro}
L(\Gtil) = \int^\oplus_{\mu \in \Chat} L_{\mu \circ \Om}(G) \, d\mu \; ,
\end{equation}
where we integrate w.r.t.\ the Haar measure on the compact Pontryagin dual group $\Chat$. In \cite[Theorem A]{DV24}, we proved W$^*$-superrigidity, both up to isomorphisms and up to virtual isomorphisms, for the twisted group von Neumann algebra $L_\om(G)$, $\om \in H^2(G,\T)$. That is a first key ingredient in the proof of Theorem \ref{thm.main-A}.

Given a countable group $\Lambda$ and an isomorphism $p L(\Gtil) p \cong L(\Lambda)$, or merely a nonzero bifinite $L(\Gtil)$-$L(\Lambda)$-bimodule, our first step is to prove that $\Lambda$ is itself, up to a virtual isomorphism, a central extension $0 \to D \to \Lambda \to \Lambda_0 \to e$, in which $D$ is at the same time equal to the center of $\Lambda$ and the virtual center $\Lambda\fc$, i.e.\ the normal amenable subgroup of elements having a finite conjugacy class.

To obtain this first step, note that $L(\Lambda\fc) \subset L(\Lambda)$ is an amenable von Neumann subalgebra. Denoting by $(v_s)_{s \in \Lambda}$ the unitaries generating $L(\Lambda)$, we have that $(\Ad v_s)_{s \in \Lambda}$ defines an action of $\Lambda$ by automorphisms of $L(\Lambda\fc)$ that is compact, meaning that the closure of $\Lambda$ inside $\Aut L(\Lambda\fc)$ is compact. Through the (virtual) isomorphism of $L(\Lambda)$ with $L(\Gtil)$, we transport $L(\Lambda\fc)$ to an amenable subalgebra of (an amplification of) $L(\Gtil)$ having a large normalizer that acts compactly. In Section \ref{sec.weakly-compact}, we piece together several existing results from Popa's deformation\slash theory, as obtained in \cite{Ioa06,OP07,PV12}, to conclude that $L(\Lambda\fc)$ has an intertwining bimodule into the center of $L(\Gtil)$, which is $L(C)$.

Using Lemma \ref{lem.transfer-embedding-in-center}, we come back to $L(\Lambda)$ and conclude that inside $L(\Lambda)$, there must be an intertwining bimodule of $L(\Lambda\fc)$ into the center of $L(\Lambda)$. In Proposition \ref{prop.structure-non-icc}, we deduce that up to finite index considerations, $\Lambda\fc$ coincides with the center of $D = \cZ(\Lambda)$. Up to a virtual isomorphism, we thus obtain the central extension $0 \to D \to \Lambda \to \Lambda_0 \to e$ and we denote by $\Om' \in H^2(\Lambda_0,D)$ the associated $2$-cocycle. We get the direct integral decomposition
$$L(\Lambda) = \int^\oplus_{\om \in \Dhat} L_{\om \circ \Om'}(\Lambda_0) \, d\om \; .$$
Using Proposition \ref{prop.iso-on-center}, we deduce that the original (virtual) isomorphism between $L(\Gtil)$ and $L(\Lambda)$ gives rise to nonnegligible Borel sets $\cU \subset \Chat$, $\cV \subset \Dhat$ and a nonsingular isomorphism $\Delta : \cU \to \cV$ such that for all $\mu \in \cU$, we have that $L_{\mu \circ \Om}(G)$ is (virtually) isomorphic to $L_{\om \circ \Om'}(\Lambda_0)$.

By the W$^*$-superrigidity theorem proved in \cite[Theorem A]{DV24}, it follows that the groups $G$ and $\Lambda_0$ are (virtually) isomorphic and that, for every $\mu \in \cU$, there exists a (virtual) isomorphism between $(G,\mu \circ \Om)$ and $(\Lambda_0,\Delta(\mu) \circ \Om')$.

In order to reach the conclusion of Theorem \ref{thm.main-A}, we have to prove that $(G,\Om)$ is (virtually) isomorphic to $(\Lambda_0,\Om')$. We thus need to lift the virtual isomorphisms between the $\T$-valued $2$-cocycles $\mu \circ \Om$ and $\Delta(\mu) \circ \Om'$ to a virtual isomorphism $\rho$ between $C$ and $D$ and a virtual isomorphism between $\rho \circ \Om$ and $\Om'$. For this, we need a careful analysis of the $2$-cohomology group $H^2(G,C)$ and the group homomorphism $\Chat \to H^2(G,\T) : \mu \mapsto \mu \circ \Om$. We do this analysis separately for the case of isomorphisms and the case of virtual isomorphisms in Sections \ref{sec.generic-iso-superrigid}, resp.\ \ref{sec.generic-virtual-iso-superrigid}. We actually prove in these sections generic meta-versions of Theorem \ref{thm.main-A.1}, resp.\ \ref{thm.main-A.2} and \ref{thm.main-A.3}, for groups satisfying the conclusions of \cite[Theorem A]{DV24} and satisfying a number of hypotheses on their $2$-cohomology. In Section \ref{sec.elementary-2-cohom-comput}, we collect the needed $2$-cohomology computations for left-right wreath product groups, so as to prove that they satisfy the hypotheses of our meta-theorems.

In Section \ref{sec.lack-of-superrigidity}, we prove several no-go results, providing central extensions that are not W$^*$-superrigid, which will be used to prove Remark \ref{rem.main}. In Section \ref{sec.proof-main}, we put all pieces together and prove Theorem \ref{thm.main-A}, as a consequence of the more precise Theorem \ref{thm.more-general}, and we prove the statements in Remark \ref{rem.main}.

Our work on this paper was done in parallel with and independently of the work \cite{CFQOT24}. In that paper, the authors prove that a family of central extensions $e \to C \to \Gtil \to G \to e$ \emph{with Kazhdan's property (T)} is W$^*$-superrigid, up to isomorphisms. As in \cite{CIOS21}, the quotient groups $G$ are wreath-like products with property~(T). An important part of the paper \cite{CFQOT24} deals with W$^*$-superrigidity for the twisted group von Neumann algebras $L_\mu(G)$, which in the setting of our paper was already done in \cite{DV24}. Also, the paper \cite{CFQOT24} does not consider virtual isomorphisms. Therefore, the overlap between this paper and \cite{CFQOT24} is small and limited to some of the methodology appearing in Section \ref{sec.generic-iso-superrigid}; see Remark \ref{rem.relation-to-CFQOT24} for a more detailed discussion.

Unless we explicitly say the contrary, in this paper, all groups are countable and discrete. We systematically write the group operation in an abelian group additively, mainly because the additive group $\Z[\cP^{-1}]$ plays a key role. The only exception is our notation of the group $\T$, viewed as the unit circle in $\C$ and thus written multiplicatively.

\section{Preliminaries}

\subsection{Bifinite bimodules and intertwining-by-bimodules}

Let $P$ be a finite von Neumann algebra and denote its standard Hilbert space as $L^2(P)$. A right Hilbert $P$-module $H_P$ is said to be of \emph{finite type} if it is isomorphic to a closed $P$-submodule of $L^2(P)^{\oplus n}$ for some $n \in \N$. This is equivalent to saying that $H_P$ is isomorphic with $p(\C^n \ot L^2(P))$ for some projection $p \in M_n(\C) \ot P$. When $P$ is countably decomposable, in particular when $P$ has separable predual as is the case in all concrete applications in this paper, a right Hilbert $P$-module $H_P$ is of finite type if and only if it is finitely generated, in the sense that there exists a finite subset $\cF \subset H$ such that $\cF \cdot P$ has dense linear span in $H$.

Let $M$ and $P$ be finite von Neumann algebras and $\bim{M}{H}{P}$ a Hilbert $M$-$P$-bimodule. Assume that $H$ is of finite type as a right Hilbert $P$-module, so that $H_P \cong p(\C^n \ot L^2(P))$. The left $M$-action is then expressed by a unital normal $*$-homomorphism $\vphi : M \to p(M_n(\C) \ot P)p$ and $\bim{M}{H}{P}$ is isomorphic with the $M$-$P$-bimodule
$$\bim{\vphi(M)}{p(\C^n \ot L^2(P))}{P} \quad\text{with}\quad a \cdot \xi \cdot b = \vphi(a) \xi b \quad\text{for all $a \in M$, $b \in P$.}$$
Also recall that a Hilbert $M$-$P$-bimodule $\bim{M}{H}{P}$ is said to be \emph{bifinite} if $H$ is of finite type as a right Hilbert $P$-module and also as a left Hilbert $M$-module.

In \cite[Section 2]{Pop03}, Popa introduced the fundamental concept of \emph{intertwining-by-bimodules}, which we recall here. For a finite von Neumann algebra $M$, a nonzero projection $p \in M_n(\C) \ot M$ and von Neumann subalgebras $A \subset p(M_n(\C) \ot M)p$ and $B \subset M$, one says that $A \prec_M B$ if $p(\C^n \ot L^2(M))$ admits a nonzero Hilbert $A$-$B$-subbimodule that is of finite type as a right Hilbert $B$-module. One says that $A \prec^f_M B$ if $Aq \prec_M B$ for every nonzero projection $q \in A' \cap p(M_n(\C) \ot M)p$. Finally recall from \cite{Pop03} that $A \prec_M B$ if and only if there exist a projection $r \in M_k(\C) \ot B$, a unital normal $*$-homomorphism $\vphi : A \to r(M_k(\C) \ot B)r$ and a nonzero partial isometry $v \in p(M_{n,k}(\C) \ot M)r$ satisfying $a v = v \vphi(a)$ for all $a \in A$.

We use the following folklore result on bifinite bimodules between finite von Neumann algebras with nontrivial center. For completeness, we include a proof.

\begin{proposition}\label{prop.iso-on-center}
Let $M$ and $P$ be finite von Neumann algebras and $\bim{M}{H}{P}$ a bifinite Hilbert $M$-$P$-bimodule. Then $H$ can be written as the direct sum of Hilbert $M$-$P$-subbimodules $H_i \subset H$ with the following property: denoting by $z_i \in \cZ(M)$ the left support of $H_i$ and by $r_i \in \cZ(P)$ the right support of $H_i$, there is a $*$-isomorphism $\al_i : \cZ(M) z_i \to \cZ(P) r_i$ such that $a \xi = \xi \al_i(a)$ for all $a \in \cZ(M) z_i$ and $\xi \in H_i$.
\end{proposition}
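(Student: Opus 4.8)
The plan is to reduce to the case where $H$ is bifinite with trivial, or rather cyclic-separating, behaviour on both sides, and then exploit the rigidity of the coupling constant / Jones index together with the center-valued trace. Concretely, here is how I would organize it.

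\textbf{Step 1: reduce to faithful bimodules on both sides.} Let $z \in \cZ(M)$ be the left support of $H$ (the smallest central projection acting as the identity on $H$) and $r \in \cZ(P)$ the right support. Replacing $M$ by $Mz$ and $P$ by $Pr$, we may assume $H$ is a faithful bifinite $M$-$P$-bimodule. Since $H$ is of finite type as a right $P$-module, write $H_P \cong p(\C^n \ot L^2(P))$ with $p \in M_n(\C) \ot P$, and encode the left action by a unital normal $*$-homomorphism $\vphi : M \to p(M_n(\C)\ot P)p$. Bifiniteness says $\vphi(M)' \cap p(M_n(\C) \ot P)p$ is finite-type as a left $\vphi(M)$-module, equivalently the right $\vphi(M)$-module $\overline{\vphi(M)}\, p(\C^n \ot L^2(P))$ — which is the contragredient picture — is of finite type; in index terms, $[p(M_n(\C) \ot P)p : \vphi(M)] < \infty$ in the sense of a finite-index inclusion of finite von Neumann algebras (in general with nontrivial centers, so "finite index" means each central cutdown has finite index and the index function is bounded).

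\textbf{Step 2: match the centers.} The key structural input is that for a finite-index inclusion $N \subset \tilde N$ of finite von Neumann algebras, the center $\cZ(\tilde N)$ is a finitely generated module over $\cZ(N)$, and more importantly the inclusion $\cZ(\tilde N) \subset \cZ(\vphi(M)' \cap p(M_n \ot P)p \, \vee \, \text{stuff})$ ... — rather, I would argue directly as follows. Decompose $1 = \sum_i q_i$ into central projections $q_i \in \cZ(M)$ such that on each piece the "center-valued coupling" is well-behaved. More precisely: $\cZ(P)r$ acts on $H$ on the right, and $\cZ(M)z$ acts on the left; these two actions commute and each generates an abelian von Neumann algebra of operators on $H$. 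Let $\cZ = (\cZ(M)z \cup \cZ(P)r)''$ acting on $H$; it is abelian. The point is that $\cZ(M)z \subset \cZ$ and $\cZ(P)r \subset \cZ$ are two "large" abelian subalgebras, and because $H$ is bifinite, each is of finite type as a module over the other. Using the spectral/direct-integral decomposition of $\cZ$ over a standard Borel space $X$, disintegrate $H = \int_X^\oplus H_x \, d\nu(x)$; over a point $x$, the fibre $H_x$ is a bifinite bimodule over the factors $M_x$ and $P_x$ (factoriality is where the disintegration over $\cZ$ helps, provided $X$ is chosen as the joint spectrum). The maps $X \to \operatorname{Spec}\cZ(M)z$ and $X \to \operatorname{Spec}\cZ(P)r$ are both finite-to-one and measure-class preserving, so after partitioning $X$ into countably many Borel pieces on which both maps are injective, we get the desired partial isomorphisms $\al_i$ between corners of $\cZ(M)$ and $\cZ(P)$.

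\textbf{Step 3: assemble.} On each piece $X_i$ of the partition, set $z_i \in \cZ(M)$ to be the support of the image of $X_i$ under the first map and $r_i \in \cZ(P)$ the support under the second, let $H_i$ be the corresponding direct-integral summand of $H$, and let $\al_i : \cZ(M)z_i \to \cZ(P)r_i$ be the $*$-isomorphism induced by the Borel isomorphism $X_i \to X_i$ pushed to the two spectra. By construction, for $a \in \cZ(M)z_i$ the operator of left multiplication by $a$ on $H_i$ equals the operator of right multiplication by $\al_i(a)$, i.e.\ $a\xi = \xi\al_i(a)$ for all $\xi \in H_i$. Finally, $H = \bigoplus_i H_i$ because the $X_i$ partition $X$; adding back the piece where the left or right support is zero (the kernel of the actions, which is a zero bimodule and can be absorbed or discarded) completes the proof.

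\textbf{Main obstacle.} The technical heart is Step 2: making rigorous the claim that bifiniteness forces the two abelian algebras $\cZ(M)z$ and $\cZ(P)r$, as subalgebras of $B(H)$ (or rather of the commutant structure), to be finite extensions of one another, so that the induced map on spectra is finite-to-one in both directions — this is what ultimately yields a Borel isomorphism after a countable partition. One clean way is to use intertwining-by-bimodules: bifiniteness of $\bim{M}{H}{P}$ is equivalent to $\vphi(M) \prec^f P$ together with $\vphi(M)' \cap (M_n \ot P) \prec^f \vphi(M)'' $-type statements, and the behaviour on centers follows from the fact that $\prec^f$ respects central decompositions and that a finite-index inclusion of abelian algebras over a standard Borel base is exactly a finite-to-one Borel map of measure spaces. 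I would also need to handle countable decomposability carefully (so that "finite type" = "finitely generated" and the direct integrals are legitimate), but since all algebras in the paper have separable predual this is not a real issue. The rest — existence of the Borel isomorphism on each piece, and the verification $a\xi = \xi\al_i(a)$ — is routine direct-integral bookkeeping.
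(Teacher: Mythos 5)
Your route — disintegrating $H$ over the joint abelian algebra $\cZ = (\cZ(M)z \vee \cZ(P)r)''$ acting on $H$ and then partitioning the joint spectrum via Lusin--Novikov — is genuinely different in presentation from the paper's, which works entirely at the level of subbimodules: the paper realizes $H_P \cong p(\C^n\ot L^2(P))$, uses bifiniteness to get $P_1 := p(M_n\ot P)p \prec_{P_1} \vphi(M)$, passes to relative commutants via \cite[Lemma 3.5]{Vae07} to conclude $\vphi(M)'\cap P_1 \prec_{P_1}\cZ(P_1)$, applies Lemma \ref{lem.equal-under-q} to find a corner $q$ with $Aq = \cZ(P_1)q$ (where $A = \vphi(\cZ(M))\vee\cZ(P_1)$), repeats in the reverse direction, and then exhausts by a maximality argument. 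Your picture and the paper's are morally equivalent, and the direct-integral packaging is conceptually appealing, but it also carries measurable-selection bookkeeping that the paper's corner-plus-maximality argument avoids.

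The real issue, which you correctly flag as the "main obstacle" but then treat too lightly, is the sentence "because $H$ is bifinite, each is of finite type as a module over the other." This is where the entire proof lives, and I don't believe it's either as easy or, as stated, even correct. What bifiniteness gives directly (via \cite[Lemma 3.5]{Vae07}) is only a single intertwining $A \prec_{P_1} \cZ(P_1)$, i.e.\ \emph{one nonzero corner} of $A$ embeds into $\cZ(P_1)$; upgrading this to the global statement you need — that $\cZ$ is a direct sum of countably many corners of $\cZ(P)r$, equivalently that $\pi_2 : X \to \operatorname{Spec}\cZ(P)r$ has (a.e.) atomic fibers — requires an exhaustion argument, which is exactly the maximality step in the paper's proof. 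Moreover "finite type" (finitely generated) is in general too strong, since the central traces of the corners $q_n$ with $Aq_n = \cZ(P_1)q_n$ can be arbitrarily small; "countable/atomic fibers" is the right claim, and Lusin--Novikov then gives a countable (not finite) partition $X = \bigsqcup_i X_i$, which is enough. A secondary logical-order problem: your claim that each fibre $H_x$ is a bifinite bimodule over the factors $M_{\pi_1(x)}$, $P_{\pi_2(x)}$ already presupposes the atomicity of the disintegration fibres (otherwise $\dim_{P_{y}}(H^2_y) < \infty$ tells you nothing about individual points $x$ over $y$), so it cannot be used as input; atomicity must be established first, and it is the whole content of the proposition. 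So: valid alternative framework, but the "finite-type" assertion in Step 2 is exactly as hard as the proposition and needs the paper's intertwining-plus-exhaustion machinery (or an equivalent) to justify.
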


Before proving Proposition \ref{prop.iso-on-center}, we need a lemma.

\begin{lemma}\label{lem.equal-under-q}
Let $M$ be a finite von Neumann algebra, $p \in M_n(\C) \ot M$ a projection and $A \subset p(M_n(\C) \ot M)p$ an abelian von Neumann subalgebra such that $(1 \ot \cZ(M))p \subset A$.

Then $A \prec_M \cZ(M)$ if and only if there exists a nonzero projection $q \in A$ such that $A q = (1 \ot \cZ(M))q$.
\end{lemma}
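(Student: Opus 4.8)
The plan is to prove the two implications separately. The reverse implication is short; the forward one is the substantive part, where the difficulty is to upgrade an intertwining ``with bounded multiplicity'' to the much sharper statement that a corner of $A$ literally equals a corner of $(1\ot\cZ(M))p$.

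\emph{Reverse implication.} Suppose $q\in A$ is a nonzero projection with $Aq=(1\ot\cZ(M))q$. I would fix any nonzero vector $\xi\in q(\C^n\ot L^2(M))\subseteq p(\C^n\ot L^2(M))$ and let $K$ be the closed linear span of $\{\xi\cdot c : c\in\cZ(M)\}$. Since a central element of $M$ acts identically on the left and on the right of $L^2(M)$, we have $\xi\cdot c=(1\ot c)\xi$, so $K$ is also the closed linear span of $(1\ot\cZ(M))\xi$, and as a right $\cZ(M)$-module it is cyclic, hence of finite type. Using $q\xi=\xi$ and $Aq=(1\ot\cZ(M))q$, one checks that $K$ is invariant under the left action of $A$. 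Thus $K$ is a nonzero $A$-$\cZ(M)$-subbimodule of $p(\C^n\ot L^2(M))$ of finite type over $\cZ(M)$, so $A\prec_M\cZ(M)$.

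\emph{Forward implication.} Assume $A\prec_M\cZ(M)$ and pick a nonzero $A$-$\cZ(M)$-subbimodule $K\subseteq p(\C^n\ot L^2(M))$ of finite type over $\cZ(M)$, say $K\cong r(\C^m\ot L^2(\cZ(M)))$ with $r\in M_m(\C)\ot\cZ(M)$ a projection. The left $A$-action on $K$ is a normal $*$-homomorphism $\vphi\colon A\to\tilde N:=r(M_m(\C)\ot\cZ(M))r$, which is unital because $p\in A$ acts as the identity on $K$. The crucial observation is that $\vphi$ maps $(1\ot\cZ(M))p$ \emph{onto} $\cZ(\tilde N)=(1_m\ot\cZ(M))r$: for $c\in\cZ(M)$, the left action of $(1\ot c)p$ on $K$ equals right multiplication by $c$ (again because $c$ is central in $M$), and under $K\cong r(\C^m\ot L^2(\cZ(M)))$ this operator is left multiplication by $(1_m\ot c)r$; hence $\vphi((1\ot c)p)=(1_m\ot c)r$. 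In particular $\cZ(\tilde N)\subseteq\vphi(A)\subseteq\tilde N$. Now $\tilde N$ is a direct integral of finite-dimensional matrix algebras over (the spectrum of) $\cZ(M)$, with matrix sizes bounded by $m$, so the abelian von Neumann algebra $\vphi(A)$ is a finite extension of $\cZ(\tilde N)$: writing $\cZ(\tilde N)=L^\infty(X_1)$, one has $\vphi(A)=L^\infty(Z)$ for a measurable, at most $m$-to-one surjection $\pi\colon Z\to X_1$ inducing the inclusion. A standard measurable selection argument (partition $Z$ according to the cardinality of the fibres of $\pi$, then cut each piece into finitely many sections of $\pi$) yields a positive-measure Borel set $W\subseteq Z$ on which $\pi$ is injective, and then $\bar q:=\mathbf{1}_W\in\vphi(A)$ is a nonzero projection with $\vphi(A)\bar q=\cZ(\tilde N)\bar q$. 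Finally I would let $z\in A$ be the projection with $\ker\vphi=Az$, so that $\vphi$ is injective on $A(1-z)$, and take the nonzero projection $q\in A(1-z)$ with $\vphi(q)=\bar q$; note $q\leq p$. Then
\[
\vphi(Aq)=\vphi(A)\bar q=\cZ(\tilde N)\bar q=\vphi\bigl((1\ot\cZ(M))q\bigr),
\]
and since $Aq$ and $(1\ot\cZ(M))q$ both lie in $A(1-z)$, on which $\vphi$ is injective, this forces $Aq=(1\ot\cZ(M))q$, as wanted.

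The step I expect to be the main obstacle is the tail of the forward implication. The relation $A\prec_M\cZ(M)$ only tells us, a priori, that some corner of $A$ embeds into an algebra that is merely \emph{finite} over $\cZ(M)$, with multiplicity possibly bigger than one; two extra ingredients are needed. First, the hypothesis $(1\ot\cZ(M))p\subseteq A$ forces the ``base'' $\vphi((1\ot\cZ(M))p)$ of the intertwining to be all of $\cZ(\tilde N)$ rather than a proper subalgebra. Second, a measurable selection lets one descend from bounded finite multiplicity to multiplicity one. One must also be careful to lift $\bar q$ into the part of $A$ where $\vphi$ is injective, so that the identity computed in $\tilde N$ can be pulled back to $A$.
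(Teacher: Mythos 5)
Your proof is correct and, while it shares the same starting point as the paper's proof (both realize the intertwining as a unital normal $*$-homomorphism $\vphi$ from $A$ into a finite type I algebra over $\cZ(M)$, and both must reduce this to ``multiplicity one''), the two arguments diverge meaningfully after that. The paper reduces multiplicity by unitarily conjugating $\vphi(A)$ into the diagonal $D_k(\C)\ot\cZ(M)$, splitting the intertwiner into components, and keeping one nonzero component, which yields $\vphi:A\to\cZ(M)z$ together with a concrete nonzero $v$ satisfying $av=v\vphi(a)$; the projection $q$ is then extracted as the support of $E_A(vv^*)$, and the hypothesis $(1\ot\cZ(M))p\subset A$ is used only at this last stage, to push $(1\ot\vphi(a))$ through $E_A$ and to obtain the reverse inclusion $(1\ot\cZ(M))q\subset Aq$. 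You instead use the hypothesis much earlier, to show that $\vphi$ maps $(1\ot\cZ(M))p$ \emph{onto} $\cZ(\tilde N)$ --- the observation that left multiplication by $(1\ot c)p$ on the bimodule coincides with right multiplication by $c$ --- and then reduce multiplicity spectrally, via a Lusin--Novikov-type selection for the finite-to-one Borel map $Z\to X_1$; finally you lift the projection $\bar q$ through $\vphi$ using that $\ker\vphi$ is a corner of $A$. Your route trades the conditional-expectation trick for a measurable-selection argument and a lifting step, so it leans more on disintegration and descriptive set theory, whereas the paper stays entirely inside operator-algebraic manipulations with intertwiners. Both approaches are correct; yours is a bit heavier on measurable machinery but arguably makes the role of the hypothesis $(1\ot\cZ(M))p\subset A$ conceptually clearer (it forces the base of the intertwining to be all of $\cZ(\tilde N)$). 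One small point worth making explicit if you flesh this out: the projection $q\in A(1-z)$ with $\vphi(q)=\bar q$ exists and is unique precisely because $\vphi$ restricted to the corner $A(1-z)$ is a $*$-isomorphism onto $\vphi(A)$, and both $Aq$ and $(1\ot\cZ(M))q$ sit inside this corner since $q\leq 1-z$, which is what licenses cancelling $\vphi$ at the end.
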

\begin{proof}
If $A q = (1 \ot \cZ(M))q$, it is immediate that $A \prec_M \cZ(M)$.

To prove the converse, assume that $A \prec_M \cZ(M)$. Take $k \in \N$, a nonzero projection $r \in M_k(\C) \ot \cZ(M)$, a unital normal $*$-homomorphism $\vphi : A \to r(M_k(\C) \ot \cZ(M))r$ and a nonzero $v \in p(M_{n,k}(\C) \ot M)r$ such that $a v = v \vphi(a)$ for all $a \in A$.

Denote by $D_k(\C) \subset M_k(\C)$ the diagonal subalgebra. Then $\vphi(A)$ is an abelian von Neumann subalgebra of a corner of the finite type~I algebra $M_k(\C) \ot \cZ(M)$ with maximal abelian subalgebra $D_k(\C) \ot \cZ(M)$. So, $\vphi(A)$ can be unitarily conjugated into $D_k(\C) \ot \cZ(M)$. This means that we can choose $w \in M_k(\C) \ot \cZ(M)$ such that $ww^* = r$,
$$w^*w = \sum_{i=1}^k e_{ii} \ot z_i \quad\text{and}\quad w^* \vphi(a) w = \sum_{i=1}^k e_{ii} \ot \vphi_i(a) \quad\text{for all $a \in A$,}$$
where $z_i \in \cZ(M)$ are projections and $\vphi_i : A \to \cZ(M) z_i$ are unital normal $*$-homomorphisms. Write the matrix $vw \in M_{n,k}(\C) \ot M$ as $vw = \sum_{i=1}^k v_i (e_{1i} \ot 1)$ with $v_i \in \C^n \ot M$. Then, $v_i \in p(\C^n \ot Mz_i)$ and $a v_i = v_i \vphi_i(a)$ for all $a \in A$. Since $ww^*=r$, we have that $vw \neq 0$. Fix $i$ such that $v_i \neq 0$. In particular, $z_i \neq 0$. We now write $v$, $z$ and $\vphi$ instead of $v_i$, $z_i$ and $\vphi_i$. We have thus found a projection $z \in \cZ(M)$, a nonzero $v \in p(\C^n \ot Mz)$ and a unital normal $*$-homomorphism $\vphi : A \to \cZ(M)z$ such that $a v = v \vphi(a)$ for all $a \in A$.

Since $\vphi(A) \subset \cZ(M)$, it follows that $a v = v \vphi(a) = (1 \ot \vphi(a))v$ for all $a \in A$. So, $a vv^* = (1 \ot \vphi(a))vv^*$ for all $a \in A$. Using the unique trace preserving conditional expectation $E_A$ and using that $(1 \ot \cZ(M))p \subset A$, we find that $a E_A(vv^*) = (1 \ot \vphi(a)) E_A(vv^*)$ for all $a \in A$. Defining $q \in A$ as the support projection of $E_A(vv^*)$, we get that $A q \subset (1 \ot \cZ(M))q$. Since $(1 \ot \cZ(M))p \subset A$, we also have that $(1 \ot \cZ(M))q \subset Aq$ and the lemma is proven.
\end{proof}

\begin{proof}[{Proof of Proposition \ref{prop.iso-on-center}}]
Since we can use a maximality argument, it suffices to find a nonzero Hilbert $M$-$P$-subbimodule $H_0 \subset H$ with left support $z_0 \in \cZ(M)$, right support $r_0 \in \cZ(P)$ and a $*$-isomorphism $\al : \cZ(M) z_0 \to \cZ(P) r_0$ such that $a \xi = \xi \al(a)$ for all $a \in \cZ(M) z_0$ and $\xi \in H_0$.

Since $H_P$ is of finite type as a right Hilbert $P$-module, $\bim{M}{H}{P} \cong \bim{\vphi(M)}{p(\C^n \ot L^2(P))}{P}$ where $\vphi : M \to p (M_n(\C) \ot P)p$ is a unital normal $*$-homomorphism. Denote $P_1 = p (M_n(\C) \ot P)p$. Since $H$ is of finite type as a left Hilbert $M$-module, also $\bim{\vphi(M)}{L^2(P_1)}{P_1}$ is of finite type as a left Hilbert $M$-module. It follows that $P_1 \prec_{P_1} \vphi(M)$. By \cite[Lemma 3.5]{Vae07}, we can take relative commutants and conclude that $\vphi(M)' \cap P_1 \prec_{P_1} \cZ(P_1)$. Note that $\cZ(P_1) = (1 \ot \cZ(P))p$.

Define the abelian von Neumann algebra $A = \vphi(\cZ(M)) \vee (1 \ot \cZ(P))p$. Since $A \subset \vphi(M)' \cap P_1$, we get that $A \prec_{P_1} (1 \ot \cZ(P))p$. By Lemma \ref{lem.equal-under-q}, we find a nonzero projection $q \in A$ such that $A q = (1 \ot \cZ(P))q$. In particular, $\vphi(\cZ(M)) q \subset (1 \ot \cZ(P))q$. Define $H_1 := q(\C^n \ot L^2(P))$ and note that $H_1$ is a nonzero Hilbert $M$-$P$-subbimodule of $H$. Denote by $z_1 \in \cZ(M)$ and $r_1 \in \cZ(P)$ its left and right support. Since $\vphi(\cZ(M)) q \subset (1 \ot \cZ(P))q$, we find a faithful unital normal $*$-homomorphism $\al : \cZ(M) z_1 \to \cZ(P) r_1$ such that $\vphi(a)q = (1 \ot \al(a))q$ for all $a \in \cZ(M) z_1$. This implies that $a \xi = \xi \al(a)$ for all $a \in \cZ(M) z_1$ and $\xi \in H_1$.

Repeating the same reasoning starting from $H_1$, we find a nonzero Hilbert $M$-$P$-subbimodule $H_0 \subset H_1$, with left support $z_0 \in \cZ(M)z_1$, right support $r_0 \in \cZ(P)r_1$ and a faithful unital normal $*$-homomorphism $\be : \cZ(P) r_0 \to \cZ(M) z_0$ such that $\be(b) \xi = \xi b$ for all $b \in \cZ(P) r_0$ and $\xi \in H_0$. It follows that $\al(z_0) = r_0$ and that the restriction of $\al$ to $\cZ(M) z_0$ is the inverse of $\be$. So the proposition is proven.
\end{proof}

As explained at the end of the introduction, given a virtual isomorphism between finite von Neumann algebras $M$ and $P$ and given a von Neumann subalgebra $A \subset M$, we want to deduce from an intertwining of $A$ into the center $\cZ(P)$ inside $P$, that there also exists an intertwining of $A$ into $\cZ(M)$ inside $M$.

\begin{lemma}\label{lem.transfer-embedding-in-center}
Let $M$ and $P$ be finite von Neumann algebras and let $\bim{M}{H}{P} = \bim{\vphi(M)}{p(\C^n \ot L^2(P))}{P}$ be a bifinite Hilbert $M$-$P$-bimodule with left support $z \in \cZ(M)$. Let $A \subset M$ be a von Neumann subalgebra. If $\vphi(A) \prec^f_P \cZ(P)$, then $A z \prec^f_M \cZ(M)$.
\end{lemma}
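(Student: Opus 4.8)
The plan is to transport the intertwining through the bimodule, after first bringing it into a normal form. Write $H=\bim{\vphi(M)}{p(\C^n\ot L^2(P))}{P}$ with $\vphi:M\to\tilde P:=p(M_n(\C)\ot P)p$; recall that $\vphi$ annihilates $M(1-z)$ and is faithful on $Mz$, and let $r\in\cZ(P)$ be the right support of $H$. First I would reduce to the case $z=1$, $r=1$ by replacing $M$ by $Mz$ and $P$ by $Pr$: the hypothesis $\vphi(A)\prec^f_P\cZ(P)$ is unaffected (since $\vphi(A)=\vphi(Az)$ and $\cZ(P)$-intertwining only involves $\cZ(P)r$), the conclusion $Az\prec^f_M\cZ(M)$ is equivalent to its version over $Mz$, while $\vphi$ becomes faithful and $p$ acquires full central support in $M_n\ot P$, so that $\cZ(\tilde P)=(1\ot\cZ(P))p$. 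I would also record the two structural facts coming from bifiniteness: $H=p(\C^n\ot L^2(P))$ is of finite type as a left $\vphi(M)$-module, hence as a left $\tilde P$-module, and so embeds $\tilde P$-linearly into a finite amplification of $L^2(\tilde P)$; and, using self-conjugacy of the standard bimodule, $L^2(\tilde P)$ is of finite type over $\vphi(M)$ on both sides, so that $\vphi(M)\subseteq\tilde P$ is a finite-index inclusion.

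Next I would push the hypothesis into $\tilde P$, proving that $\vphi(A)\prec^f_P\cZ(P)$ implies $\vphi(A)\prec^f_{\tilde P}\cZ(\tilde P)$. The projections to be tested are the same on both sides, namely the nonzero $q\in\vphi(A)'\cap\tilde P$, so I fix such a $q$ with $\vphi(A)q\prec_P\cZ(P)$ and a $\vphi(A)q$-$\cZ(P)$-subbimodule $\cK\subseteq q(\C^n\ot L^2(P))$ of finite type over $\cZ(P)$. As $\cZ(P)$ is central in $P$, its right action on $\cK$ agrees with the left action of the matching elements of $\cZ(\tilde P)=(1\ot\cZ(P))p$; pushing $\cK$ through the $\tilde P$-linear embedding $q(\C^n\ot L^2(P))\hookrightarrow L^2(\tilde P)^{\oplus k}$ from the previous step preserves this left action and turns it, inside $L^2(\tilde P)^{\oplus k}$, into the right $\cZ(\tilde P)$-action (central again). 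Hence the image of $\cK$ witnesses $\vphi(A)q\prec_{\tilde P}\cZ(\tilde P)$, proving the claim.

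Then I would replace the target $\cZ(\tilde P)$ by $\vphi(\cZ(M))$. Since $\vphi(M)\subseteq\tilde P$ is of finite index, a relative-commutant argument (in the spirit of \cite[Lemma 3.5]{Vae07}) gives $\vphi(M)'\cap\tilde P\prec_{\tilde P}\cZ(\vphi(M))=\vphi(\cZ(M))$; as $\cZ(\tilde P)\subseteq\vphi(M)'\cap\tilde P$, also $\cZ(\tilde P)\prec_{\tilde P}\vphi(\cZ(M))$, and composing this with $\vphi(A)\prec^f_{\tilde P}\cZ(\tilde P)$ by transitivity of intertwining gives $\vphi(A)\prec^f_{\tilde P}\vphi(\cZ(M))$. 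Finally I would descend along the finite-index inclusion $\vphi(M)\subseteq\tilde P$: for subalgebras $B,C\subseteq\vphi(M)$ one has $B\prec_{\tilde P}C\Rightarrow B\prec_{\vphi(M)}C$, because a $B$-$C$-subbimodule of $L^2(\tilde P)$ of finite type over $C$ can be carried into $L^2(\vphi(M))$ using that $L^2(\tilde P)$ is a finite sum of bifinite $\vphi(M)$-$\vphi(M)$-bimodules. Applied with $C=\vphi(\cZ(M))$ and $B=\vphi(A)q$ for every nonzero $q\in\vphi(A)'\cap\vphi(M)\subseteq\vphi(A)'\cap\tilde P$, this gives $\vphi(A)\prec^f_{\vphi(M)}\vphi(\cZ(M))$, and transporting along the $*$-isomorphism $\vphi:M\xrightarrow{\sim}\vphi(M)$ yields $A\prec^f_M\cZ(M)$, i.e.\ $Az\prec^f_M\cZ(M)$ in the original setup.

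The step I expect to be the main obstacle is this last descent of intertwining-by-bimodules along the finite-index inclusion $\vphi(M)\subseteq\tilde P$: one has to genuinely argue that a finite-type $B$-$C$-subbimodule of $L^2(\tilde P)$ produces one inside $L^2(\vphi(M))$, controlling at the same time which corner projections need to be tested for the $\prec^f_{\vphi(M)}$-conclusion (they all lie in $\vphi(A)'\cap\tilde P$, so the earlier step covers them). Everything else — the reductions to $z=r=1$, the corner and amplification bookkeeping, transitivity of $\prec$, the relative-commutant fact, and self-conjugacy of $L^2$ — is routine.
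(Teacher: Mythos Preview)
Your approach is genuinely different from the paper's and has two gaps, both in the ``second half'' (your steps 4 and 5).

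\textbf{Step 4.} The invocation of \cite[Lemma 3.5]{Vae07} goes in the wrong direction. What that lemma gives (exactly as in the paper's Proposition \ref{prop.iso-on-center}) is: from $\tilde P\prec_{\tilde P}\vphi(M)$ one deduces $\vphi(M)'\cap\tilde P\prec_{\tilde P}\cZ(\tilde P)$, i.e.\ the relative commutant intertwines into the center of the \emph{ambient} algebra. You claim instead $\vphi(M)'\cap\tilde P\prec_{\tilde P}\vphi(\cZ(M))$, which is intertwining into the center of the \emph{sub}algebra; this does not follow from the same lemma. It is true (it is essentially the content of Proposition \ref{prop.iso-on-center}), but you would have to invoke that proposition rather than the raw relative-commutant lemma. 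Moreover, the subsequent transitivity ``$\vphi(A)\prec^f\cZ(\tilde P)$ and $\cZ(\tilde P)\prec\vphi(\cZ(M))$ hence $\vphi(A)\prec^f\vphi(\cZ(M))$'' is not automatic: transitivity of $\prec$ needs hypotheses, and your final target $\vphi(\cZ(M))$ is \emph{not} central in $\tilde P$, so the usual easy arguments do not apply.

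\textbf{Step 5.} Your proposed justification for the descent is wrong. You write that a finite-type $B$-$C$-subbimodule of $L^2(\tilde P)$ can be ``carried into $L^2(\vphi(M))$ using that $L^2(\tilde P)$ is a finite sum of bifinite $\vphi(M)$-$\vphi(M)$-bimodules''. But a bifinite $N$-$N$-bimodule does not in general embed $N$-$N$-bilinearly into a multiple of $L^2(N)$: the trivial bimodule $L^2(N)^{\oplus m}$ only contains sub-bimodules isomorphic to corners of $L^2(N)$, whereas $L^2(\tilde P)$ as a $\vphi(M)$-$\vphi(M)$-bimodule typically contains other irreducibles. So there is no ``carrying'' map of the kind you describe. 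The descent claim may well be true, but it requires a genuinely different argument.

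The paper's proof sidesteps both difficulties with a single device: it writes $H$ also from the left as $\bim{M}{(M_{1,k}\ot L^2(M))q}{\psi(P)}$ via $\psi:P\to q(M_k\ot M)q$, pushes the hypothesis through $\psi$ to obtain $(\id\ot\psi)\vphi(A)\prec^f_M\psi(\cZ(P))$, invokes Proposition \ref{prop.iso-on-center} to get $\psi(\cZ(P))\prec^f_M\cZ(M)$ (now the target \emph{is} the center of the ambient algebra, so transitivity is straightforward), and finishes using the concrete partial isometry $V$ coming from the embedding $L^2(Mz)\hookrightarrow H\ot_P\overline H$, which satisfies $(\id\ot\psi)\vphi(a)V=Va$. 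This $V$ plays exactly the role your ``descent'' was meant to play, but it lives already inside matrices over $M$, so no passage from $\tilde P$ down to $\vphi(M)$ is needed.
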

\begin{proof}
Since $\bim{M}{H}{P}$ is bifinite, we can also write $\bim{M}{H}{P} \cong \bim{M}{(M_{1,k}(\C) \ot L^2(M))q}{\psi(P)}$ for some unital normal $*$-homomorphism $\psi : P \to q(M_k(\C) \ot M)q$. Since $\vphi(A) \prec^f_P \cZ(P)$, we also find that
$$(\id \ot \psi)\vphi(A) \prec^f_M \psi(\cZ(P)) \; .$$
By Proposition \ref{prop.iso-on-center}, we have that $\psi(\cZ(P)) \prec^f_M \cZ(M)$. It follows that $(\id \ot \psi)\vphi(A) \prec^f_M \cZ(M)$.

Since $\bim{M}{(H \ot_P \overline{H})}{M} \cong \bim{\vphi(M)}{p(M_n(\C) \ot L^2(P))p}{\vphi(M)}$, we can use $\vphi$ to define an $M$-bimodular isometry $L^2(M z) \to H \ot_P \overline{H}$. Since we also have that
$$\bim{M}{(H \ot_P \overline{H})}{M} \cong \bim{(\id \ot \psi)\vphi(M)}{p'(\C^{n} \ot \C^{k} \ot L^2(M))}{M} \; ,$$
where $p' = (\id \ot \psi)(p)$, this $M$-bimodular isometry becomes an element $V \in M_{nk,1}(\C) \ot M$ such that $VV^* \leq p'$, $V^* V = z$ and $(\id \ot \psi)\vphi(a) V = V a$ for all $a \in Mz$.

Since $(\id \ot \psi)\vphi(A) \prec^f_M \cZ(M)$, also $(\id \ot \psi)\vphi(A) VV^* \prec^f_M \cZ(M)$ and thus $A z \prec^f_M \cZ(M)$.
\end{proof}

\subsection{Commensurability and virtual isomorphism}\label{sec.virtual-iso}

\begin{definition}\label{def.virtual-iso}
We say that two discrete groups $G$ and $\Lambda$ are \emph{commensurable} if they admit isomorphic finite index subgroups. We say that $G$ and $\Lambda$ are \emph{virtually isomorphic} if there exist finite index subgroups $G_1 < G$, $\Lambda_1 < \Lambda$ and finite normal subgroups $N \lhd G_1$, $\Sigma \lhd \Lambda_1$ such that $G_1/N \cong \Lambda_1 / \Sigma$.
\end{definition}

Two II$_1$ factors $M$ and $P$ are called virtually isomorphic if there exists a nonzero bifinite $M$-$P$-bimodule $\bim{M}{H}{P}$. Since $M$ and $P$ are factors, the left $M$-action and the right $P$-action on $H$ are automatically faithful.

When $M$ and $P$ are arbitrary finite von Neumann algebras, we say that an $M$-$P$-bimodule $\bim{M}{H}{P}$ is \emph{faithful} if the left $M$-action and the right $P$-action on $H$ are both faithful. One could say that two tracial von Neumann algebras $M$ and $P$ are virtually isomorphic if there exists a faithful bifinite $M$-$P$-bimodule. This defines an equivalence relation on the class of finite von Neumann algebras, while the relation of existence of a nonzero bifinite $M$-$P$-bimodule does not define an equivalence relation on the class of finite von Neumann algebras. This can be seen by taking arbitrary finite von Neumann algebras $M$ and $P$, and noting that the latter relation holds between $M$ and $M \oplus P$, and also between $P$ and $M \oplus P$, but not necessarily between $M$ and $P$.

As can be seen immediately from the formulation of Theorem \ref{thm.main-A.2} and \ref{thm.main-A.3}, using this terminology of virtual isomorphism for von Neumann algebras would be extremely confusing. In the rest of this paper, we therefore write everywhere explicitly that there ``exists a nonzero\slash faithful bifinite bimodule''.

The following lemma is certainly well-known. For completeness, we include a proof.

\begin{lemma}\label{lem.from-virtual-iso-to-bimodule}
Let $G$ and $\Lambda$ be countable groups.
\begin{enuma}
\item\label{lem.from-virtual-iso-to-bimodule.1} If $G$ and $\Lambda$ are virtually isomorphic, there exists a nonzero bifinite Hilbert $L(G)$-$L(\Lambda)$-bimodule.
\item\label{lem.from-virtual-iso-to-bimodule.2} If $G$ and $\Lambda$ are commensurable, there exists a faithful bifinite Hilbert $L(G)$-$L(\Lambda)$-bimodule.
\end{enuma}
\end{lemma}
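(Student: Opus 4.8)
The plan is to prove both parts by constructing explicit bimodules from the group-theoretic data. For part (i), since $G$ and $\Lambda$ are virtually isomorphic, I would fix finite index subgroups $G_1 < G$, $\Lambda_1 < \Lambda$ and finite normal subgroups $N \lhd G_1$, $\Sigma \lhd \Lambda_1$ together with an isomorphism $\theta : G_1/N \to \Lambda_1/\Sigma$. The strategy is to factor the desired bimodule through three elementary constructions and compose them, using that bifiniteness is preserved under (finite) composition (tensor product over an intermediate algebra) and that a composition of bifinite bimodules is bifinite.

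First I would handle the passage to a finite index subgroup: if $H < G$ has finite index, then $L(H) \subset L(G)$ is a finite index inclusion, and $L^2(L(G))$ viewed as an $L(G)$-$L(H)$-bimodule, as well as an $L(H)$-$L(G)$-bimodule, is bifinite (it is finitely generated on each side, since $[L(G):L(H)] = [G:H] < \infty$). Next, for the quotient by a finite normal subgroup $N \lhd H$: there is a natural surjection $L(H) \to L(H/N)$, or rather $L(H/N) \cong p L(H) p$ for the projection $p = |N|^{-1} \sum_{n \in N} u_n$, giving the $L(H)$-$L(H/N)$-bimodule $L^2(L(H)) p$ and the $L(H/N)$-$L(H)$-bimodule $p L^2(L(H))$, both bifinite since $L(H) p L(H)$ has finite "index" — concretely $L^2(L(H))p$ is finitely generated as a right $p L(H) p$-module because $L(H)p$ is, using a system of coset representatives of $N$ in $H$, and it is singly generated as a left $L(H)$-module. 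Finally, the group isomorphism $\theta : G_1/N \to \Lambda_1/\Sigma$ induces a von Neumann algebra isomorphism $L(G_1/N) \cong L(\Lambda_1/\Sigma)$, whose standard form $L^2$ is a bifinite (indeed an isomorphism-implementing) $L(G_1/N)$-$L(\Lambda_1/\Sigma)$-bimodule. Composing the chain $L(G) \rightsquigarrow L(G_1) \rightsquigarrow L(G_1/N) \rightsquigarrow L(\Lambda_1/\Sigma) \rightsquigarrow L(\Lambda_1) \rightsquigarrow L(\Lambda)$ of bifinite bimodules via Connes tensor product yields a nonzero bifinite $L(G)$-$L(\Lambda)$-bimodule, proving (i).

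For part (ii), commensurability means we may take $N = \Sigma = \{e\}$, so the chain reduces to $L(G) \rightsquigarrow L(G_1) \xrightarrow{\cong} L(\Lambda_1) \rightsquigarrow L(\Lambda)$, involving only the finite index bimodules and an isomorphism; I would check that each of these is faithful on both sides: $L^2(L(G))$ as an $L(G)$-$L(G_1)$-bimodule is faithful because $L(G_1) \subset L(G)$ and the left $L(G)$-action on its own standard form is faithful, while the right $L(G_1)$-action is the restriction of the faithful right $L(G)$-action. The Connes tensor product of faithful bimodules between finite von Neumann algebras is again faithful (this is where one uses that there are no "lost corners" — the supports match up because at each stage the intermediate algebra is a finite index subfactor-type inclusion with both one-sided supports equal to $1$), giving a faithful bifinite $L(G)$-$L(\Lambda)$-bimodule.

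The main obstacle I anticipate is the bookkeeping around supports and faithfulness under Connes tensor product, and more concretely verifying the finite-index bimodule claims cleanly: one should note that $L^2(L(G))$ decomposed over $L(H)$ is $\bigoplus_{gH} u_g L^2(L(H))$, a finite direct sum, which immediately gives finite generation as a left $L(H)$-module, and the right module structure is handled symmetrically; for the finite normal subgroup quotient one similarly uses that $p L(H) = L(H/N) \cdot (\text{finite set})$. Since the paper only needs a clean existence statement and remarks that the result is well-known, I would keep these verifications brief, citing the standard correspondence between finite index inclusions and bifinite bimodules and the functoriality of Connes tensor product, rather than grinding through index computations.
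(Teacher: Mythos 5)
Your proposal is correct and follows essentially the same route as the paper's proof: compose a finite-index restriction bimodule, a bimodule implementing the passage through the finite normal quotients and the group isomorphism, and a finite-index co-restriction bimodule via Connes tensor product, then observe that when the finite normal subgroups are trivial every factor is faithful. The only difference is cosmetic: you factor the middle stage through $L(G_1) \rightsquigarrow L(G_1/N) \cong L(\Lambda_1/\Sigma) \rightsquigarrow L(\Lambda_1)$ with the central projection $p = |N|^{-1}\sum_{n\in N} u_n$, whereas the paper packages these three steps into a single $L(G_0)$-$L(\Lambda_0)$-bimodule $\ell^2(\Lambda_0/\Sigma')$ with the left action twisted by $\delta$, which is slightly tighter but amounts to the same construction.
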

\begin{proof}
(a) Take finite index subgroups $G_0 < G$, $\Lambda_0 < \Lambda$, finite normal subgroups $\Sigma \lhd G_0$ and $\Sigma' \lhd \Lambda_0$ and a group isomorphism $\delta : G_0/\Sigma \to \Lambda_0 / \Sigma'$. Using $\delta$ to define the left action, we turn $H = \ell^2(\Lambda_0/\Sigma')$ into an $L(G_0/\Sigma)$-$L(\Lambda_0/\Sigma')$-bimodule that is singly generated as a left Hilbert module and singly generated as a right Hilbert module. We may view $H$ as a bifinite $L(G_0)$-$L(\Lambda_0)$-bimodule. Note that $H$ is nonzero but that the left, resp.\ right action is not faithful if $\Sigma$, resp.\ $\Sigma'$ is nontrivial.

The inclusion of $G_0$ into $G$ turns $\ell^2(G)$ into a faithful bifinite $L(G)$-$L(G_0)$-bimodule. We similarly view $\ell^2(\Lambda)$ as a faithful bifinite $L(\Lambda_0)$-$L(\Lambda)$-bimodule. Then,
$$\ell^2(G) \ot_{L(G_0)} H \ot_{L(\Lambda_0)} \ell^2(\Lambda)$$
is the required nonzero bifinite $L(G)$-$L(\Lambda)$-bimodule.

(b) If $G$ and $\Lambda$ have isomorphic finite index subgroups, in the proof of (a), we may take $\Sigma = \{e\} = \Sigma'$ and the resulting bifinite bimodule is faithful.
\end{proof}

\subsection{Second cohomology of discrete groups}\label{sec.second-cohomology-of-groups}

We recall a few basic concepts about group cohomology. Given a countable group $\Gamma$ and an abelian group $A$, one denotes by $Z^2(\Gamma,A)$ the group of normalized $2$-cocycles with values in $A$, i.e.\ maps $\Omega : \Gamma \times \Gamma \to A$ satisfying
$$\Om(g,h) + \Om(gh,k) = \Om(g,hk) + \Om(h,k) \quad\text{and}\quad \Om(g,e) = 0 = \Om(e,g) \quad\text{for all $g,h,k \in \Gamma$.}$$
Denoting by $C^1(\Gamma,A)$ the group of all maps $F : \Gamma \to A$ with $F(e) = 0$, one has the coboundary homomorphism
$$\partial : C^1(\Gamma,A) \to Z^2(\Gamma,A) : (\partial F) (g,h) = F(g) + F(h) - \overline{F(gh)} \; .$$
One defines $B^2(\Gamma,A) = \partial(C^1(\Gamma,A))$ and $H^2(\Gamma,A)$ as the quotient $Z^2(\Gamma,A) / B^2(\Gamma,A)$.

Given $\Om \in Z^2(\Gamma,A)$, the product $(a,g)\cdot (b,h) = (a+b+\Om(g,h),gh)$ turns $G = A \times \Gamma$ into a group and defines a central extension $0 \to A \to G \overset{q}{\longrightarrow} \Gamma \to e$. For every such central extension, we can choose a map $\vphi : \Gamma \to G$ satisfying $q \circ \vphi = \id$ and define $\Om \in Z^2(G,A)$ by $\vphi(g)\vphi(h) = \vphi(g,h) \Om(g,h)$. This defines the classical bijection between
$H^2(\Gamma,A)$ and central extensions, where two such central extensions $0 \to A \to G_i \overset{q_i}{\longrightarrow} \Gamma \to e$ are identified if there exists an isomorphism $\theta : G_1 \to G_2$ satisfying $\theta(c) = c$ for all $c \in A$ and $q_2 \circ \theta = q_1$.

When $\Gamma$ is abelian and $0 \to A \to G \to \Gamma \to 0$ is a central extension, then $G$ is abelian if and only if the associated $2$-cocycle $\Om \in H^2(\Gamma,A)$ is \emph{symmetric}: $\Om(g,h) = \Om(h,g)$ for all $g,h \in \Gamma$. In this way, we identify $\Ext^1(\Gamma,A)$, the group of abelian extensions of $\Gamma$ by $A$, with the group $H^2\sym(\Gamma,A)$ of symmetric $2$-cocycles divided by the coboundaries.

When the abelian coefficient group $A$ is the circle $\T$, we write the group operation as multiplication. With the topology of pointwise convergence, one gets that $C^1(\Gamma,\T)$ and $Z^2(\Gamma,\T)$ are compact abelian groups and that $\partial$ is continuous, so that its image is closed. With the quotient topology, $H^2(\Gamma,\T)$ then becomes a compact abelian group.

We say that a cocycle $\Om \in Z^2(\Gamma,\T)$ is of finite type if there exists a finite dimensional projective representation $\pi : \Gamma \to \cU(d)$ such that $\pi(g) \pi(h) = \Om(g,h) \pi(gh)$ for all $g,h \in \Gamma$.

Throughout the paper, we often make use of the universal coefficient theorem for group cohomology (see e.g.\ \cite[Theorem 3.8]{BT82} for an elementary treatment). It says that the Schur multiplier $M(\Gamma)$ may be interpreted as the Pontryagin dual of the compact abelian group $H^2(\Gamma,\T)$ and thus, that the sequence
\begin{equation}\label{eq.univ-coeff}
0 \to \Ext^1(\Gamma\ab,A) \overset{\Upsilon}{\longrightarrow} H^2(\Gamma,A) \overset{\Theta}{\longrightarrow} \Hom(\Chat,H^2(\Gamma,\T)) \to 1
\end{equation}
is exact and (non-canonically) split. The homomorphism $\Theta$ is defined by $\Theta(\Om) : \mu \mapsto \mu \circ \Om$, and the homomorphism $\Upsilon$ is given by identifying as above $\Ext^1(\Gamma\ab,A)$ with $H^2\sym(\Gamma\ab,A)$ and writing $\Upsilon(\Om) = \Om \circ \pi\ab$, where $\pi\ab : \Gamma \to \Gamma\ab$ is the abelianization homomorphism.

\subsection{Direct integral decompositions}

We make use of the theory of direct integrals of von Neumann algebras with separable predual and refer to \cite[Section IV.8]{Tak79} for the necessary background. In particular, if $M$ is any von Neumann algebra with separable predual, if $(X,\mu)$ is a standard probability space and if we are given a unital embedding $L^\infty(X,\mu) \subset \cZ(M)$, by \cite[Theorem IV.8.21]{Tak79}, we have a canonical decomposition
\begin{equation}\label{eq.abstract-direct-integral}
M = \int_X^\oplus M_x \, d\mu(x)
\end{equation}
and we have $L^\infty(X,\mu) = \cZ(M)$ if and only if a.e.\ $M_x$ is a factor.

We apply this in particular to the group von Neumann algebra $L(G)$ of a central extensions $0 \to C \to G \to \Gamma \to e$. Since $C < \cZ(G)$, we have that $L(C) \subset \cZ(L(G))$. Choose a lift $\theta : \Gamma \to G$ for the quotient homomorphism $G \to \Gamma$, and define the $2$-cocycle $\Om \in Z^2(\Gamma,C)$ by $\theta(g)\theta(h) = \Om(g,h) \theta(gh)$ for all $g,h \in \Gamma$. We use the bijection $C \times \Gamma \to G : (c,g) \mapsto c \theta(g)$ to identify $\ell^2(G) = \ell^2(C) \ot \ell^2(\Gamma)$. We equip the Pontryagin dual $\Chat$ with its Haar probability measure and identify $\ell^2(C) = L^2(\Chat)$ through the Fourier transform. Denoting for every $c \in C$ by $\ev_c \in L^\infty(\Chat)$ the unitary $\om \mapsto \om(c)$, we thus find the unitary
$$U : \ell^2(G) \to L^2(\Chat) \ot \ell^2(\Gamma) : U \delta_{c \theta(g)} = \ev_c \ot \delta_g \quad\text{for all $c \in C$, $g \in \Gamma$.}$$
Since $U\lambda_c U^* = \ev_c \ot 1$ for all $c \in C$, we get that $U L(C) U^* = L^\infty(\Chat) \ot 1$. For every $\om \in \Chat$, we have the scalar $2$-cocycle $\om \circ \Om \in Z^2(\Gamma,\T)$. For every $g \in \Gamma$, the unitary $U \lambda_{\theta(g)} U^*$ belongs to $L^\infty(\Chat) \ot B(\ell^2(\Gamma))$ and is given by the function $\Chat \to B(\ell^2(\Gamma)) : \om \mapsto \lambda_{\om \circ \Om}(g)$, where for every $\mu \in Z^2(\Gamma,\T)$, $\lambda_\mu$ denotes the $\mu$-regular representation that generates the twisted group von Neumann algebra $L_\mu(\Gamma)$. We have thus identified the abstract direct integral decomposition of \eqref{eq.abstract-direct-integral} with the concrete direct integral decomposition
\begin{equation}\label{eq.concrete-direct-integral}
L(G) = \int_{\Chat}^\oplus L_{\om \circ \Omega}(\Gamma) \, d\om \; ,
\end{equation}
where the integral is taken w.r.t.\ the Haar measure on $\Chat$.

\subsection{The Kurosh theorems}\label{sec.kurosh}

By the Kurosh subgroup theorem (see e.g.\ \cite[Theorem 14]{Ser80}), every subgroup $\Lambda$ of a free product $\Gamma = \Gamma_1 \ast \Gamma_2$ is freely generated by copies of $\Z$ and conjugates of subgroups of $\Gamma_i$, $i=1,2$. If $\Lambda < \Gamma$ has finite index, there are only finitely many of each.

The following elementary lemma is an immediate consequence of the Kurosh subgroup theorem. It is possible to fully describe $\Aut(\Gamma_1 \ast \Gamma_2)$, but we do not need that result in our paper.

\begin{lemma}\label{lem.countable-aut}
If $\Gamma = \Gamma_1 \ast \Gamma_2$ is the free product of two nontrivial abelian groups with countable automorphism group $\Aut \Gamma_i$ for all $i \in \{1,2\}$, then $\Aut \Gamma$ is countable.
\end{lemma}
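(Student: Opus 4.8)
The plan is to use the Kurosh subgroup theorem twice: once to get a handle on finite-index subgroups of $\Gamma$, and once to decompose automorphisms of $\Gamma$ as automorphisms of free products. First I would recall that since $\Gamma_1$ and $\Gamma_2$ are nontrivial, every automorphism $\alpha \in \Aut\Gamma$ permutes the set of conjugacy classes of maximal finite-or-small ``free factors''; more precisely, by the Kurosh subgroup theorem applied to $\Gamma$ itself (or rather, by the standard description of the factor system), the subgroups $\Gamma_1$ and $\Gamma_2$ are, up to conjugacy, the only proper malnormal-ish free factors, so $\alpha$ must send $\Gamma_i$ to a conjugate $g_i \Gamma_{\sigma(i)} g_i^{-1}$ for some permutation $\sigma$ of $\{1,2\}$ and some $g_i \in \Gamma$. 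The point is that once we know $\alpha(\Gamma_i) = g_i \Gamma_{\sigma(i)} g_i^{-1}$, the data determining $\alpha$ is: the permutation $\sigma$ (two choices), an isomorphism $\Gamma_i \to \Gamma_{\sigma(i)}$ for each $i$ (countably many, since $\Aut\Gamma_i$ and hence $\Isom(\Gamma_i,\Gamma_{\sigma(i)})$ is countable when nonempty), and the conjugating elements $g_i \in \Gamma$ (countably many, as $\Gamma$ is countable). Since $\Gamma$ is generated by $\Gamma_1 \cup \Gamma_2$, an automorphism is determined by its restriction to $\Gamma_1$ and $\Gamma_2$, hence by this finite tuple of choices drawn from countable sets; therefore $\Aut\Gamma$ is countable.

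The key steps in order: (1) invoke the Kurosh subgroup theorem / Grushko-type rigidity to show any $\alpha \in \Aut\Gamma$ carries each free factor $\Gamma_i$ to a conjugate of some $\Gamma_j$; (2) observe that $\Gamma$ is generated by $\Gamma_1 \cup \Gamma_2$, so $\alpha$ is determined by the pair of maps $\alpha|_{\Gamma_1}, \alpha|_{\Gamma_2}$; (3) parametrize each $\alpha|_{\Gamma_i}$ by a choice of target index $\sigma(i)$, an element $g_i \in \Gamma$, and an isomorphism in $\Isom(\Gamma_i, \Gamma_{\sigma(i)})$; (4) conclude the parameter set is a subset of $\{1,2\}^2 \times \Gamma^2 \times \bigl(\bigsqcup \Isom(\Gamma_i,\Gamma_j)\bigr)^2$, which is countable because $\Gamma$ is countable and each $\Isom(\Gamma_i,\Gamma_j)$ is either empty or a torsor over the countable group $\Aut\Gamma_j$. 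One small lemma to note along the way: if $\Gamma_j$ is a nontrivial abelian group with countable automorphism group, then for any two such, $|\Isom(\Gamma_i,\Gamma_j)| \le |\Aut\Gamma_j|$ is countable.

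The main obstacle is step (1): justifying cleanly that $\alpha(\Gamma_i)$ is conjugate to one of the original factors. This is exactly the statement that the free-product decomposition $\Gamma = \Gamma_1 \ast \Gamma_2$ is ``rigid'' in the sense that its factors are unique up to conjugacy and reordering — which is a standard consequence of the Kurosh subgroup theorem (apply Kurosh to the subgroup $\alpha(\Gamma_i) < \Gamma$: it is a free product of conjugates of subgroups of $\Gamma_1, \Gamma_2$ and copies of $\Z$; since $\alpha(\Gamma_i)$ is itself freely indecomposable relative to this decomposition — being isomorphic to the freely-indecomposable $\Gamma_i$, or in any case being such that $\alpha^{-1}$ of the Kurosh pieces must repartition $\Gamma_i$ — one forces $\alpha(\Gamma_i)$ into a single conjugate $g\Gamma_j g^{-1}$). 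Since the excerpt says we ``do not need'' the full description of $\Aut(\Gamma_1 \ast \Gamma_2)$, I would keep this argument short, citing \cite{Ser80} for Kurosh and arguing freely-indecomposability directly from $\Gamma_i$ being a factor and having no nontrivial free-product decomposition when abelian (abelian groups are freely indecomposable). Everything else is bookkeeping with countable sets.
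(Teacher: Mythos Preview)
Your overall strategy matches the paper's, and steps (2)--(4) are fine: once you know where $\alpha$ sends each free factor, the automorphism is determined by countably many choices. The issue is in step (1).

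Your claim that Kurosh forces $\alpha(\Gamma_i)$ into a conjugate $g\Gamma_j g^{-1}$ is false when $\Gamma_i \cong \Z$. You correctly note that the Kurosh decomposition of $\alpha(\Gamma_i)$ has factors that are either conjugates of subgroups of the $\Gamma_j$ \emph{or copies of $\Z$}, and that abelianity forces a single factor --- but you then silently discard the second possibility. Concretely: take $\Gamma_1 = \langle a\rangle \cong \Z$, pick any $b \in \Gamma_2 \setminus \{e\}$, and define $\alpha \in \Aut\Gamma$ by $\alpha(a) = ab$ and $\alpha|_{\Gamma_2} = \id$. This is an automorphism (a Nielsen transvection, with inverse $a \mapsto ab^{-1}$), and $\alpha(\Gamma_1) = \langle ab\rangle$ is a cyclically reduced infinite cyclic subgroup of syllable length $2$, hence not conjugate into either $\Gamma_j$. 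The ``rigidity of free product decompositions'' you invoke (Grushko uniqueness) only pins down the non-$\Z$ factors up to conjugacy; the free part is not rigid.

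The paper repairs exactly this gap by a case split. If both $\Gamma_i \cong \Z$ then $\Gamma \cong \F_2$ and one quotes that $\Aut\F_2$ is countable. Otherwise, say $\Gamma_1 \not\cong \Z$: then the single Kurosh factor of $\delta(\Gamma_1)$ cannot be a free $\Z$, so after an inner automorphism $\delta(\Gamma_1) \subset \Gamma_i$; a short centralizer argument (using $C_\Gamma(\Gamma_1) = \Gamma_1$) upgrades this to $\delta(\Gamma_1) = \Gamma_i$. If additionally $\Gamma_2 \cong \Z$, one does \emph{not} attempt to locate $\delta(\Gamma_2)$ in a conjugate of a factor --- instead one simply records $\delta(a_0) \in \Gamma$ for a generator $a_0$ of $\Gamma_2$, which is countably many choices. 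Your counting argument in step (3)--(4) already contains this fix implicitly (an isomorphism $\Z \to$ its image is just a choice of element of $\Gamma$); you just need to route the $\Z$ case there directly rather than through the false conjugacy claim.
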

\begin{proof}
If $\Gamma_1 \cong \Z \cong \Gamma_2$, then $\Gamma \cong \F_2$ and $\Aut \Gamma$ is countable. By symmetry, we may thus assume that $\Gamma_1 \not\cong \Z$.

Let $\delta \in \Aut \Gamma$. By the Kurosh subgroup theorem, after replacing $\delta$ by $(\Ad g) \circ \delta$ for some $g \in \Gamma$, we get that $\delta(\Gamma_1) < \Gamma_i$ for some $i \in \{1,2\}$. Since $\Gamma_i$ commutes with $\delta(\Gamma_1)$ and $\delta$ is an automorphism, it follows that $\delta(\Gamma_1) = \Gamma_i$.

If $\Gamma_2 \cong \Z$, then $i=1$. So, $\delta$ is fully determined by an automorphism of $\Gamma_1$, an inner automorphism and the value of $\delta(a_0) \in \Gamma$, where $a_0$ is a generator of $\Gamma_2$. So if $\Aut \Gamma_1$ is countable, also $\Aut \Gamma$ is countable.

If $\Gamma_2 \not\cong \Z$, we similarly find that $\delta(\Gamma_2) = h \Gamma_j h^{-1}$ for some $h \in \Gamma$ and $j \in \{1,2\}$. Since $\Aut \Gamma_1$ and $\Aut \Gamma_2$ are countable, we again conclude that $\Aut \Gamma$ is countable.
\end{proof}

\subsection{\boldmath Groups in class $\cC$}\label{sec.class-C}

Recall from \cite[Definition 3.1]{PV21}, that a group $\Gamma$ is said to belong to class $\cC$ if $\Gamma$ is nonamenable, weakly amenable, biexact and if every nontrivial element $g \in \Gamma \setminus \{e\}$ has an amenable centralizer $C_\Gamma(g)$. This class $\cC$ contains all free products $\Gamma = \Gamma_1 \ast \Gamma_2$ of amenable groups with $|\Gamma_1| \geq 2$ and $|\Gamma_2| \geq 3$.

\section{The structure of non icc group von Neumann algebras}

Let $\Lambda$ be a countable group. Denote by $\Lambda\fc$ the \emph{virtual center} of $\Lambda$, i.e.\ the subgroup of elements having a finite conjugacy class. By \cite[Satz 1]{Kan69} and Schur's theorem, the group von Neumann algebra $L(\Lambda)$ has a nonzero type~I direct summand if and only if $\Lambda$ is virtually isomorphic to an abelian group (in the sense of Definition \ref{def.virtual-iso}). By \cite[Satz 2]{Kan69}, the group von Neumann algebra $L(\Lambda)$ is of type~I if and only if $\Lambda$ is commensurable to an abelian group.

Note that a finite von Neumann algebra $M$ has a nonzero type~I direct summand if and only if $M \prec_M \cZ(M)$, while $M$ is of type~I if and only if $M \prec^f_M \cZ(M)$. In the following proposition, we prove a generalization of the results of \cite{Kan69}, characterizing when $L(\Lambda\fc) \prec \cZ(L(\Lambda))$. This proposition is one of the key ingredients in the proof of Theorem \ref{thm.main-A}, as we will use it to decompose the unknown group von Neumann algebra $L(\Lambda)$ as a direct integral of twisted group von Neumann algebras over its center.

As a further illustration, which is not needed in the rest of the paper, we deduce from Proposition \ref{prop.structure-non-icc} the results of \cite{Kan69}. Note that Proposition \ref{prop.structure-non-icc} also generalizes \cite[Proposition 6.1]{DV24} and that the methods to prove Proposition \ref{prop.structure-non-icc} are quite similar to the methods used in \cite{CFQT24} to analyze how a group von Neumann algebra decomposes as a direct integral over its center in a group-like manner.

\begin{proposition}\label{prop.structure-non-icc}
Let $\Lambda$ be a countable group with virtual center $\Lambda\fc$ and $z \in \cZ(L(\Lambda))$ a nonzero central projection. Denote by $(v(s))_{s \in \Lambda}$ the unitaries generating $L(\Lambda)$ and define $\Sigma = \{s \in \Lambda \mid v(s) z = z\}$.
\begin{enuma}
\item\label{prop.structure-non-icc.1} $\Sigma$ is a finite normal subgroup of $\Lambda$.
\item\label{prop.structure-non-icc.2} The following statements are equivalent.
    \begin{enumlist}
    \item $L(\Lambda\fc)z \prec^f_{L(\Lambda)} \cZ(L(\Lambda))$
    \item There exists a finite index subgroup $\Lambda_1 < \Lambda$ containing $\Sigma$ such that the group $\Lambda_0 = \Lambda_1 / \Sigma$ satisfies $\Lambda_{0,\text{\rm fc}} = \cZ(\Lambda_0)$.
    \end{enumlist}
\item \label{prop.structure-non-icc.3} If the conditions in (b) hold and if we write $\cZ(L(\Lambda)) z = L^\infty(X,\mu)$, there are measurable maps $d : X \to \N$ and $n : X \to \N$, a measurable field of finite index subgroups $\Lambda_x < \Lambda/\Lambda\fc$ and a measurable field of $2$-cocycles $\om_x \in H^2(\Lambda_x,\T)$ such that
    $$L(\Lambda) z \cong \int_X^\oplus (M_{d(x)n(x)}(\C) \ot L_{\om_x}(\Lambda_x)) \, d\mu(x) \; .$$
    Moreover, $n(x) = [\Lambda/\Lambda\fc:\Lambda_x]$ for a.e.\ $x \in X$ and in the direct integral decomposition
    $$L(\Lambda\fc)z \cong \int_X^\oplus A_x \, d\mu(x) \; ,$$
    we have that $A_x \cong M_{d(x)}(\C) \ot \C^{n(x)}$ for a.e.\ $x \in X$.
\end{enuma}
\end{proposition}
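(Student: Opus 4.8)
The plan is to proceed part by part, using the structure theory of finite-conjugacy-class elements together with Popa's intertwining calculus. For part \ref{prop.structure-non-icc.1}, note that $\Sigma = \{s \in \Lambda \mid v(s)z = z\}$ is visibly a subgroup containing $e$, and it is normal because $z$ is central: if $v(s)z = z$ then $v(t)v(s)v(t)^* z = v(t)v(s) z v(t)^* = v(t) z v(t)^* = z$. Finiteness follows because the $v(s)z$ for $s$ ranging over a transversal of $\Sigma$ in the subgroup they generate are orthogonal in $L^2(L(\Lambda)z)$, which has finite trace; more precisely, one checks $\tau(v(s)z) = \tau(z)$ for $s \in \Sigma$ and $\tau(v(s)z) \ne \tau(z)$ otherwise is not quite the right dichotomy, so instead observe that $s \mapsto v(s)z$ descends to $\Lambda/\Sigma$ as a unitary representation with distinct images, and that $\Sigma$ must be finite because the projection $z$ would otherwise be ``too small'' — the cleanest argument is that $s\in\Sigma$ forces $s$ to have finite conjugacy class (since all conjugates $tst^{-1}$ also lie in $\Sigma$ and act as the identity on $z$, and one bounds $|\Sigma|$), so $\Sigma \subset \Lambda\fc$ and $\Sigma$ is a finite subgroup as the support of a central projection in $L(\Lambda\fc)$, which is type I.

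For part \ref{prop.structure-non-icc.2}, the equivalence (ii)$\Rightarrow$(i) is the easy direction: if $\Lambda_1 < \Lambda$ has finite index with $\Sigma \lhd \Lambda_1$ and $\Lambda_0 = \Lambda_1/\Sigma$ satisfies $\Lambda_{0,\mathrm{fc}} = \cZ(\Lambda_0)$, then after cutting by $z$ and passing to $L(\Lambda_0)$ one has $L(\Lambda_{0,\mathrm{fc}}) = L(\cZ(\Lambda_0)) = \cZ(L(\Lambda_0))$ directly, because a group $\Lambda_0$ with $\Lambda_{0,\mathrm{fc}}=\cZ(\Lambda_0)$ has $L(\Lambda_0)$ with center exactly $L(\cZ(\Lambda_0))$ (as $\Lambda_0/\cZ(\Lambda_0)$ is icc). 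One then propagates this through the finite-index inclusion $L(\Lambda_1)z \subset L(\Lambda)z$ and the relation $\Lambda\fc \cap \Lambda_1$ being commensurable with $\Lambda_{1,\mathrm{fc}}$, which maps onto $\Lambda_{0,\mathrm{fc}}$, to get $L(\Lambda\fc)z \prec^f \cZ(L(\Lambda))$; this is a finite-index bookkeeping argument. The substantive direction is (i)$\Rightarrow$(ii): assuming $L(\Lambda\fc)z \prec^f_{L(\Lambda)} \cZ(L(\Lambda))$, I would use Lemma \ref{lem.equal-under-q} (with $M = L(\Lambda)$, $p$ replaced by $z$, and $A = L(\Lambda\fc)z \vee \cZ(L(\Lambda))z$, which is abelian since $\Lambda\fc$ is the virtual center and hence $L(\Lambda\fc)$ is abelian — wait, $L(\Lambda\fc)$ is generally not abelian, only amenable, so one must first replace $\Lambda\fc$ by a finite-index abelian-or-central subgroup; this is where the finite index in (ii) enters). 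The correct move: $\Lambda\fc$ has an abelian subgroup of finite index (Schur), or rather, by a theorem of P. Hall / the structure of FC-groups, $\Lambda\fc$ contains a characteristic subgroup $N$ of finite index with $[\Lambda\fc:N] < \infty$ and $N' $ finite; after quotienting by the finite $\Sigma$ and replacing $\Lambda$ by a finite-index subgroup, one arranges that $\Lambda\fc$ (the image) is abelian and central. Then $L(\Lambda\fc)z$ is abelian, $\cZ(L(\Lambda))z \subset L(\Lambda\fc)z$, and Lemma \ref{lem.equal-under-q} applies: $L(\Lambda\fc)z \prec \cZ(L(\Lambda))$ gives a nonzero projection $q \in L(\Lambda\fc)z$ with $L(\Lambda\fc)q = \cZ(L(\Lambda))q$; translating $q$ being ``$\prec^f$'' into ``for a net of projections summing to $z$'' and using a group-theoretic averaging, one extracts the finite-index subgroup $\Lambda_1$.

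For part \ref{prop.structure-non-icc.3}, under the conditions of (b), I would take the direct integral decomposition of $L(\Lambda)z$ over $\cZ(L(\Lambda))z = L^\infty(X,\mu)$ as in \eqref{eq.abstract-direct-integral}, so that a.e.\ fiber is a II$_1$ factor or a matrix algebra; simultaneously decompose the abelian algebra $L(\Lambda\fc)z$ and the central subalgebra, and feed in the group-measure-space structure coming from the central extension $\Lambda_1/\Sigma$. Concretely, the fiber at $x$ is a twisted group von Neumann algebra: the quotient $\Lambda/\Lambda\fc$ acts on $\Lambda\fc$-characters, and over each ergodic component one gets a subgroup $\Lambda_x < \Lambda/\Lambda\fc$ (the stabilizer of a character, up to the finite-index corrections) together with a scalar $2$-cocycle $\om_x \in H^2(\Lambda_x,\T)$ arising from the extension class restricted to that orbit, matching the mechanism of \eqref{eq.concrete-direct-integral}. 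The matrix amplification $M_{d(x)n(x)}(\C)$ accounts for: (a) the finite-dimensional representation of $\Lambda\fc$ of dimension $d(x)$ at the point $x$ (since $L(\Lambda\fc)z$ decomposes with fibers $M_{d(x)}(\C) \ot \C^{n(x)}$, the $M_{d(x)}(\C)$ being the ``non-abelian FC'' part and $\C^{n(x)}$ indexing the orbit of size $n(x) = [\Lambda/\Lambda\fc : \Lambda_x]$), and (b) the induction from $\Lambda_x$ to $\Lambda/\Lambda\fc$ contributing the factor $n(x)$. The identity $n(x) = [\Lambda/\Lambda\fc : \Lambda_x]$ and $A_x \cong M_{d(x)}(\C)\ot\C^{n(x)}$ then follow by comparing dimensions/traces in the two direct integral decompositions.

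The main obstacle I anticipate is the direction (i)$\Rightarrow$(ii) in part \ref{prop.structure-non-icc.2}, specifically converting the abstract intertwining $L(\Lambda\fc)z \prec^f \cZ(L(\Lambda))$ — which is purely von Neumann algebraic — into a genuine group-theoretic statement producing the finite-index subgroup $\Lambda_1$. This requires (1) reducing to the case where $\Lambda\fc$ is abelian via FC-group structure theory, handled at the cost of a finite-index passage and a finite quotient; (2) applying Lemma \ref{lem.equal-under-q} to get a corner on which $L(\Lambda\fc)$ collapses to the center; and (3) a combinatorial/ergodic argument showing that the $\Lambda$-conjugation action on the spectrum of $L(\Lambda\fc)z$ has finite orbits on a non-null set, which is exactly what forces $\Lambda_0 = \Lambda_1/\Sigma$ to have $\Lambda_{0,\mathrm{fc}} = \cZ(\Lambda_0)$. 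Steps (1) and (3) together are the technical heart; the rest is organizational bookkeeping with finite indices and direct integrals.
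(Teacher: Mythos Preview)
Your plan for (a) never lands on a correct argument. The claim that ``$\Sigma \subset \Lambda\fc$ and $\Sigma$ is a finite subgroup as the support of a central projection in $L(\Lambda\fc)$, which is type~I'' is circular (you need $\Sigma$ finite to make sense of that projection) and $L(\Lambda\fc)$ is not type~I in general. The paper's argument is the Fourier-coefficient estimate you almost wrote down but abandoned: since $z \in L(\Lambda)$,
\[
\tau(z) = \|z\|_2^2 = \sum_{s \in \Lambda} |\tau(v(s)^* z)|^2 \ge \sum_{s \in \Sigma} |\tau(z)|^2 = |\Sigma|\,\tau(z)^2,
\]
so $|\Sigma| \le \tau(z)^{-1} < \infty$.

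The serious gap is in your plan for (i)$\Rightarrow$(ii). Your step~(1) --- reducing to the case where $\Lambda\fc$ is abelian via ``FC-group structure theory'' --- does not work: FC-groups need not contain an abelian subgroup of finite index. For instance, $\bigoplus_{n \in \N} S_3$ is an FC-group with no finite-index abelian subgroup. So you cannot arrange $L(\Lambda\fc)z$ to be abelian a priori, and Lemma~\ref{lem.equal-under-q} does not apply directly to $L(\Lambda\fc)z$. Worse, the sentence ``after quotienting by the finite $\Sigma$ and replacing $\Lambda$ by a finite-index subgroup, one arranges that $\Lambda\fc$ is abelian and central'' is exactly the conclusion (ii) you are trying to prove.

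The paper circumvents this by reversing the logical order: it proves the direct-integral statement (c) \emph{first}, directly from hypothesis~(i). The intertwining $L(\Lambda\fc)z \prec^f \cZ(L(\Lambda))$ forces $L(\Lambda\fc)z$ to be type~I, and Lemma~\ref{lem.equal-under-q} applied to its \emph{center} $\cZ(L(\Lambda\fc))z$ (which \emph{is} abelian and contains $\cZ(L(\Lambda))z$) shows that the fibers $A_x$ have discrete center; combined with the ergodic trace-preserving action of $\Lambda$ on each $A_x$, this gives $A_x \cong M_{d(x)}(\C) \otimes \C^{n(x)}$. Only then does the paper extract~(ii): since each $A_x$ is finite-dimensional, one finds finite subsets $\cF_n \subset \Lambda\fc$ spanning $A_x$ for $x$ in an increasing exhaustion, and for suitable $n_0$ the centralizer $\Lambda_2 = C_\Lambda(\cF_{n_0})$ has finite index and $\Lambda_1 = \Sigma \cdot \Lambda_2$ works. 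The finiteness of $\cF_{n_0}$ is what produces the finite-index subgroup --- not any abstract structure theorem for FC-groups.
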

\begin{proof}
Throughout the proof, we denote by $\tau$ the canonical trace on $L(\Lambda)$.

(a) Since
$$\tau(z) = \tau(z^* z) = \sum_{s \in \Lambda} |\tau(v(s) z)|^2 \geq |\Sigma| \, |\tau(z)|^2 \; ,$$
it follows that $\Sigma$ is a finite subgroup of $\Lambda$. Since $z$ commutes with every $v(s)$, $s \in \Lambda$, we get that $\Sigma$ is a normal subgroup of $\Lambda$.

(b, ii $\Rightarrow$ i) Define $\Lambda_2 = \bigcap_{g \in \Lambda} g \Lambda_1 g^{-1}$. Since $\Lambda_1 < \Lambda$ has finite index and $\Sigma$ is a normal subgroup of $\Lambda$, we get that $\Lambda_2 < \Lambda$ is a finite index normal subgroup and $\Sigma < \Lambda_2$. Write $\Lambda_3 = \Lambda_2 / \Sigma$. Since $\Lambda_3 < \Lambda_0$ has finite index, we get that
$$\cZ(\Lambda_3) \subset \Lambda_{3,\text{\rm fc}} = \Lambda_{0,\text{\rm fc}} \cap \Lambda_3 = \cZ(\Lambda_0) \cap \Lambda_3 \subset \cZ(\Lambda_3) \; .$$
So, $\Lambda_{3,\text{\rm fc}} = \cZ(\Lambda_3)$. Replacing $\Lambda_1$ by $\Lambda_2$, we may thus assume from the start that $\Lambda_1$ is a normal subgroup of $\Lambda$.

Since $\Lambda_1 < \Lambda$ has finite index, also $\Lambda_{1,\text{\rm fc}} = \Lambda\fc \cap \Lambda_1 < \Lambda\fc$ has finite index. It follows that $L(\Lambda\fc) \prec^f L(\Lambda_{1,\text{\rm fc}})$ inside $L(\Lambda\fc)$ and thus, a fortiori, also inside $L(\Lambda)$.

Write $B = L(\Lambda_{1,\text{\rm fc}})z$. Since $z$ is a central projection in $L(\Lambda)$, we also get that $L(\Lambda\fc)z \prec^f B$ inside $L(\Lambda)$. Note that
$$B = L(\Lambda_{1,\text{\rm fc}})z = L(\Lambda_{0,\text{\rm fc}})z = L(\cZ(\Lambda_0))z$$
is abelian. Also, for every $s \in \Lambda_1$, we have that $v(s) z \in L(\Lambda_0) z$, so that $v(s)$ commutes with $B$. Therefore, conjugation by $v(s)$ defines an action of the finite group $\Lambda/\Lambda_1$ on the abelian von Neumann algebra $B$.

For any action of a finite group $H$ on an abelian von Neumann algebra $D$ with fixed point algebra $D^H$, we can find projections $z_k \in D$ that sum up to $1$ and satisfy $D z_k =  D^H z_k$ for all $k$. The fixed point algebra of the action of $\Lambda/\Lambda_1$ on $B$ equals $B \cap \cZ(L(\Lambda))z$. Since $\Lambda/\Lambda_1$ is finite, we thus conclude that $B \prec^f B \cap \cZ(L(\Lambda))z$ inside $B$ and thus, a fortiori, also inside $L(\Lambda)$. Since we already obtained that $L(\Lambda\fc)z \prec^f B$, we conclude that (a) holds.

(c) We prove that condition (i) in (b) implies the conclusion of (c). Write $M = L(\Lambda)$. Note that $\cZ(M) \subset L(\Lambda\fc)$. Writing $\cZ(M) z = L^\infty(X,\mu)$, we thus have well-defined direct integral decompositions
$$L(\Lambda\fc)z = \int_X^\oplus A_x \, d\mu(x) \quad\text{and}\quad Mz = \int_X^\oplus M_x \, d\mu(x) \; ,$$
in which $A_x \subset M_x$ are von Neumann subalgebras and $\mu$ is the measure on $X$ given by restricting $\tau$ to $\cZ(M)$. In particular, $\mu(X) = \tau(z)$. We also write $\tau(\cdot \, z) = \int^\oplus_X \tau_x \, d\mu(x)$, where for every $x \in X$, $\tau_x$ is a normal tracial state on $M_x$.

Since $L(\Lambda\fc)z \prec^f_M \cZ(M)$ and $\cZ(M)$ is abelian, the von Neumann algebra $L(\Lambda\fc)z$ is of type~I and $\cZ(L(\Lambda\fc))z \prec^f_M \cZ(M)$. Since also $\cZ(M) \subset \cZ(L(\Lambda\fc))$, Lemma \ref{lem.equal-under-q} provides a sequence of projections $q_n \in \cZ(L(\Lambda\fc))z$ such that $\cZ(L(\Lambda\fc)) q_n = \cZ(M) q_n$ for all $n$ and $\sum_n q_n = z$. Writing $q_n = \int^\oplus_X q_{n,x} \, d\mu(x)$, we find that for a.e.\ $x \in X$, $(q_{n,x})_n$ is a sequence of minimal projections in $\cZ(A_x)$ with $\sum_n q_{n,x} = 1$. So for a.e.\ $x \in X$, $\cZ(A_x)$ is discrete.

As explained above, $L(\Lambda\fc)z$ is of type~I. So, a.e.\ $A_x$ is of type I and we conclude that a.e.\ $A_x$ is a discrete von Neumann algebra.

Write $v(s) z = \int^\oplus_X v(s,x) \, d\mu(x)$ with $v(s,x) \in \cU(M_x)$. Since $(\Ad v(s))_{s \in \Lambda}$ is a $\tau$-preserving action on $L(\Lambda\fc)$ with fixed point algebra $\cZ(M)$, we find that for a.e.\ $x \in X$, $(\Ad v(s,x))_{s \in \Lambda}$ is an ergodic $\tau_x$-preserving action on $A_x$. Every discrete tracial von Neumann algebra that admits an ergodic trace preserving action is of the form $M_d(\C) \ot \C^n$. We thus find measurable maps $d : X \to \N$ and $n : X \to \N$ such that $A_x \cong M_{d(x)}(\C) \ot \C^{n(x)}$.

Choose a projection $r \in \cZ(L(\Lambda\fc))z$ such that, writing $r = \int^\oplus_X r_x \, d\mu(x)$, we have that $r_x \in \cZ(A_x)$ is a minimal central projection. We can then define
$$\Gamma_x = \{ s \in \Lambda \mid v(s,x) r_x = r_x v(s,x)\} \;\; , \quad\text{so that $[\Lambda:\Gamma_x] = n(x)$,}$$
and measurably choose unitaries $\rho(s,x) \in A_x r_x \cong M_{d(x)}(\C)$ for all $s \in \Gamma_x$ s.t.\ $\Ad v(s,x) r_x = \Ad \rho(s,x)$ on $A_x r_x$. By construction, $\Lambda\fc < \Gamma_x$. We define $\Lambda_x = \Gamma_x / \Lambda\fc$ and measurably choose lifts $\theta_x : \Lambda_x \to \Gamma_x$. We also choose a projection $p \in L(\Lambda\fc)r$ such that, writing $p = \int^\oplus_X p_x \, d\mu(x)$, we have that $p_x$ is a minimal projection in $A_x r_x$.

Write $\pi_x(s) = v(\theta_x(s),x) \rho(\theta_x(s),x)^* p_x$, for all $s \in \Lambda_x$. Whenever $s \in \Lambda \setminus \Lambda\fc$, we have that $E_{L(\Lambda\fc)}(v(s)) = 0$, so that $E_{A_x}(v(s,x)) = 0$ for a.e.\ $x \in X$. It follows that for a.e.\ $x \in X$, we have that $\tau_x(\pi_x(s)) = 0$ for all $s \in \Lambda_x \setminus \{e\}$. By construction, $\pi_x$ is a projective representation of $\Lambda_x$. So, $\pi_x$ induces an isomorphism $p_x M_x p_x \cong L_{\om_x}(\Lambda_x)$, where $\om_x \in H^2(\Lambda_x,\T)$ is a measurable field of $2$-cocycles. Then also $M_x \cong M_{d(x)n(x)}(\C) \ot L_{\om_x}(\Lambda_x)$ so that (c) is proven.

(b, i $\Rightarrow$ ii) By the paragraphs above, the conclusions of (c) hold. For every $n \in \N$ and $n$-tuple $s \in \Lambda\fc \times \cdots \times \Lambda\fc$, define the measurable subset $\cU(n,s) \subset X$ by
$$\cU(n,s) = \bigl\{ x \in X \bigm| d(x)^2 n(x) = n \quad\text{and}\quad \det\bigl(\tau_x(v(s_i,x)^* v(s_j,x))_{i,j = 1,\ldots,n}\bigr) \neq 0 \bigr\} \; .$$
By definition, $\cU(n,s)$ consists of the elements $x \in X$ such that $(v(s_i,x))_{i=1,\ldots,n}$ is a vector space basis for $A_x$. Since a.e.\ $A_x$ is finite dimensional, we have that
$$X = \bigcup_{n \in \N} \bigcup_{s \in \Lambda\fc^n} \cU(n,s) \; ,$$
up to measure zero. Writing $\Lambda\fc$ as the union of an increasing sequence of finite subsets $\cF_n \subset \Lambda\fc$, we thus find an increasing sequence of measurable subsets $\cU_n \subset X$ such that $X = \bigcup_n \cU_n$ up to measure zero and $A_x$ is linearly spanned by $v(s,x)_{s \in \cF_n}$ for all $x \in \cU_n$.

Define the central projections $z_n \in \cZ(M)z$ by $z_n = 1_{\cU_n}$. Note that the sequence $z_n$ is increasing and $z_n \to z$ strongly. Then $\Sigma_n = \{s \in \Lambda \mid v(s) z_n = z_n\}$ is a decreasing sequence of finite normal subgroups of $\Lambda$ with $\bigcap_n \Sigma_n = \Sigma$. We can thus fix $n_0 \in \N$ such that $\Sigma_{n_0} = \Sigma$. Since $\cF_{n_0}$ is a finite subset of $\Lambda\fc$, the centralizer $\Lambda_2 := C_\Lambda(\cF_{n_0})$ of $\cF_{n_0}$ inside $\Lambda$ has finite index in $\Lambda$. Define $\Lambda_1 := \Sigma \cdot \Lambda_2$ and $\Lambda_0 := \Lambda_1 / \Sigma$. We prove that $\Lambda_{0,\text{\rm fc}} = \cZ(\Lambda_0)$.

Note that $\Lambda_0 = \Lambda_2 / (\Lambda_2 \cap \Sigma)$. The inclusion $\cZ(\Lambda_0) \subset \Lambda_{0,\text{\rm fc}}$ is trivial. Conversely, take $h \in \Lambda_{0,\text{\rm fc}}$. Write $h = g (\Lambda_2 \cap \Sigma)$ with $g \in \Lambda_2$. Since $\Sigma < \Lambda$ is a finite normal subgroup and $h \in \Lambda_{0,\text{\rm fc}}$, we have that $g \in \Lambda\fc$. Choose an arbitrary $k \in \Lambda_2$. Define the commutator $c = gkg^{-1}k^{-1}$. We have to prove that $c \in \Sigma$. Since $g \in \Lambda\fc$, we have $v(g,x) \in A_x$ for a.e.\ $x \in X$. It follows that for a.e.\ $x \in \cU_{n_0}$, we have that $v(g,x) \in \operatorname{span}\{v(s,x) \mid s \in \cF_{n_0}\}$. Since $k \in \Lambda_2$, the unitaries $v(k,x)$ commute with $v(s,x)$ for all $s \in \cF_{n_0}$. So, $v(k,x)$ and $v(g,x)$ commute for a.e.\ $x \in \cU_{n_0}$. This means that $v(c) z_{n_0} = z_{n_0}$. Thus, $c \in \Sigma_{n_0} = \Sigma$ and the equality $\Lambda_{0,\text{\rm fc}} = \cZ(\Lambda_0)$ follows.
\end{proof}

\begin{corollary}[\makebox{\cite[S\"{a}tze 1 \& 2]{Kan69}}]\label{cor.Kaniuth}
Let $\Lambda$ be a countable group.
\begin{enuma}
\item $L(\Lambda)$ has a nonzero type~I direct summand if and only if $\Lambda$ is virtually isomorphic to an abelian group in the sense of Definition \ref{def.virtual-iso}.
\item $L(\Lambda)$ is of type~I if and only if $\Lambda$ is commensurable to an abelian group.
\end{enuma}
\end{corollary}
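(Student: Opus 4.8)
The plan is to deduce Corollary~\ref{cor.Kaniuth} from Proposition~\ref{prop.structure-non-icc} together with the two equivalences recorded just before it (a finite von Neumann algebra $M$ has a nonzero type~I direct summand iff $M\prec_M\cZ(M)$, and $M$ is of type~I iff $M\prec^f_M\cZ(M)$), plus the elementary observations that a corner of a type~I von Neumann algebra is again type~I and that a finite-index extension of a finite type~I von Neumann algebra is type~I.

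\emph{From groups to von Neumann algebras.} Suppose $\Lambda$ is virtually isomorphic to an abelian group, and choose a finite index subgroup $\Lambda_1<\Lambda$ and a finite normal subgroup $\Sigma\lhd\Lambda_1$ with $\Lambda_1/\Sigma$ abelian; in the situation of (b) we may take $\Sigma=\{e\}$ and $\Lambda_1$ abelian. Then $z_\Sigma=|\Sigma|^{-1}\sum_{s\in\Sigma}v(s)$ is a nonzero projection in $\cZ(L(\Lambda_1))$, and the restriction of the left regular representation of $\Lambda_1$ to $z_\Sigma\ell^2(\Lambda_1)\cong\ell^2(\Lambda_1/\Sigma)$ identifies $z_\Sigma L(\Lambda_1)z_\Sigma\cong L(\Lambda_1/\Sigma)$, which is abelian. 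Since $L(\Lambda_1)\subset L(\Lambda)$ has finite index, so does $z_\Sigma L(\Lambda_1)z_\Sigma\subset z_\Sigma L(\Lambda)z_\Sigma$, whence $z_\Sigma L(\Lambda)z_\Sigma$ is type~I. Thus $L(\Lambda)$ has a nonzero type~I corner and therefore a nonzero type~I direct summand, and if $\Sigma=\{e\}$ then $L(\Lambda_1)\subset L(\Lambda)$ is itself a finite-index extension of an abelian algebra, so $L(\Lambda)$ is type~I. This gives the ``if'' parts of (a) and (b).

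\emph{From von Neumann algebras to groups.} Conversely, assume $L(\Lambda)z$ is type~I for some nonzero $z\in\cZ(L(\Lambda))$, taking $z=1$ in case (b), and set $\Sigma=\{s\in\Lambda\mid v(s)z=z\}$ as in Proposition~\ref{prop.structure-non-icc} (so $\Sigma=\{e\}$ when $z=1$). The crucial first step is to show $L(\Lambda\fc)z\prec^f_{L(\Lambda)}\cZ(L(\Lambda))$. Since $\cZ(L(\Lambda))\subset L(\Lambda\fc)$, in the direct integral decomposition of the finite type~I algebra $L(\Lambda)z$ over its center $\cZ(L(\Lambda))z$ the fibres of $L(\Lambda\fc)z$ are finite-dimensional; hence $L(\Lambda\fc)z$ is type~I and $\cZ(L(\Lambda\fc))z$ is atomic over $\cZ(L(\Lambda))z$. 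Running the measurable-selection argument of the proof of Proposition~\ref{prop.structure-non-icc} together with Lemma~\ref{lem.equal-under-q} then yields $L(\Lambda\fc)z\prec^f_{L(\Lambda)}\cZ(L(\Lambda))$. Now Proposition~\ref{prop.structure-non-icc.2} provides a finite index subgroup $\Lambda_1<\Lambda$ with $\Sigma<\Lambda_1$ such that $\Lambda_0:=\Lambda_1/\Sigma$ satisfies $\Lambda_{0,\text{\rm fc}}=\cZ(\Lambda_0)$, and Proposition~\ref{prop.structure-non-icc.3} writes $L(\Lambda)z$ as a direct integral of algebras $M_{d(x)n(x)}(\C)\ot L_{\om_x}(\Lambda_x)$ with $\Lambda_x<\Lambda/\Lambda\fc$ of index $n(x)$. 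As $L(\Lambda)z$ is type~I, almost every fibre is a finite-dimensional factor, so $L_{\om_x}(\Lambda_x)$ is finite-dimensional and hence $\Lambda_x$ is finite; together with $[\Lambda/\Lambda\fc:\Lambda_x]=n(x)<\infty$ this forces $[\Lambda:\Lambda\fc]<\infty$. Consequently $\Lambda_1\cap\Lambda\fc=\Lambda_{1,\text{\rm fc}}$ has finite index in $\Lambda_1$, so $\cZ(\Lambda_0)=\Lambda_{0,\text{\rm fc}}=\Lambda_{1,\text{\rm fc}}/\Sigma$ is a finite index abelian subgroup of $\Lambda_0$: thus $\Lambda_0$ is commensurable to an abelian group. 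Since $\Lambda$ is virtually isomorphic to $\Lambda_0$ (via $\Lambda_1<\Lambda$ and $\Sigma\lhd\Lambda_1$) and virtual isomorphism is an equivalence relation, $\Lambda$ is virtually isomorphic to an abelian group, proving (a); in case (b) we have $\Sigma=\{e\}$, so $\cZ(\Lambda_1)=\Lambda_1\cap\Lambda\fc$ is a finite index abelian subgroup of $\Lambda$, proving (b).

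\emph{Main obstacle.} The step requiring the most care is the reduction ``$L(\Lambda)z$ type~I $\Rightarrow L(\Lambda\fc)z\prec^f_{L(\Lambda)}\cZ(L(\Lambda))$'': one must exploit that $\cZ(L(\Lambda))\subset L(\Lambda\fc)$ makes $L(\Lambda\fc)z$ decomposable over the ambient center with finite-dimensional fibres, so that its own center is atomic over $\cZ(L(\Lambda))z$, and then invoke Lemma~\ref{lem.equal-under-q} and a measurable selection exactly as in the proof of Proposition~\ref{prop.structure-non-icc}. The remaining ingredients — the two characterizations of (summands of) type~I algebras quoted above, stability of type~I under corners and finite-index extensions, and the arithmetic of finite-index subgroups and finite normal subgroups — are routine.
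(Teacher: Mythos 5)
Your overall strategy matches the paper's: reduce everything to Proposition~\ref{prop.structure-non-icc} via the characterisation of (partial) type~I-ness by intertwining into the centre. The ``from groups to von Neumann algebras'' directions are fine, and your corner argument with $z_\Sigma\in\cZ(L(\Lambda_1))$ is a perfectly good alternative to the paper's device of first replacing $\Sigma$ by a finite normal subgroup of all of $\Lambda$ so that the corresponding projection becomes central in $L(\Lambda)$.

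The step that needs repair is your ``crucial first step''. You want to deduce $L(\Lambda\fc)z\prec^f_{L(\Lambda)}\cZ(L(\Lambda))$ from $L(\Lambda)z$ being type~I, and you propose to ``run the measurable-selection argument of the proof of Proposition~\ref{prop.structure-non-icc} together with Lemma~\ref{lem.equal-under-q}''. But that proof goes in the opposite direction: it \emph{starts} from the $\prec^f$ hypothesis and \emph{derives} the direct integral picture, so there is nothing to run backward, and as written this step is not an argument. The claim you need is precisely what the paper dispatches with the phrase ``a fortiori'', namely: if $\cZ(M)z\subset B\subset Mz$ and $Mz$ is type~I, then $B\prec^f_M\cZ(M)$. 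A direct argument: write $z=\sum_n z_n$ with $z_n\in\cZ(M)z$ projections such that $Mz_n\cong M_n(\C)\ot\cZ(M)z_n$ for each $n$ (the homogeneous decomposition of the finite type~I algebra $Mz$). Given a nonzero projection $q\in B'\cap Mz$, choose $n$ with $qz_n\neq 0$. Then $qz_nL^2(M)$ is a nonzero $Bq$-$\cZ(M)$-subbimodule of $qL^2(M)$, and it is of finite type as a right $\cZ(M)$-module because $z_nL^2(M)\cong\C^{n^2}\ot L^2(\cZ(M)z_n)$. This gives $Bq\prec_M\cZ(M)$ for every such $q$, i.e.\ $B\prec^f_M\cZ(M)$.

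Finally, you obtain $[\Lambda:\Lambda\fc]<\infty$ only a posteriori, from the decomposition in Proposition~\ref{prop.structure-non-icc.3}. That is valid, but the paper's upfront observation is cheaper: if $[\Lambda:\Lambda\fc]=\infty$, then $L(\Lambda)\not\prec_{L(\Lambda)}L(\Lambda\fc)$, hence $L(\Lambda)\not\prec_{L(\Lambda)}\cZ(L(\Lambda))$ (since $\cZ(L(\Lambda))\subset L(\Lambda\fc)$), so $L(\Lambda)$ is of type~II and the $\Rightarrow$ directions of both (a) and (b) are vacuous. Assuming $[\Lambda:\Lambda\fc]<\infty$ from the outset also streamlines your closing group-theoretic bookkeeping.
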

\begin{proof}
Write $M = L(\Lambda)$. First note that if $\Lambda\fc < \Lambda$ has infinite index, then $M \not\prec_M L(\Lambda\fc)$. Since $\cZ(M) \subset L(\Lambda\fc)$, we certainly get that $M \not\prec_M \cZ(M)$, so that $M$ is of type II. We may thus assume that $\Lambda\fc < \Lambda$ has finite index.

(a, $\Rightarrow$) Since $M$ has a nonzero type~I direct summand, we can take a nonzero central projection $z \in \cZ(M)$ such that $Mz \prec^f_M \cZ(M)$. A fortiori, $L(\Lambda\fc)z \prec^f_M \cZ(M)$. By Proposition \ref{prop.structure-non-icc.2}, $\Lambda\fc$ is virtually isomorphic to an abelian group. Since $\Lambda\fc < \Lambda$ has finite index, also $\Lambda$ is virtually isomorphic to an abelian group.

(a, $\Leftarrow$) Take a finite index subgroup $\Lambda_1 < \Lambda$ and a finite normal subgroup $\Sigma_1 \lhd \Lambda_1$ such that $\Lambda_1 / \Sigma_1$ is abelian. Let $g_1,\ldots,g_k \in \Lambda$ be representatives for $\Lambda/\Lambda_1$ with $g_1 = e$. Define $\Lambda_2 = \bigcap_{i=1}^k g_i \Lambda_1 g_i^{-1}$ and $\Sigma_2 = \Sigma_1 \cap \Lambda_2$. Then $\Lambda_2 < \Lambda$ still has finite index, $\Sigma_2 < \Lambda_2$ is a finite normal subgroup and $\Lambda_2 / \Sigma_2$ is abelian. Now $\Lambda_2$ is also a normal subgroup of $\Lambda$. Let $h_1,\ldots,h_n \in \Lambda$ be representatives for $\Lambda/\Lambda_2$ and define $\al_i \in \Aut \Lambda_2$ by $\al_i = \Ad h_i$. Write $\Sigma = \al_1(\Sigma_2) \cdots \al_n(\Sigma_2)$. Then $\Sigma < \Lambda_2$ is a finite normal subgroup, $\Lambda_2 / \Sigma$ is still abelian and now, $\Sigma$ is normal in $\Lambda$. Define $z = |\Sigma|^{-1} \sum_{s \in \Sigma} v_s$.

Then $z$ is a projection in the center of $L(\Lambda)$ and $L(\Lambda) z = L(\Lambda/\Sigma)$. Since $\Lambda_2 / \Sigma < \Lambda/\Sigma$ has finite index, we have that $L(\Lambda/\Sigma) \prec^f L(\Lambda_2/\Sigma)$. Since $\Lambda_2/\Sigma$ is abelian, it follows that $L(\Lambda/\Sigma)$ is of type~I. So, $L(\Lambda)z$ is of type I.

(b, $\Rightarrow$) Since $M$ is of type~I, we have that $M \prec^f_M \cZ(M)$. Applying Proposition \ref{prop.structure-non-icc.2} with $z=1$, we find a finite index subgroup $\Lambda_1 < \Lambda\fc$ such that $\Lambda_{1,\text{\rm fc}}$ is abelian. Since $\Lambda\fc < \Lambda$ has finite index, also $\Lambda_1 < \Lambda$ has finite index. It follows that $\Lambda_{1,\text{\rm fc}} = \Lambda\fc \cap \Lambda_1$, which has finite index in $\Lambda$. So, $\Lambda$ has an abelian subgroup of finite index.

(b, $\Leftarrow$) We find an abelian finite index subgroup $\Lambda_1<\Lambda$. Since $L(\Lambda) \prec^f L(\Lambda_1)$ and $L(\Lambda_1)$ is abelian, it follows that $L(\Lambda)$ is of type I.
\end{proof}

\section{\boldmath Isomorphism W$^*$-superrigidity}\label{sec.generic-iso-superrigid}

There are by now, starting with \cite{IPV10}, several classes of W$^*$-superrigid icc groups $G$. In \cite{DV24}, the first families of groups were shown to be W$^*$-superrigid even when we allow arbitrary $2$-cocycle twists. We provide in this section sufficient conditions to transfer such cocycle W$^*$-superrigidity of an icc group $G$ to W$^*$-superrigidity of certain central extensions
\begin{equation}\label{eq.central-extension}
0 \to C \to \Gtil \to G \to e \quad\text{with corresponding $2$-cocycle $\Om \in H^2(G,C)$.}
\end{equation}
In Lemma \ref{lem.about-all-our-conditions}, we prove that the groups in Theorem \ref{thm.main-A} satisfy these sufficient conditions.

We fix a countable group $G$ and a countable abelian group $C$. Since we cannot produce W$^*$-superrigidity out of thin air, the first condition is the following.
\begin{enumlist}
\item\label{cond.iso.1} The group $G$ satisfies the following cocycle W$^*$-superrigidity property: if $\Lambda$ is any countable group, $\mu \in H^2(G,\T)$ and $\om \in H^2(\Lambda,\T)$ are arbitrary $2$-cocycles and $p \in L_\mu(G)$ is a nonzero projection such that $p L_\mu(G) p \cong L_\om(\Lambda)$, then $p=1$ and there exists a group isomorphism $\delta : G \to \Lambda$ with $\om \circ \delta = \mu$ in $H^2(G,\T)$.
\end{enumlist}

In \cite[Theorem 6.15]{DV24}, left-right wreath product groups $G = (\Z/2\Z)^{(\Gamma)} \rtimes (\Gamma \times \Gamma)$ for $\Gamma$ in class $\cC$ (see Section \ref{sec.class-C}) were shown to satisfy condition \ref{cond.iso.1}.

In Theorem \ref{thm.inherit-iso-superrigidity}, we prove a rigidity result for \emph{arbitrary} central extensions $\Gtil$ of an icc group $G$ by a torsion free abelian group $C$ with associated $2$-cocycle $\Om \in H^2(G,C)$, when $G$ satisfies condition \ref{cond.iso.1} and the more technical conditions \ref{ncond.iso.2} and \ref{ncond.iso.3} below. We describe all countable groups $\Lambdatil$ such that $L(\Lambdatil) \cong L(\Gtil)$. These groups $\Lambdatil$ must themselves be central extensions of $G$ by an abelian $D$ such that the associated $2$-cocycle $\Om' \in H^2(G,D)$ is in a specific way ``equivalent'' to $\Om$ (cf.\ Definition \ref{def.equiv-2-cocycle}). Under extra assumptions on $H^2(G,C)$ and $\Om$, this forces $D \cong C$ and $\Om' = \Om$, so that W$^*$-superrigidity follows.

Consider a central extension as in \eqref{eq.central-extension}, and assume that $G$ is icc. We claim that $\Gtil\fc = \cZ(\Gtil) = C$. The inclusions $C \subset \cZ(\Gtil) \subset \Gtil\fc$ are trivial. Conversely, assume that $g \in \Gtil$ has a finite conjugacy class. Then the image of $g$ in $G$ also has a finite conjugacy class, and is thus trivial, so that $g \in C$. As explained in \eqref{eq.concrete-direct-integral}, we get the direct integral decomposition
\begin{equation}\label{eq.central-decomp}
L(\Gtil) = \int^\oplus_{\Chat} L_{\mu \circ \Om}(G) \, d\mu \; ,
\end{equation}
where we integrate w.r.t.\ the Haar measure on $\Chat$. So as a group, $\Gtil$ is determined by the $2$-cocycle $\Om \in H^2(G,C)$, but as a group von Neumann algebra, $L(\Gtil)$ only ``sees'' $\mu \circ \Om$ for $\mu \in \Chat$. It is thus natural to consider the following equivalences between $2$-cocycles.

\begin{definition}\label{def.equiv-2-cocycle}
Let $G$ be a countable group and $C$, $D$ countable abelian groups. Let $\Om \in H^2(G,C)$ and $\Om' \in H^2(G,D)$.
\begin{enuma}
\item We write $\Om \approx \Om'$ if $C=D$ and $\om \circ \Om = \om \circ \Om'$ in $H^2(G,\T)$ for all $\om \in \Chat$.
\item We write $\Om \sim^t \Om'$ for $t > 0$ if there exists an isomorphism $\zeta : C_0 \to D_0$ between subgroups $C_0 < C$ and $D_0 < D$ such that $[C:C_0] t = [D:D_0]$ and
\begin{equation}\label{eq.cocycle-equal}
\mu \circ \Om = \om \circ \Om' \quad\text{in $H^2(G,\T)$ whenever $\mu \in \Chat$, $\om \in \Dhat$ satisfy $\om \circ \zeta = \mu|_{C_0}$.}
\end{equation}
\end{enuma}
\end{definition}

Note that in (b), we are not assuming that $C_0 < C$ and $D_0 < D$ have finite index. Also note that $\approx$ is an equivalence relation on $H^2(G,C)$. In Lemma \ref{lem.prop-of-sim-t}, we observe that if $\Om \sim^t \Om'$ and $\Om' \sim^s \Om\dpr$, then $\Om \sim^{ts} \Om\dpr$ and $\Om' \sim^{t^{-1}} \Om$.

We need the following extra conditions on $G$, which are of a technical nature and are needed to deal with the more subtle operator algebraic issues in the proof of Theorem \ref{thm.inherit-iso-superrigidity}.

\begin{enumlist}[resume]
\item\label{ncond.iso.2} The automorphism group $\Aut G$ is countable.
\end{enumlist}

Recall that a subgroup $G_0 < G$ of a discrete group is said to be \emph{co-amenable} if $G/G_0$ admits a $G$-invariant mean.

\begin{enumlist}[resume]
\item\label{ncond.iso.3} There is a family of non co-amenable subgroups $(G_i)_{i \in I}$ of $G$ for which the following holds: if $(N,\tau)$ is any tracial von Neumann algebra, $p \in N \ovt L(G)$ a nonzero projection, $B \subset p(N \ovt L(G))p$ an amenable von Neumann subalgebra and $\cG \subset \cN_{p(N \ovt L(G))p}(B)$ a subgroup such that $(\Ad v)_{v \in \cG}$ has compact closure in $\Aut B$ and $P := (B \cup \cG)\dpr$ is strongly nonamenable relative to $N \ovt L(G_i)$ for all $i \in I$, then $B \prec^f N \ot 1$.
\end{enumlist}

Recall from Section \ref{sec.second-cohomology-of-groups} the group homomorphism
\begin{equation}\label{eq.hom-Theta}
\Theta : H^2(G,C) \to \Hom(\Chat,H^2(G,\T)) : (\Theta(\Om))(\mu) = \mu \circ \Om \quad\text{for all $\Om \in H^2(G,C)$, $\mu \in \Chat$,}
\end{equation}
where $\Hom(\Chat,H^2(G,\T))$ denotes the abelian group of continuous homomorphisms between the compact groups $\Chat$ and $H^2(G,\T)$. Also recall from Section \ref{sec.second-cohomology-of-groups} that we denote by $\Ext^1(G\ab,C)$ the group of abelian extensions of the abelianization $G\ab$ by $C$.

For every countable group $\Gamma$, we denote by $\tau_\Gamma$ the canonical tracial state on $L(\Gamma)$.

\begin{theorem}\label{thm.inherit-iso-superrigidity}
Let $0 \to C \to \Gtil \to G \to e$ be any central extension of countable groups with associated $2$-cocycle $\Om \in H^2(G,C)$.

Define $\cH_{\sim^t}$, resp.\ $\cH_{\approx}$, as the class of countable groups that are isomorphic to a central extension of $G$ by an abelian $D$ and $\Om' \in H^2(G,D)$ satisfying $\Om \sim^t \Om'$, resp.\ $\Om \approx \Om'$.

Let $\Lambdatil$ be any countable group and $t \in (0,1]$.
\begin{enuma}
\item\label{thm.inherit-iso-superrigidity.1} If $\Lambdatil$ belongs to $\cH_{\sim^t}$, there exists a projection $p \in L(\Gtil)$ with $\tau_{\Gtil}(p)=t$ and $p L(\Gtil) p \cong L(\Lambdatil)$.
\end{enuma}
Assume that $C$ is torsion free.
\begin{enuma}[resume]
\item\label{thm.inherit-iso-superrigidity.2} If $G$ is icc and satisfies conditions \ref{cond.iso.1}, \ref{ncond.iso.2} and \ref{ncond.iso.3}, then the converse of \ref{thm.inherit-iso-superrigidity.1} holds: $p L(\Gtil) p \cong L(\Lambdatil)$ for some projection $p \in L(\Gtil)$ with $\tau_{\Gtil}(p)=t$ iff $\Lambdatil$ belongs to $\cH_{\sim^t}$.

\item\label{thm.inherit-iso-superrigidity.3} If $\Theta(\Om) \in \Hom(\Chat,H^2(G,\T))$ is faithful and $\Lambdatil \in \cH_{\sim^t}$, then $t = 1$ and $\Lambdatil \in \cH_\approx$.

\item\label{thm.inherit-iso-superrigidity.4} If $\Ext^1(G\ab,C) = 0$, then every group in $\cH_\approx$ is isomorphic to $\Gtil$.
\end{enuma}
\end{theorem}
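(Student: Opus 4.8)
The statement to prove is Theorem \ref{thm.inherit-iso-superrigidity}, and I will sketch the proof of the four parts in turn, with the bulk of the work lying in parts \ref{thm.inherit-iso-superrigidity.2} and \ref{thm.inherit-iso-superrigidity.1}.

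\textbf{Part \ref{thm.inherit-iso-superrigidity.1}.} Suppose $\Lambdatil$ is a central extension $0 \to D \to \Lambdatil \to G \to e$ with $2$-cocycle $\Om' \in H^2(G,D)$ satisfying $\Om \sim^t \Om'$, witnessed by an isomorphism $\zeta : C_0 \to D_0$ with $[C:C_0]t = [D:D_0]$. The plan is to compare the two direct integral decompositions \eqref{eq.central-decomp}, namely $L(\Gtil) = \int^\oplus_{\Chat} L_{\mu \circ \Om}(G)\,d\mu$ and $L(\Lambdatil) = \int^\oplus_{\Dhat} L_{\om \circ \Om'}(G)\,d\om$. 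The dual of the inclusion $C_0 < C$ is a surjection $\Chat \to \Chat_0$ with fibers the cosets of $(C/C_0)^\wedge$; similarly for $D$. The isomorphism $\zeta$ induces an isomorphism $\widehat\zeta : \Dhat_0 \to \Chat_0$, and the defining condition \eqref{eq.cocycle-equal} says precisely that $\mu \circ \Om$ and $\om \circ \Om'$ agree in $H^2(G,\T)$ whenever $\mu|_{C_0} = \widehat\zeta(\om|_{D_0})$. Restricting the first direct integral to the Borel set of $\mu$ with $\mu|_{C_0}$ in a fixed measurable section, and likewise for the second, the two restricted direct integrals become isomorphic bundles of twisted group von Neumann algebras over a common base; the fiber multiplicities are $[C:C_0]$ on one side and $[D:D_0]$ on the other, so the restricted pieces are isomorphic after amplifying by $t = [C:C_0]/[D:D_0]$. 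Since $L(\Gtil)$ is a direct integral over a nonatomic base, cutting down by a projection of trace $t$ can realize exactly this amplification, giving $p L(\Gtil)p \cong L(\Lambdatil)$. The bookkeeping with Borel sections of the quotient maps $\Chat \to \Chat_0$, $\Dhat \to \Dhat_0$ is the only delicate point, and it is routine measurable selection.

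\textbf{Part \ref{thm.inherit-iso-superrigidity.2} — the main obstacle.} This is the heart of the theorem and follows exactly the outline given in the introduction. Given $p L(\Gtil)p \cong L(\Lambdatil)$ with $\tau_{\Gtil}(p) = t$, I first apply the weakly-compact-normalizer machinery: $L(\Lambdatil\fc) \subset L(\Lambdatil)$ is amenable, $(\Ad v_s)_{s\in\Lambdatil}$ acts on it compactly, and transporting to (an amplification of) $L(\Gtil)$ and invoking condition \ref{ncond.iso.3} (with $N$ a matrix amplification and the family $(G_i)$) together with Proposition \ref{prop.structure-non-icc} — via Lemma \ref{lem.transfer-embedding-in-center} to push the intertwining back to $L(\Lambdatil)$ — yields that $L(\Lambdatil\fc) \prec^f \cZ(L(\Lambdatil))$. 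Proposition \ref{prop.structure-non-icc.2} then gives, up to finite-index and finite-kernel adjustments, a central extension $0 \to D \to \Lambdatil \to \Lambda_0 \to e$ with $D = \cZ(\Lambdatil) = \Lambdatil\fc$ and $2$-cocycle $\Om' \in H^2(\Lambda_0,D)$. Comparing the two direct integral decompositions of $L(\Gtil)$ and $L(\Lambdatil)$ over their centers (using Proposition \ref{prop.iso-on-center} applied to the finite-type bimodule $p L^2(L(\Gtil))$, giving a nonsingular Borel isomorphism $\Delta$ between nonnegligible subsets $\cU \subset \Chat$, $\cV \subset \Dhat$ matching the fibers), condition \ref{cond.iso.1} forces $L_{\mu\circ\Om}(G) \cong L_{\Delta(\mu)\circ\Om'}(\Lambda_0)$ to come from a group isomorphism $G \cong \Lambda_0$ with $(\Delta(\mu)\circ\Om')\circ\delta = \mu\circ\Om$. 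Since $\Aut G$ is countable (condition \ref{ncond.iso.2}), a single $\delta$ works on a nonnegligible set of $\mu$, so we may assume $\Lambda_0 = G$ and $\delta = \id$. Thus $\mu \circ \Om = \Delta(\mu)\circ\Om'$ in $H^2(G,\T)$ for a.e.\ $\mu$ in a set of positive measure, and one upgrades the nonsingular map $\Delta$ (between subsets of the compact groups $\Chat$, $\Dhat$ respecting the measured-groupoid/fiber structure coming from the two direct integrals) to a group isomorphism $\zeta : C_0 \to D_0$ between finite-index subgroups (or more generally matching the fiber data), recovering exactly $\Om \sim^t \Om'$ with $t$ the ratio of indices — which one checks equals $\tau_{\Gtil}(p)$ by a trace computation. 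The hard part is precisely this last upgrading: extracting an honest group isomorphism $\zeta$ and the relation $[C:C_0]t = [D:D_0]$ from the measure-theoretic data, for which one uses that $\Chat \to H^2(G,\T)$ and $\Dhat \to H^2(G,\T)$ have kernels that are closed subgroups of the form $C_0^\perp$, $D_0^\perp$, and that the essentially-equivariant isomorphism $\Delta$ between their graphs descends to a topological-group isomorphism by standard ergodic-theoretic rigidity for quotients of compact abelian groups.

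\textbf{Parts \ref{thm.inherit-iso-superrigidity.3} and \ref{thm.inherit-iso-superrigidity.4}.} Part \ref{thm.inherit-iso-superrigidity.3}: if $\Theta(\Om)$ is faithful, the kernel of $\Chat \to H^2(G,\T)$, $\mu \mapsto \mu\circ\Om$, is trivial; but in the relation $\Om \sim^t \Om'$ the condition \eqref{eq.cocycle-equal} forces $\mu\circ\Om$ to depend only on $\mu|_{C_0}$, so $\mu\circ\Om$ is constant on cosets of $C_0^\perp$, hence $C_0^\perp \subset \ker \Theta(\Om) = \{1\}$, i.e.\ $C_0 = C$; symmetrically $D_0 = D$ (using that $\Om' \sim^{t^{-1}}\Om$ by Lemma \ref{lem.prop-of-sim-t} and faithfulness is inherited), so $t = [C:C_0]/[D:D_0] \cdot t$ forces the indices to match, $t = 1$, and then $\zeta : C \to D$ is an isomorphism with $\mu\circ\Om = (\mu\circ\zeta^{-1})\circ\Om'$ for all $\mu$, which after replacing $\Om'$ by its pushforward under $\zeta^{-1}$ (i.e.\ identifying $D = C$ via $\zeta$) reads $\mu\circ\Om = \mu\circ\Om'$ for all $\mu\in\Chat$, that is $\Om\approx\Om'$; hence $\Lambdatil\in\cH_\approx$. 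Part \ref{thm.inherit-iso-superrigidity.4}: if $\Lambdatil$ is a central extension of $G$ by $C$ with $\Om'\in H^2(G,C)$ and $\Om\approx\Om'$, then $\mu\circ(\Om-\Om') = 1$ in $H^2(G,\T)$ for every $\mu\in\Chat$, i.e.\ $\Om - \Om' \in \ker\Theta$; by the universal coefficient sequence \eqref{eq.univ-coeff}, $\ker\Theta = \Upsilon(\Ext^1(G\ab,C))$, which vanishes by hypothesis, so $\Om = \Om'$ in $H^2(G,C)$ and the two central extensions are isomorphic; by the classification of central extensions recalled in Section \ref{sec.second-cohomology-of-groups} this gives $\Gtil \cong \Lambdatil$ as groups.
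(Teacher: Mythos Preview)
Your outline follows the paper's strategy, but there are two genuine gaps.

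\textbf{Part \ref{thm.inherit-iso-superrigidity.2}: the finite-index adjustment is fatal here.} You invoke Proposition \ref{prop.structure-non-icc.2} to get a central extension ``up to finite-index and finite-kernel adjustments.'' But the theorem asserts that $\Lambdatil$ \emph{itself} lies in $\cH_{\sim^t}$, not merely some group virtually isomorphic to it. If you pass to a finite-index subgroup of $\Lambdatil$, you lose the exact conclusion. The paper avoids this by using Proposition \ref{prop.structure-non-icc.3} instead: it writes $L(\Lambdatil)$ as a direct integral of $M_{d(x)n(x)}(\C)\otimes L_{\om_x}(\Lambda_x)$ with $\Lambda_x < \Lambda = \Lambdatil/\Lambdatil\fc$ of index $n(x)$, and condition \ref{cond.iso.1} applied fiberwise forces $p_\mu = 1$, $d(x)=n(x)=1$, hence $\Lambda_x = \Lambda$. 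From $d(x)=n(x)=1$ a.e.\ one reads off $L(\Lambdatil\fc) = \cZ(L(\Lambdatil))$ directly, so $\Lambdatil\fc = \cZ(\Lambdatil)$ as groups, and $\Lambdatil$ is \emph{exactly} a central extension of $G$---no finite-index passage. Your route via Proposition \ref{prop.structure-non-icc.2} does not yield this.

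\textbf{Part \ref{thm.inherit-iso-superrigidity.3}: the symmetry argument is unjustified.} You write ``symmetrically $D_0 = D$ (using that $\Om' \sim^{t^{-1}}\Om$ \ldots and faithfulness is inherited).'' But only $\Theta(\Om)$ is assumed faithful; there is no reason $\Theta(\Om')$ should be. The correct argument is simpler and uses no symmetry: once $C_0 = C$, the defining relation $[C:C_0]\,t = [D:D_0]$ reads $[D:D_0] = t \leq 1$, so $[D:D_0]=1$ and $t=1$; then $\zeta : C \to D$ is an isomorphism and the rest follows as you say.

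Two further remarks on part \ref{thm.inherit-iso-superrigidity.2}. First, you do not need Proposition \ref{prop.iso-on-center}: you have an honest $*$-isomorphism $L(\Lambdatil)\cong pL(\Gtil)p$, so uniqueness of the direct integral decomposition over the center gives the nonsingular isomorphism $\Delta$ directly. Second, your ``standard ergodic-theoretic rigidity'' for extracting $\zeta : C_0 \to D_0$ is exactly where the hypothesis that $C$ is torsion free enters: $\Chat$ is then connected, so a closed subgroup containing a nonnegligible set must be everything, and this upgrades $\al_n(\lambda(\cU_n)) \subset \rho(\Dhat)$ to $\al_n(\lambda(\Chat)) \subset \rho(\Dhat)$; one then identifies the connected component of the identity in $\rho(\Dhat)$ and deduces $\al_n(\lambda_0(\widehat{C_0})) = \rho_0(\widehat{D_0})$, whence $\zeta_n$. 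The relation $[C:C_0]\,\tau_{\Gtil}(p) = [D:D_0]$ is then an explicit Haar-measure computation (counting fibers of $\lambda$ and $\rho$ over $\cU_n$ and $\cV_n = \Delta(\cU_n)$), not an abstract ``trace computation.''

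Parts \ref{thm.inherit-iso-superrigidity.1} and \ref{thm.inherit-iso-superrigidity.4} are essentially as in the paper.
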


So when all conditions \ref{cond.iso.1}, \ref{ncond.iso.2}, \ref{ncond.iso.3}, the faithfulness of $\Theta(\Om)$ and the vanishing of $\Ext^1(G\ab,C)$ hold, the group $\Gtil$ is W$^*$-superrigid: we have $p L(\Gtil) p \cong L(\Lambdatil)$ if and only if $p=1$ and $\Gtil \cong \Lambdatil$.

We can rephrase Theorem \ref{thm.inherit-iso-superrigidity} in the following way. The groups that are isomorphic to resp.\ $\Gtil$, a group in $\cH_\approx$, a group in $\cH_{\sim^1}$, and the groups whose group von Neumann algebra is isomorphic to $L(\Gtil)$, form larger and larger families of groups, and the rigidity results \ref{thm.inherit-iso-superrigidity.2}-\ref{thm.inherit-iso-superrigidity.4} say which of these families are equal.

As we explain in Lemma \ref{lem.extend-characters-C}, the faithfulness of $\Theta(\Om)$ is equivalent to the condition that every group homomorphism $\Gtil \to \T$ is equal to $1$ on $C$. In Proposition \ref{prop.new-no-go-result-1.1}, we explain that the faithfulness of $\Theta(\Om)$ is a necessary assumption for the validity of Theorem \ref{thm.inherit-iso-superrigidity.3}. In Proposition \ref{prop.new-no-go-result-3.2}, we show that a condition as $\Ext^1(G\ab,C) = 0$ in Theorem \ref{thm.inherit-iso-superrigidity.4} is essential: we give examples of groups $\Lambdatil$ in $\cH_\approx$ that are not isomorphic to $\Gtil$.

\subsection{Proof of Theorem \ref{thm.inherit-iso-superrigidity}}

Before proving Theorem \ref{thm.inherit-iso-superrigidity}, we need the following lemma that will also be used in Section \ref{sec.generic-virtual-iso-superrigid}.

\begin{lemma}\label{lem.intertwine-virtual-center-to-center}
Let $G$ be any countable group satisfying condition \ref{ncond.iso.3}. Let $C$ be any countable abelian group and $e \to C \to \Gtil \to G \to e$ any central extension. Let $\Lambdatil$ be any countable group and $\bim{L(\Lambdatil)}{H}{L(\Gtil)}$ a nonzero bifinite bimodule with left support $z \in \cZ(L(\Lambdatil))$.

Then inside $L(\Lambdatil)$, we have $L(\Lambdatil\fc)z \prec^f \cZ(L(\Lambdatil))$.
\end{lemma}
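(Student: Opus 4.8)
The plan is to transport the virtual-center subalgebra of $L(\Lambdatil)$ across the bimodule $H$ into (an amplification of) $L(\Gtil)$, apply condition \ref{ncond.iso.3} there to intertwine it into the central part, and then transport the intertwining back using Lemma \ref{lem.transfer-embedding-in-center}. First I would set $B_0 = L(\Lambdatil\fc)$ and note that $B_0$ is amenable (it is the group von Neumann algebra of the amenable, in fact FC, group $\Lambdatil\fc$), and that the unitaries $(v(s))_{s \in \Lambdatil}$ of $L(\Lambdatil)$ normalize $B_0$. Crucially, the action $(\Ad v(s))_{s\in\Lambdatil}$ on $B_0$ has precompact closure in $\Aut B_0$: this is because each element of $\Lambdatil\fc$ has a finite $\Lambdatil$-orbit under conjugation, so the image of $\Lambdatil$ in $\Aut B_0$ preserves the finite-dimensional spaces spanned by these orbits and is therefore a subgroup of a product of finite permutation/unitary groups acting on $\ell^2(\Lambdatil\fc)$, which is compact. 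Thus $B_0 \subset L(\Lambdatil)$, together with the normalizing subgroup $\cG_0 := \{v(s) : s \in \Lambdatil\}$ generating all of $L(\Lambdatil)$, provides exactly the configuration (amenable subalgebra, compactly-acting large normalizer) that condition \ref{ncond.iso.3} is designed to handle.

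Next I would use the bifinite bimodule to move this picture into $L(\Gtil)$. Write $\bim{L(\Lambdatil)}{H}{L(\Gtil)} \cong \bim{\vphi(L(\Lambdatil))}{p(\C^n \ot L^2(L(\Gtil)))}{L(\Gtil)}$ for a unital normal $*$-homomorphism $\vphi : L(\Lambdatil) \to p(M_n(\C)\ot L(\Gtil))p$, where $p$ has left support $z$; replacing $L(\Lambdatil)$ by $L(\Lambdatil)z$ we may assume $\vphi$ is faithful. Set $N = M_n(\C)$, so we work inside $N \ovt L(G)$ — wait, more precisely inside $p(M_n(\C)\ot L(\Gtil))p$; to match the statement of \ref{ncond.iso.3} literally I would take $N$ to be a matrix amplification absorbing the projection, writing the corner as a corner of $M_n(\C) \ovt L(\Gtil)$ and then using the direct-integral picture \eqref{eq.central-decomp} $L(\Gtil) = \int^\oplus_{\Chat} L_{\mu\circ\Om}(G)\,d\mu$ to realize $M_n(\C)\ovt L(\Gtil)$ as $\int^\oplus_{\Chat} M_n(\C)\ovt L_{\mu\circ\Om}(G)\,d\mu$, i.e.\ as a corner of $(L^\infty(\Chat)\ot M_n(\C)) \ovt L(G)$ once the twisted regular representations are absorbed — effectively putting $N = L^\infty(\Chat)\ot M_n(\C)$ and replacing $L(\Gtil)$ by $L(G)$ twisted measurably, which is the form condition \ref{ncond.iso.3} wants. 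Set $B := \vphi(B_0)$ and $\cG := \vphi(\cG_0)$. Then $B$ is amenable, $\cG \subset \cN(B)$, $(\Ad v)_{v\in\cG}$ still has compact closure in $\Aut B$ (conjugation by $\vphi$ intertwines the two actions), and $P := (B\cup\cG)\dpr = \vphi(L(\Lambdatil)z)$.

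Then I would check the relative non-amenability hypothesis: since $G$ is icc and, via condition \ref{ncond.iso.3}'s family $(G_i)$ of non-coamenable subgroups, $L(G)$ is strongly nonamenable relative to $N\ovt L(G_i)$, and since $\vphi(L(\Lambdatil)z)$ is a nonzero corner whose relative commutant situation is controlled — here the point is that $L(\Lambdatil)z$ has no amenable direct summand that could be intertwined into $L(G_i)$, because the $(G_i)$ are non-coamenable so $L(G)$ has no such summand, and corners inherit this. (This is the step where I would need to argue carefully, possibly by contradiction: if $P$ were not strongly nonamenable relative to some $N\ovt L(G_i)$, a corner of $P = \vphi(L(\Lambdatil)z)$ would be amenable relative to $N \ovt L(G_i)$, and pulling back through $\vphi$ and the bifinite bimodule — using that bifinite bimodules preserve relative amenability and that $L(G_i)$ is non-coamenable in $L(G)$ — I would derive a contradiction with $L(\Lambdatil)z$ being a II$_1$-ish algebra with trivial such intertwining; I might instead just observe $P$ is a nonzero tracial vN algebra and that $L(G)$ itself is strongly nonamenable relative to each $N \ovt L(G_i)$, and that strong nonamenability of the ambient passes to $P$ since $P$ contains the large normalizer $\cG$ — this needs the standard fact that if the whole algebra is strongly nonamenable relative to $Q$ then so is any subalgebra with trivial relative commutant or any subalgebra containing enough normalizer; this compatibility check is the main obstacle, and I expect it is handled exactly by the results of \cite{Ioa06,OP07,PV12} cited in Section \ref{sec.weakly-compact}, so I would cite those.) Granting this, condition \ref{ncond.iso.3} yields $B \prec^f N\ot 1$, i.e.\ $\vphi(L(\Lambdatil\fc)z) \prec^f_{M_n(\C)\ovt L(\Gtil)} M_n(\C)\ot L^\infty(\Chat) = M_n(\C)\ot \cZ(L(\Gtil))$; after descending the amplification this reads $\vphi(L(\Lambdatil\fc)z) \prec^f_{L(\Gtil)} \cZ(L(\Gtil))$ in the amplified sense. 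Finally, applying Lemma \ref{lem.transfer-embedding-in-center} with $M = L(\Lambdatil)$, $P = L(\Gtil)$, $A = L(\Lambdatil\fc)$ and this very bimodule $H$ (whose left support is $z$), we conclude $L(\Lambdatil\fc)z \prec^f_{L(\Lambdatil)} \cZ(L(\Lambdatil))$, which is exactly the claim. The routine points (measurability of the direct-integral bookkeeping, exact form of the amplification absorbing $p$, the precompactness argument) I would leave to short remarks; the substantive point is the relative-nonamenability verification feeding into \ref{ncond.iso.3}.
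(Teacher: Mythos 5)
There is a genuine gap, located exactly where you flag the "main obstacle," and it comes in two parts.

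\textbf{The cocycle twist cannot be absorbed.} Your reduction tries to rewrite $M_n(\C)\ovt L(\Gtil) = \int^\oplus_{\Chat} M_n(\C)\ovt L_{\mu\circ\Om}(G)\,d\mu$ as "a corner of $(L^\infty(\Chat)\ot M_n(\C))\ovt L(G)$ once the twisted regular representations are absorbed." This is false in general: $L_{\mu\circ\Om}(G)$ is not a corner of (or isomorphic to, over the base) the untwisted $L(G)$, and $L(\Gtil) \cong L^\infty(\Chat)\ovt L(G)$ only when the extension is trivial, i.e.\ $\Om = 0$ in $H^2(G,C)$. So the algebra you land in is not of the form $N \ovt L(G)$, and condition \ref{ncond.iso.3} cannot be invoked as you propose. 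The paper circumvents this by a different device: it introduces the $*$-homomorphism $\psi : L(\Gtil) \to L(\Gtil)\ovt L(G)$, $\psi(w_h) = w_h \ot u_{\pi(h)}$ (where $\pi : \Gtil \to G$ is the quotient map), takes $N = L(\Gtil)$, and uses the nondegenerate commuting squares
$$\psi(L(\Gtil_i)) \subset \psi(L(\Gtil)) \subset L(\Gtil)\ovt L(G) \supset L(\Gtil)\ovt L(G_i), \qquad \psi(L(C)) \subset \psi(L(\Gtil)) \subset L(\Gtil)\ovt L(G) \supset L(\Gtil)\ot 1,$$
together with \cite[Lemma 2.2]{DV24} to transfer relative amenability and intertwining between the two pictures. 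This is the essential idea your proof is missing.

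\textbf{The relative nonamenability of $P$ is asserted, not proven.} Your suggested heuristic — "if the whole algebra is strongly nonamenable relative to $Q$ then so is any subalgebra containing enough normalizer" — is not a valid general principle, and you do not close the gap. The paper's argument is by contradiction: if $\vphi(Pz)q$ were amenable relative to $L(\Gtil_i)$, the bifiniteness of $H$ forces $Mr \prec^f \vphi(Pz)q$ (where $r$ is the central support of $q$); then $Mr$ is amenable relative to $\vphi(Pz)q$ and hence, by \cite[Proposition 2.4(3)]{OP07}, relative to $L(\Gtil_i)$; and then the basic construction $\langle M, e_{L(\Gtil_i)}\rangle \supset \ell^\infty(\Gtil/\Gtil_i)$ produces a $G$-invariant state on $\ell^\infty(G/G_i)$, contradicting that $G_i$ is not co-amenable. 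That the relative nonamenability is obtained by pulling back an invariant mean through the basic construction, rather than by a generic subalgebra-inheritance argument, is the second idea your proof needs.

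Your opening observations (amenability of $B_0$, compactness of the adjoint action via finite conjugacy classes, and the final application of Lemma \ref{lem.transfer-embedding-in-center}) match the paper and are correct.
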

\begin{proof}
Write $P = L(\Lambdatil)$ and $M = L(\Gtil)$. Take a projection $p \in M_n(\C) \ot M$ and a unital normal faithful $*$-homomorphism $\vphi : Pz \to p (M_n(\C) \ot M)p$ such that $\bim{P}{H}{M} \cong \bim{\vphi(P)}{p(\C^n \ot L^2(M))}{M}$. Denote by $\pi : \Gtil \to G$ the quotient homomorphism and denote by $(w_h)_{h \in \Gtil}$ and $(u_g)_{g \in G}$ the canonical unitaries generating $L(\Gtil)$, resp.\ $L(G)$. Define the normal $*$-homomorphism
$$\psi : L(\Gtil) \to L(\Gtil) \ovt L(G) : \psi(w_h) = w_h \ot u_{\pi(h)} \quad\text{for all $h \in \Gtil$.}$$
Take the subgroups $(G_i)_{i \in I}$ given by condition \ref{ncond.iso.3} and write $\Gtil_i = \pi^{-1}(G_i)$. Denote by $(v_s)_{s \in \Lambdatil}$ the unitaries generating $L(\Lambdatil)$. Write $B = L(\Lambdatil\fc)z$.

Since $\Lambdatil\fc$ is an amenable group, $B$ is amenable. Since $\Lambdatil\fc$ consists of the elements of $\Lambdatil$ having a finite conjugacy class, we can write $\Lambdatil\fc$ as the union of finite disjoint subsets $W_k \subset \Lambdatil\fc$ that are conjugation invariant: $s W_k s^{-1} = W_k$ for all $s \in \Lambdatil$. We can then view the unitary representation $(\Ad v_s)_{s \in \Lambdatil}$ on $\ell^2(\Lambdatil\fc)$ as the direct sum of the finite-dimensional representations on $\ell^2(W_k)$. It follows that the closure of $(\Ad v_s z)_{s \in \Lambdatil}$ in $\Aut(B)$ is compact.

Note that the unitaries $(v_s z)_{s \in \Lambdatil}$ generate $P z$. We claim that for all $i \in I$, $\vphi(Pz)$ is strongly nonamenable relative to $L(\Gtil_i)$ inside $M=L(\Gtil)$. Assume the contrary and take $i \in I$ and a nonzero projection $q \in \vphi(Pz)' \cap p(M_n(\C) \ot M)p$ such that $\vphi(Pz)q$ is amenable relative to $L(\Gtil_i)$ inside $M$. Define $r \in \cZ(M)$ such that $1 \ot r$ is the central support of the projection $q \in M_n(\C) \ot M$.

Note that $q \cdot H$ is finitely generated as a left Hilbert $P$-module. So, $(M_{1,n}(\C) \ot r L^2(M))q$ is an $Mr$-$\vphi(Pz)q$-bimodule that is finitely generated as a right Hilbert $\vphi(Pz)q$-module. By definition of the projection $r$, the left action of $Mr$ is faithful. By the definition of the intertwining relation $\prec$, we get that $M r \prec^f \vphi(Pz)q$ inside $M$. A fortiori, $Mr$ is amenable relative to $\vphi(Pz)q$ inside $M$. By \cite[Proposition 2.4(3)]{OP07}, we get that $Mr$ is amenable relative to $L(\Gtil_i)$ inside $M$. The basic construction $\langle M, e_{L(\Gtil_i)}\rangle$ is defined as the commutant of the right action of $L(\Gtil_i)$ on $\ell^2(\Gtil)$ and contains in this way $\ell^\infty(\Gtil/\Gtil_i)$ as multiplication operators. We thus find a $G$-invariant state on $\ell^\infty(G/G_i) = \ell^\infty(\Gtil/\Gtil_i)$, contradicting the assumption that $G_i$ is not co-amenable in $G$. So the claim is proven.

We now make use twice of \cite[Lemma 2.2]{DV24} that allows to transfer relative amenability and intertwining bimodules along nondegenerate commuting squares (see also the discussion preceding \cite[Lemma 2.2]{DV24} for the necessary terminology). Since
$$\begin{array}{ccc}
L(\Gtil) \ovt L(G_i) & \subset & L(\Gtil) \ovt L(G) \\ \cup & & \cup \\ \psi(L(\Gtil_i)) & \subset & \psi(L(\Gtil))
\end{array}
$$
is a nondegenerate commuting square and $\vphi(Pz)$ is strongly nonamenable relative to $L(\Gtil_i)$, it follows from \cite[Lemma 2.2]{DV24} that $(\id \ot \psi)\vphi(Pz)$ is strongly nonamenable relative to $M \ovt L(G_i)$ inside $M \ovt L(G)$. Using condition \ref{ncond.iso.3}, we conclude that $(\id \ot \psi)\vphi(B) \prec^f M \ot 1$ inside $M \ovt L(G)$. Since also
$$\begin{array}{ccc}
L(\Gtil) \ot 1 & \subset & L(\Gtil) \ovt L(G) \\ \cup & & \cup \\ \psi(L(C)) & \subset & \psi(L(\Gtil))
\end{array}$$
is a nondegenerate commuting square, by a second application of \cite[Lemma 2.2]{DV24}, we get that $\vphi(B) \prec^f L(C)$ inside $L(\Gtil)$. Since $L(C) \subset \cZ(L(\Gtil))$, it follows from Lemma \ref{lem.transfer-embedding-in-center} that $B \prec^f \cZ(L(\Lambdatil))$ inside $L(\Lambdatil)$.
\end{proof}

\begin{proof}[{Proof of Theorem \ref{thm.inherit-iso-superrigidity.1}}]
Take a central extension $\Lambdatil$ of $G$ by $D$ associated with $\Om' \in H^2(G,D)$ such that $\Om \sim^t \Om'$. Take an isomorphism $\zeta : C_0 \to D_0$ between subgroups $C_0 < C$ and $D_0 < D$ such that $[C:C_0] t = [D:D_0]$ and such that \eqref{eq.cocycle-equal} holds. In particular, $\mu \circ \Om = 1$ in $H^2(G,\T)$ for all $\mu \in \Chat$ that are equal to $1$ on $C_0$. We thus get a well-defined continuous group homomorphism $\lambda_0 : \widehat{C_0} \to H^2(G,\T)$ satisfying $\lambda_0(\mu|_{C_0}) = \mu \circ \Om$ for all $\mu \in \Chat$.

Choose a Borel lift $\theta_0 : \widehat{C_0} \to \Chat$ for the quotient homomorphism $\Chat \to \widehat{C_0} : \mu \mapsto \mu|_{C_0}$. Denote by $S < \Chat$ the kernel of this quotient homomorphism. Then, $\theta : S \times \widehat{C_0} \to \Chat : \theta(s,\mu) = s \theta_0(\mu)$ is a bijective Borel map. Since $\theta_*(\text{Haar}_S \times \text{Haar}_{\widehat{C_0}})$ is a translation invariant probability measure on $\Chat$, it must be equal to $\text{Haar}_{\Chat}$. Since $\theta(s,\mu) \circ \Om = \lambda_0(\mu)$, it then follows from \eqref{eq.concrete-direct-integral} that $\theta$ induces a trace preserving isomorphism
\begin{align*}
(L(\Gtil),\tau_{\Gtil}) &\cong \int_{\Chat}^\oplus (L_{\mu \circ \Om}(G),\tau_G) \, d\mu \cong \int_{S \times \widehat{C_0}}^\oplus (L_{\theta(s,\mu) \circ \Om}(G),\tau_G) \, d(s,\mu) \\
&\cong (L^\infty(S),\text{Haar}_S) \ovt \int_{\widehat{C_0}}^\oplus (L_{\lambda_0(\mu)}(G),\tau_G) \, d\mu \; ,
\end{align*}
where $\Chat$, $S$ and $\widehat{C_0}$ are equipped with their Haar probability measures. If $[C:C_0] < +\infty$, the group $S$ is finite and has order $[C:C_0]$, so that $(L^\infty(S),\text{Haar}_S)$ is isomorphic with $A_1 = \C^{[C:C_0]}$ with the uniform tracial state. If $[C:C_0]=+\infty$, we get that $(S,\text{Haar}_S)$ is a nonatomic standard probability space, so that $(L^\infty(S),\text{Haar}_S)$ is isomorphic with $A_1=L^\infty([0,1])$ with the Lebesgue measure. We have thus found a trace preserving isomorphism
\begin{equation}\label{eq.my-iso-for-Gtil}
L(\Gtil) \cong A_1 \ovt Q_1 \quad\text{where}\quad Q_1 = \int_{\widehat{C_0}}^\oplus L_{\lambda_0(\mu)}(G) \, d\mu \; ,
\end{equation}
and where $Q_1$ is equipped with the trace given by integration over the Haar measure of $\widehat{C_0}$ of the canonical tracial states on $L_{\lambda_0(\mu)}(G)$.

Since $\om \circ \Om' = 1$ in $H^2(G,\T)$ for all $\om \in \Dhat$ that are equal to $1$ on $D_0$, we similarly define $\rho_0 : \widehat{D_0} \to H^2(G,\T)$ such that $\rho_0(\om|_{D_0}) = \om \circ \Om'$ for all $\om \in \Dhat$. As in \eqref{eq.my-iso-for-Gtil}, we have that
$$L(\Lambdatil) \cong A_2 \ovt Q_2 \quad\text{where}\quad Q_2 = \int_{\widehat{D_0}}^\oplus L_{\rho_0(\om)}(G) \, d\om \; ,$$
where $A_2 = \C^{[D:D_0]}$ when $[D:D_0]<+\infty$ and $A_2 = A_1$ otherwise. Define the continuous isomorphism $\gamma : \widehat{C_0} \to \widehat{D_0}$ by $\gamma(\mu) = \mu \circ \zeta^{-1}$. By \eqref{eq.cocycle-equal}, $\rho_0(\gamma(\mu)) = \lambda_0(\mu)$ for all $\mu \in \widehat{C_0}$. So, $\gamma$ induces an isomorphism $Q_1 \cong Q_2$.

Note that we are assuming that $t \in (0,1]$. When $[C:C_0] = +\infty$, also $[D:D_0] = +\infty$ and we can choose a projection $p \in A_1$ with trace $t$. Since $A_1 p \cong A_2$, it follows that $q L(\Gtil) q \cong L(\Lambdatil)$, where $q = p \ot 1$. When $[C:C_0] < + \infty$ and $[D:D_0]  = [C:C_0] t$, we can take a projection $p \in A_1 = \C^{[C:C_0]}$ of trace $t$ and conclude again that $A_1 p \cong A_2$, so that $q L(\Gtil) q \cong L(\Lambdatil)$, where $q = p \ot 1$.
\end{proof}

\begin{proof}[{Proof of Theorem \ref{thm.inherit-iso-superrigidity.2}}]
Assume that $p \in L(\Gtil)$ is a projection such that $p L(\Gtil) p \cong L(\Lambdatil)$ and $\tau_{\Gtil}(p) = t$. Write $P = L(\Lambdatil)$ and $M = L(\Gtil)$, and assume that $\al : P \to pMp$ is a $*$-isomorphism. Then $\bim{\al(P)}{p L^2(M)}{M}$ is a bifinite $P$-$M$-bimodule and the left $P$-action is faithful. By condition \ref{ncond.iso.3} and Lemma \ref{lem.intertwine-virtual-center-to-center}, we find that $L(\Lambdatil\fc) \prec^f_P \cZ(P)$. We define $(X,\eta)$ such that $L^\infty(X,\eta) = \cZ(P)$. We write $\Lambda = \Lambdatil/\Lambdatil\fc$. By Proposition \ref{prop.structure-non-icc.3}, we find measurable fields of finite index subgroups $\Lambda_x < \Lambda$ and $2$-cocycles $\om_x \in H^2(\Lambda_x,\T)$ such that
$$P = \int^\oplus_X (M_{d(x)n(x)}(\C) \ot L_{\om_x}(\Lambda_x)) \, d\eta(x) \quad\text{and}\quad L(\Lambdatil\fc) \cong \int^\oplus_X (M_{d(x)}(\C) \ot \C^{n(x)}) \, d\eta(x) \; ,$$
with $n(x) = [\Lambda:\Lambda_x]$. Since $G$ is icc, we have $\Gtil\fc = \cZ(\Gtil) = C$ and get the direct integral decomposition \eqref{eq.central-decomp}. Since the direct integral decomposition of a von Neumann algebra over its center is unique (see e.g.\ \cite[Theorem IV.8.23]{Tak79}), the isomorphism $\al: P \to pMp$ gives rise to a nonnegligible measurable subset $\cU \subset \Chat$, a measurable family of nonzero projections $(p_\mu)_{\mu \in \cU}$ in $L_{\mu \circ \Om}(G)$ and a nonsingular isomorphism $\Delta : \cU \to X$ such that $p = \int_\cU^\oplus p_\mu \, d\mu$ and
$$p_\mu L_{\mu \circ \Om}(G) p_\mu \cong M_{d(x)n(x)}(\C) \ot L_{\om_x}(\Lambda_x) \quad\text{with $x = \Delta(\mu)$, for a.e.\ $\mu \in \cU$.}$$
By condition \ref{cond.iso.1}, we find that for a.e.\ $\mu \in \cU$ and $x = \Delta(\mu)$, we have $p_\mu = 1$, $d(x)=n(x)=1$ and $(G,\mu \circ \om) \cong (\Lambda_x,\om_x)$. Since $[\Lambda:\Lambda_x] = n(x)$, we also conclude that $\Lambda_x = \Lambda$. Since $p_\mu = 1$ for a.e.\ $\mu \in \cU$, in the direct integral decomposition \eqref{eq.central-decomp}, the projection $p$ corresponds to $1_\cU$. Since the identification \eqref{eq.central-decomp} is trace preserving, $\tau_{\Gtil}(p)$ equals the Haar measure of $\cU$.

So for a.e.\ $x \in (X,\eta)$, we have $d(x) = n(x) = 1$. This means that $L(\Lambdatil\fc) = L^\infty(X,\eta) = \cZ(P)$. We conclude that $\Lambdatil\fc = \cZ(\Lambdatil)$. We denote $D = \cZ(\Lambdatil)$, so that $\cZ(P) = L(D)$, and we identify $(X,\eta)$ with $\Dhat$ equipped with its Haar probability measure. Since $D = \cZ(\Lambdatil)$, we also find the central extension $0 \to D \to \Lambdatil \to \Lambda \to e$. We denote by $\Om' \in H^2(\Lambda,D)$ the corresponding $2$-cocycle. We can then rewrite the direct integral decomposition for $P$ as
$$P = \int^\oplus_{\Dhat} L_{\om \circ \Om'}(\Lambda) \, d\om$$
and obtain the nonsingular isomorphism $\Delta : \cU \to \Dhat$ such that for a.e.\ $\mu \in \cU$, there exists an isomorphism $\delta_\mu : G \to \Lambda$ satisfying $\Delta(\mu) \circ \Om' \circ \delta_\mu = \mu \circ \Om$ in $H^2(G,\T)$.

By condition \ref{ncond.iso.2} we can enumerate all isomorphisms $\delta_n : G \to \Lambda$. Define the measurable sets $\cU_n \subset \cU$ by
$$\cU_n = \bigl\{ \mu \in \cU \bigm| \Delta(\mu) \circ \Om' \circ \delta_n = \mu \circ \Om \;\;\text{in $H^2(G,\T)$}\; \bigr\} \; .$$
Then, $\cU = \bigcup_n \cU_n$ up to measure zero. By removing the indices for which $\cU_n$ has measure zero and by making the $\cU_n$ smaller, we find a sequence of isomorphisms $\delta_n : G \to \Lambda$ and disjoint nonnegligible measurable subsets $\cU_n \subset \cU$ such that $\Delta(\mu) \circ \Om' \circ \delta_n = \mu \circ \Om$ in $H^2(G,\T)$ for all $\mu \in \cU_n$ and $\cU = \bigcup_n \cU_n$ up to measure zero.

Define the continuous group homomorphisms
$$\lambda: \Chat \to H^2(G,\T) : \lambda(\mu) = \mu \circ \Om \quad\text{and}\quad \rho : \Dhat \to H^2(\Lambda,\T) : \rho(\om) = \om \circ \Om' \; .$$
Define the subgroups $C_0 < C$ and $D_0 < D$ such that the kernel of $\lambda$ equals $\widehat{C/C_0}$ and the kernel of $\rho$ equals $\widehat{D/D_0}$. Define the faithful homomorphisms $\lambda_0 : \widehat{C_0} \to H^2(G,\T)$ and $\rho_0 : \widehat{D_0} \to H^2(\Lambda,\T)$ such that $\lambda_0(\mu|_{C_0}) = \lambda(\mu)$ and $\rho_0(\om|_{D_0}) = \rho(\om)$ for all $\mu \in \Chat$, $\om \in \Dhat$.

We also define the isomorphisms $\al_n : H^2(G,\T) \to H^2(\Lambda,\T)$ by $\al_n(\psi) = \psi \circ \delta_n^{-1}$ for all $\psi \in H^2(G,\T)$. By construction, $\al_n(\lambda(\mu)) = \rho(\Delta(\mu))$ for all $\mu \in \cU_n$ and all $n \in \N$. It follows that for every $n \in \N$, $(\al_n \circ \lambda)^{-1}(\rho(\Dhat))$ is a closed subgroup $\Chat$ that contains $\cU_n$, so that it is nonnegligible and thus open. Since $C$ is torsion free, the group $\Chat$ is connected and we conclude that $\al_n(\lambda(\Chat)) \subset \rho(\Dhat)$ for all $n \in \N$.

Denote $\cV_n = \Delta(\cU_n)$. Then $\cV_n \subset \rho^{-1}(\al_n(\lambda(\Chat)))$, so that the closed subgroup $\al_n(\lambda(\Chat))$ of $\rho(\Dhat)$ is nonnegligible and thus open. Since $\al_n(\lambda(\Chat))$ is connected, we conclude that the connected component $K$ of the identity in $\rho(\Dhat)$ is an open subgroup of $\rho(\Dhat)$ and $\al_n(\lambda(\Chat)) = K$ for all $n \in \N$. It follows that $\cV_n \subset \rho^{-1}(K)$ for all $n \in \N$. Since $\bigcup_{n \in \N} \cV_n = \Dhat$ up to measure zero, we conclude that $\Dhat \subset \rho^{-1}(K)$. It follows that $K = \rho(\Dhat)$. We get for all $n \in \N$ that
$$\al_n(\lambda_0(\widehat{C_0})) = \al_n(\lambda(\Chat)) = \rho(\Dhat) = \rho_0(\widehat{D_0}) \; .$$
This induces a sequence of group isomorphisms $\zeta_n : C_0 \to D_0$ such that $\al_n(\lambda_0(\mu)) = \rho_0(\mu \circ \zeta_n^{-1})$ for all $\mu \in \widehat{C_0}$ and all $n \in \N$.

Only using $n=1$, note that $\delta_1$ induces an isomorphism between $\Lambdatil$ and the central extension of $G$ by $D$ associated with $\Om' \circ \delta_1$. Below we prove that $[C:C_0] \tau_{\Gtil}(p) = [D:D_0]$. Since $t = \tau_{\Gtil}(p)$ and using $\zeta_1 : C_0 \to D_0$, we get that $\Om \sim^t \Om'$, so that $\Lambdatil \in \cH_{\sim^t}$.

For every compact group $L$, we denote by $\Haar_L$ the unique Haar probability measure on $L$. As explained above, $\tau_{\Gtil}(p) = \Haar_{\Chat}(\cU)$.

Fix $n \in \N$. We have that $\al_n(\lambda(\mu)) = \rho(\Delta(\mu))$ for all $\mu \in \cU_n$, where $\Delta : \cU_n \to \cV_n$ and $\al_n : H^2(G,\T) \to H^2(\Lambda,\T)$ are bijective. By Lemma \ref{lem.characterize-finite-kernel} below, $[C:C_0] <+\infty$ iff the restriction of $\lambda$ to $\cU_n$ is finite-to-one. Similarly, $[D:D_0] < +\infty$ iff the restriction of $\rho$ to $\cV_n$ is finite-to-one. So, $[C:C_0]<+\infty$ iff $[D:D_0]<+\infty$. To conclude the proof of (b), we may thus assume that $[C:C_0]<+\infty$ and $[D:D_0]<+\infty$, and we have to prove that $\Haar_{\Chat}(\cU) = [C:C_0]^{-1} [D:D_0]$.

Denote by $S < \Dhat$ the kernel of the quotient homomorphism $\Dhat \to \widehat{D_0} : \om \mapsto \om|_{D_0}$. Choose a Borel lift $\theta_0 : \widehat{D_0} \to \Dhat$ for this quotient homomorphism. As explained in the beginning of the proof of \ref{thm.inherit-iso-superrigidity.1}, the map $\theta : S \times \widehat{D_0} \to \Dhat : (s,\om) = s \theta_0(\om)$ is a bijective Borel map that is measure preserving if we equip $S$, $\widehat{D_0}$ and $\Dhat$ with their respective Haar probability measures. Note that $S$ is a finite group of order $[D:D_0]$, whose Haar measure is thus the normalized counting measure. It follows that
\begin{align}
\Haar_{\Dhat}(\cV_n) &= \int_{\widehat{D_0}} \Haar_S\bigl(\{s \in S \mid s \theta_0(\psi) \in \cV_n \}\bigr) \, d\Haar_{\widehat{D_0}}(\psi) \notag\\
&= [D:D_0]^{-1} \int_{\widehat{D_0}} \# \{s \in S \mid s \theta_0(\psi) \in \cV_n \} \, d\Haar_{\widehat{D_0}}(\psi) \notag\\
&= [D:D_0]^{-1} \int_{\widehat{D_0}} \# \{\om \in \cV_n \mid \om|_{D_0} = \psi\} \, d\Haar_{\widehat{D_0}}(\psi) \; .\label{eq.good-Haar-equality}
\end{align}
The isomorphisms $\zeta_n : C_0 \to D_0$ also induce isomorphisms $\be_n : \widehat{C_0} \to \widehat{D_0} : \be_n(\mu) = \mu \circ \zeta_n^{-1}$. By definition, for all $\mu \in \cU_n$ and $\psi \in \widehat{C_0}$, we have $\al_n(\lambda(\mu)) = \rho(\Delta(\mu))$ and $\al_n(\lambda_0(\psi)) = \rho_0(\be_n(\psi))$. Therefore, $\Delta(\mu)|_{D_0} = \be_n(\psi)$ iff $\mu|_{C_0} = \psi$, so that for every fixed $\psi \in \widehat{C_0}$, the map $\Delta$ restricts to a bijection between the finite sets
$$\{\mu \in \cU_n \mid \mu|_{C_0} = \psi\} \quad\text{and}\quad \{\om \in \cV_n \mid \om|_{D_0} = \be_n(\psi)\} \; .$$
Since $\be_n$ preserves the Haar measure, it then follows from \eqref{eq.good-Haar-equality} that
\begin{align*}
\Haar_{\Dhat}(\cV_n) &= [D:D_0]^{-1} \int_{\widehat{C_0}} \# \{\mu \in \cU_n \mid \mu|_{C_0} = \psi\} \, d\Haar_{\widehat{C_0}}(\psi) \\ &= [D:D_0]^{-1} [C:C_0] \Haar_{\Chat}(\cU_n) \; .
\end{align*}
Summing over $n$ and using that $\Dhat = \bigcup_{n \in \N} \cV_n$ up to measure zero, we get that
$$1 = [D:D_0]^{-1} [C:C_0] \Haar_{\Chat}(\cU) \; ,$$
so that the proof of (b) is complete.
\end{proof}

\begin{proof}[{Proof of Theorem \ref{thm.inherit-iso-superrigidity.3}}]
Assume that $\Lambdatil$ is a central extension of $G$ by a countable abelian $D$ and $\Om' \in H^2(G,D)$ satisfying $\Om \sim^t \Om'$. Choose subgroups $C_0 < C$ and $D_0 < D$, and an isomorphism $\zeta : C_0 \to D_0$ such that $[C:C_0] t = [D:D_0]$ and such that \eqref{eq.cocycle-equal} holds. In particular, $\mu \circ \Om = 1$ in $H^2(G,\T)$ for all $\mu \in \Chat$ that are equal to $1$ on $C_0$. Since $\Theta(\Om)$ is assumed to be faithful, we get that $C_0 = C$. So, $[D:D_0] = t \leq 1$. It follows that $t=1$ and $D_0 = D$. Also, $\zeta^{-1} \circ \Om' \in H^2(G,C)$ satisfies $\Om \approx \zeta^{-1} \circ \Om'$, so that $\Lambdatil$ belongs to $\cH_\approx$.
\end{proof}

\begin{proof}[{Proof of Theorem \ref{thm.inherit-iso-superrigidity.4}}]
Assume that $\Lambdatil$ is a central extension of $G$ by $C$ and $\Om' \in H^2(G,C)$ satisfying $\Om \approx \Om'$. This means that $\Om - \Om'$ belongs to the kernel of $\Theta$. By the universal coefficient theorem \eqref{eq.univ-coeff} and the assumption that $\Ext^1(G\ab,C) = 0$, it follows that $\Om - \Om' = 0$ in $H^2(G,C)$. So, $\Om = \Om'$ in $H^2(G,C)$ and $\Gtil \cong \Lambdatil$.
\end{proof}

\begin{lemma}\label{lem.characterize-finite-kernel}
Let $K$ be a second countable compact group with Haar probability measure $\mu$. Let $K_0 < K$ be a closed subgroup and consider the quotient homomorphism $\lambda : K \to K/K_0$.
If $\cU \subset K$ is a nonnegligible Borel set and the restriction $\lambda|_\cU$ is finite-to-one, then $K_0$ is finite.
\end{lemma}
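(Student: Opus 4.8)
The plan is to turn the statement into a routine application of Tonelli's theorem on the product $K \times K_0$. First I would record the elementary observation that the fibre of $\lambda$ over a coset $gK_0 \in K/K_0$ is exactly the left coset $gK_0$, which the map $k \mapsto gk$ carries bijectively onto $K_0$. Consequently, the hypothesis that $\lambda|_\cU$ is finite-to-one is equivalent to the statement that for every $g \in K$ the Borel set $\{k \in K_0 \mid gk \in \cU\}$ is finite. I then argue by contradiction: assume $K_0$ is infinite and equip it with its Haar probability measure $\Haar_{K_0}$.

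The key step is to integrate the nonnegative Borel function $(g,k) \mapsto 1_\cU(gk)$ on $K \times K_0$ against $\mu \times \Haar_{K_0}$. Applying Tonelli's theorem together with the right-invariance of the Haar measure $\mu$ on the compact group $K$ gives
\[
\int_K \Haar_{K_0}\bigl(\{k \in K_0 \mid gk \in \cU\}\bigr)\, d\mu(g) = \int_{K_0} \mu(\cU k^{-1})\, d\Haar_{K_0}(k) = \int_{K_0} \mu(\cU)\, d\Haar_{K_0}(k) = \mu(\cU) > 0 \; ,
\]
so there is at least one $g \in K$ with $\Haar_{K_0}\bigl(\{k \in K_0 \mid gk \in \cU\}\bigr) > 0$.

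To finish, I would invoke the standard fact that the Haar probability measure of an infinite compact group is nonatomic: an atom would, by translation invariance, give rise to infinitely many atoms of equal positive mass, contradicting that the total mass equals $1$. Hence a Borel subset of $K_0$ of positive $\Haar_{K_0}$-measure is uncountable, and in particular infinite. Applying this to the set $\{k \in K_0 \mid gk \in \cU\}$ for the $g$ produced above shows that $\lambda^{-1}(gK_0) \cap \cU$ is infinite, contradicting that $\lambda|_\cU$ is finite-to-one; therefore $K_0$ must be finite.

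I do not expect a genuine obstacle here: the only care needed is with measurability. For each fixed $g$ the set $\{k \in K_0 \mid gk \in \cU\}$ is Borel, being the preimage of $\cU$ under the continuous map $k \mapsto gk$; and the function $g \mapsto \Haar_{K_0}(\{k \in K_0 \mid gk \in \cU\})$ is Borel by Tonelli applied to the jointly Borel function $(g,k) \mapsto 1_\cU(gk)$ (here second countability guarantees that the Borel $\sigma$-algebra of $K \times K_0$ is the product $\sigma$-algebra, and that $K/K_0$ is standard Borel so that ``finite-to-one'' is meaningful). With these points in place the argument above is complete.
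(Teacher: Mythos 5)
Your proof is correct, but it takes a genuinely different route from the paper's. The paper first invokes a measurable selection argument (Lusin--Novikov: a Borel map with countable fibres partitions its domain into countably many Borel sets on each of which it is injective) to shrink $\cU$ to a nonnegligible Borel set on which $\lambda$ is injective; then the translates $k\cU$, $k \in K_0$, are pairwise disjoint and all of Haar measure $\mu(\cU) > 0$, forcing $K_0$ to be finite since $\mu$ is a probability measure. You instead run a Tonelli computation on $K \times K_0$ and then appeal to nonatomicity of Haar measure on infinite compact groups to conclude that some fibre of $\lambda|_\cU$ has positive $K_0$-Haar measure and is hence infinite (in fact uncountable). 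Your approach trades the selection theorem for Fubini plus the elementary nonatomicity fact, which makes it slightly more self-contained; the paper's is a touch shorter once the selection step is granted. Both conclude by the same pigeonhole on a probability measure. (A minor slip: you wrote that $k \mapsto gk$ carries $gK_0$ bijectively onto $K_0$; it is the other direction, $K_0 \to gK_0$, but the bijection between $gK_0 \cap \cU$ and $\{k \in K_0 \mid gk \in \cU\}$ that you actually use is correct.)
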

\begin{proof}
By making $\cU$ smaller, we may assume that $\lambda|_{\cU}$ is injective and still $\mu(\cU) > 0$. Since $\lambda(k x) = \lambda(x)$ for all $k \in K_0$ and $x \in K$, it follows that $k \cU \cap \cU = \emptyset$ for all $k \in K_0 \setminus \{e\}$. Since $\mu(k \cU) = \mu(\cU)$ for all $k \in K_0$ and $\mu$ is a probability measure, it follows that $K_0$ is finite.
\end{proof}

\subsection{Further remarks and comments}

The faithfulness of $\Theta(\Om)$ appears as an assumption in Theorem \ref{thm.inherit-iso-superrigidity.3}. It can be equivalently expressed as follows.

\begin{lemma}\label{lem.extend-characters-C}
Let $G$ be a group and $C$ an abelian group. Let $\Om \in H^2(G,C)$ with corresponding central extension $0 \to C \to \Gtil \to G \to e$. Consider $\Theta(\Om) \in \Hom(\Chat,H^2(G,\T))$. Then the kernel of $\Theta(\Om)$ equals the image of the restriction homomorphism $\res : \Hom(\Gtil,\T) \to \Chat$.
\end{lemma}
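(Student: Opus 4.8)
The plan is to make the correspondence completely explicit. First I would fix a normalized set-theoretic lift $\theta\colon G\to\Gtil$ of the quotient map, so that $\theta(e)=e$ and $\theta(g)\theta(h)=\Om(g,h)\,\theta(gh)$ for all $g,h\in G$, and every element of $\Gtil$ is uniquely of the form $c\,\theta(g)$ with $c\in C$ central. Recall that, by definition of $\Theta$, an element $\mu\in\Chat$ lies in $\Ker\Theta(\Om)$ exactly when $\mu\circ\Om$ is a coboundary, i.e.\ when there is some $F\in C^1(G,\T)$ (so $F(e)=1$) with $\mu(\Om(g,h))=F(g)F(h)\,\overline{F(gh)}$ for all $g,h\in G$.

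For the inclusion $\operatorname{im}(\res)\subseteq\Ker\Theta(\Om)$, I would take $\chi\in\Hom(\Gtil,\T)$, put $\mu=\res(\chi)=\chi|_C$, and apply $\chi$ to the identity $\theta(g)\theta(h)=\Om(g,h)\,\theta(gh)$. Setting $F(g)=\chi(\theta(g))$, this gives $F(g)F(h)=\mu(\Om(g,h))\,F(gh)$ together with $F(e)=\chi(e)=1$, so $F$ witnesses that $\mu\circ\Om$ is trivial in $H^2(G,\T)$ and hence $\mu\in\Ker\Theta(\Om)$.

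For the reverse inclusion, I would start from $\mu\in\Ker\Theta(\Om)$, pick a corresponding $F\in C^1(G,\T)$, and define $\chi\colon\Gtil\to\T$ on the unique decomposition by $\chi(c\,\theta(g))=\mu(c)\,F(g)$. A short computation — using that $C$ is central to write $c\theta(g)\cdot d\theta(h)=cd\,\Om(g,h)\,\theta(gh)$, and then using $\mu(\Om(g,h))=F(g)F(h)\overline{F(gh)}$ — shows $\chi$ is a homomorphism, the spurious factor $\mu(\Om(g,h))$ being exactly cancelled by $\overline{F(gh)}$. Since $\chi(c)=\mu(c)$ for $c\in C$, we get $\res(\chi)=\mu$, so $\mu\in\operatorname{im}(\res)$, which closes the argument.

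I do not expect a real obstacle: the statement is a routine unwinding of the definitions of the central extension and of $B^2(G,\T)$. The only thing to watch is the bookkeeping — that $\theta$ and $F$ are taken normalized, and that the inverse/conjugation convention in the coboundary map $\partial$ is used consistently — so that the cancellation in the homomorphism check comes out exactly.
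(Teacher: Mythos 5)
Your proof is correct and is essentially the same as the paper's: both fix a set-theoretic lift $G\to\Gtil$ realizing $\Om$, then show that $\chi\in\Hom(\Gtil,\T)$ restricted to $C$ has $\mu\circ\Om=\partial(\chi\circ\theta)$, and conversely build $\chi(c\theta(g))=\mu(c)F(g)$ from a primitive $F$ of $\mu\circ\Om$. The only difference is cosmetic notation ($\theta,F$ versus the paper's $\vphi,\eta$) and your added care about normalizing $\theta(e)=e$, which the paper leaves implicit.
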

\begin{proof}
Fix a lifting map $\vphi : G \to \Gtil$ such that $\vphi(g)\vphi(h) = \vphi(gh) \Om(g,h)$ for all $g,h \in G$. If $\om \in \Hom(\Gtil,\T)$ and $\mu = \om|_C$, we get that $\mu \circ \Om = \partial(\om \circ \vphi)$, so that $\mu \in \Ker \Theta(\Om)$. If $\mu \in \Chat$ belongs to the kernel of $\Theta(\Om)$, we take $\eta : G \to \T$ such that $\mu \circ \Om = \partial \eta$ and define $\om \in \Hom(\Gtil,\T)$ by $\om(c\vphi(g)) = \mu(c) \eta(g)$ for all $c \in C$, $g \in G$. By construction, $\om|_C = \mu$.
\end{proof}

\begin{remark}\label{rem.relation-to-CFQOT24}
Assume that $G$ is an icc hyperbolic property (T) group satisfying condition \ref{cond.iso.1}. If $0 \to C \to \Gtil \to G \to e$ is any central extension with $\Gtil\ab = 0$ and $C$ torsion free, all conditions appearing in Theorem \ref{thm.inherit-iso-superrigidity} are satisfied: by property (T), condition \ref{ncond.iso.2} holds; by \cite[Theorem 1.4]{PV12} and hyperbolicity of $G$, condition \ref{ncond.iso.3} holds w.r.t.\ the trivial subgroup $\{e\} \subset G$; by Lemma \ref{lem.extend-characters-C} and the triviality of $\Gtil\ab$, the faithfulness of $\Theta(\Om)$ holds; since $\Gtil\ab = 0$, also $G\ab = 0$ so that $\Ext^1(G\ab,C) = 0$.

In \cite{CFQOT24}, such icc hyperbolic property (T) groups satisfying condition \ref{cond.iso.1} are constructed as wreath-like products, and they are shown to have central extensions $0 \to C \to \Gtil \to G \to e$ with $\Gtil\ab = 0$ and with $\Gtil$ still having property (T). They thus obtain W$^*$-superrigid groups $\Gtil$ with infinite center and with property (T). As explained in the introduction, the results of \cite{CFQOT24} were obtained in parallel with and independently of our results. They prove more directly W$^*$-superrigidity of $\Gtil$. Quite naturally, they also use the direct integral decomposition \eqref{eq.central-decomp}, resulting in a small overlap between \cite{CFQOT24} and this Section \ref{sec.generic-iso-superrigid} of our paper.
\end{remark}

\begin{remark}\label{rem.relax-iso}
In the proof of Theorem \ref{thm.inherit-iso-superrigidity.2}, we did not use conditions \ref{cond.iso.1} and \ref{ncond.iso.2} in their full generality. Given a concrete central extension $0 \to C \to \Gtil \to G \to e$ with associated $2$-cocycle $\Om \in H^2(G,C)$, we only used that condition \ref{cond.iso.1} holds for the specific $2$-cocycles in $H^2(G,\T)$ of the form $\mu \circ \Om$, $\mu \in \Chat$.

Secondly, in our application of condition \ref{ncond.iso.2}, it would be sufficient to have that $\Aut G / \cA$ is countable, where $\cA < \Aut G$ is the subgroup of automorphisms $\delta$ satisfying $\mu \circ \Om \circ \delta = \mu \circ \Om$ in $H^2(G,\T)$ for all $\mu \in \Chat$.

This observation will be used in the proof of Proposition \ref{prop.new-no-go-result-3}.
\end{remark}

For completeness, we also prove the following lemma about the relation $\sim^t$ introduced in Definition \ref{def.equiv-2-cocycle}.

\begin{lemma}\label{lem.prop-of-sim-t}
Let $G$ be a countable group and, for $i \in \{1,2,3\}$, let $C_i$ be countable abelian groups and $\Om_i \in H^2(G,C_i)$.
\begin{enuma}
\item If $\Om_1 \sim^t \Om_2$ for $t > 0$, then $\Om_2 \sim^{t^{-1}} \Om_1$.
\item If $\Om_1 \sim^t \Om_2$ and $\Om_2 \sim^s \Om_3$ for $t,s > 0$, then $\Om_1 \sim^{ts} \Om_3$.
\end{enuma}
\end{lemma}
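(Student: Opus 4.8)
The plan is to unwind Definition \ref{def.equiv-2-cocycle}(b) in both parts; the only input that is not pure bookkeeping is the standard extension property of $\T$-valued characters on abelian groups.

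For (a), suppose $\Om_1 \sim^t \Om_2$ is witnessed by an isomorphism $\zeta : A_1 \to A_2$ between subgroups $A_i < C_i$ with $[C_1:A_1]\,t = [C_2:A_2]$. First I would check directly that $\zeta^{-1} : A_2 \to A_1$ witnesses $\Om_2 \sim^{t^{-1}} \Om_1$: the index identity $[C_2:A_2]\,t^{-1} = [C_1:A_1]$ is immediate from the hypothesis (also when the indices are infinite), and since $\zeta$ is a bijection the relation ``$\mu\circ\zeta^{-1} = \om|_{A_2}$'' on a pair $(\mu,\om)\in\widehat{C_1}\times\widehat{C_2}$ becomes, after composing with $\zeta$, exactly the relation ``$\om\circ\zeta = \mu|_{A_1}$'' appearing in \eqref{eq.cocycle-equal} for $\Om_1\sim^t\Om_2$. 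Hence the required equality $\om\circ\Om_2 = \mu\circ\Om_1$ in $H^2(G,\T)$ follows. So (a) is essentially a symmetry of the definition.

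For (b), let $\zeta_1 : A_1 \to A_2$ witness $\Om_1\sim^t\Om_2$ and $\zeta_2 : B_2 \to B_3$ witness $\Om_2\sim^s\Om_3$, where $A_i<C_i$ and $B_2<C_2$, $B_3<C_3$. Since $A_2$ and $B_2$ need not coincide, the idea is to pass to their intersection: set $E_2 := A_2\cap B_2 < C_2$, $E_1 := \zeta_1^{-1}(E_2)<C_1$, $E_3 := \zeta_2(E_2)<C_3$, and $\zeta := \zeta_2\circ\zeta_1 : E_1 \to E_3$. For the index condition I would use the tower law together with the fact that group isomorphisms preserve indices: this gives $[C_1:E_1] = [C_1:A_1]\,[A_2:E_2]$ and $[C_3:E_3] = [C_3:B_3]\,[B_2:E_2]$, while $[C_2:E_2] = [C_2:A_2]\,[A_2:E_2] = [C_2:B_2]\,[B_2:E_2]$. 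Combining with $[C_1:A_1]\,t=[C_2:A_2]$ and $[C_2:B_2]\,s=[C_3:B_3]$, both $[C_1:E_1]\,ts$ and $[C_3:E_3]$ equal $[C_2:E_2]\,s$, so $[C_1:E_1]\,ts = [C_3:E_3]$ (read in $(0,\infty]$ when indices are infinite).

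It then remains to verify \eqref{eq.cocycle-equal} for $\zeta$. Given $\mu\in\widehat{C_1}$ and $\nu\in\widehat{C_3}$ with $\nu\circ\zeta = \mu|_{E_1}$, this says precisely that the characters $\mu\circ\zeta_1^{-1}$ of $A_2$ and $\nu\circ\zeta_2$ of $B_2$ agree on $E_2=A_2\cap B_2$; hence they glue to a character of the subgroup $A_2+B_2<C_2$, which extends to some $\om\in\widehat{C_2}$ because $\T$ is divisible, hence an injective abelian group. By construction $\om\circ\zeta_1 = \mu|_{A_1}$, so the cocycle equality for $\Om_1\sim^t\Om_2$ gives $\mu\circ\Om_1 = \om\circ\Om_2$; and $\nu\circ\zeta_2 = \om|_{B_2}$, so the cocycle equality for $\Om_2\sim^s\Om_3$ gives $\om\circ\Om_2 = \nu\circ\Om_3$. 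Chaining these in $H^2(G,\T)$ yields $\mu\circ\Om_1 = \nu\circ\Om_3$, which is what \eqref{eq.cocycle-equal} demands, so $\Om_1\sim^{ts}\Om_3$. The only step that is more than formal — and hence the ``hard'' part, though it is entirely standard — is the construction of the intermediate character $\om$ via gluing and extension using injectivity of $\T$; everything else is definition-chasing and multiplicativity of subgroup indices.
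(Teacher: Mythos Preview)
Your proof is correct and follows essentially the same approach as the paper's: part (a) by symmetry of the definition via $\zeta^{-1}$, and part (b) by passing to the intersection $A_2\cap B_2$ in $C_2$, composing the isomorphisms, verifying the index identity via the tower law, and constructing the intermediate character $\om\in\widehat{C_2}$ by gluing on $A_2+B_2$ and extending using divisibility of $\T$. The paper's argument is identical up to notation, only sketching the index computation where you spell it out.
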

\begin{proof}
(a) This follows immediately from the definition, because $[C_1:D_1] t = [C_2:D_2]$ iff $[C_2:D_2] t^{-1} = [C_1:D_1]$, also when the indices are infinite.

(b) Take an isomorphism $\al : D_1 \to D_2$ between subgroups $D_1 < C_1$ and $D_2 < C_2$ such that $[C_1:D_1] t = [C_2:D_2]$ and $\om_1 \circ \Om_1 = \om_2 \circ \Om_2$ in $H^2(G,\T)$ whenever the $\om_i \in \widehat{C_i}$ satisfy $\om_2 \circ \al = \om_1|_{D_1}$.

Similarly, take an isomorphism $\be : D'_2 \to D_3$ between subgroups $D'_2 < C_2$ and $D_3 < C_3$ such that $[C_2:D'_2]s = [C_3:D_3]$ and $\om_2 \circ \Om_2 = \om_3 \circ \Om_3$ in $H^2(G,\T)$ whenever the $\om_i \in \widehat{C_i}$ satisfy $\om_3 \circ \be = \om_2|_{D'_2}$.

Define $D'_1 = \al^{-1}(D_2 \cap D'_2)$ and $D'_3 = \be(D_2 \cap D'_2)$, so that the restriction $\gamma$ of $\be \circ \al$ to $D'_1$ defines an isomorphism between $D'_1$ and $D'_3$. It is easy to check that $[C_1:D'_1] t s = [C_3:D'_3]$.

Take $\om_1 \in \widehat{C_1}$ and $\om_3 \in \widehat{C_3}$ such that $\om_3 \circ \gamma = \om_1|_{D'_1}$. Then, there is a unique and well-defined homomorphism $\om_2 : D_2 + D'_2 \to \T$ satisfying $\om_2(a) = \om_1(\al^{-1}(a))$ for all $a \in D_2$ and $\om_2(b) = \om_3(\be(b))$ for all $b \in D'_2$. Extend $\om_2$ to a character on $C_2$. By construction, $\om_2 \circ \al = \om_1|_{D_1}$, so that $\om_1 \circ \Om_1 = \om_2 \circ \Om_2$ in $H^2(G,\T)$. Also, $\om_3 \circ \be = \om_2|_{D'_2}$, so that $\om_2 \circ \Om_2 = \om_3 \circ \Om_3$ in $H^2(G,\T)$. It thus follows that $\om_1 \circ \Om_1 = \om_3 \circ \Om_3$ in $H^2(G,\T)$ and we conclude that $\Om_1 \sim^{ts} \Om_3$.
\end{proof}

\subsection{Rigidity of trivial central extensions}

As explained in the introduction, direct products $C \times G$ of an infinite abelian group $C$ with a discrete group $G$ are never W$^*$-superrigid, because $L(C \times G) \cong L(D \times G)$ for any other infinite abelian group $D$. Nevertheless in \cite{CFQT24}, it is proven that for certain groups $G$, this is the only obstruction to W$^*$-superrigidity, meaning that any countable group $\Lambda$ with $L(\Lambda) \cong L(C \times G)$ must be of the form $\Lambda \cong D \times G$.

In the specific case of trivial central extensions, we can prove a more general variant of Theorem \ref{thm.inherit-iso-superrigidity.2}. In this way, we obtain new examples of rigidity of trivial central extensions in Proposition \ref{prop.examples-rigidity-trivial-extensions}. To prove this variant of Theorem \ref{thm.inherit-iso-superrigidity.2}, the following weaker variant of condition \ref{cond.iso.1} suffices.

\begin{enumlistI}
\item\label{cond.iso.I} If $p \in L(G)$ is a projection and $p L(G) p \cong L_\om(\Lambda)$ for some countable group $\Lambda$ and $\om \in H^2(\Lambda,\T)$, then $p=1$, $\om = 1$ in $H^2(\Lambda,\T)$ and $\Lambda \cong G$.
\end{enumlistI}

\begin{theorem}\label{thm.rigidity-trivial-extensions}
Let $G$ be a countable icc group satisfying conditions \ref{cond.iso.I} and \ref{ncond.iso.3}. Denote by $G\ab$ its abelianization.
\begin{enuma}
\item\label{thm.rigidity-trivial-extensions.1} If $G\ab$ is free abelian, then rigidity holds: for any countable abelian group $C$, countable group $\Lambda$ and projection $p \in L(C \times G)$, we have that $pL(C \times G)p \cong L(\Lambda)$ if and only if $p \in L(C)$ and $\Lambda \cong D \times G$ where $D$ is a countable abelian group with $|D| = |C|\tau(p)$.
\item\label{thm.rigidity-trivial-extensions.2} If $G\ab$ is not free abelian, then rigidity fails: there exists a countable group $\Lambda$ such that $L(\Z \times G) \cong L(\Lambda)$, but $\Lambda$ is not isomorphic to $D \times G$ for any abelian group $D$.
\end{enuma}
\end{theorem}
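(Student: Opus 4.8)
The plan is to run the argument behind Theorem~\ref{thm.inherit-iso-superrigidity.2} in the special case of the trivial $2$-cocycle $\Om=0\in H^2(G,C)$, where $\Gtil=C\times G$ and every fiber in the central decomposition \eqref{eq.concrete-direct-integral} is $L(G)$ itself. This is exactly what lets the argument run under the weaker hypothesis \ref{cond.iso.I} (one only ever encounters $L(G)$, never a genuine twist $L_{\mu\circ\Om}(G)$), and makes condition \ref{ncond.iso.2} unnecessary: there is no enumeration of isomorphisms $G\to\Lambda_0$ to organize, since the ``cocycle rigidity'' step degenerates to the vanishing of a single homomorphism $\Theta(\Om')$, which is then killed by $\Ext^1(G\ab,-)=0$. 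For (b) the plan is instead to exhibit a nonsplit central extension of $G$ by $\Z$ whose $2$-cocycle lies in $\ker\Theta$; such extensions exist precisely because $G\ab$ is not free.

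For (a), the ``if'' direction is routine: a projection $p\in L(C)$ is $1_\cU$ for a measurable $\cU\subset\Chat$, so $pL(C\times G)p=L^\infty(\cU)\ovt L(G)$ while $L(D\times G)=L^\infty(\Dhat)\ovt L(G)$; the hypothesis $|D|=|C|\tau(p)$ makes the abelian parts isomorphic (both $\C^{|C|\tau(p)}$ when $C$ is finite, both $L^\infty([0,1])$ when $C$ is infinite), so $pL(C\times G)p\cong L(\Lambda)$. For the converse, suppose $\al:L(\Lambda)\xrightarrow{\ \sim\ }pL(C\times G)p$. As in the proof of Theorem~\ref{thm.inherit-iso-superrigidity.2}, $\bim{\al(L(\Lambda))}{pL^2(L(C\times G))}{L(C\times G)}$ is a bifinite bimodule with faithful left action, so Lemma~\ref{lem.intertwine-virtual-center-to-center} (using \ref{ncond.iso.3}) gives $L(\Lambda\fc)\prec^f_{L(\Lambda)}\cZ(L(\Lambda))$, and Proposition~\ref{prop.structure-non-icc} with $z=1$ (so $\Sigma=\{e\}$) decomposes $L(\Lambda)$ over its center into fibers $M_{d(x)n(x)}(\C)\ot L_{\om_x}(\Lambda_x)$ with $\Lambda_x<\Lambda/\Lambda\fc$ of index $n(x)$. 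Since $G$ is icc, $L(C\times G)=L^\infty(\Chat)\ovt L(G)$ has center $L^\infty(\Chat)$; writing $p$ as a measurable field $(p_\mu)$ of projections in $L(G)$, uniqueness of the central direct integral produces a non-negligible $\cU\subset\Chat$ and a nonsingular isomorphism $\Delta:\cU\to X$ with $p_\mu L(G)p_\mu$ isomorphic to the fiber at $\Delta(\mu)$ for a.e.\ $\mu$. Using $M_k(\C)\cong L_{\sigma_k}((\Z/k\Z)^2)$ for the canonical nondegenerate cocycle $\sigma_k$, this fiber is $L_{\sigma_k\boxtimes\om_x}((\Z/k\Z)^2\times\Lambda_x)$, so \ref{cond.iso.I} applies fiberwise: a.e.\ $p_\mu=1$, and $\sigma_k\boxtimes\om_x=1$ with $(\Z/k\Z)^2\times\Lambda_x\cong G$; restricting the trivial class to $(\Z/k\Z)^2$ forces $k=d(x)n(x)=1$, hence $\om_x=1$ and $\Lambda_x\cong G$. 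Therefore $p=1_\cU\in L(C)$ and $L(\Lambda\fc)=\cZ(L(\Lambda))$, so $\Lambda\fc=\cZ(\Lambda)=:D$ and $\Lambda_0:=\Lambda/\Lambda\fc\cong G$; writing $\Om'\in H^2(\Lambda_0,D)$ for the associated cocycle, $L(\Lambda)=\int_\Dhat^\oplus L_{\om\circ\Om'}(\Lambda_0)\,d\om$ and the identification gives $\om\circ\Om'=1$ in $H^2(\Lambda_0,\T)$ for a.e.\ $\om$, i.e.\ $\Theta(\Om')=0$. Since $\Lambda_0\cong G$ and $G\ab$ is free abelian, $\Ext^1(G\ab,D)=0$, so \eqref{eq.univ-coeff} forces $\Om'=0$ and $\Lambda\cong D\times G$; finally $L(D)=\cZ(L(\Lambda))\cong\cZ(pL(C\times G)p)=L^\infty(\cU)$ pins down $|D|=|C|\tau(p)$.

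For (b), since $G\ab$ is countable and not free abelian, the classical fact that a countable abelian group $A$ with $\Ext^1(A,\Z)=0$ must be free (see e.g.\ Fuchs' treatise on infinite abelian groups) gives a nonzero $\xi\in\Ext^1(G\ab,\Z)$. Put $\Om':=\Upsilon(\xi)\in H^2(G,\Z)$; by exactness of \eqref{eq.univ-coeff}, $\Om'\neq0$ while $\Theta(\Om')=0$. Let $\Lambda$ be the central extension $0\to\Z\to\Lambda\to G\to0$ with class $\Om'$. Since $\om\circ\Om'=1$ in $H^2(G,\T)$ for every $\om\in\widehat{\Z}$, the computation from the proof of Theorem~\ref{thm.inherit-iso-superrigidity.1} gives $L(\Lambda)=\int_{\widehat{\Z}}^\oplus L_{\om\circ\Om'}(G)\,d\om\cong L^\infty(\widehat{\Z})\ovt L(G)=L(\Z\times G)$. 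On the other hand $G$ icc gives $\cZ(\Lambda)=\Lambda\fc=\Z$, so if $\Lambda\cong D\times G$ with $D$ abelian then $D\cong\cZ(\Lambda)\cong\Z$ (using $\cZ(G)=\{e\}$), and any isomorphism $\Lambda\cong\Z\times G$ necessarily restricts to $\pm\id$ on centers and descends to an automorphism of $G$, forcing $\Om'\in B^2(G,\Z)$, i.e.\ $\Om'=0$ in $H^2(G,\Z)$ --- contradicting $\xi\neq0$. Hence $\Lambda$ is not isomorphic to $D\times G$ for any abelian $D$.

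The main obstacle is the bookkeeping in the converse part of (a): verifying that $L(\Lambda\fc)$, transported through $\al$ into an amplification of $L(C\times G)$, satisfies the hypotheses of the weakly-compact intertwining result packaged in Lemma~\ref{lem.intertwine-virtual-center-to-center}; then reading off the fibers via uniqueness of the central direct integral; and eliminating the matrix amplifications $M_{d(x)n(x)}(\C)$ and the relative index $n(x)$ through the $(\Z/k\Z)^2$-rewriting so that \ref{cond.iso.I} can be invoked on a.e.\ fiber. In (b) the only nontrivial input outside operator algebras is the abelian-group fact that a countable non-free abelian group has nonzero $\Ext^1(-,\Z)$.
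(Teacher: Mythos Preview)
Your argument for (a) follows the paper's proof essentially verbatim: Lemma~\ref{lem.intertwine-virtual-center-to-center} via condition~\ref{ncond.iso.3}, Proposition~\ref{prop.structure-non-icc.3}, uniqueness of the central decomposition, condition~\ref{cond.iso.I} fiberwise, and the universal coefficient theorem~\eqref{eq.univ-coeff} together with $\Ext^1(G\ab,D)=0$. Your rewriting $M_k(\C)\ot L_{\om_x}(\Lambda_x)\cong L_{\sigma_k\boxtimes\om_x}((\Z/k\Z)^2\times\Lambda_x)$ is precisely the step the paper leaves implicit when it invokes~\ref{cond.iso.I} to force $d(x)=n(x)=1$.

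Your construction in (b) genuinely differs from the paper's. You take coefficients in $\Z$ and appeal to Stein's theorem (a countable abelian group $A$ with $\Ext^1(A,\Z)=0$ is free) to obtain a nonzero $\xi\in\Ext^1(G\ab,\Z)$ and hence a nonsplit central extension $0\to\Z\to\Lambda\to G\to e$ with $\Theta(\Om')=0$. The paper instead chooses a surjection $\theta:\Z^{(\N)}\twoheadrightarrow G\ab$, sets $C:=\Ker\theta$, and observes that this short exact sequence does not split (otherwise $G\ab$ would be a direct summand, hence a subgroup, of $\Z^{(\N)}$, hence free); the resulting $\Psi\in\Ext^1(G\ab,C)$ is nonzero and gives the counterexample. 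The paper's route is more elementary---it only needs that subgroups of free abelian groups are free---while yours relies on a sharper classical theorem but produces a counterexample with center exactly $\Z$, matching the statement more tightly. The paper's endgame is also marginally cleaner: rather than tracking that an isomorphism $\Lambda\cong\Z\times G$ restricts to $\pm\id$ on centers, it simply notes that any isomorphism $\Lambda\cong D\times G$ would make the canonical extension $0\to\cZ(\Lambda)\to\Lambda\to\Lambda/\cZ(\Lambda)\to e$ split (the image of $\{0\}\times G$ furnishing a section), directly contradicting $\Om'\neq 0$.
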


Note that in the formulation of the theorem, we follow the convention that also the trivial group $\{0\}$ is free abelian.

\begin{proof}
(a) Assume that $p L(C \times G) p \cong L(\Lambda)$. We identify $L(C) = L^\infty(Y)$ and view $L(C \times G)$ as the direct integral of the constant field $L(G)$ over $Y$. In this way, $p$ decomposes as a measurable family of projections $p_x \in L(G)$, $x \in Y$. We define $X \subset Y$ as the set of $x \in Y$ such that $p_x \neq 0$. Then $pL(C \times G)p$ is the direct integral of the factors $(p_x L(G) p_x)_{x \in X}$.

We denote by $(v_s)_{s \in \Lambda}$ the canonical unitaries generating $L(\Lambda)$. Since $L(\Lambda\fc)$ is an amenable von Neumann subalgebra of $L(\Lambda) \cong p L(C \times G) p$ on which the action $(\Ad v_s)_{s \in \Lambda}$ is compact, it follows from condition \ref{ncond.iso.3} that $L(\Lambda\fc) \prec^f \cZ(L(\Lambda))$. Write $\Lambda_1 = \Lambda/\Lambda\fc$. By Proposition \ref{prop.structure-non-icc.3}, we find measurable maps $d : X \to \N$, $n : X \to \N$ and measurable families of finite index subgroups $\Lambda_x < \Lambda_1$ and $2$-cocycles $\om_x \in H^2(\Lambda_x,\T)$ such that
$$p_x L(G) p_x \cong M_{d(x)n(x)}(\C) \ot L_{\om_x}(\Lambda_x) \quad\text{for a.e.\ $x \in X$,}$$
and such that $n(x) = [\Lambda_1 : \Lambda_x]$ and $L(\Lambda\fc)$ is the direct integral of $M_{d(x)}(\C) \ot \C^{n(x)}$.

By condition \ref{cond.iso.I}, we have that $p_x = 1$, $d(x) = 1 = n(x)$, $\om_x = 1$ in $H^2(\Lambda_x,\T)$ and $G \cong \Lambda_x$ for a.e.\ $x \in X$. This means that $p \in L(C)$, that $\Lambda\fc = \cZ(\Lambda)$ and that $G \cong \Lambda_1$, so that $\Lambda$ is a central extension $0 \to D \to \Lambda \to G \to e$ whose associated $2$-cocycle $\Om \in H^2(G,D)$ satisfies $\om \circ \Om = 1$ in $H^2(G,\T)$ for all $\om \in \Dhat$. We also get that $L(C) p \cong L(D)$ so that $|D| = |C| \tau(p)$.

Since $\om \circ \Om = 1$ in $H^2(G,\T)$ for all $\om \in \Dhat$, it follows from the universal coefficient theorem \eqref{eq.univ-coeff} that $\Om = \Upsilon(\Psi)$ for some $\Psi \in \Ext^1(G\ab,D)$. Since $G\ab$ is a free abelian group, $\Ext^1(G\ab,D)=0$ and thus $\Psi = 0$ in $\Ext^1(G\ab,D)$, so that $\Om = 0$ in $H^2(G,D)$. This means that $\Lambda \cong D \times G$.

(b) Denote $L = \Z^{(\N)}$ and choose a surjective homomorphism $\theta : L \to G\ab$. Since $G\ab$ is not free abelian and $L$ is torsion free, the kernel $C := \Ker \theta$ is infinite. Since all subgroups of $L$ are free abelian, the extension $0 \to C \to L \to G\ab \to 0$ is not split and thus defines a nonzero element $\Psi \in \Ext^1(G\ab,C)$. With the notation of \eqref{eq.univ-coeff}, define $\Om \in H^2(G,C)$ by $\Om = \Upsilon(\Psi)$. Then $\Theta(\Om) = 1$, meaning that $\om \circ \Om = 1$ in $H^2(G,\T)$ for all $\om \in \Chat$. Denote by $0 \to C \to \Lambda \to G \to e$ the central extension that corresponds to $\Om$.

Since $\om \circ \Om = 1$ in $H^2(G,\T)$ for all $\om \in \Chat$, we get that $L(\Lambda) \cong L(C) \ovt L(G)$. Since $C$ is infinite, it follows that $L(\Lambda) \cong L(\Z \times G)$.

Since $G$ has trivial center, $\cZ(\Lambda) = C$. Since $\Om \neq 0$ in $H^2(G,C)$, the extension $0 \to \cZ(\Lambda) \to \Lambda \to \Lambda/\cZ(\Lambda) \to e$ is not split. So, $\Lambda$ is not isomorphic to $D \times G$ for any abelian group $D$.
\end{proof}

\begin{proposition}\label{prop.examples-rigidity-trivial-extensions}
Let $\Gamma$ be a countable group in class $\cC$ (see \ref{sec.class-C}). Define $\Sigma_1 < (\Z/2\Z)^{(\Gamma)}$ as the index $2$ subgroup of elements $x$ satisfying $\sum_{g \in \Gamma} x_g = 0$. Define $G_1 = \Sigma_1 \rtimes (\Gamma \times \Gamma)$.
\begin{enuma}
\item If the abelianization of $\Gamma$ is trivial, the conclusion of Theorem \ref{thm.rigidity-trivial-extensions.1} holds for $G_1$, so that we have rigidity of trivial central extensions.
\item If the abelianization of $\Gamma$ is nontrivial, the conclusion of Theorem \ref{thm.rigidity-trivial-extensions.2} holds for $G_1$, so that rigidity of trivial central extensions fails.
\end{enuma}
\end{proposition}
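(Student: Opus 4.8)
The strategy is to verify that $G_1$ satisfies all the hypotheses of Theorem~\ref{thm.rigidity-trivial-extensions} -- that it is a countable icc group satisfying conditions \ref{cond.iso.I} and \ref{ncond.iso.3} -- and then to compute $(G_1)\ab$ and observe that it is free abelian exactly when $\Gamma\ab$ is trivial. The two cases of the proposition then follow by applying Theorem~\ref{thm.rigidity-trivial-extensions.1}, resp.\ \ref{thm.rigidity-trivial-extensions.2}, to $G_1$.

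The core of the argument is the computation of the abelianization. Since $\Sigma_1$ is abelian, abelianizing the semidirect product gives $(G_1)\ab \cong (\Sigma_1)_{\Gamma\times\Gamma} \times (\Gamma\times\Gamma)\ab \cong (\Sigma_1)_{\Gamma\times\Gamma} \times \Gamma\ab \times \Gamma\ab$, where $(\Sigma_1)_{\Gamma\times\Gamma}$ denotes the coinvariants for the left-right action $(a,b)\cdot S = \{agb^{-1} : g\in S\}$ of $\Gamma\times\Gamma$. I claim that $(\Sigma_1)_{\Gamma\times\Gamma} \cong \Gamma\ab/2\Gamma\ab$. Viewing an element of $\Sigma_1$ as an even-cardinality subset $S\subseteq\Gamma$ with symmetric difference as addition, the map $\psi\colon\Sigma_1\to\Gamma\ab/2\Gamma\ab$, $\psi(S)=\sum_{g\in S}\bar g$, is a surjective homomorphism, and since $|S|$ is even one has $\psi((a,b)\cdot S)=\sum_{g\in S}\overline{agb^{-1}}=\psi(S)$ for all $a,b\in\Gamma$; hence $\psi$ descends to a surjection $\bar\psi\colon(\Sigma_1)_{\Gamma\times\Gamma}\to\Gamma\ab/2\Gamma\ab$. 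In the other direction, write $[g]$ for the class in $(\Sigma_1)_{\Gamma\times\Gamma}$ of $\{e,g\}$, with $[e]=0$. The coinvariance relations, together with $\{e,g_1\}\triangle\{e,g_2\}=\{g_1,g_2\}=(g_1,e)\cdot\{e,g_1^{-1}g_2\}$, yield $[g_1]+[g_2]=[g_1^{-1}g_2]$, hence $2[g]=0$ and, after replacing $g_1$ by $g_1^{-1}$, $[g_1]+[g_2]=[g_1g_2]$; thus $g\mapsto[g]$ is a homomorphism $\Gamma\to(\Sigma_1)_{\Gamma\times\Gamma}$ which is surjective (the sets $\{e,g\}$ generate $\Sigma_1$) and of exponent $2$, so it factors through $\Gamma\ab/2\Gamma\ab$. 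Since the composite $\Gamma\ab/2\Gamma\ab\to(\Sigma_1)_{\Gamma\times\Gamma}\to\Gamma\ab/2\Gamma\ab$ is the identity, both maps are isomorphisms, and we obtain $(G_1)\ab\cong(\Gamma\ab/2\Gamma\ab)\times\Gamma\ab\times\Gamma\ab$.

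The dichotomy follows at once. If $\Gamma\ab=0$, then $(G_1)\ab=0$ is free abelian, and part~(a) follows from Theorem~\ref{thm.rigidity-trivial-extensions.1}. If $\Gamma\ab\neq0$, then either $2\Gamma\ab\neq\Gamma\ab$, so that $(G_1)\ab$ has nontrivial $2$-torsion, or $\Gamma\ab$ is a nonzero $2$-divisible group and hence not free abelian; in either case $(G_1)\ab$ is not free abelian, so part~(b) follows from Theorem~\ref{thm.rigidity-trivial-extensions.2}. It remains to check the standing hypotheses on $G_1$. The group $\Gamma$ is icc because a group in class~$\cC$ has amenable centralizers of nontrivial elements, while a finite conjugacy class would force a finite index, hence nonamenable, centralizer; consequently $\Gamma\times\Gamma$ is icc. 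Moreover any nonzero $x=\{g_1,\dots,g_{2k}\}\in\Sigma_1$ already has infinite orbit under $\Gamma\times\{e\}$, since the set of left translations fixing $x$ is contained in the finite set $\{g_jg_1^{-1} : 1\le j\le 2k\}$ and $\Gamma$ is infinite. A standard two-step argument (first reducing the $\Gamma\times\Gamma$-component to $e$ by iccness, then the $\Sigma_1$-component to $0$ by the orbit count) then gives $(G_1)\fc=\{e\}$, i.e.\ $G_1$ is icc. Finally, conditions \ref{cond.iso.I} and \ref{ncond.iso.3} hold for the left-right wreath product $G_1$ by the cocycle W$^*$-superrigidity and deformation/rigidity results of \cite{BV12, DV24} (see the discussion around condition \ref{cond.iso.1}, and \cite{PV12} for the relative amenability input underlying \ref{ncond.iso.3}), available precisely because $\Gamma$ belongs to class~$\cC$; here one uses the non-co-amenable subgroups $\Sigma_1\rtimes(\Gamma\times\{e\})$ and $\Sigma_1\rtimes(\{e\}\times\Gamma)$ of $G_1$.

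The main content here is the abelianization computation and the resulting free-abelian dichotomy; the one point requiring care is to confirm that the deformation/rigidity machinery of \cite{BV12, DV24, PV12} applies to the augmentation subgroup variant $\Sigma_1\rtimes(\Gamma\times\Gamma)$ rather than only to the full wreath product $(\Z/2\Z)^{(\Gamma)}\rtimes(\Gamma\times\Gamma)$. Since $\Sigma_1$ is a $\Gamma\times\Gamma$-invariant finite-index subgroup of $(\Z/2\Z)^{(\Gamma)}$ and the two ``side'' subgroups above remain non-co-amenable in $G_1$, those proofs go through unchanged, and everything else is elementary group (co)homology.
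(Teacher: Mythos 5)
Your proposal follows the same overall strategy as the paper: verify that $G_1$ satisfies conditions \ref{cond.iso.I} and \ref{ncond.iso.3}, compute $G_{1,\text{\rm ab}}$, and invoke Theorem~\ref{thm.rigidity-trivial-extensions}. But there is a genuine difference in how the abelianization dichotomy is handled, and your version is actually the more careful one. The paper computes $G_{1,\text{\rm ab}} = \Sigma_2 \times \Gamma\ab \times \Gamma\ab$, identifies that $\Sigma_2$ is nontrivial iff $\Gamma$ has an index-$2$ subgroup, and then asserts ``if $\Gamma\ab$ is nontrivial, then $G_{1,\text{\rm ab}}$ has elements of order $2$.'' As stated, that last step does not hold in general (e.g.\ if $\Gamma\ab$ has odd torsion, or is nonzero and $2$-divisible, then $\Sigma_2$ vanishes and $G_{1,\text{\rm ab}} = \Gamma\ab\times\Gamma\ab$ need not have $2$-torsion), though the intended conclusion ``$G_{1,\text{\rm ab}}$ is not free abelian'' is still correct. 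Your explicit identification $\Sigma_2 \cong \Gamma\ab/2\Gamma\ab$, via the evaluation map $\psi(S)=\sum_{g\in S}\bar g$ and its inverse $\bar g\mapsto[\{e,g\}]$, is clean and correct, and your case split (either $2\Gamma\ab\neq\Gamma\ab$ giving $2$-torsion, or $\Gamma\ab$ nonzero $2$-divisible hence not a direct summand of any free abelian group) is the right way to close the dichotomy. This is a small but real improvement over the paper's argument.

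On the operator-algebraic side you are considerably lighter than the paper. Verifying condition \ref{cond.iso.I} for the augmentation-ideal group $G_1$ is not quite ``unchanged'' from the case of the full left-right wreath product: the paper starts from $pL(G_1)p\cong L_\om(\Lambda)$, applies \cite[Lemma 6.11]{DV24} to get $\Lambda\fc$ finite, then \cite[Proposition 6.1]{DV24} to pass to a corner $qL_\om(\Lambda)q\cong L_{\om_1}(\Lambda_1)$ with $\Lambda_1$ icc, and only then follows the proof of \cite[Theorem 6.12]{DV24}; the conclusions $p=1$, $q=1$, $\Lambda\fc=\{e\}$ and $\om=1$ are extracted in a specific order from that reduction. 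Your sketch names the right inputs and correctly flags that the key point is that the proofs work for the $\Gamma\times\Gamma$-invariant finite-index subgroup $\Sigma_1<(\Z/2\Z)^{(\Gamma)}$ (indeed Lemma~\ref{lem.left-right-wreath-product-satisfies-4} is stated for exactly such $\Sigma_1$, so condition \ref{ncond.iso.3} is immediate), but the reduction via \cite[Proposition 6.1]{DV24} to a corner indexed by a minimal projection of $L_\om(\Lambda\fc)$ is a step you would need to spell out rather than assert. Aside from that, your proof also supplies an explicit icc argument for $G_1$, which the paper uses but does not write out.
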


Note that by taking free products of amenable groups with trivial abelianization, class $\cC$ contains numerous groups with trivial abelianization.

\begin{proof}
We first prove that $G_1$ satisfies condition \ref{cond.iso.I}. Assume that $p L(G_1) p \cong L_\om(\Lambda)$. By \cite[Lemma 6.11]{DV24}, the virtual center $\Lambda\fc$ is finite. Since $G_1$ is icc, $L_\om(\Lambda)$ is a factor. It then follows from \cite[Proposition 6.1]{DV24} that $L_\om(\Lambda\fc) \cong M_d(\C) \ot \C^k$ and that for a minimal projection $q \in L_\om(\Lambda\fc)$, we have that $q L_\om(\Lambda) q \cong L_{\om_1}(\Lambda_1)$, where $\Lambda_1 < \Lambda/\Lambda\fc$ is an index $k$ subgroup and $\om_1 \in H^2(\Lambda_1,\T)$ is a canonical variant of $\om$.

To the projection $q$ corresponds a projection $p_1 \leq p$ in $L(G_1)$ such that $p_1 L(G_1) p_1 \cong L_{\om_1}(\Lambda_1)$. By construction, $\Lambda_1$ is icc. We can now follow the proof of \cite[Theorem 6.12]{DV24} and conclude that $p_1 = 1$, $\om_1 = 1$ in $H^2(\Lambda_1,\T)$ and $\Lambda_1 \cong G$. Since $p_1 = 1$, we get that $p=1$ and $q=1$. Since $q=1$, we get that $d=1=k$, so that $\Lambda\fc = \{e\}$. This also implies that $(\Lambda_1,\om_1) = (\Lambda,\om)$. We have thus proven that $p=1$, $\om = 1$ in $H^2(\Lambda,\T)$ and $\Lambda \cong G$.

By Lemma \ref{lem.left-right-wreath-product-satisfies-4}, the group $G_1$ satisfies condition \ref{ncond.iso.3}.

We finally prove the following dichotomy: if $\Gamma\ab$ is trivial, also $G_{1,\text{\rm ab}}$ is trivial, while if $\Gamma\ab$ is nontrivial, then $G_{1,\text{\rm ab}}$ is a nontrivial group that is not free abelian. Denote by $(\al_{(g,h)})_{(g,h) \in \Gamma \times \Gamma}$ the action of $\Gamma \times \Gamma$ on $\Sigma_1$. Then $G_{1,\text{\rm ab}}$ is given by $\Sigma_2 \times \Gamma\ab \times \Gamma\ab$, where $\psi : \Sigma_1 \to \Sigma_2$ is the largest quotient satisfying $\psi \circ \al_{(g,h)} = \psi$ for all $g,h \in \Gamma$. In the proof of Lemma \ref{lem.about-all-our-conditions.6}, we have seen that $\Sigma_2$ is nontrivial if and only if $\Gamma$ admits a subgroup of index $2$, i.e.\ admits $\Z/2\Z$ as a quotient. So if $\Gamma\ab$ is trivial, also $G_{1,\text{\rm ab}}$ is trivial, while if $\Gamma\ab$ is nontrivial, then $G_{1,\text{\rm ab}}$ has elements of order $2$. So the dichotomy is proven.

Both (a) and (b) now follow from Theorem \ref{thm.rigidity-trivial-extensions}.
\end{proof}

\section{\boldmath Virtual isomorphism W$^*$-superrigidity}\label{sec.generic-virtual-iso-superrigid}

In \cite{DV24}, the first W$^*$-superrigidity theorem up to virtual isomorphism was obtained. In the same way as we did in Section \ref{sec.generic-iso-superrigid} for isomorphism W$^*$-superrigidity, we provide in this section sufficient conditions to transfer virtual isomorphism W$^*$-superrigidity of $G$ to certain central extensions $0 \to C \to \Gtil \to G \to e$. Unsurprisingly these sufficient conditions are virtual versions of the sufficient conditions appearing in Section \ref{sec.generic-iso-superrigid}.

We fix an icc group $G$. The following conditions are the appropriate virtual versions of conditions \ref{cond.iso.1}, \ref{ncond.iso.2} and \ref{ncond.iso.3}.

\begin{enumlistprime}
\item\label{cond.iso.1prime} The group $G$ satisfies the following virtual isomorphism cocycle W$^*$-superrigidity property: if $\Lambda$ is any countable group and $\mu \in H^2(G,\T)$, $\om \in H^2(\Lambda,\T)$ are arbitrary $2$-cocycles such that there exists a nonzero bifinite $L_\mu(G)$-$L_\om(\Lambda)$-bimodule, then the pairs $(G,\mu)$ and $(\Lambda,\om)$ are virtually isomorphic, meaning that there exists a finite index subgroup $\Lambda_0 < \Lambda$ and a group homomorphism $\delta : \Lambda_0 \to G$ with finite kernel and image of finite index such that the $2$-cocycle $\om|_{\Lambda_0} \, \overline{\mu \circ \delta}$ is of finite type.
\end{enumlistprime}

By \cite[Theorem A]{DV24}, all left-right wreath products $G = (\Z/2\Z)^{(\Gamma)} \rtimes (\Gamma \times \Gamma)$ with $\Gamma$ in class $\cC$ (see Section \ref{sec.class-C}) satisfy condition \ref{cond.iso.1prime}.

In order to go from finite type $2$-cocycles to actual coboundaries, we also need the following.
\begin{enumlistprimeprime}[start=1]
\item\label{ncond.iso.1primeprime} For every finite index subgroup $G_0 < G$ and finite type $2$-cocycle $\Phi \in H^2(G_0,\T)$, there exists a finite index subgroup $G_1 < G_0$ such that $\Phi|_{G_1} = 1$ in $H^2(G_1,\T)$.
\end{enumlistprimeprime}

We replace condition \ref{ncond.iso.2} in Section \ref{sec.generic-iso-superrigid} by its natural virtual counterpart, essentially saying that the commensurator of $G$ is countable.

\begin{enumlistprime}[resume]
\item\label{ncond.iso.2prime} The set of triples $(G_0,G_1,\delta)$ where $G_0,G_1 < G$ are finite index subgroups and $\delta : G_0 \to G_1$ is a group isomorphism, is countable.
\end{enumlistprime}

The third condition (iii)$^\prime$ is identical to the condition \ref{ncond.iso.3}, so we do not repeat it here.

We thus take an icc group $G$ satisfying conditions \ref{cond.iso.1prime}, \ref{ncond.iso.1primeprime}, \ref{ncond.iso.2prime} and \ref{ncond.iso.3}, and we consider an \emph{arbitrary} central extension $\Gtil$ of $G$ by a torsion free abelian group $C$ with associated $2$-cocycle $\Om \in H^2(G,C)$. In Theorem \ref{thm.inherit-virtual-iso-superrigidity}, we describe all countable groups $\Lambdatil$ for which there exists a nonzero\slash faithful bifinite $L(\Gtil)$-$L(\Lambdatil)$-bimodule. We prove that up to virtual isomorphism\slash commensurability these groups are themselves central extensions whose associated $2$-cocycle is ``equivalent'' with $\Om$. Under extra assumptions on $H^2(G,C)$ and $\Om$, this forces $\Lambdatil$ to be virtually isomorphic\slash commensurable to $\Gtil$, so that W$^*$-superrigidity follows.

We thus introduce the following variant of Definition \ref{def.equiv-2-cocycle}, providing the appropriate notion of equivalence between $2$-cocycles.

\begin{definition}\label{def.equiv-2-cocycle-bis}
Let $G$ be a countable group and $C$, $D$ countable abelian groups. Let $\Om \in H^2(G,C)$ and $\Om' \in H^2(G,D)$.
\begin{enuma}
\item\label{def.equiv-2-cocycle-bis.1} We write $\Om \sim \Om'$ if $\Om \sim^t \Om'$ for some $t > 0$ (see Definition \ref{def.equiv-2-cocycle}). So, $\Om \sim \Om'$ iff there exists an isomorphism $\zeta : C_0 \to D_0$ between subgroups $C_0 < C$ and $D_0 < D$ such that
    \begin{equation}\label{eq.cocycle-equal-bis}
\mu \circ \Om = \om \circ \Om' \quad\text{in $H^2(G,\T)$ whenever $\mu \in \Chat$, $\om \in \Dhat$ satisfy $\om \circ \zeta = \mu|_{C_0}$.}
\end{equation}
    and such that $[C:C_0]$, $[D:D_0]$ are both finite, or both infinite.

\item\label{def.equiv-2-cocycle-bis.2} We write $\Om \approx\ve \Om'$ if $\Om \sim \Om'$ with both $C_0 < C$, $D_0 < D$ of finite index.
\end{enuma}
\end{definition}

Recall the homomorphism $\Theta : H^2(G,C) \to \Hom(\Chat,H^2(G,\T))$ introduced in \eqref{eq.hom-Theta}.

\begin{theorem}\label{thm.inherit-virtual-iso-superrigidity}
Let $0 \to C \to \Gtil \to G \to e$ be any central extension of countable groups with associated $2$-cocycle $\Om \in H^2(G,C)$.

Define $\cF_\sim$, resp.\ $\cF_{\approx\ve}$, as the class of countable groups that are isomorphic to a central extension of a finite index subgroup $G_0 < G$ by a countable abelian group $D$ such that the associated $2$-cocycle $\Om' \in H^2(G_0,D)$ satisfies $\Om' \sim \Om|_{G_0}$, resp.\ $\Om' \approx\ve \Om|_{G_0}$.

Let $\Lambdatil$ be any countable group.
\begin{enuma}
\item\label{thm.inherit-virtual-iso-superrigidity.1} If $\Lambdatil$ is virtually isomorphic\slash commensurable to a group in $\cF_\sim$, there exists a nonzero\slash faithful bifinite $L(\Lambdatil)$-$L(\Gtil)$-bimodule.
\end{enuma}
Assume that $C$ is torsion free.
\begin{enuma}[resume]
\item\label{thm.inherit-virtual-iso-superrigidity.2} If $G$ is icc and satisfies conditions \ref{cond.iso.1prime}, \ref{ncond.iso.1primeprime}, \ref{ncond.iso.2prime} and \ref{ncond.iso.3}, also the converse of \ref{thm.inherit-virtual-iso-superrigidity.1} holds: there exists a nonzero\slash faithful bifinite $L(\Lambdatil)$-$L(\Gtil)$-bimodule iff $\Lambdatil$ is virtually isomorphic\slash commensurable to a group in $\cF_\sim$.

\item\label{thm.inherit-virtual-iso-superrigidity.3} If $\Theta(\Om|_{G_0}) \in \Hom(\Chat,H^2(G_0,\T))$ has finite kernel for every finite index subgroup $G_0 < G$, then $\cF_\sim = \cF_{\approx\ve}$.

\item\label{thm.inherit-virtual-iso-superrigidity.4} If condition \ref{ncond.iso.4} below holds, then every group in $\cF_{\approx\ve}$ is virtually isomorphic to $\Gtil$.

\item\label{thm.inherit-virtual-iso-superrigidity.5} If conditions \ref{ncond.iso.4} and \ref{ncond.iso.5} below hold, then every group in $\cF_{\approx\ve}$ is commensurable to $\Gtil$.
\end{enuma}
\end{theorem}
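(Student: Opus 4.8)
The plan is to take the virtual isomorphism between $\Gtil$ and a group $\Lambdatil \in \cF_{\approx\ve}$ provided by \ref{thm.inherit-virtual-iso-superrigidity.4}, and to upgrade it to a commensurability by removing the two finite normal subgroups that appear in it, after passing to finite index subgroups. On the side of $\Gtil$ this is automatic from torsion freeness of $C$; on the side of $\Lambdatil$, condition \ref{ncond.iso.5} is exactly the cohomological input that makes it possible.

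So let $\Lambdatil\in\cF_{\approx\ve}$. By \ref{thm.inherit-virtual-iso-superrigidity.4}, which uses condition \ref{ncond.iso.4}, there are finite index subgroups $\Gtil_1<\Gtil$, $\Lambdatil_1<\Lambdatil$, finite normal subgroups $N\lhd\Gtil_1$, $\Sigma\lhd\Lambdatil_1$, and an isomorphism $\theta:\Gtil_1/N\to\Lambdatil_1/\Sigma$. First I would record the elementary fact that a finite index subgroup of an icc group is again icc: a finite conjugacy class inside such a subgroup is contained in a finite conjugacy class of the ambient group, which is trivial. Writing $\pi:\Gtil\to G$ for the quotient homomorphism, $\pi(\Gtil_1)$ is then an icc finite index subgroup of $G$, so its finite normal subgroup $\pi(N)$ is trivial; hence $N\subseteq\ker\pi=C$, and as $C$ is torsion free and $N$ is finite, $N=\{e\}$. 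Applying the same argument to the central extension $0\to D\to\Lambdatil\to G_0\to e$ underlying $\Lambdatil$ — where $G_0<G$ has finite index, hence is icc, so $\cZ(\Lambdatil)=\Lambdatil\fc=D$ — gives that the image of $\Sigma$ in $G_0$ is trivial, i.e. $\Sigma\subseteq D$; in particular $\Sigma$ is central in $\Lambdatil_1$.

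Using $N=\{e\}$ and $\theta$, we may then regard $\Lambdatil_1$ as a central extension $0\to\Sigma\to\Lambdatil_1\to\Gtil_1\to e$ of $\Gtil_1$ by the finite group $\Sigma$, with class $\om\in H^2(\Gtil_1,\Sigma)$. It now suffices to prove that $\om$ restricts to $0$ in $H^2(Q,\Sigma)$ for some finite index subgroup $Q<\Gtil_1$: the preimage of such a $Q$ in $\Lambdatil_1$ then contains a copy of $Q$ of index $|\Sigma|$, so $Q$ is a common finite index subgroup of $\Gtil$ and $\Lambdatil$, which is commensurability in the sense of Definition \ref{def.virtual-iso}. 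To produce such a $Q$ I would feed $\om$ into the universal coefficient sequence \eqref{eq.univ-coeff}: its component in $\Hom(\widehat\Sigma,H^2(\Gtil_1,\T))$ is, character by character of the finite group $\Sigma$, the obstruction $2$-cocycle of a finite central extension, hence of finite type, so it is trivialized on a finite index subgroup by the virtual vanishing of finite type cocycles (the relevant variant of condition \ref{ncond.iso.1primeprime}); and the remaining component in $\Ext^1((\Gtil_1)\ab,\Sigma)$ is trivialized on a further finite index subgroup by condition \ref{ncond.iso.5}.

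The main obstacle is exactly this last point. Unlike the torsion free coefficient group $C$, whose divisibility forced $\Ext^1(G\ab,C)$ to vanish outright in \ref{thm.inherit-iso-superrigidity.4}, the finite group $\Sigma$ can pair nontrivially with the torsion of $(\Gtil_1)\ab$; moreover, comparing the two central extension descriptions of $\Gtil_1\cong\Lambdatil_1/\Sigma$ yields an exact sequence $0\to\Sigma\to D\to C\to 0$ which need not split, so $D$ itself may carry torsion and $\Sigma$ may genuinely be nontrivial. One therefore has to descend to a finite index subgroup of $\Gtil_1$ — equivalently of the corresponding finite index subgroup of $G$ — on which the relevant $\Ext^1$-class dies, and condition \ref{ncond.iso.5} is the hypothesis designed to guarantee that such a subgroup exists; that \ref{ncond.iso.5} is verified for the concrete left-right wreath products of Theorem \ref{thm.main-A} is the content of Section \ref{sec.elementary-2-cohom-comput}.
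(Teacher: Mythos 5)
Your overall plan — reduce to a central extension $0 \to \Sigma \to \Lambdatil_1 \to \Gtil_1 \to e$ with $\Sigma$ finite and try to split it on a further finite index subgroup — is a reasonable idea, and your preliminary analysis (that $N = \{e\}$ from torsion freeness of $C$, and that $\Sigma \subset D$) is correct. But the universal-coefficient step has two gaps that I don't see how to close without substantially changing the route.

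First, both conditions you invoke are hypotheses about finite index subgroups of $G$, not of $\Gtil$. Condition~\ref{ncond.iso.1primeprime} trivializes finite-type cocycles on finite index subgroups $G_0 < G$; condition~\ref{ncond.iso.5} kills classes in $\Ext^1(G_{0,\text{\rm ab}},E)$ for $G_0 < G$ of finite index. You apply them to $H^2(\Gtil_1,\T)$ and $\Ext^1((\Gtil_1)\ab,\Sigma)$ for $\Gtil_1 < \Gtil$. There is no commuting-square argument in sight that transports these conditions from $G$ to $\Gtil$: the group $\Gtil_1$ is itself a central extension of a finite index subgroup of $G$ by a finite index subgroup of the infinite group $C$, so $(\Gtil_1)\ab$ contains a (quotient of a) copy of $C$, and $\Ext^1(C,\Sigma)$ is not controlled by \ref{ncond.iso.5} at all.

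Second, the claim that the $\Hom(\widehat\Sigma,H^2(\Gtil_1,\T))$-component is ``of finite type'' because it arises from a finite central extension conflates two distinct notions. A cocycle $\mu\circ\om$ with $\mu\in\widehat\Sigma$ takes only finitely many values in $\T$, but ``finite type'' in the sense of this paper means carried by a finite-dimensional projective representation, which is a genuinely stronger property and does not follow from the cocycle having finite image. In general, a group can admit a central extension by $\Z/2\Z$ whose associated $\{\pm 1\}$-valued $2$-cocycle is not of finite type.

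The paper avoids both issues by never forming the virtual isomorphism of~\ref{thm.inherit-virtual-iso-superrigidity.4} first and then attempting to repair it. Instead it stays at the level of $G_0<G$ throughout: it projects the given $\Psi\in H^2(G_0,D)$ to $\Psi'\in H^2(G_0,E)$ with $E = D/D_0$ finite, observes from the $\approx\ve$ relation that $\Theta(\Psi')=1$, uses condition~\ref{ncond.iso.5} (now legitimately, since $G_0<G$) to kill $\Psi'|_{G_1}$ on a finite index $G_1 < G_0$, and then adjusts $\Psi|_{G_1}$ by a coboundary to land in $Z^2(G_1,D_0)$ with $D_0$ torsion free. After that, the torsion-free part of the proof of~\ref{thm.inherit-virtual-iso-superrigidity.4} already yields commensurability. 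I'd suggest rewriting along these lines: the key is to kill the torsion of $D$ \emph{before} invoking~\ref{thm.inherit-virtual-iso-superrigidity.4}, not after.
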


So when all conditions  \ref{cond.iso.1prime}, \ref{ncond.iso.1primeprime}, \ref{ncond.iso.2prime}, \ref{ncond.iso.3}, \ref{ncond.iso.4}, \ref{ncond.iso.5} and the finiteness of $\Ker \Theta(\Om|_{G_0})$ hold, we have W$^*$-superrigidity up to virtual isomorphisms: for an arbitrary countable group $\Lambdatil$, there exists a nonzero\slash faithful bifinite $L(\Gtil)$-$L(\Lambdatil)$-bimodule if and only if $\Gtil$ is virtually isomorphic\slash commensurable to $\Lambdatil$.

One can also rephrase Theorem \ref{thm.inherit-virtual-iso-superrigidity} in the following way. The groups that are virtually isomorphic\slash commensurable to resp.\ $\Gtil$, a group in $\cF_{\approx\ve}$, a group in $\cF_\sim$, and the groups whose group von Neumann algebra admits a nonzero\slash faithful bifinite bimodule with $L(\Gtil)$, form larger and larger families of groups, and the rigidity results \ref{thm.inherit-virtual-iso-superrigidity.2}-\ref{thm.inherit-virtual-iso-superrigidity.5} say which of these families are equal.

In Theorem \ref{thm.inherit-virtual-iso-superrigidity}, we use the following cohomological conditions on $G$ and $C$. Both are virtual counterparts of the assumption $\Ext^1(G\ab,C) = 0$ that appeared in Theorem \ref{thm.inherit-iso-superrigidity}.

\begin{enumlist}[start=4]
\item\label{ncond.iso.4} If $G_0 < G$ has finite index and $D$ is a torsion free abelian group that is commensurable to $C$, then for every $\Om \in \Ext^1(G_{0,\text{\rm ab}},D)$, there exists a finite index subgroup $G_1 < G_0$ such that $\Om \circ \pi = 0$ in $\Ext^1(G_{1,\text{\rm ab}},D)$, where $\pi : G_{1,\text{\rm ab}} \to G_{0,\text{\rm ab}}$ is the natural homomorphism.

\item\label{ncond.iso.5} If $G_0 < G$ is a finite index subgroup and $E$ is a finite abelian group, then for every $\Om \in \Ext^1(G_{0,\text{\rm ab}},E)$, there exists a finite index subgroup $G_1 < G_0$ such that $\Om \circ \pi = 0$ in $\Ext^1(G_{1,\text{\rm ab}},E)$, where $\pi : G_{1,\text{\rm ab}} \to G_{0,\text{\rm ab}}$ is the natural homomorphism.
\end{enumlist}

By Lemma \ref{lem.extend-characters-C}, the finiteness of the kernel of $\Theta(\Om|_{G_0})$ is equivalent to the finiteness of the image of the restriction homomorphism $\Hom(\Gtil_0,\T) \to \Chat$, where $\Gtil_0 < \Gtil$ is the preimage of $G_0 < G$. In Proposition \ref{prop.new-no-go-result-1.2}, we prove that this finiteness of the kernel of $\Theta(\Om|_{G_0})$ is a necessary condition for Theorem \ref{thm.inherit-virtual-iso-superrigidity.3} to hold.

In Proposition \ref{prop.new-no-go-result-3}, we illustrate the essential roles of conditions \ref{ncond.iso.4} and \ref{ncond.iso.5}. We give examples where the conclusions of Theorem \ref{thm.inherit-virtual-iso-superrigidity.2} and \ref{thm.inherit-virtual-iso-superrigidity.3} hold, but the conclusion of \ref{thm.inherit-virtual-iso-superrigidity.4} fails. We also give examples where the conclusions of Theorem \ref{thm.inherit-virtual-iso-superrigidity.2}, \ref{thm.inherit-virtual-iso-superrigidity.3} and \ref{thm.inherit-virtual-iso-superrigidity.4} hold, but the conclusion of \ref{thm.inherit-virtual-iso-superrigidity.5} fails.

\begin{proof}[{Proof of Theorem \ref{thm.inherit-virtual-iso-superrigidity.1}}]
By Lemma \ref{lem.from-virtual-iso-to-bimodule}, it suffices to prove the following: if $\Lambdatil$ is a central extension of a finite index subgroup $G_0 < G$ by a countable abelian group $D$ such that the associated $2$-cocycle $\Om' \in H^2(G_0,D)$ satisfies $\Om|_{G_0} \sim \Om'$, then there exists a faithful bifinite $L(\Gtil)$-$L(\Lambdatil)$-bimodule.

Define $\Gtil_0 < \Gtil$ as the finite index subgroup given by the preimage of $G_0 < G$. By Lemma \ref{lem.from-virtual-iso-to-bimodule}, there exists a faithful bifinite $L(\Gtil)$-$L(\Gtil_0)$-bimodule. It thus suffices to prove that there exists a faithful bifinite $L(\Gtil_0)$-$L(\Lambdatil)$-bimodule.

Take an isomorphism $\zeta : C_0 \to D_0$ between subgroups $C_0 < C$ and $D_0 < D$ such that $\mu \circ \Om|_{G_0} = \om \circ \Om'$ in $H^2(G_0,\T)$ whenever $\mu \in \Chat$, $\om \in \Dhat$ satisfy $\om \circ \zeta = \mu|_{C_0}$, and such that $[C:C_0]$, $[D:D_0]$ are both finite, or both infinite.

When both $n=[C:C_0]$ and $m=[D:D_0]$ are finite, we find, as in the proof of Theorem \ref{thm.inherit-iso-superrigidity.1}, a tracial von Neumann algebra $Q$ such that $L(\Gtil_0) \cong \C^n \ot Q$ and $L(\Lambdatil) \cong \C^m \ot Q$. In particular, there exists a faithful bifinite $L(\Gtil_0)$-$L(\Lambdatil)$-bimodule.

When both $[C:C_0]$ and $[D:D_0]$ are infinite, we find, as in the proof of Theorem \ref{thm.inherit-iso-superrigidity.1}, a tracial von Neumann algebra $Q$ such that $L(\Gtil_0) \cong L^\infty([0,1]) \ovt Q \cong L(\Lambdatil)$. Again, there exists a faithful bifinite $L(\Gtil_0)$-$L(\Lambdatil)$-bimodule.
\end{proof}

\begin{proof}[{Proof of Theorem \ref{thm.inherit-virtual-iso-superrigidity.2}, part 1}]
Write $M = L(\Gtil)$, $P = L(\Lambdatil)$ and assume that $H$ is a nonzero bifinite $P$-$M$-bimodule. We prove in part~1 that $\Lambdatil$ is virtually isomorphic to a group in $\cF_\sim$.
Denote by $z \in \cZ(P)$ the left support of $H$. By Lemma \ref{lem.intertwine-virtual-center-to-center} and condition \ref{ncond.iso.3}, we find that $L(\Lambdatil\fc)z \prec^f \cZ(P)$ inside $P$. Denote by $v_s$, $s \in \Lambdatil$, the unitaries generating $L(\Lambdatil)$. Define $\Sigma = \{s \in \Lambdatil \mid v_s z = z \}$. By Proposition \ref{prop.structure-non-icc.1} and \ref{prop.structure-non-icc.2}, we get that $\Sigma$ is a finite normal subgroup of $\Lambdatil$ and we find a finite index subgroup $\Lambdatil_1 < \Lambdatil$ containing $\Sigma$ such that the group $\Lambdatil_0 := \Lambdatil_1 / \Sigma$ satisfies $\Lambdatil_{0,\text{\rm fc}} = \cZ(\Lambdatil_0)$.

Since $v_s z = z$ for all $s \in \Sigma$ and $\Lambdatil_1 < \Lambdatil$ has finite index, we can view $H$ as a nonzero bifinite $L(\Lambdatil_0)$-$L(\Gtil)$-bimodule. Since $\Lambdatil_0$ is virtually isomorphic to $\Lambdatil$, we may replace $\Lambdatil$ by $\Lambdatil_0$ and assume from the start that $\Lambdatil\fc = \cZ(\Lambdatil)$. Denote $D = \cZ(\Lambdatil)$ and define $\Lambda = \Lambdatil / D$. We have found the central extension $0 \to D \to \Lambdatil \to \Lambda \to e$. We denote the associated $2$-cocycle by $\Om' \in H^2(\Lambda,D)$.

Denote by $r \in \cZ(M)$ the right support of $H$. By Proposition \ref{prop.iso-on-center}, we may replace $H$ by a nonzero $P$-$M$-subbimodule and assume that there is a $*$-isomorphism $\al : \cZ(M) r \to \cZ(P) z$ such that
\begin{equation}\label{eq.left-is-right-on-center}
\al(a r) \cdot \xi = \xi \cdot a \quad\text{for all $a \in \cZ(M)$ and $\xi \in H$.}
\end{equation}

Since $G$ is an icc group, we get that $\Gtil\fc = \cZ(\Gtil) = C$, so that $\cZ(M) = L(C) = L^\infty(\Chat)$. Since $\Lambdatil\fc = \cZ(\Lambdatil) = D$, we similarly have that $\cZ(P) = L(D) = L^\infty(\Dhat)$. We thus write $r = 1_\cU$ and $z = 1_\cV$, where $\cU \subset \Chat$ and $\cV \subset \Dhat$ are nonnegligible measurable subsets. The $*$-isomorphism $\al$ gives rise to the nonsingular isomorphism $\Delta : \cU \to \cV$ such that $\al(F) = F \circ \Delta^{-1}$ for every $F \in L^\infty(\cU) = \cZ(M)r$.

As explained in \eqref{eq.concrete-direct-integral}, we have the direct integral decompositions
$$M = \int^\oplus_{\Chat} L_{\mu \circ \Om}(G) \, d\mu \quad\text{and}\quad P = \int^\oplus_{\Dhat} L_{\om \circ \Om'}(\Lambda) \, d\om \; ,$$
where we integrate w.r.t.\ the Haar measures on $\Chat$ and $\Dhat$. We write $M_\mu = L_{\mu \circ \Om}(G)$ and $P_\om = L_{\om \circ \Om'}(\Lambda)$. Since $\cZ(M) = L^\infty(\Chat)$ and $\cZ(P) = L^\infty(\Dhat)$, almost all $M_\mu$ and $P_\om$ are factors. Using \eqref{eq.left-is-right-on-center}, we have a corresponding direct integral decomposition
$$H = \int^\oplus_{\cU} H_\mu \, d\mu$$
in which every $H_\mu$ is a $P_{\Delta(\mu)}$-$M_\mu$-bimodule. Since $H$ is finitely generated as a right Hilbert $M$-module, we have in particular that $\dim_M(H) < +\infty$, where $\dim_M$ denotes the $M$-dimension of $H$ w.r.t.\ the canonical trace $\tau_{\Gtil}$ on $M$. Since the direct integral decomposition \eqref{eq.concrete-direct-integral} is trace preserving, we find that
$$\int_\cU \dim_{M_\mu}(H_\mu) \, d\mu = \dim_M(H) < +\infty \; ,$$
so that $\dim_{M_\mu}(H_\mu) < +\infty$ for a.e.\ $\mu \in \cU$. Since $M_\mu$ is a factor, it follows that $H_\mu$ is finitely generated as a right Hilbert $M_\mu$-module for a.e.\ $\mu \in \cU$. We similarly find that $H_\mu$ is finitely generated as a left Hilbert $P_{\Delta(\mu)}$-module for a.e.\ $\mu \in \cU$. So, $H_\mu$ is a bifinite $P_{\Delta(\mu)}$-$M_\mu$-bimodule for a.e.\ $\mu \in \cU$. Removing null sets from $\cU$ and $\cV$, we have thus found nonnegligible Borel sets $\cU \subset \Chat$, $\cV \subset \Dhat$ and a nonsingular isomorphism $\Delta : \cU \to \cV$ such that for every $\mu \in \cU$, there exists a nonzero bifinite $L_{\Delta(\mu) \circ \Om'}(\Lambda)$-$L_{\mu \circ \Om}(G)$-bimodule.

By condition \ref{cond.iso.1prime}, this means that for every $\mu \in \cU$, there exist a finite index subgroup $\Lambda_\mu < \Lambda$ and a group homomorphism $\phi_\mu : \Lambda_\mu \to G$ such that $\Ker \phi_\mu$ is finite, $\phi_\mu(\Lambda_\mu) < G$ has finite index and
\begin{equation}\label{eq.is-of-finite-type}
(\mu \circ \Om \circ \phi_\mu) \; \overline{(\Delta(\mu) \circ \Om')} \quad\text{is a finite type $2$-cocycle on $\Lambda_\mu$.}
\end{equation}
Using a single $\mu \in \cU$, we see that $\Lambda$ is virtually isomorphic to $G$. By condition \ref{ncond.iso.2prime} and Lemma \ref{lem.automatic-icc} below, we can choose a finite index subgroup $\Lambda_0 < \Lambda$ that is icc.

By condition \ref{ncond.iso.2prime}, we can enumerate the triples $(G_n,\Lambda_n,\delta_n)$, $n \geq 1$, of all finite index subgroups $G_n < G$, $\Lambda_n < \Lambda_0$ and group isomorphisms $\delta_n : G_n \to \Lambda_n$. We then define for all $n \in \N$, the Borel set $\cU_n \subset \cU$ by
$$\cU_n = \bigl\{ \mu \in \cU \bigm| \mu \circ \Om|_{G_n} = \Delta(\mu) \circ \Om' \circ \delta_n \;\;\text{in $H^2(G_n,\T)$} \bigr\} \; .$$
We prove that $\bigcup_{n \in \N} \cU_n = \cU$.

Fix $\mu \in \cU$. Using the notation $\phi_\mu : \Lambda_\mu \to G$ introduced above and the icc subgroup $\Lambda_0 < \Lambda$, it follows that $\Lambda_0 \cap \Ker \phi_\mu = \{e\}$, so that $\phi_\mu$ restricts to an isomorphism between the finite index subgroup $\Lambda_0 \cap \Lambda_\mu$ of $\Lambda_0$ and the finite index subgroup $\phi_\mu(\Lambda_0 \cap \Lambda_\mu)$ of $G$. Denote by $\delta_\mu$ the inverse of this restriction of $\phi_\mu$.

By \eqref{eq.is-of-finite-type}, the $2$-cocycle $(\mu \circ \Om) \, \overline{(\Delta(\mu) \circ \Om' \circ \delta_\mu)}$ is of finite type on the finite index subgroup $\phi_\mu(\Lambda_0 \cap \Lambda_\mu)$ of $G$. By condition \ref{ncond.iso.1primeprime}, we can choose a finite index subgroup $G_\mu < \phi_\mu(\Lambda_0 \cap \Lambda_\mu)$ such that
$$\mu \circ \Om|_{G_\mu} = \Delta(\mu) \circ \Om' \circ \delta_\mu|_{G_\mu} \quad\text{in $H^2(G_\mu,\T)$.}$$
Take $n \in \N$ such that $(G_n,\Lambda_n,\delta_n) = (G_\mu,\delta_\mu(G_\mu),\delta_\mu|_{G_\mu})$. It follows that $\mu \in \cU_n$. Since $\mu \in \cU$ was arbitrary, we have proven that $\bigcup_{n \in \N} \cU_n = \cU$.

There thus exists an $n \in \N$ such that $\cU_n$ is nonnegligible. Fixing any $n \in \N$ for which $\cU_n$ is nonnegligible, we now replace $\Lambda_0$ by the smaller group $\Lambda_n$ and we put $G_0 = G_n$. We also replace $\cU$ by the nonnegligible subset $\cU_n$, and replace $\cV$ by $\Delta(\cU_n)$.

We have thus found finite index subgroups $G_0 < G$, $\Lambda_0 < \Lambda$, a group isomorphism $\delta_0 : G_0 \to \Lambda_0$, nonnegligible Borel sets $\cU \subset \Chat$, $\cV \subset \Dhat$ and a nonsingular isomorphism $\Delta : \cU \to \cV$ such that
$$\mu \circ \Om|_{G_0} = \Delta(\mu) \circ \Om' \circ \delta_0 \quad\text{in $H^2(G_0,\T)$, for all $\mu \in \cU$.}$$
We now proceed as in the proof of Theorem \ref{thm.inherit-iso-superrigidity.2}. We thus define the continuous group homomorphisms
$$\lambda : \Chat \to H^2(G_0,\T) : \lambda(\mu) = \mu \circ \Om|_{G_0} \quad\text{and}\quad \rho : \Dhat \to H^2(\Lambda_0,\T) : \rho(\eta) = \eta \circ \Om'|_{\Lambda_0} \; .$$
We also define the isomorphism $\al : H^2(G_0,\T) \to H^2(\Lambda_0,\T) : \psi \mapsto \psi \circ \delta_0^{-1}$. By definition, $\al(\lambda(\mu)) = \rho(\Delta(\mu))$ for all $\mu \in \cU$.

Note that $(\al \circ \lambda)^{-1}(\rho(\Dhat))$ is a closed subgroup of $\Chat$ that contains $\cU$, so that it is nonnegligible and thus open. Since $C$ is torsion free, $\Chat$ is connected and we conclude that $\al(\lambda(\Chat)) \subset \rho(\Dhat)$. Since $\cV \subset \rho^{-1}(\al(\lambda(\Chat)))$, we similarly find that $\rho^{-1}(\al(\lambda(\Chat)))$ is an open subgroup of $\Dhat$, which is thus equal to $\widehat{D/F}$ for some finite subgroup $F < D$. Write $D_1 = D/F$ and note that $\cV \subset \widehat{D_1}$.

Define the subgroup $C_0 < C$ such that the kernel of $\lambda$ equals $\widehat{C/C_0}$. Define the faithful homomorphisms $\lambda_0 : \widehat{C_0} \to H^2(G_0,\T)$ such that $\lambda_0(\mu|_{C_0}) = \lambda(\mu)$ for all $\mu \in \Chat$. By construction, $(\al \circ \lambda_0)^{-1} \circ \rho$ defines a surjective continuous homomorphism of $\widehat{D_1}$ onto $\widehat{C_0}$ whose dual is a faithful group homomorphism $\zeta : C_0 \to D_1$ satisfying $\mu \circ \Om|_{G_0} = \om \circ \Om' \circ \delta_0$ in $H^2(G_0,\T)$ whenever $\mu \in \Chat$, $\om \in \widehat{D_1}$ and $\om \circ \zeta = \mu|_{C_0}$. Since the kernel of $\lambda$ equals $\widehat{C/C_0}$, it follows in particular that $\om \in \widehat{D_1}$ satisfies $\rho(\om) = 1$ iff $\om$ is equal to $1$ on $\zeta(C_0)$.

Note that $C_0 < C$ has finite index iff the restriction of $\lambda$ to $\cU$ is finite-to-one. Similarly, $D_0 < D_1$ has finite index iff the restriction of $\rho$ to $\cV \subset \widehat{D_1}$ is finite-to-one. Since $\al(\lambda(\mu)) = \rho(\Delta(\mu))$ for all $\mu \in \cU$, we conclude that $[C:C_0]$ and $[D:D_0]$ are either both finite, or both infinite.

Define the $2$-cocycle $\Om'_0 \in H^2(G_0,D_1)$ by composing $\Om' \circ \delta_0$ with the quotient homomorphism $D \to D_1$. We have $\Om'_0 \sim \Om|_{G_0}$. Since $\Lambdatil$ is virtually isomorphic to the central extension defined by $\Om'_0$, the proof is complete.
\end{proof}

\begin{proof}[{Proof of Theorem \ref{thm.inherit-virtual-iso-superrigidity.2}, part 2}]
We still write $M = L(\Gtil)$, $P = L(\Lambdatil)$ and now assume that $H$ is a faithful bifinite $P$-$M$-bimodule. We prove in part~2 that $\Lambdatil$ is commensurable to a group in $\cF_\sim$. We start by repeating part~1 of the proof. In the first paragraph, we get that $z=1$, so that in the second paragraph, after replacing $\Lambdatil$ by a finite index subgroup, we get that $\Lambdatil\fc = \cZ(\Lambdatil)$. We again denote this abelian group by $D$ and define $\Lambda = \Lambdatil/D$. We find the central extension $0 \to D \to \Lambdatil \to \Lambda \to e$ and denote the associated $2$-cocycle by $\Om' \in H^2(\Lambda,D)$.

As in the proof of part~1, after a further passage to a finite index subgroup in $\Lambdatil$, we may assume that the group $\Lambda$ is icc. Also, by starting from a decomposition of $\bim{P}{H}{M}$ using Proposition \ref{prop.iso-on-center}, the proof of part~1 gives us, indexed by $i \in \N$, nonnegligible Borel sets $\cU_i \subset \Chat$, $\cV_i \subset \Dhat$, nonsingular isomorphisms $\Delta_i : \cU_i \to \cV_i$, finite index subgroups $G_i < G$, $\Lambda_i < \Lambda$ and group isomorphisms $\delta_i : G_i \to \Lambda_i$ such that
\begin{equation}\label{eq.equality-with-Delta-i}
\mu \circ \Om|_{G_i} = \Delta_i(\mu) \circ \Om' \circ \delta_i \;\;\text{in $H^2(G_i,\T)$, for all $i \in \N$ and all $\mu \in \cU_i$,}
\end{equation}
and such that $\bigcup_i \cU_i$ and $\bigcup_i \cV_i$ have a complement of measure zero, where the latter follows from the faithfulness of the bimodule $\bim{P}{H}{M}$. Take $n$ such that the normalized Haar measure of $\cV_0 := \bigcup_{i=1}^n \cV_i$ in $\Dhat$ is strictly larger than $1/2$. Define the finite index subgroup $\Lambda_0 < \Lambda$ by $\Lambda_0 = \bigcap_{i=1}^n \Lambda_i$. Define the continuous homomorphism $\rho : \Dhat \to H^2(\Lambda_0,\T) : \rho(\om) = \om \circ \Om'|_{\Lambda_0}$.

Fix $i \in \{1,\ldots,n\}$. Since $\Lambda_0 \subset \Lambda_i$ and $\delta_i : G_i \to \Lambda_i$ is an isomorphism, we have the well-defined continuous group homomorphism
$$\be_i : \Chat \to H^2(\Lambda_0,\T) : \be_i(\mu) = \mu \circ \Om \circ \delta_i^{-1}|_{\Lambda_0} \; .$$
Since $\cU_i \subset \be_i^{-1}(\rho(\Dhat))$ and $C$ is torsion free, we conclude as above that $\be_i(\Chat) \subset \rho(\Dhat)$. Since $\cV_i \subset \rho^{-1}(\be_i(\Chat))$, it similarly follows that $\rho^{-1}(\be_i(\Chat))$ is an open subgroup of $\Dhat$. Applying $\rho$, we conclude that $\be_i(\Chat) \subset \rho(\Dhat)$ is an open subgroup. Since $\Chat$ is connected, also $\be_i(\Chat)$ is connected. It follows that the connected component $K$ of the identity in $\rho(\Dhat)$ is an open subgroup of $\rho(\Dhat)$ and that $\be_i(\Chat) = K$.

Since $\cV_i \subset \rho^{-1}(\be_i(\Chat))$, we conclude that $\cV_i \subset \rho^{-1}(K)$ for every $i \in \{1,\ldots,n\}$. So, $\rho^{-1}(K)$ is an open subgroup of $\Dhat$ whose normalized Haar measure is strictly larger than $1/2$. This means that $\rho^{-1}(K) = \Dhat$, and thus $\rho(\Dhat) = \be_i(\Chat)$ for all $i \in \{1,\ldots,n\}$.

We take $i=1$ and define $G_0 = \delta_1^{-1}(\Lambda_0)$. Since $\Lambda_0 < \Lambda_1$ has finite index, also $G_0 < G_1$ has finite index, so that $G_0 < G$ has finite index. Define the subgroup $C_0 < C$ such that $\widehat{C/C_0}$ is the kernel of the continuous group homomorphism $\lambda : \Chat \to H^2(G_0,\T) : \lambda(\mu) = \mu \circ \Om|_{G_0}$. Since $\rho$ is a surjective homomorphisms of $\Dhat$ onto $\be_1(\Chat)$, there is a unique faithful group homomorphism $\zeta : C_0 \to D$ such that $\mu \circ \Om|_{G_0} = \om \circ \Om' \circ \delta_1|_{G_0}$ in $H^2(G_0,\T)$ whenever $\mu \in \Chat$ and $\om \in \Dhat$ satisfy $\mu|_{C_0} = \om \circ\zeta$. Write $D_0 = \zeta(C_0)$.

In the same way as at the end of the proof of part~1, we conclude that $[C:C_0]$ and $[D:D_0]$ are either both finite, or both infinite. Define $\Om'_0 \in H^2(G_0,D)$ by $\Om'_0 = \Om' \circ \delta_1|_{G_0}$. Then $\Om|_{G_0} \sim \Om'_0$. By construction, $\Lambdatil$ is commensurable to the central extension defined by $\Om'_0$, so that the proof is complete.
\end{proof}

\begin{proof}[{Proof of Theorem \ref{thm.inherit-virtual-iso-superrigidity.3}}]
Take a central extension $\Lambdatil$ of a finite index subgroup $G_0 < G$ by a countable abelian group $D$ such that the associated $2$-cocycle $\Om' \in H^2(G_0,D)$ satisfies $\Om|_{G_0} \sim \Om'$. It suffices to prove that $\Om|_{G_0} \approx\ve \Om'$.

Take $C_0 < C$, $D_0 < D$ and a group isomorphism $\zeta : C_0 \to D_0$ as in Definition \ref{def.equiv-2-cocycle-bis.1}. It follows from \eqref{eq.cocycle-equal-bis} that $\mu \in \Chat$ belongs to the kernel of $\Theta(\Om|_{G_0})$ whenever $\mu$ is equal to $1$ on $C_0$. Since $\Theta(\Om|_{G_0})$ has finite kernel, $C_0 < C$ has finite index. Since $[C:C_0]$, $[D:D_0]$ are supposed to be either both finite, or both infinite, it follows that also $D_0 < D$ has finite index. So, $\Om|_{G_0} \approx\ve \Om'$.
\end{proof}

\begin{proof}[{Proof of Theorem \ref{thm.inherit-virtual-iso-superrigidity.4} and \ref{thm.inherit-virtual-iso-superrigidity.5}}]
Take a central extension $\Lambdatil$ of a finite index subgroup $G_0 < G$ by a countable abelian group $D$ such that the associated $2$-cocycle $\Psi \in H^2(G_0,D)$ satisfies $\Om|_{G_0} \approx\ve \Psi$. Assume that condition \ref{ncond.iso.4} holds. We prove that $\Lambdatil$ is virtually isomorphic to $\Gtil$. We then assume that also condition \ref{ncond.iso.5} holds and prove that $\Lambdatil$ is commensurable to $\Gtil$.

By the universal coefficient theorem \eqref{eq.univ-coeff}, condition \ref{ncond.iso.4}, resp.\ \ref{ncond.iso.5}, can be reformulated in the following way: if $G_0 < G$ is a finite index subgroup and $\Phi \in H^2(G_0,D)$ is a $2$-cocycle with values in a torsion free abelian group $D$ that is commensurable with $C$, resp.\ a finite abelian group $D$, and if $\om \circ \Phi = 1$ in $H^2(G_0,\T)$ for all $\om \in \Dhat$, there exists a finite index subgroup $G_1 < G$ such that $\Phi|_{G_1} = 0$ in $H^2(G_1,D)$.

Since $\Om|_{G_0} \approx\ve \Psi$, we can take an isomorphism $\zeta : C_0 \to D_0$ between finite index subgroups $C_0 < C$ and $D_0 < D$ such that \eqref{eq.cocycle-equal-bis} holds. Denote by $D\tor < D$ the subgroup of elements of finite order. Since $C$ is torsion free, also $C_0$ is torsion free, so that $\zeta(C_0) \cap D\tor = \{0\}$. Since $\zeta(C_0) < D$ has finite index, it follows that $D\tor$ is finite. So $D\tor$ is a finite normal subgroup of $\Lambdatil$ and $\Lambdatil/D\tor$ can be viewed as the central extension of $G_0$ by $D_1 := D/D\tor$, with $2$-cocycle $\Psi_1 \in H^2(G_0,D_1)$ given by composing $\Psi$ with the quotient homomorphism $D \to D_1$. The composition of $\zeta$ with the quotient homomorphism remains a faithful homomorphism $\zeta_1 : C_0 \to D_1$. Note that $D_1$ is torsion free and virtually isomorphic to $C$. By construction, $\mu \circ \Om|_{G_0} = \om \circ \Psi_1$ in $H^2(G_0,\T)$ whenever $\mu \in \Chat$ and $\om \in \widehat{D_1}$ satisfy $\om \circ \zeta_1 = \mu|_{C_0}$.

Define $D_2 = \zeta_1(C_0)$. Since $\om \circ \Psi_1 = 1$ in $H^2(G_0,\T)$ whenever $\om \in \widehat{D_1}$ is equal to $1$ on $D_2$, we obtain a well-defined continuous homomorphism $\al : \widehat{D_2} \to H^2(G_0,\T)$ satisfying $\al(\om|_{D_2}) = \om \circ \Psi_1$ for every $\om \in \widehat{D_1}$. By the surjectivity of $\Theta$ in the universal coefficient theorem \eqref{eq.univ-coeff}, we can choose $\Psi_0 \in H^2(G_0,D_2)$ such that $\om \circ \Psi_0 = \al(\om)$ for all $\om \in \widehat{D_2}$.

The difference $\Psi_1 - \Psi_0$ defines a $2$-cocycle $\Psi_2 \in H^2(G_0,D_1)$ satisfying $\om \circ \Psi_2 = 1$ for all $\om \in \widehat{D_1}$. Since $D_1$ is torsion free and virtually isomorphic to $C$, condition \ref{ncond.iso.4} gives us a finite index subgroup $G_1 < G_0$ such that $\Psi_2|_{G_1} = 0$ in $H^2(G_1,D_1)$. This means that $\Psi_0|_{G_1} = \Psi_1|_{G_1}$ in $H^2(G_1,D_1)$.

Since $\mu \circ \Om|_{G_0}=1$ for all $\mu \in \Chat$ that are equal to $1$ on $C_0$, after a similar application of \eqref{eq.univ-coeff} and condition \ref{ncond.iso.4}, and after replacing $G_1$ by a smaller finite index subgroup, we also find $\Om_0 \in H^2(G_1,C_0)$ such that $\Om|_{G_1} = \Om_0$ in $H^2(G_1,C)$.

Note that $\zeta_1 : C_0 \to D_2$ is an isomorphism. By construction, $\om \circ \Psi_0 = (\om \circ \zeta_1) \circ \Om_0$ in $H^2(G_1,\T)$ for all $\om \in \widehat{D_2}$. After a third application of condition \ref{ncond.iso.4} and replacing $G_1$ by a smaller finite index subgroup, we get that $\zeta_1^{-1} \circ \Psi_0 = \Om_0$ in $H^2(G_1,C_0)$. So the central extension of $G_1$ by $C_0$ defined by $\Om_0$ is isomorphic to the central extension of $G_1$ by $D_2$ defined by $\Psi_0$. The first of these groups is isomorphic to a finite index subgroup of $\Gtil$, while the second of these groups is isomorphic to a finite index subgroup of $\Lambdatil/D\tor$. So $\Lambdatil$ and $\Gtil$ are virtually isomorphic.

Finally assume that also condition \ref{ncond.iso.5} holds. Define the finite abelian group $E = D/D_0$ and define $\Psi' \in H^2(G_0,E)$ by composing $\Psi$ with the quotient homomorphism $\pi : D \to E$. Since $\om \circ \Psi = 1$ in $H^2(G_0,\T)$ for all $\om \in \Dhat$ that are equal to $1$ on $D_0$, we get that $\om \circ \Psi' = 1$ in $H^2(G_0,\T)$ for all $\om \in \widehat{E}$. By condition \ref{ncond.iso.5}, we find a finite index subgroup $G_1 < G_0$ such that $\Psi'|_{G_1} = 0$ in $H^2(G_1,E)$.

Choosing a representative $\Psi \in Z^2(G_0,D)$, it follows that $\pi \circ \Psi|_{G_1} = \partial \vphi$ for some map $\vphi : G_1 \to E$. Choose a map $\psi : G_1 \to D$ such that $\pi \circ \psi = \vphi$. Then $\Psi_1 := \Psi|_{G_1} - \partial \psi$ belongs to $Z^2(G_1,D_0)$. Define $\Lambdatil_1$ as the central extension of $G_1$ by $D_0$ with $2$-cocycle $\Psi_1$. By construction, $\Lambdatil_1$ is isomorphic with a finite index subgroup of $\Lambdatil$.

We still have that $\Psi_1 \approx\ve \Om|_{G_1}$. Since $\Psi_1 \in H^2(G_1,D_0)$ and $D_0$ is torsion free, in the proof of (d) above, the torsion part is zero and we conclude that $\Lambdatil_1$ is commensurable to $\Gtil$.
\end{proof}

\begin{remark}\label{rem.relax-v-iso}
We make a similar remark as in \ref{rem.relax-iso} about relaxing conditions \ref{cond.iso.1prime} and \ref{ncond.iso.2prime}, which will be used in the proof of Proposition \ref{prop.new-no-go-result-3}. Again, given a concrete central extension $0 \to C \to \Gtil \to G \to e$ with associated $2$-cocycle $\Om \in H^2(G,C)$, we only used that condition \ref{cond.iso.1prime} holds for the specific $2$-cocycles in $H^2(G,\T)$ of the form $\mu \circ \Om$, $\mu \in \Chat$.

Secondly, in our applications of condition \ref{ncond.iso.2prime}, it would be sufficient that $G$ has only countable many finite index subgroups and that we have a countable family $(L_n,R_n,\delta_n)$, $n \in \N$, of finite index subgroups $L_n,R_n < G$ and isomorphisms $\delta_n : L_n \to R_n$ such that for every triple $(G_0,G_1,\delta)$ of finite index subgroups $G_0,G_1 < G$ and isomorphism $\delta : G_0 \to G_1$, there exists an $n \in \N$ and an automorphism $\al \in \Aut G$ such that $\al(G_0) = L_n$, $\delta = \delta_n \circ \al|_{G_0}$ and $\mu \circ \Om \circ \al = \mu \circ \Om$ in $H^2(G,\T)$ for all $\mu \in \Chat$.
\end{remark}

In the proof of Theorem \ref{thm.inherit-virtual-iso-superrigidity}, we made use of the group theoretic Lemma \ref{lem.automatic-icc} whose proof relies on the following lemma.

\begin{lemma}\label{lem.finite-index-in-abelianization}
Let $G$ be a countable group that has only countably many finite index subgroups. If $G_0 < G$ is a finite index subgroup and $G_{0,\text{\rm ab}}$ denotes its abelianization, which we write additively, then for every $n \in \N$, $n \cdot G_{0,\text{\rm ab}}$ is a finite index subgroup of $G_{0,\text{\rm ab}}$.
\end{lemma}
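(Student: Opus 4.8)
The plan is to argue by contradiction. Suppose that $n\cdot A$ has infinite index in $A:=G_{0,\text{\rm ab}}$ for some $n\ge 2$; I will manufacture uncountably many finite index subgroups of $G$, contradicting the hypothesis that $G$ has only countably many of them.

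First I would set up the elementary reductions. Since $G_0<G$ has finite index, every finite index subgroup of $G_0$ has finite index in $G$, and distinct subgroups of $G_0$ stay distinct in $G$; moreover, by the correspondence theorem, the finite index subgroups of $A$ are in index-preserving bijection with the finite index subgroups of $G_0$ containing $[G_0,G_0]$. Hence $A$ has at most countably many finite index subgroups, and it suffices to prove the purely abelian statement: \emph{if $A$ is an abelian group with only countably many finite index subgroups, then $[A:nA]<\infty$ for all $n$.}

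The core of the argument is to pass from the assumption $[A:nA]=\infty$ to an infinite elementary abelian $p$-section. The quotient $A/nA$ is an infinite abelian group of exponent dividing $n$, hence the direct sum of its $p$-primary components over the finitely many primes $p\mid n$, so some component $B$ is infinite. Then $B$ is an infinite abelian $p$-group of bounded exponent, and by the standard structure theory of such groups (a direct sum of infinitely many nontrivial finite cyclic $p$-groups) the quotient $B/pB$ is infinite. Since multiplication by $p$ is invertible on the $q$-primary components with $q\ne p$, one checks that $A/pA\cong B/pB$, so $A/pA$ is an infinite-dimensional vector space over $\F_p$.

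Finally I would count hyperplanes. An infinite-dimensional $\F_p$-vector space $V$ has uncountably many codimension-one subspaces: already a countably infinite independent subset carries $|\F_p^{\N}|=2^{\aleph_0}$ linear functionals, and these yield that many hyperplanes. Each hyperplane of $A/pA$ pulls back to an index $p$ subgroup of $A$, distinct hyperplanes give distinct subgroups, and so $A$ has uncountably many finite index subgroups — the desired contradiction. I expect no real difficulty here; the one point to keep in mind is that ``countably many'' in the hypothesis includes the countably infinite case, so the argument must genuinely yield \emph{uncountably} many subgroups, which is precisely why it is routed through an infinite-dimensional $\F_p$-space rather than, say, an infinite ascending chain of finite quotients.
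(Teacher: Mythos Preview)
Your proof is correct and follows essentially the same strategy as the paper: reduce to the abelian case, show that some quotient $A/pA$ is an infinite $\F_p$-vector space, and then count hyperplanes to produce uncountably many index-$p$ subgroups. The only difference is in the middle step: the paper peels off one prime at a time by induction on $n$ (writing $n=pk$ and using the induction hypothesis on $k$), whereas you reach the same conclusion in one shot via the primary decomposition of $A/nA$ together with Pr\"ufer's theorem on bounded abelian $p$-groups; the paper's induction is slightly more elementary, yours slightly more conceptual, but the arguments are otherwise identical.
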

\begin{proof}
Since also $G_0$ has only countably many finite index subgroups, also its quotient $G_{0,\text{\rm ab}}$ has only countably many finite index subgroups. We may thus assume that $G$ is abelian, write $G$ additively and prove that for every $n \in \N$, $n \cdot G$ has finite index in $G$. We prove this by induction on $n$. The statement is trivial for $n=1$. So assume that $n > 1$ and that $[G:k \cdot G] < \infty$ for all integers $1 \leq k < n$ and all abelian groups $G$ with countably many finite index subgroups. We prove that $[G:n \cdot G] < \infty$ for all abelian groups $G$ with countably many finite index subgroups.

Take a prime number $p$ such that $n = p k$ with $k \in \N$. Define $G_0 = k \cdot G$. By the induction hypothesis, $G_0 < G$ has finite index. So, $G_0$ is an abelian group with countably many finite index subgroups. Since $n \cdot G = p \cdot G_0$, it suffices to prove that $[G_0:p \cdot G_0] < \infty$. Assume the contrary and define $K = G_0 / p \cdot G_0$. Then $K$ is an infinite abelian group in which every nontrivial element has order $p$. This implies that $K \cong (\Z/p\Z)^{(\N)}$. For every $n \in \N$, denote by $a_n \in K$ the generator $1 \in \Z/p\Z$ in position $n$. For every subset $\cF \subset \N$, define the group homomorphism $\vphi_\cF : K \to \Z/p\Z$ by
$$\vphi_\cF(a_n) = \begin{cases} 1 &\;\;\text{if $n \in \cF$,}\\
0 &\;\;\text{if $n \not\in \cF$.}\end{cases}$$
Then, $(\Ker \vphi_\cF)_{\cF \subset \N}$ is an uncountable family of index $p$ subgroups of $K$, whose preimages in $G_0$ provide an uncountable family of index $p$ subgroups of $G_0$, which is impossible. So the lemma is proven.
\end{proof}

\begin{lemma}\label{lem.automatic-icc}
Let $0 \to D \to \Lambdatil \to \Lambda \to e$ be any central extension of countable groups such that $\Lambdatil\fc = \cZ(\Lambdatil) = D$. Assume that $\Lambda$ is virtually isomorphic to an icc group that admits only countably many finite index subgroups. Then $\Lambda$ admits a finite index subgroup that is icc.
\end{lemma}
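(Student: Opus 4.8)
The plan is to reduce, in two steps, to a situation in which $\Lambda$ is already icc, and then to derive a contradiction from a commutator computation inside $\Lambdatil$ together with Lemma~\ref{lem.finite-index-in-abelianization}.

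\emph{Step 1: making the hypotheses sit on a convenient finite index subgroup.} First I would record that ``having only countably many finite index subgroups'' is a virtual isomorphism invariant. It clearly passes to finite index subgroups. For a quotient $G/N$ by a finite normal subgroup $N$ it suffices to bound the finite index \emph{normal} subgroups of $G$ (every finite index subgroup contains its normal core and then lies among finitely many subgroups of $G$ containing that core); given $H\lhd G$ of finite index, $HN$ is one of countably many finite index subgroups of $G$ containing $N$, $H\cap N$ is one of finitely many subgroups of $N$, and for fixed $L:=HN$, $N_0:=H\cap N$ the subgroup $H/N_0$ is a normal complement of $N/N_0$ in $L/N_0$; since two normal subgroups with trivial intersection centralise each other, $N/N_0$ must be central in $L/N_0$, so these complements form a torsor under $\Hom(L/N,\,N/N_0)$, which is countable because every homomorphism from a group with countably many finite index subgroups to a finite group is determined by its (finite index) kernel and an injection of the finite quotient. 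Passing back up along a finite index inclusion is again the normal core argument. Hence $\Lambda$ has countably many finite index subgroups. Next, since $\Lambda$ is virtually isomorphic to an icc group, there are a finite index subgroup $\Lambda_1<\Lambda$ and a finite normal subgroup $N\lhd\Lambda_1$ with $\Lambda_1/N$ icc (finite index subgroups and finite normal quotients of icc groups are icc); as $\Lambda_1\cap\Lambda\fc$ maps into $(\Lambda_1/N)\fc=\{e\}$ it is contained in $N$, so $\Lambda\fc$ is finite. Replacing $\Lambda$ by $C_\Lambda(\Lambda\fc)$ (finite index, as $\Aut(\Lambda\fc)$ is finite) and $\Lambdatil$ by the preimage of this subgroup preserves $\Lambdatil\fc=\cZ(\Lambdatil)=D$ and the countability of the set of finite index subgroups, and now $E:=\Lambda\fc=\cZ(\Lambda\fc)$ is a finite abelian subgroup of $\cZ(\Lambda)$. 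A finite index icc subgroup of this reduced $\Lambda$ is one for the original $\Lambda$.

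\emph{Step 2: the reduced $\Lambda$ is icc.} Suppose not, so $E\neq\{e\}$; pick $x\in E$ of prime order $p$ and lift it to $\tilde x\in\Lambdatil$. Since $x\neq e$ we have $\tilde x\notin D=\Lambdatil\fc$, so the $\Lambdatil$-conjugacy class of $\tilde x$ is infinite. Because $x\in\cZ(\Lambda)$, the commutator $[\tilde x,h]$ lies in $D$ for every $h\in\Lambdatil$, and as $D$ is central in $\Lambdatil$, the map $\psi\colon\Lambdatil\to D$, $\psi(h)=[\tilde x,h]$, is a group homomorphism with kernel $C_{\Lambdatil}(\tilde x)$; hence $\operatorname{im}\psi$ is infinite. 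Moreover $\psi$ vanishes on $D$, so it descends to $\bar\psi\colon\Lambda\to D$, which factors through $\Lambda\ab$ because $D$ is abelian and has $\operatorname{im}\bar\psi=\operatorname{im}\psi$. Finally $\tilde x^{p}\in D\subseteq\cZ(\Lambdatil)$, so $e=[\tilde x^{p},h]=\psi(h)^{p}$ for all $h$ (using $[\tilde a\tilde b,h]=[\tilde a,h]\,[\tilde b,h]$ whenever $[\tilde b,h]$ is central, and induction), so every element of $\operatorname{im}\psi$ has order dividing $p$; thus $\operatorname{im}\bar\psi$ is an infinite abelian group of exponent $p$. Being a quotient of $\Lambda\ab$ annihilated by $p$, it forces $\Lambda\ab/p\Lambda\ab$ to be infinite, i.e.\ $p\cdot\Lambda\ab$ has infinite index in $\Lambda\ab$. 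This contradicts Lemma~\ref{lem.finite-index-in-abelianization} applied with $G=G_0=\Lambda$ and $n=p$. Hence $\Lambda\fc=\{e\}$, i.e.\ the reduced $\Lambda$ is icc.

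The main obstacle is the bookkeeping in Step~1, specifically the verification that ``countably many finite index subgroups'' survives a finite normal quotient, where one must rule out an uncountable family of lifts of a finite index normal subgroup; once Step~1 is in place, Step~2 is a short commutator computation feeding into Lemma~\ref{lem.finite-index-in-abelianization}.
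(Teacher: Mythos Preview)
Your proof is correct and rests on the same core mechanism as the paper's: a commutator (equivalently, the skew part $\Psi(\sigma,s)-\Psi(s,\sigma)$ of the $2$-cocycle) gives a homomorphism from a finite index subgroup into $D$, and Lemma~\ref{lem.finite-index-in-abelianization} forces its image to be finite, so the lift of a putative non-icc element lands in $\Lambdatil\fc=D$.

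The organization differs in one interesting way. You first transfer the property ``countably many finite index subgroups'' from the icc group $G$ to $\Lambda$ itself, which lets you apply Lemma~\ref{lem.finite-index-in-abelianization} directly with $G_0=\Lambda$ and work entirely inside $\Lambdatil$. The paper instead keeps the homomorphism $\phi:\Lambda_1\to G$ with finite kernel $F$ and finite-index image throughout: it sets $\Lambda_0=C_{\Lambda_1}(F)$, $F_0=F\cap\Lambda_0$, pushes the bihomomorphism $\eta:F_0\times\Lambda_0\to D$ across $\phi$ to $F_0\times G_{0,\text{ab}}$ with $G_0=\phi(\Lambda_0)$, and invokes Lemma~\ref{lem.finite-index-in-abelianization} on the $G$-side. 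This sidesteps entirely your Step~1 bookkeeping (in particular the careful count of lifts through a finite normal quotient). On the other hand, your Step~2 is slightly leaner than the paper's endgame: you pick a single element of prime order $p$ and get exponent $p$ directly, whereas the paper carries the whole finite group $F_0$ and uses $n=|F_0|$, then still has to pass through an auxiliary finite subgroup $D_0\subset D$ before reaching the contradiction. Net effect: the paper trades your Step~1 for a slightly heavier Step~2; both routes are valid.
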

\begin{proof}
Take an icc group $G$ with only countably many finite index subgroups such that $\Lambda$ is virtually isomorphic to $G$. Finite index subgroups of $G$ are still icc and therefore have no nontrivial finite normal subgroups. We can thus choose a finite index subgroup $\Lambda_1 < \Lambda$ and a group homomorphism $\phi : \Lambda_1 \to G$ such that $F = \Ker \phi$ is finite and $\phi(\Lambda_1) < G$ has finite index.

Define $\Lambda_0 = C_{\Lambda_1}(F)$. Since $F$ is a finite normal subgroup of $\Lambda_1$, we have that $\Lambda_0 < \Lambda_1$ has finite index. Write $F_0 = F \cap \Lambda_0$. Since $G$ is icc and $\phi$ restricts to an isomorphism between $\Lambda_0 / F_0$ and a finite index subgroup of $G$, it suffices to prove that $F_0 = \{e\}$.

We write the group operation in the abelian group $D$ additively. Denote by $\Psi \in H^2(\Lambda,D)$ the $2$-cocycle associated with the central extension $0 \to D \to \Lambdatil \to \Lambda \to e$. Since $F_0$ and $\Lambda_0$ commute, the map
$$\eta : F_0 \times \Lambda_0 \to D : \eta(\si,s) = \Psi(\si,s) - \Psi(s,\si)$$
is a bihomomorphism, meaning that for fixed $\si_0 \in F_0$ and $s_0 \in \Lambda_0$, the maps $\si \mapsto \eta(\si,s_0)$ and $s \mapsto \eta(\si_0,s)$ are group homomorphisms.
Since $F_0$ is finite and $\eta$ is a bihomomorphism, $\eta(F_0 \times F_0) \subset D$ is a finite subset of elements of finite order. Since $D$ is abelian, the subgroup $D_0 < D$ generated by $\eta(F_0 \times F_0)$ is still finite. We get the well-defined bihomomorphism $\eta_0 : F_0 \times \Lambda_0/F_0 \to D/D_0 : \eta_0(\si,s F_0) =\eta(\si,s) + D_0$. Defining the finite index subgroup $G_0 < G$ as $G_0 = \phi(\Lambda_0)$, we find a unique bihomomorphism $\gamma : F_0 \times G_0 \to D/D_0$ such that $\gamma \circ (\id \times \phi) = \eta_0$.

As a bihomomorphism to an abelian group, $\gamma$ factorizes through $\gamma_0 : F_0 \times G_{0,\text{\rm ab}} \to D/D_0$. Let $n = |F_0|$. We write the group operation in $G_{0,\text{\rm ab}}$ additively and the one in $F_0$ multiplicatively. Since $\si^n = e$ for all $\si \in F_0$, it follows that
$$\gamma_0(\si,n \cdot g) = n \cdot \gamma_0(\si,g) = \gamma_0(\si^n,g) = 0 \quad\text{for all $\si \in F_0$ and $g \in G_{0,\text{\rm ab}}$.}$$
Since $G$ has only countably many finite index subgroups, it follows from Lemma \ref{lem.finite-index-in-abelianization} that $n \cdot G_{0,\text{\rm ab}}$ has finite index in $G_{0,\text{\rm ab}}$. Since $\gamma_0(\si,n \cdot g)=0$ for all $\si \in F_0$ and $g \in G_{0,\text{\rm ab}}$ and $\gamma_0$ is a bihomomorphism, we conclude that $\gamma_0(F_0 \times G_{0,\text{\rm ab}}) \subset D/D_0$ is a finite subset of elements of finite order. Since $D/D_0$ is an abelian group, it follows that the subgroup of $D/D_0$ generated by $\eta_0(F_0 \times \Lambda_0/F_0)$ is finite. Since $D_0$ is a finite subgroup of $D$, we conclude that the subgroup $D_1$ of $D$ generated by $\eta(F_0 \times \Lambda_0)$ is finite as well.

Let $\theta : \Lambda \to \Lambdatil$ be a lift such that $\theta(s_1)\theta(s_2) = \Psi(s_1,s_2) \theta(s_1 s_2)$ for all $s_1,s_2 \in \Lambda$. Fix $\si \in F_0$. By definition of $\eta$, we find that $\theta(s)^{-1} \theta(\si) \theta(s) \in D_1 \theta(\si)$ for all $s \in \Lambda_0$. Since every element of $D = \cZ(\Lambdatil)$ commutes with $\theta(\si)$, it follows that $s^{-1} \theta(\si) s \in D_1 \theta(\si)$ for all $s$ in the preimage $\Lambdatil_0$ of $\Lambda_0$ in $\Lambdatil$. Since $\Lambda_0 < \Lambda$ has finite index, also $\Lambdatil_0 < \Lambdatil$ has finite index. It follows that $\theta(\si) \in \Lambdatil\fc = D$. So, $\si = e$. Since $\si \in F_0$ is arbitrary, we have proven that $F_0 = \{e\}$.
\end{proof}

\section{Weakly compact actions and Bernoulli crossed products}\label{sec.weakly-compact}

We prove in this section that certain left-right wreath product groups $G$ satisfy condition \ref{ncond.iso.3}. This follows from two technical Lemmas \ref{lem.weak-compact-Bernoulli} and \ref{lem.left-right-wreath-product-satisfies-4}, whose proofs consist of repackaging several known results from \cite{Ioa06,OP07,PV12} and are thus written concisely.

A trace preserving group action $\cG \actson^\al (B,\tau)$ is said to be \emph{compact} if the closure of $\al(\cG)$ in $\Aut(B,\tau)$ is compact. In \cite[Definition 3.1]{OP07}, the notion of a weakly compact trace preserving action was introduced. Rather than recalling the precise definition, we mention here that it was proven in \cite[Proposition 3.2]{OP07} that every compact trace preserving action on an amenable tracial von Neumann algebra is weakly compact.

\begin{lemma}\label{lem.weak-compact-Bernoulli}
Let $\Gamma \actson I$ be any transitive action of a countable group $\Gamma$ on a countable set $I$. Let $(A_0,\tau_0)$ and $(N,\tau)$ be arbitrary tracial von Neumann algebras. Consider $(A,\tau) = (A_0,\tau_0)^I$ with the generalized Bernoulli action $\Gamma \actson A$ and put $M = A \rtimes \Gamma$. Write $\cM = N \ovt M$.

Let $p \in \cM$ be a projection and $B \subset p \cM p$ a von Neumann subalgebra such that $B \prec^f_\cM N \ovt A$. Let $\cG \subset \cN_{p\cM p}(B)$ be a subgroup such that the action $(\Ad v)_{v \in \cG}$ of $\cG$ on $B$ is weakly compact. Write $P = (B \cup \cG)\dpr$. Let $i_0 \in I$.

If $P$ is strongly nonamenable relative to $N \ovt (A \rtimes \Stab i_0)$, then $B \prec^f_\cM N \ot 1$.
\end{lemma}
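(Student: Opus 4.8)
The plan is to follow the standard dichotomy strategy from Popa's deformation/rigidity theory for Bernoulli actions, combining a weak-compactness rigidity result with a spectral-gap/malleability argument. I would argue by contradiction: assume $B \not\prec^f_\cM N \ot 1$. Since $B$ is already inside $N \ovt A$ (up to the intertwining $B \prec^f_\cM N \ovt A$) and the action $(\Ad v)_{v \in \cG}$ is weakly compact on $B$, the first step is to invoke the weak compactness + Bernoulli rigidity machinery of \cite{Ioa06} together with \cite[Theorem 3.5]{OP07} (or the relevant statement in \cite{PV12}) to locate $B$ relative to the ``core'' of the Bernoulli construction. Concretely, after passing through the intertwining $B \prec^f N \ovt A$ and replacing $B$ by a corner conjugated into $N \ovt A$, weak compactness of the $\cG$-action on $B$ should force one of two alternatives: either $B \prec^f_\cM N \ot 1$ (which we are assuming fails), or there is a nonzero projection under which the normalizing algebra $P$ is amenable relative to some $N \ovt (A \rtimes \Stab i_0)$.

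The key mechanism is the following. Write $A = (A_0,\tau_0)^I$ and use the malleable deformation of the Bernoulli action (the flip/rotation deformation of \cite{Ioa06, Pop03}). A von Neumann subalgebra with a weakly compact normalizing action is, by \cite[Theorem 3.5 / Corollary 3.3]{OP07} applied in the relative setting over $N$, ``compact'' in a way that is incompatible with the mixing of the Bernoulli action unless it gets absorbed into a ``stabilizer corner''. More precisely: because $\Gamma \actson I$ is transitive, the stabilizers $\Stab i$ for $i \in I$ are all conjugate, and the Bernoulli action is weakly mixing relative to each $N \ovt (A \rtimes \Stab i_0)$. If $B \not\prec^f N \ot 1$, then $B$ has a ``coordinate'' in some $A_{0,i}$, i.e.\ $B$ does not sit trivially over $N$, and then the weak-compactness of the normalizer together with the transversality/ malleability estimate (in the style of \cite[Lemma 2.1]{PV12}) yields that the whole normalizing algebra $P = (B \cup \cG)''$ must be amenable relative to $N \ovt (A \rtimes \Stab i_0)$ inside $\cM$ — contradicting the hypothesis that $P$ is \emph{strongly} nonamenable relative to $N \ovt (A \rtimes \Stab i_0)$.

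I would organize the argument as: (1) use $B \prec^f_\cM N \ovt A$ to reduce to the case $B \subset N \ovt A$ (after a corner/unitary conjugation and an amplification, keeping track of the normalizer $\cG$ via \cite[Remark/Lemma]{Pop03} on how normalizers transport under intertwining); (2) run the deformation argument: apply the malleable deformation $(\alpha_t)$ of the Bernoulli action to $\cM = N \ovt M$; either the deformation converges uniformly on the unit ball of $B$, in which case a standard argument (as in \cite[Section 4]{Ioa06} or \cite[Theorem 3.1]{PV12}) upgrades this to $B \prec^f N \ot 1$ — the desired conclusion; or it does not converge, in which case \cite[Proposition 2.4]{OP07} together with the spectral gap of the Bernoulli action gives a nonzero projection $q \in P' \cap (p\cM p)$ with $Pq$ amenable relative to $N \ovt (A \rtimes \Stab i_0)$; (3) observe that weak compactness of the $\cG$-action on $B$ is exactly what forces the ``convergence'' branch to apply to $B$ itself (this is the role of \cite[Theorem 3.5]{OP07}: a weakly compact action cannot exhibit the kind of ``free'' behavior that obstructs convergence), so the non-convergence branch is incompatible with strong nonamenability of $P$; hence only the first branch survives and $B \prec^f_\cM N \ot 1$.

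The main obstacle I anticipate is handling the relative setting over the arbitrary tracial von Neumann algebra $N$ cleanly — all of \cite{Ioa06, OP07, PV12} have versions ``with an auxiliary algebra'', but one must be careful that weak compactness, relative amenability, and intertwining all behave well under tensoring with $N$ and under the nondegenerate commuting squares $N \ovt (A \rtimes \Stab i_0) \subset N \ovt M$. A second delicate point is the bookkeeping around $\prec^f$ (``full'' intertwining, i.e.\ intertwining of every corner): one must ensure that the conclusion $B \prec^f N \ot 1$, not merely $B \prec N \ot 1$, comes out, which typically requires running the argument on an arbitrary nonzero projection in $B' \cap p\cM p$ and checking that strong nonamenability of $P$ (rather than mere nonamenability) is preserved under cutting by such projections — which it is, essentially by definition of ``strongly nonamenable relative to''.
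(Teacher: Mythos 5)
Your overall strategy — malleable deformation plus weak compactness plus strong nonamenability, following \cite{Ioa06,OP07,PV12} — is the right one, and it is essentially the same route the paper takes. However, there is a genuine gap in step (2): you claim that once the deformation $\theta_t$ converges uniformly on the unit ball of $B$, a ``standard argument'' immediately upgrades this to $B \prec^f N \ot 1$. That is not what the deformation convergence gives you. What \cite[Theorem 3.6(ii)]{Ioa06} (applied over the auxiliary algebra $N$) actually yields from uniform convergence is only $B \prec_\cM N \ovt A_0^{I_0}$ for some \emph{finite subset} $I_0 \subset I$, which is strictly weaker than $B \prec_\cM N \ot 1$ whenever $I_0 \neq \emptyset$. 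To pass from the finite subset to the empty subset you need a second, separate invocation of the strong nonamenability hypothesis: if $B \not\prec_\cM N \ot 1$, take a minimal nonempty $I_0$ with $B \prec_\cM N \ovt A_0^{I_0}$; then \cite[Lemma 4.1.2]{IPV10} shows that the normalizing algebra $P = (B \cup \cG)''$ embeds into $N \ovt (A_0^{I_0} \rtimes \Stab I_0)$, and since $\Stab I_0$ is commensurable with $\Stab i_0$ (by transitivity and finiteness of $I_0$), this contradicts strong nonamenability of $P$ relative to $N \ovt (A \rtimes \Stab i_0)$. Without this second application of the hypothesis, the proof does not close.

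A second, smaller issue is your description of the argument as a ``dichotomy'' in which weak compactness selects the convergence branch. In fact, weak compactness supplies the net $(\xi_n)$ with the properties of \cite[Definition 3.1]{OP07}, and then strong nonamenability of $P$ relative to $N \ovt (A \rtimes \Stab i_0)$ (together with the weak containment of $L^2(\cMtil \ominus \cM)$ in $L^2(\cM) \ot_{\cM_0} L^2(\cM)$, as in \cite[Corollary~4.3]{IPV10} and \cite[Lemma 3.2]{BV12}) is what forces $\lim_{t,n} \|(\theta_t \ot \id)\xi_n - \xi_n\|_2 = 0$ and hence uniform convergence of $\theta_t$ on $(B)_1$, as in the proof of \cite[Theorem 4.9]{OP07}. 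Both hypotheses enter simultaneously to establish convergence; there is no free branching. Relatedly, your step (1) — reducing to $B \subset N \ovt A$ by transporting the normalizer through the intertwining — is unnecessary and in fact dangerous: normalizers do not transport cleanly under intertwining, and the paper's proof avoids this entirely by working with $B \subset p\cM p$ throughout. The hypothesis $B \prec^f_\cM N \ovt A$ is never used to conjugate $B$ into $N \ovt A$. Finally, the reduction from $\prec^f$ to $\prec$ is done at the very start (replacing $B$ and $\cG$ by $Bq$ and $\cG q$ for $q \in P' \cap p\cM p$), which is simpler than re-running the whole argument for each corner as you suggest.
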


\begin{proof}
Since we may replace $B$ and $\cG$ by $B q$ and $\cG q$ for any nonzero projection $q \in P' \cap p \cM p$, it suffices to prove that $B \prec_\cM N \ot 1$.

Define the malleable deformation of $\Gamma \actson A$ given by \cite[Proposition 2.3]{Ioa06}: we define $\Atil = (A_0 \ast L(\Z))^I$ w.r.t.\ the canonical traces and define $\theta_t \in \Aut(\Atil)$ as $\theta_t = (\Ad u_t)^I$, where $u_t \in \cU(L(\Z))$ is a concrete $1$-parameter group of unitaries. We extend $\theta_t$ to $\cMtil = N \ovt (\Atil \rtimes \Gamma)$ by acting as the identity on $N$ and $\Gamma$.

Since the action $(\Ad v)_{v \in \cG}$ of $\cG$ on $B$ is weakly compact, it follows from \cite[Definition 3.1]{OP07} that we can take a net of vectors $(\xi_n)_{n \in \cJ}$ in $L^2(B \ovt B\op)$ such that, viewing $\xi_n \in L^2(\cM \ovt \cM\op)$, we have
\begin{itemlist}
\item $\|(b \ot 1) \xi_n - (1 \ot b\op) \xi_n \|_2 \to 0$ for all $b \in B$,
\item $\|(v \ot \overline{v}) \xi_n - \xi_n (v \ot \overline{v})\|_2 \to 0$ for all $v \in \cG$,
\item $\langle (x \ot 1) \xi_n , \xi_n \rangle = \tau(x) = \langle (1 \ot x\op)\xi_n,\xi_n\rangle$ for all $x \in p\cM p$.
\end{itemlist}
Write $\cM_0 = N \ovt (A \rtimes \Stab i_0)$. By \cite[Proof of Corollary 4.3]{IPV10} (see also \cite[Lemma 3.2]{BV12}), the $\cM$-$\cM$-bimodule $L^2(\cMtil \ominus \cM)$ is weakly contained in $L^2(\cM) \ot_{\cM_0} L^2(\cM)$. Repeating the proof of \cite[Theorem 4.9]{OP07}, we first use that $P$ is strongly nonamenable relative to $\cM_0$ to conclude that
$$\lim_{t \to 0 \, , \, n \in \cJ} \|(\theta_t \ot \id)(\xi_n) - \xi_n\|_2 = 0 \; .$$
We then deduce that $\|\theta_t(b) - b\|_2 \to 0$ as $t \to 0$, uniformly on the unit ball $(B)_1$.

Now the proof of \cite[Theorem 3.6(ii)]{Ioa06} implies that $B \prec_\cM N \ovt A_0^{I_0}$ for some finite subset $I_0$. Assume that $B \not\prec_\cM N \ot 1$. Making $I_0$ smaller, we may assume that $I_0 \neq \emptyset$ and $B \not\prec_\cM N \ovt A_0^{I_1}$ for every proper subset $I_1 \subset I_0$. By \cite[Lemma 4.1.2]{IPV10}, we get that $P \prec_\cM N \ovt (A_0^{I_0} \rtimes \Stab I_0)$. This contradicts the strong nonamenability of $P$ relative to $\cM_0$ and the lemma is proven.
\end{proof}

\begin{lemma}\label{lem.left-right-wreath-product-satisfies-4}
Let $\Gamma$ be a weakly amenable, nonamenable, biexact group and let $\Sigma_0$ be a countable group. Consider the left-right action $\Gamma \times \Gamma \actson \Sigma = \Sigma_0^{(\Gamma)}$ and let $\Sigma_1 < \Sigma$ be a globally invariant subgroup. Then $G = \Sigma_1 \rtimes (\Gamma \times \Gamma)$ satisfies condition \ref{ncond.iso.3}.
\end{lemma}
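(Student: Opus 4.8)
The plan is to apply Lemma \ref{lem.weak-compact-Bernoulli} after first rewriting the group $G = \Sigma_1 \rtimes (\Gamma \times \Gamma)$ so that it fits that lemma's hypotheses. Set $\Lambda = \Gamma \times \Gamma$. The left--right action of $\Lambda$ on $\Sigma = \Sigma_0^{(\Gamma)}$ is the generalized Bernoulli action associated with the transitive action $\Lambda \actson \Gamma$ given by $(g,h)\cdot k = gkh^{-1}$, whose stabilizer of the point $e \in \Gamma$ is the diagonal subgroup $\Delta\Gamma = \{(g,g) \mid g \in \Gamma\} \cong \Gamma$. Thus $A := L(\Sigma_1)$ sits inside the generalized Bernoulli tensor product $L(\Sigma) = L(\Sigma_0)^{\otimes \Gamma}$ with the $\Lambda$-action restricting to the one on $A$, and $M := L(G) = A \rtimes \Lambda$ embeds into $\widehat{M} := L(\Sigma) \rtimes \Lambda = A_0^{\,\Gamma} \rtimes (\Gamma \times \Gamma)$, where $A_0 = L(\Sigma_0)$. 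The key structural inputs are: first, that $\Lambda = \Gamma \times \Gamma$ inherits from $\Gamma$ all the properties needed (nonamenability, weak amenability, biexactness are stable under finite direct products, and a biexact group has no amenable infinite normal subgroups), and second, that the relevant stabilizer subgroups $\Stab i \subset \Gamma \times \Gamma$ for the Bernoulli action on $\Gamma$ are conjugates of $\Delta\Gamma$, hence non co-amenable in $\Gamma \times \Gamma$ precisely because $\Gamma$ is nonamenable. So the family of subgroups $(G_i)_{i \in I}$ witnessing condition \ref{ncond.iso.3} will be taken to be (the $G$-versions of) these diagonal stabilizers, i.e.\ $G_i = \Sigma_1^{\Stab i} \rtimes \Stab i$ for $i$ ranging over $\Gamma$.

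The argument then runs as follows. Let $(N,\tau)$ be a tracial von Neumann algebra, $p \in \cM := N \ovt M$ a nonzero projection, $B \subset p\cM p$ an amenable von Neumann subalgebra, and $\cG \subset \cN_{p\cM p}(B)$ a subgroup with $(\Ad v)_{v \in \cG}$ having compact closure in $\Aut B$ and $P := (B \cup \cG)\dpr$ strongly nonamenable relative to $N \ovt L(G_i)$ for all $i$. First I would observe that, since $B$ is amenable and the $\cG$-action on $B$ is compact and trace preserving, it is weakly compact by \cite[Proposition 3.2]{OP07}. Next comes the first genuinely nontrivial step: showing $B \prec^f_{\widehat{\cM}} N \ovt L(\Sigma)$ inside $\widehat{\cM} = N \ovt \widehat{M}$, where $\widehat{\cM} \supset \cM$. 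Here one uses the standard malleable deformation / spectral gap dichotomy for the core Bernoulli action $\Gamma \times \Gamma \actson A_0^{\,\Gamma}$ exactly as in the proof of Lemma \ref{lem.weak-compact-Bernoulli}: because $P$ is strongly nonamenable relative to every $N \ovt (A_0^{\Gamma} \rtimes \Stab i) \supset N \ovt L(G_i)$ (note $L(G_i) \subset A_0^{\Gamma} \rtimes \Stab i$ and relative nonamenability passes to larger subalgebras), the weakly compact vectors $\xi_n$ are asymptotically fixed by $\theta_t \otimes \id$, hence $B$ is deformation-rigid, and Ioana's transversality argument \cite[Theorem 3.6]{Ioa06} forces $B \prec^f N \ovt A_0^{\,\Gamma}$ -- in fact into the Bernoulli base. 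Then I apply Lemma \ref{lem.weak-compact-Bernoulli} directly, with $N \ovt L(\Sigma)$ playing the role of its $N \ovt A$: the hypotheses ``$B \prec^f_{\widehat\cM} N \ovt L(\Sigma)$'', ``$(\Ad v)_{v\in\cG}$ weakly compact on $B$'', and ``$P$ strongly nonamenable relative to $N \ovt (L(\Sigma) \rtimes \Stab i_0)$'' are now all met (the last using that $\Stab i_0$ is conjugate to $\Delta\Gamma$ and $L(\Sigma) \rtimes \Stab i_0 \supset L(G_{i_0})$), so the lemma yields $B \prec^f_{\widehat\cM} N \ot 1$.

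Finally I would descend from $\widehat\cM$ back to $\cM = N \ovt L(G)$. The inclusion $L(G) \subset \widehat M$ comes with a $G$-equivariant embedding, and more importantly the square
$$
\begin{array}{ccc}
N \ot 1 & \subset & N \ovt \widehat M \\ \cup & & \cup \\ N \ot 1 & \subset & N \ovt M
\end{array}
$$
is a nondegenerate commuting square (the conditional expectation $\widehat M \to M$ restricts correctly), so by \cite[Lemma 2.2]{DV24} the intertwining $B \prec^f_{\widehat\cM} N \ot 1$ inside $\widehat\cM$ transfers to $B \prec^f_{\cM} N \ot 1$ inside $\cM$, which is exactly the conclusion of condition \ref{ncond.iso.3}. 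The main obstacle I anticipate is the bookkeeping in the first nontrivial step -- correctly setting up Ioana's malleable deformation for the generalized (rather than classical) Bernoulli action $\Gamma\times\Gamma \actson \Gamma$ and verifying that strong relative nonamenability of $P$ over the diagonal-stabilizer subalgebras $N\ovt(A_0^\Gamma \rtimes \Stab i)$ is precisely what \cite[Theorem 4.9]{OP07} needs to kill the deformation; a secondary subtlety is checking that $\Sigma_1 \leq \Sigma$ being merely a globally invariant subgroup (not all of $\Sigma$) causes no trouble, which it does not, since $L(\Sigma_1) \subset L(\Sigma)$ is $\Lambda$-equivariantly embedded and we only ever intertwine $B$ \emph{into} larger algebras.
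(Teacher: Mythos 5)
Your proof departs from the paper's in a way that introduces a genuine gap. The paper's witnessing family for condition \ref{ncond.iso.3} consists of \emph{three} subgroups: $G_1 = \Sigma_1\rtimes(\Gamma\times e)$, $G_2 = \Sigma_1\rtimes(e\times\Gamma)$ and $G_3 = \Sigma_1\rtimes\delta(\Gamma)$ where $\delta(g)=(g,g)$. Your proposal uses only the diagonal stabilizers, i.e.\ conjugates of $G_3$, and omits the two factor subgroups $G_1,G_2$ entirely. These are not dispensable: the paper's proof first invokes \cite[Theorem 1.4]{PV12} (the Popa--Vaes unique Cartan theorem for crossed products by products of weakly amenable biexact groups), using strong nonamenability of $P$ relative to $N\ovt L(G_1)$ and to $N\ovt L(G_2)$ to conclude $B\prec^f N\ovt L(\Sigma_1)$; only then is the diagonal brought in together with the malleable deformation of Lemma \ref{lem.weak-compact-Bernoulli}, and that lemma has $B\prec^f_\cM N\ovt A$ as an \emph{explicit hypothesis}.

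The concrete gap is your ``first genuinely nontrivial step'', where you claim that deformation rigidity of $B$ together with Ioana's transversality already forces $B\prec^f N\ovt L(\Sigma)$. This is circular: the hypothesis $B\prec^f N\ovt A$ is precisely what makes the transversality step inside Lemma \ref{lem.weak-compact-Bernoulli} go, and deformation rigidity alone does not produce it. Indeed, the malleable deformation $\theta_t$ acts as the identity on $N\ovt L(\Gamma\times\Gamma)$, so any amenable subalgebra of $N\ovt L(\Gamma\times\Gamma)$ is trivially $\theta_t$-rigid while intertwining nowhere near $N\ovt L(\Sigma)$. Ruling out this scenario is exactly what \cite[Theorem 1.4]{PV12} does, and it requires nonamenability of $P$ relative to the two factor subgroups; your diagonal-only family cannot see it. A secondary issue: the remark that biexactness is stable under finite direct products is false (already $\F_2\times\F_2$ is not biexact); the PV12 theorem is formulated for \emph{products} of biexact groups, so one uses biexactness of the factor $\Gamma$, never of $\Gamma\times\Gamma$.
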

\begin{proof}
Define $G_1 = \Sigma_1 \rtimes (\Gamma \times e)$, $G_2 = \Sigma_1 \rtimes (e \times \Gamma)$ and $G_3 = \Sigma_1 \rtimes \delta(\Gamma)$ where $\delta(g) = (g,g)$. Since $\Gamma$ is nonamenable, the subgroups $G_i < G$ are not co-amenable.

Take a tracial von Neumann algebra $(N,\tau)$, a nonzero projection $p \in N \ovt L(G)$, a von Neumann subalgebra $B \subset p(N \ovt L(G))p$ and a subgroup $\cG \subset \cN_{p(N \ovt L(G)) p}(B)$. Assume that $B$ is amenable and that the action $(\Ad v)_{v \in \cG}$ of $\cG$ on $B$ is compact. Write $P = (B \cup \cG)\dpr$ and assume that $P$ is strongly nonamenable relative to $N \ovt L(G_i)$ for all $i \in \{1,2,3\}$.

Because $P$ is strongly nonamenable relative to $N \ovt L(G_1)$ and relative to $N \ovt L(G_2)$, it follows from \cite[Theorem 1.4]{PV12} that $B \prec^f N \ovt L(\Sigma_1)$. We define $\Gtil = \Sigma \rtimes (\Gamma \times \Gamma)$ and $\Gtil_3 = \Sigma \rtimes \delta(\Gamma)$. Since $B \prec^f N \ovt L(\Sigma_1)$ inside $N \ovt L(G)$, a fortiori $B \prec^f N \ovt L(\Sigma)$ inside $N \ovt L(\Gtil)$. Since $P$ is strongly nonamenable relative to $N \ovt L(G_3)$ inside $N \ovt L(G)$, it follows from \cite[Lemma 2.2]{DV24} that $P$ is strongly nonamenable relative to $N \ovt L(\Gtil_3)$ inside $N \ovt L(\Gtil)$.

By \cite[Proposition 3.2]{OP07}, the action $(\Ad v)_{v \in \cG}$ of $\cG$ on $B$ is weakly compact. Since the stabilizers of the left-right action $\Gamma \times \Gamma \actson \Gamma$ are conjugate to $\delta(\Gamma)$, it thus follows from Lemma \ref{lem.weak-compact-Bernoulli} that $B \prec^f N \ot 1$ inside $N \ovt L(\Gtil)$. By \cite[Lemma 2.2]{DV24}, we conclude that also $B \prec^f N \ot 1$ inside $N \ovt L(G)$.
\end{proof}

\section{\boldmath Elementary $2$-cohomology computations}\label{sec.elementary-2-cohom-comput}

As above, we write the group operation in an abelian group additively, with the circle $\T$ being the exception. Given groups $\Gamma_1,\Gamma_2$ and an abelian group $C$, we denote by $\BHom(\Gamma_1,\Gamma_2,C)$ the abelian group of bihomomorphisms $\gamma : \Gamma_1 \times \Gamma_2 \to C$, i.e.\ the group of maps $\gamma : \Gamma_1 \times \Gamma_2 \to C$ with the property that $g \mapsto \gamma(g,h_0)$ and $h \mapsto \gamma(g_0,h)$ are group homomorphisms for all $g_0 \in \Gamma_1$ and $h_0 \in \Gamma_2$.

When $\Gamma \actson^\al \Sigma$ is an action of a group $\Gamma$ by automorphisms of a group $\Sigma$ and when $C$ is an abelian group, we consider the abelian group $\cZ(\Gamma \actson \Sigma,C)$ of maps $\eta : \Gamma \times \Sigma \to C$ satisfying
$$\eta(g,ab) = \eta(g,a) + \eta(g,b) \quad\text{and}\quad \eta(gh,a) = \eta(g,\al_h(a)) + \eta(h,a) \quad\text{for all $g,h \in \Gamma$, $a,b \in \Sigma$.}$$
We define the subgroup $\cB(\Gamma \actson \Sigma,C) < \cZ(\Gamma \actson \Sigma,C)$ of maps of the form $\eta(g,a) = \vphi(\al_g(a)) - \vphi(a)$ for some $\vphi \in \Hom(\Sigma,C)$, and we denote by $\cH(\Gamma \actson \Sigma,C)$ the quotient.

Recall that an abelian group $C$ is called $p$-divisible, where $p$ is a prime number, if for every $c \in C$, there exists a $d \in C$ with $p d = c$.

The following elementary lemma is surely well known, but we did not find a reference that includes all the explicit formulas that we need later on. So we provide a complete and easy proof.

\begin{lemma}\phantomsection\label{lem.basic-cohomology}
\begin{enuma}
\item\label{lem.basic-cohomology.1} Let $\Gamma \actson^\al \Sigma$ be an action of a group $\Gamma$ by automorphisms of a group $\Sigma$. Let $C$ be an abelian group. Consider the semidirect product $G = \Sigma \rtimes \Gamma$ with $(a,g) \cdot (b,h) = (a\al_g(b),gh)$ for all $a,b \in \Sigma$, $g, h \in \Gamma$. Then
    \begin{equation}\label{eq.cohom-short-exact}
    0 \to H^2(\Gamma,C) \oplus \cH(\Gamma \actson \Sigma,C) \overset{\Psi}{\longrightarrow} H^2(G,C) \overset{\res}{\longrightarrow} H^2(\Sigma,C) \; ,
    \end{equation}
    where $(\Psi(\om,\eta))((a,g),(b,h)) = \om(g,h) + \eta(g,b)$ and $\res$ is the restriction to $\Sigma$, is an exact sequence of well-defined group homomorphisms.

\item\label{lem.basic-cohomology.2} Let $\Sigma$ and $C$ be abelian groups. Assume that $\Sigma$ can be written as the union of an increasing sequence of subgroups $\Sigma_n < \Sigma$ such that $\Sigma_1 = \{0\}$ and for every $n \in \N$, the group $\Sigma_{n+1}/\Sigma_n$ is either isomorphic to $\Z$, or isomorphic to $\Z/N_n\Z$ with $C$ being $p$-divisible for every prime divisor $p$ of $N_n$.
    Then $\Ext^1(\Sigma,C) = 0$ and the homomorphism
\begin{equation}\label{eq.hom-for-abelian}
\Xi : H^2(\Sigma,C) \to \BHom(\Sigma,\Sigma,C) : (\Xi(\Om))(a,b) = \Om(a,b) - \Om(b,a)
\end{equation}
is well-defined and faithful.

\item\label{lem.basic-cohomology.3} If $\Gamma_1,\Gamma_2$ are groups and $C$ is an abelian group, then
\begin{align*}
& \Theta_1 : H^2(\Gamma_1,C) \oplus H^2(\Gamma_2,C) \oplus \BHom(\Gamma_1,\Gamma_2,C) \to H^2(\Gamma_1 \times \Gamma_2,C) : (\om_1,\om_2,\gamma) \mapsto \Om\\
& \hspace{3cm}\text{with}\;\; \Om((g_1,g_2),(h_1,h_2)) = \om_1(g_1,h_1) + \om_2(g_2,h_2) + \gamma(g_2,h_1) \; ,\\[1ex]
& \Theta_2 : H^2(\Gamma_1 \ast \Gamma_2 , C) \to H^2(\Gamma_1,C) \oplus H^2(\Gamma_2,C) : \Theta_2(\Om) = (\Om|_{\Gamma_1},\Om|_{\Gamma_2})
\end{align*}
are well-defined group isomorphisms.
\end{enuma}
\end{lemma}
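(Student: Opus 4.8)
The three statements are all instances of the standard Lyndon--Hochschild--Serre / Künneth-type decompositions in degree $2$, and I would prove them by writing down explicit cocycle formulas and checking the cocycle identity and coboundary conditions by hand. The common template is: (1) verify that each displayed map sends cocycles to cocycles (the cocycle identity is a finite algebraic check), (2) verify it descends to cohomology (coboundaries go to coboundaries/zero), (3) construct an inverse, or prove injectivity and surjectivity separately by a cocycle-normalization argument.

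For part \ref{lem.basic-cohomology.1}: given a $2$-cocycle $\Om \in Z^2(G,C)$ with $G = \Sigma \rtimes \Gamma$, I would normalize it using a coboundary so that $\Om$ restricted to $\Sigma$ is our chosen representative, then analyze $\Om$ on pairs $((a,g),(e,h))$, $((e,g),(b,e))$ and $((e,g),(e,h))$. The cocycle identity forces the $\Gamma$-part to be a $2$-cocycle $\om \in Z^2(\Gamma,C)$ and the ``mixed'' part $\eta(g,b) := \Om((e,g),(b,e))$ (suitably normalized) to lie in $\cZ(\Gamma \actson \Sigma, C)$; the remaining freedom is exactly a coboundary on $\Sigma$ (hence the sequence is only left-exact, not right-exact) together with a $1$-cochain on $\Gamma$ (giving $\cH$ rather than $\cZ$) — this is where the quotient by $\cB(\Gamma\actson\Sigma,C)$ and by $B^2(\Gamma,C)$ enters. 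Injectivity of $\Psi$ is the statement that if $\Psi(\om,\eta)$ is a coboundary on $G$, then restricting the trivializing cochain to $\Gamma$ and to $\Sigma\times\{e\}$ exhibits $\om$ and $\eta$ as coboundaries; this is a routine unwinding.

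For part \ref{lem.basic-cohomology.2}: $\Ext^1(\Sigma,C)=0$ follows by induction on the filtration $\Sigma_n$, using the long exact sequence for $\Ext$: $\Ext^1(\Z,C)=0$ always, and $\Ext^1(\Z/N\Z,C)=C/NC=0$ when $C$ is $p$-divisible for all $p \mid N$; a direct-limit argument ($\Ext^1$ commutes with the relevant limit up to a $\varprojlim^1$ term which vanishes here because at each stage the relevant quotient maps are surjective) finishes it. Given $\Ext^1(\Sigma,C)=H^2\sym(\Sigma,C)=0$, every symmetric $2$-cocycle is a coboundary; since $\Xi$ kills precisely the symmetric cocycles (the antisymmetrization of a coboundary vanishes — here one uses $\partial F(a,b)-\partial F(b,a)=0$ because $\Sigma$ is abelian), $\Xi$ is well-defined on $H^2$ and its kernel is $H^2\sym(\Sigma,C)=0$, giving faithfulness. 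The target is $\BHom$ rather than just alternating bilinear maps because $\Xi(\Om)$ automatically satisfies bihomomorphism in each variable by the cocycle identity.

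For part \ref{lem.basic-cohomology.3}: the isomorphism $\Theta_1$ is the degree-$2$ Künneth formula; one checks the formula defines a cocycle (finite check), that it is injective by restricting a trivializing cochain to each factor and to a transversal, and surjective by the same normalization argument as in (a) applied to $G = \Gamma_1 \times \Gamma_2 = \Gamma_2 \rtimes \Gamma_1$ with trivial action — then $\cH(\Gamma_1 \actson \Gamma_2, C) = \Hom(\Gamma_1, \Hom(\Gamma_2,C)) = \BHom(\Gamma_1,\Gamma_2,C)$ and the $\res$ map to $H^2(\Gamma_2,C)$ is now split surjective, upgrading the left-exact sequence of (a) to a short exact split sequence. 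For $\Theta_2$: given a free product $\Gamma_1 \ast \Gamma_2$, a $2$-cocycle is determined up to coboundary by its restrictions to the factors — one builds the inverse to $\Theta_2$ by choosing, for any pair $(\om_1,\om_2)$, the unique cocycle on $\Gamma_1\ast\Gamma_2$ obtained via the universal property applied to the projective representations $\lambda_{\om_i}$ on $L_{\om_i}(\Gamma_i)$ (i.e.\ amalgamating twisted representations); injectivity follows because a coboundary restricting to coboundaries on both factors can be trivialized globally on the free product.

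\textbf{Main obstacle.} The genuinely delicate point is part \ref{lem.basic-cohomology.2}: getting $\Ext^1(\Sigma,C)=0$ for the whole colimit $\Sigma$, not just each $\Sigma_n$, requires care with the $\varprojlim^1$ obstruction in the short exact sequence $0 \to \varprojlim{}^1 \Ext^0(\Sigma_n \hookleftarrow, C) \to \Ext^1(\Sigma,C) \to \varprojlim \Ext^1(\Sigma_n,C) \to 0$; one must check the inverse system of $\Hom(\Sigma_n,C)$'s satisfies Mittag-Leffler (or argue directly with $2$-cocycles by extending a trivializing $1$-cochain one step up the filtration at a time, which is cleaner and avoids homological algebra). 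Everything else is bookkeeping with explicit cocycle formulas, and the correct normalizations of the coboundaries (which $1$-cochain to subtract at each stage) are the only place errors creep in; I would do those normalizations explicitly once and reuse them across all three parts.
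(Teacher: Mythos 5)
Your approach to parts (a) and (b) matches the paper's: in (a) you recover the cocycle pair $(\om,\eta)$ from a $2$-cocycle on $G$ whose restriction to $\Sigma$ is trivial by choosing a homomorphic lift on $\Sigma$ and an arbitrary lift on $\Gamma$, and faithfulness comes from tracking when the resulting central extension splits; in (b) the paper does exactly what you identify as ``cleaner'': it inductively builds a splitting $\theta_n : \Sigma_n \to B$ one step of the filtration at a time (choosing a lift of a generator of $\Sigma_{n+1}/\Sigma_n$ and, in the $\Z/N_n\Z$ case, correcting it using divisibility of $C$), avoiding $\varprojlim^1$ entirely. Your observation that $\Theta_1$ in (c) can be deduced from (a) applied to the trivial action $\Gamma_1 \actson \Gamma_2$, where $\cH(\Gamma_1 \actson \Gamma_2,C) = \BHom(\Gamma_1,\Gamma_2,C)$ and $\res$ becomes split surjective via pullback along the projection $G \to \Gamma_2$, is a genuine economy; the paper instead writes down an explicit left inverse $\Psi$ of $\Theta_1$ by antisymmetrizing on $(\Gamma_1\times e)\times(e\times\Gamma_2)$ and proves its injectivity directly.

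The one step that would fail as written is the surjectivity of $\Theta_2$. You propose building a preimage of $(\om_1,\om_2)$ by ``amalgamating the projective representations $\lambda_{\om_i}$ on $L_{\om_i}(\Gamma_i)$.'' That device requires $\om_i$ to be $\T$-valued, whereas the lemma is stated for an arbitrary abelian coefficient group $C$, for which $L_{\om_i}(\Gamma_i)$ has no meaning and there are in general no finite-dimensional (or any) ``representations'' realizing the cocycle. The correct tool, which the paper uses, is purely group-theoretic: given the central extensions $0 \to C \to G_i \to \Gamma_i \to e$ associated with $\om_i$, form the amalgamated free product $G = G_1 \ast_C G_2$. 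This is again a central extension of $\Gamma_1 \ast \Gamma_2$ by $C$, its $2$-cocycle $\Om$ satisfies $\Theta_2(\Om) = (\om_1,\om_2)$, and no representation theory is needed. With that replacement the rest of your argument for $\Theta_2$ (injectivity from combining homomorphic lifts on the two free factors) is correct and matches the paper.
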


Note that the group $H^2(\Sigma,C)$ appearing at the right of \eqref{eq.cohom-short-exact} is in general too large for the restriction $\res$ to be surjective. Also the result in \eqref{eq.hom-for-abelian} is not sharp, as one can describe the image of $\Xi$. But the formulations above suffice for our purposes and have a short proof.

\begin{proof}
(a) Whenever $\om \in Z^2(\Gamma,C)$ and $\eta \in \cZ(\Gamma \actson \Sigma,C)$, one turns $\Gtil = C \times (\Sigma \rtimes \Gamma)$ into a group by
$$(c,a,g) \cdot (d,b,h) = (c+d+\om(g,h)+\eta(g,b),a \al_g(b),gh) \quad\text{for all $c,d \in C$, $a,b \in \Sigma$, $g,h \in \Gamma$.}$$
With $q(c,a,g) = (a,g)$ and $c \mapsto (c,e,e)$, we obtain the central extension $0 \to C \to \Gtil \to G \to e$. By construction, the associated $2$-cocycle is $\Psi(\om,\eta)$. It is easy to check that this central extension is split if and only if $\om \in B^2(\Gamma,C)$ and $\eta \in \cB(\Gamma \actson \Sigma,C)$. So, $\Psi$ is a well-defined faithful group homomorphism.

To conclude the proof of (a), take $\Om \in H^2(G,C)$ such that the restriction of $\Om$ to $\Sigma$ is a trivial $2$-cocycle. We have to prove that $\Om$ lies in the image of $\Psi$. Denote by $\Gtil$ the central extension corresponding to $\Om$, with quotient map $q : \Gtil \to G$. We view $\Sigma$ and $\Gamma$ as subgroups of $G$.

Since $\res(\Om)$ is trivial, we can choose a homomorphism $\theta : \Sigma \to \Gtil$ such that $q \circ \theta = \id$. We choose any map $\vphi : \Gamma \to \Gtil$ satisfying $q \circ \vphi = \id$ and $\vphi(e) = e$. Define $\om \in Z^2(\Gamma,C)$ by $\vphi(g)\vphi(h) = \om(g,h) \, \vphi(gh)$ for all $g,h \in \Gamma$. Define the map $\eta : \Gamma \times \Sigma \to C$ by $\vphi(g) \theta(a) = \eta(g,a) \, \theta(\al_g(a)) \vphi(g)$. It is easy to check that $\eta \in \cZ(\Gamma \actson \Sigma,C)$ and that $\Om = \Psi(\om,\eta)$.

(b) First assume that $0 \to C \to B \to \Sigma \to 0$ is an extension with $B$ abelian and denote by $q : B \to \Sigma$ the quotient homomorphism. We have to prove that there exists a group homomorphism $\theta : \Sigma \to B$ satisfying $q \circ \theta = \id$. We write the group operations in $C$, $B$ and $\Sigma$ additively.

We inductively construct group homomorphisms $\theta_n : \Sigma_n \to B$ such that $q \circ \theta_n = \id$ and $\theta_{n+1}|_{\Sigma_n} = \theta_n$. Once this is done, we can uniquely define $\theta$ such that $\theta|_{\Sigma_n} = \theta_n$ for all $n$, so that (b) is proven.

We start by defining $\theta_1 : \Sigma_1 = \{0\} \to B$ trivially. Assume that $\theta_1,\ldots,\theta_n$ have been constructed. If $\Sigma_{n+1}/\Sigma_n$ is an infinite cyclic group, we can take $a \in \Sigma_{n+1}$ such that $a+\Sigma_n$ generates $\Sigma_{n+1}/\Sigma_n$. Choose an arbitrary $b \in B$ such that $q(b) = a$. Since $k a \not\in \Sigma_n$ for all $k \in \Z \setminus \{0\}$,
$$\theta_{n+1} : \Sigma_{n+1} \to B : \theta_{n+1}(k a + d) = k b + \theta_n(d) \quad\text{for all $k \in \Z$, $d \in \Sigma_n$,}$$
is a well-defined group homomorphism.

If $\Sigma_{n+1}/\Sigma_n$ is a finite cyclic group of order $N$, by assumption, $C$ is $p$-divisible for every prime divisor of $N$. We choose $a \in \Sigma_{n+1}$ such that $a+\Sigma_n$ generates $\Sigma_{n+1}/\Sigma_n$. We choose an arbitrary $b \in B$ such that $q(b) = a$. Since $N a \in \Sigma_n$, we find that $N b - \theta_n(N a) \in C$. Since every element of $C$ can be divided by $N$, we can choose $c \in C$ such that $N c = N b - \theta_n(N a)$. We now unambiguously define the group homomorphism
$$\theta_{n+1} : \Sigma_{n+1} \to B : \theta_{n+1}(k a + d) = k(b-c) + \theta_n(d) \quad\text{for all $k \in \Z$, $d \in \Sigma_n$.}$$
In both cases, the homomorphisms $\theta_n$ combine into a well-defined group homomorphism $\theta : \Sigma \to B$. So, we have proven that $\Ext^1(\Sigma,C) = 0$. It is easy to check that $\Xi$ is a well-defined group homomorphism and that every $\Om \in H^2(\Sigma,C)$ with $\Xi(\Om)=0$ defines an abelian extension. Since we have already proven that $\Ext^1(\Sigma,C) = 0$, it follows that $\Xi$ is faithful.

(c) It is immediate that $\Theta_1$ is a well-defined group homomorphism. Also
\begin{multline*}
\Psi : H^2(\Gamma_1 \times \Gamma_2,C) \to H^2(\Gamma_1,C) \oplus H^2(\Gamma_2,C) \oplus \BHom(\Gamma_1,\Gamma_2,C) : \\
\Psi(\Om) = (\Om|_{\Gamma_1 \times e},\Om|_{e \times \Gamma_2},\gamma) \quad\text{where $\gamma(g,h) = \Om((e,g),(h,e)) - \Om((h,e),(e,g))$}
\end{multline*}
is a well-defined group homomorphism. By construction, $\Psi \circ \Theta_1 = \id$. It thus suffices to prove that $\Psi$ is faithful. Take $\Om$ in $H^2(\Gamma_1 \times \Gamma_2,C)$ such that $\Psi(\Om) = 0$ and denote by $0 \to C \to G \to \Gamma_1 \times \Gamma_2 \to e$ the corresponding central extension, with quotient homomorphism $q : G \to \Gamma_1 \times \Gamma_2$.

Since the restrictions of $\Om$ to $\Gamma_1$ and $\Gamma_2$ are trivial, we can choose homomorphisms $\theta_i : \Gamma_i \to G$ such that $q \circ \theta_i = \id$. Since $\Om((e,g),(h,e)) - \Om((h,e),(e,g))=0$, the images of $\theta_1$ and $\theta_2$ commute, so that we obtain the well-defined group homomorphism $\theta : \Gamma_1 \times \Gamma_2 \to G : \theta(g,h) = \theta_1(g) \theta_2(h)$. By construction, $q \circ \theta = \id$, so that $\Om$ must be a trivial $2$-cocycle.

Also the group homomorphism $\Theta_2$ is well-defined. To prove the faithfulness of $\Theta_2$, assume that $\Om \in H^2(\Gamma_1 \ast \Gamma_2,C)$ is such that $\Theta_2(\Om) = 0$. In the corresponding central extension, we find homomorphic lifts on $\Gamma_1$ and $\Gamma_2$, because the restrictions of $\Om$ to $\Gamma_1$ and $\Gamma_2$ are trivial. By freeness, they uniquely combine into a homomorphic lift on $\Gamma_1 \ast \Gamma_2$.

To prove the surjectivity of $\Theta_2$, choose $\om_i \in H^2(\Gamma_i,C)$ with corresponding central extensions $0 \to C \to G_i \to \Gamma_i \to e$. One turns the amalgamated free product $G = G_1 \ast_C G_2$ into a central extension whose corresponding $2$-cocycle $\Om$ satisfies $\Theta_2(\Om) = (\om_1,\om_2)$.
\end{proof}

The following more ad hoc lemma is an immediate consequence of Lemma \ref{lem.basic-cohomology} and will in particular be used to prove that our groups satisfy condition \ref{ncond.iso.1primeprime}.

\begin{lemma}\label{lem.cohomology-semidirect-order-2}
Let $\Gamma \actson^\al \Sigma$ be an action of a group $\Gamma$ on a countable abelian group $\Sigma$ in which every nontrivial element has order $2$. Write $G = \Sigma \rtimes \Gamma$. Let $C$ be an abelian group that is $2$-divisible. Then
$$\Psi : H^2(\Gamma,C) \to H^2(G,C) : (\Psi(\om))((a,g),(b,h)) = \om(g,h)$$
is a faithful group homomorphism and the following holds.
\begin{enuma}
\item\label{lem.cohomology-semidirect-order-2.1} If $C$ has no elements of order $2$, then $\Psi$ is a group isomorphism.
\item\label{lem.cohomology-semidirect-order-2.2} If $\Om \in H^2(G,\T)$ is a finite type $2$-cocycle, there exist finite index subgroups $\Sigma_0 < \Sigma$ and $\Gamma_0 < \Gamma$ such that $\al_g(\Sigma_0) = \Sigma_0$ for all $g \in \Gamma_0$ and a finite type $2$-cocycle $\om_0 \in H^2(\Gamma_0,\T)$ such that, writing $G_0 = \Sigma_0 \rtimes \Gamma_0$, we have $\Om|_{G_0} = \Psi(\om_0)$ in $H^2(G_0,\T)$.
\end{enuma}
\end{lemma}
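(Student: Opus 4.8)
The plan is to apply Lemma \ref{lem.basic-cohomology.1} to the semidirect product $G = \Sigma \rtimes \Gamma$, which gives the exact sequence
$$0 \to H^2(\Gamma,C) \oplus \cH(\Gamma \actson \Sigma,C) \overset{\Psi}{\longrightarrow} H^2(G,C) \overset{\res}{\longrightarrow} H^2(\Sigma,C)\; .$$
The key computation is to show that $\cH(\Gamma \actson \Sigma,C) = 0$ when every nontrivial element of $\Sigma$ has order $2$ and $C$ is $2$-divisible; this will identify the map $\Psi$ in the statement with the composition of the inclusion $H^2(\Gamma,C) \hookrightarrow H^2(\Gamma,C)\oplus\cH(\Gamma\actson\Sigma,C)$ and the $\Psi$ of Lemma \ref{lem.basic-cohomology.1}, and hence show it is a faithful homomorphism. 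To see $\cH(\Gamma\actson\Sigma,C)=0$: given $\eta\in\cZ(\Gamma\actson\Sigma,C)$, for each $g\in\Gamma$ the map $a\mapsto\eta(g,a)$ is a homomorphism $\Sigma\to C$, hence (since every element of $\Sigma$ has order $2$) takes values in the $2$-torsion of $C$; but more useful is to directly produce a splitting homomorphism $\vphi:\Sigma\to C$ with $\eta(g,a)=\vphi(\al_g(a))-\vphi(a)$. I expect one builds $\vphi$ on an increasing exhaustion of $\Sigma$ as in the proof of Lemma \ref{lem.basic-cohomology.2}, using $2$-divisibility of $C$ at each step to extend over the $\Z/2\Z$-quotients $\Sigma_{n+1}/\Sigma_n$ and using the cocycle identity $\eta(gh,a)=\eta(g,\al_h(a))+\eta(h,a)$ to control how $\vphi$ must transform under $\al$. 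Alternatively, and perhaps more cleanly, one observes that $\cH(\Gamma\actson\Sigma,C)$ injects into $H^1(\Gamma,\Hom(\Sigma,C))$ and that $\Hom(\Sigma,C)=\Hom(\Sigma,C[2])$ is a vector space over $\F_2$ on which the abelianization of the coefficient structure combined with $2$-divisibility forces the relevant cohomology to vanish — but I would more likely just give the explicit exhaustion argument, since the paper favours explicit formulas.

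For part \ref{lem.cohomology-semidirect-order-2.1}, when $C$ additionally has no elements of order $2$, I claim $\res = 0$, so that $\Psi$ is surjective as well. Indeed $H^2(\Sigma,C)$ injects into $\BHom(\Sigma,\Sigma,C)$ by Lemma \ref{lem.basic-cohomology.2} — whose hypotheses hold here because $\Sigma$ is a countable abelian group that is a union of finite $2$-groups, so it has an exhaustion by subgroups with successive quotients $\Z/2\Z$ and $C$ is $2$-divisible — and moreover $\BHom(\Sigma,\Sigma,C) = 0$ since any bihomomorphism $\Sigma\times\Sigma\to C$ lands in the $2$-torsion $C[2] = 0$. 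Hence $H^2(\Sigma,C) = 0$, and by exactness $\res$ is the zero map, so $\Psi$ is onto, hence an isomorphism.

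For part \ref{lem.cohomology-semidirect-order-2.2}, suppose $\Om \in H^2(G,\T)$ is of finite type, realized by a finite-dimensional projective representation $\pi$. Restricting $\Om$ to $\Sigma$ gives a finite type $2$-cocycle on the locally finite $2$-group $\Sigma$, which is classified by $\Xi(\Om|_\Sigma)\in\BHom(\Sigma,\Sigma,\T)$, a map whose image lies in $\T[2]=\{\pm1\}$, i.e.\ an alternating $\F_2$-bilinear form. Its radical $\Sigma_0 := \{a\in\Sigma : \Om(a,b)=\Om(b,a)\ \forall b\in\Sigma\}$ is a subgroup; I expect it has finite index because $\pi$ being finite-dimensional forces the form to be "finite rank" — the restriction of $\Om$ to $\Sigma$ factors through a finite quotient. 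On $\Sigma_0$ the restricted cocycle is symmetric with values in $\{\pm1\}$, hence (again using $2$-divisibility of $\T$, or that $\Ext^1(\Sigma_0,\T)$ vanishes by Lemma \ref{lem.basic-cohomology.2}) cohomologically trivial, so after shrinking $\Sigma_0$ further if necessary and passing to a finite index $\Gamma_0 < \Gamma$ normalizing $\Sigma_0$, the cocycle $\Om|_{\Sigma_0\rtimes\Gamma_0}$ has trivial restriction to $\Sigma_0$. Applying the exact sequence of Lemma \ref{lem.basic-cohomology.1} to $G_0 = \Sigma_0\rtimes\Gamma_0$ together with the vanishing $\cH(\Gamma_0\actson\Sigma_0,\T)=0$ (same argument as in the first paragraph, $\T$ being $2$-divisible) yields $\Om|_{G_0}=\Psi(\om_0)$ for a unique $\om_0\in H^2(\Gamma_0,\T)$; finiteness of type of $\om_0$ follows from that of $\Om|_{G_0}$ by restricting/inducing the projective representation.

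The main obstacle, I expect, is the quantitative step in \ref{lem.cohomology-semidirect-order-2.2}: showing that the radical $\Sigma_0$ of the $\{\pm1\}$-valued alternating form on $\Sigma$ has \emph{finite} index and that one can simultaneously arrange $\Gamma_0 < \Gamma$ of finite index normalizing $\Sigma_0$ and trivializing the mixed $\eta$-part. One must use that $\pi$ is finite-dimensional to bound the form, and that $\Gamma$ acts on $\Sigma$ preserving the commutator pairing, so that the (finite) radical quotient is $\Gamma$-equivariant and its stabilizer in $\Gamma$ has finite index. The cohomology vanishing statements are routine consequences of Lemma \ref{lem.basic-cohomology} once the exhaustion structure on $\Sigma$ (and on $\Sigma_0$, being a subgroup of a locally finite $2$-group) is in hand.
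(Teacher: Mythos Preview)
Your central claim that $\cH(\Gamma\actson\Sigma,C)=0$ whenever $C$ is $2$-divisible is \emph{false}, and this creates a genuine gap in your argument for part \ref{lem.cohomology-semidirect-order-2.2}. A counterexample: take $\Gamma=\Sigma=\Z/2\Z$ with trivial action and $C=\T$. Then $\cZ(\Gamma\actson\Sigma,\T)$ consists of bihomomorphisms $\Z/2\Z\times\Z/2\Z\to\T$, while every coboundary $\vphi(\al_g(a))\overline{\vphi(a)}$ is trivial; hence $\cH\cong\Z/2\Z$. More generally $\cH(\Gamma\actson\Sigma,C)\cong H^1(\Gamma,\Hom(\Sigma,C))$, and there is no reason for $H^1$ of a group with coefficients in an $\F_2$-vector space to vanish. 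Your alternative sketch via $H^1(\Gamma,\Hom(\Sigma,C[2]))$ runs into the same wall.

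This does not affect faithfulness of $\Psi$ or part \ref{lem.cohomology-semidirect-order-2.1}: faithfulness follows immediately from Lemma \ref{lem.basic-cohomology.1} without any claim on $\cH$, since the $\Psi$ in the statement is the restriction of the injective map there to the first summand; and in part \ref{lem.cohomology-semidirect-order-2.1} the extra hypothesis $C[2]=0$ gives $\Hom(\Sigma,C)=0$, so already $\cZ(\Gamma\actson\Sigma,C)=0$ (the paper observes this in one line). But in part \ref{lem.cohomology-semidirect-order-2.2} with $C=\T$ you genuinely cannot kill the $\eta$-component by a blanket vanishing of $\cH$. The paper instead exploits the finite-dimensionality of $\pi$ a second time: once $\Om|_{\Sigma_0}$ is trivial (so $\pi|_{\Sigma_0}$ is a genuine representation), one decomposes $\C^d=\bigoplus_{\mu\in\cF}H_\mu$ into isotypic pieces for finitely many characters $\mu\in\widehat{\Sigma_0}$; the cocycle identity for $\eta$ shows that $(g\cdot\mu)(a):=\overline{\eta(g^{-1},a)}\,\mu(\al_{g^{-1}}(a))$ defines a $\Gamma$-action on $\widehat{\Sigma_0}$ under which $\cF$ is invariant, and the stabilizer $\Gamma_0$ of any $\mu\in\cF$ has finite index and satisfies $\eta(g,a)=\mu(a)\overline{\mu(\al_g(a))}$ for $g\in\Gamma_0$, i.e.\ $\eta\in\cB(\Gamma_0\actson\Sigma_0,\T)$. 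This is the missing idea: one kills the \emph{specific} $\eta$ coming from $\Om$ by passing to finite index in $\Gamma$, not all of $\cH$. Note also that $\Sigma_0$ is already invariant under the full $\Gamma$ (since the commutator pairing is $\Gamma$-equivariant via conjugation by $\pi(g)$), so the passage to $\Gamma_0$ is solely for trivializing $\eta$, not for normalizing $\Sigma_0$ as you suggest.
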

\begin{proof}
We can write $\Sigma$ as the union of an increasing sequence of subgroups $\Sigma_n$ such that $\Sigma_1 = \{0\}$ and each $\Sigma_{n+1}/\Sigma_n$ has order $2$.

(a) Since every nontrivial element of $\Sigma$ has order $2$, while $C$ has no elements of order $2$, $\BHom(\Sigma,\Sigma,C) = 0$ and $\cZ(\Gamma \actson \Sigma,C) = 0$. By Lemmas \ref{lem.basic-cohomology.1} and \ref{lem.basic-cohomology.2}, the homomorphism $\Psi$ is a group isomorphism.

(b) Let $\Om \in H^2(G,\T)$ be a finite type $2$-cocycle. Choose a projective representation $\pi : G \to \cU(d)$ such that $\pi(g)\pi(h) = \Om(g,h) \pi(gh)$ for all $g,h \in G$. Define the bihomomorphism $\gamma : \Sigma \times \Sigma \to \T$ by $\gamma(a,b) = \Om(a,b) \overline{\Om(b,a)}$. Since $\pi(a) \pi(b) = \gamma(a,b) \pi(b) \pi(a)$ for all $a,b \in \Sigma$, conjugating by $\pi(g)$ for $g \in \Gamma$, we conclude that
\begin{equation}\label{eq.gamma-invariant}
\gamma(\al_g(a),\al_g(b)) = \gamma(a,b) \quad\text{for all $a,b \in \Sigma$, $g \in \Gamma$.}
\end{equation}
Define the subgroup $\Sigma_0 < \Sigma$ by $\Sigma_0 = \{a \in \Sigma \mid \forall b \in \Sigma: \gamma(a,b) = 1\}$. By \eqref{eq.gamma-invariant}, we have that $\al_g(\Sigma_0) = \Sigma_0$ for all $g \in \Gamma$. By \cite[Lemma 2.5]{DV24}, the subgroup $\Sigma_0 < \Sigma$ has finite index.

Since $\gamma(a,b) = 1$ for all $a,b \in \Sigma_0$ and since $\T$ is $2$-divisible, it follows from Lemma \ref{lem.basic-cohomology.2} that $\Om|_{\Sigma_0} = 1$ in $H^2(\Sigma_0,\T)$. We may thus assume that the restriction of $\pi$ to $\Sigma_0$ is a representation. As in the proof of \ref{lem.basic-cohomology.1}, define $\eta \in \cZ(\Gamma \actson \Sigma_0,\T)$ such that
\begin{equation}\label{eq.my-equivariance}
\pi(g) \pi(a) \pi(g)^* = \eta(g,a) \, \pi(\al_g(a)) \quad\text{for all $g \in \Gamma$, $a \in \Sigma_0$.}
\end{equation}
Since $\pi|_{\Sigma_0}$ is a finite dimensional unitary representation of the abelian group $\Sigma_0$, we find a finite subset $\cF \subset \widehat{\Sigma}_0$ and a direct sum decomposition $\C^d = \bigoplus_{\mu \in \cF} H_\mu$ with $H_\mu \subset \C^d$ nonzero subspaces such that $\pi(a) \xi = \mu(a) \xi$ for all $a \in \Sigma_0$ and $\xi \in H_\mu$.

Since $\eta \in \cZ(\Gamma \actson \Sigma_0,\T)$, the formula $(g^{-1} \cdot \mu)(a) = \eta(g,a) \, \mu(\al_g(a))$ defines an action of $\Gamma$ on $\widehat{\Sigma}_0$. By \eqref{eq.my-equivariance}, we get that $g \cdot \cF = \cF$ for all $g \in \Gamma$. Fix any $\mu \in \cF$ and define the finite index subgroup $\Gamma_0 < \Gamma$ by $\Gamma_0 = \{g \in \Gamma \mid g \cdot \mu = \mu\}$. By definition, $\eta(g,a) = \mu(a) \, \overline{\mu(\al_g(a))}$ for all $g \in \Gamma_0$ and $a \in \Sigma_0$. This means that $\eta \in \cB(\Gamma_0 \actson \Sigma_0,\T)$.

Write $G_0 = \Sigma_0 \rtimes \Gamma_0$. Then Lemma \ref{lem.basic-cohomology.1} provides a $2$-cocycle $\om_0 \in H^2(\Gamma_0,\T)$ such that $\Om|_{G_0} = \Psi(\om_0)$ in $H^2(G_0,\T)$. Since $\Om$ is of finite type, also $\om_0$ is of finite type.
\end{proof}

\begin{lemma}\label{lem.good-properties-ZP}
Fix a set of prime numbers $\cP$ and denote by $C = \Z[\cP^{-1}]$ the subring of $\Q$ generated by $\{p^{-1} \mid p \in \cP\}$.
\begin{enuma}
\item\label{lem.good-properties-ZP.1} If $N \in \N$, then $N \cdot C < C$ is a finite index subgroup whose index is the largest positive integer that divides $N$ and has no prime divisor in $\cP$.
\item\label{lem.good-properties-ZP.2} If $D < C$ is a finite index subgroup, $[C:D]$ has no prime divisor in $\cP$ and $D = [C:D] \cdot C$.
\item\label{lem.good-properties-ZP.3} If $D$ is a torsion free abelian group that is virtually isomorphic to $C$, then $D \cong C$.
\item\label{lem.good-properties-ZP.4} If $B$ is an abelian group that is $p$-divisible for every $p \in \cP$, then $H^2(C,B) = 0$.
\end{enuma}
\end{lemma}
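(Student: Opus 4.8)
The plan is to handle the four parts in the stated order, since \ref{lem.good-properties-ZP.2} and \ref{lem.good-properties-ZP.3} rest on \ref{lem.good-properties-ZP.1}, while \ref{lem.good-properties-ZP.4} will follow from Lemma~\ref{lem.basic-cohomology.2}. For \ref{lem.good-properties-ZP.1} I would factor $N = N' N''$, where $N''$ is the largest divisor of $N$ all of whose prime factors lie in $\cP$, so that $N'$ is coprime to every prime in $\cP$ and is precisely the largest divisor of $N$ with no prime divisor in $\cP$. Since $N''$ is a unit in the ring $C = \Z[\cP^{-1}]$, we have $N\cdot C = N'\cdot C$, and it remains to compute $[C:N'C]$. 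The natural homomorphism $\Z \to C/N'C$ is surjective: any $c\in C$ can be written $c = z/e$ with $z\in\Z$ and $e$ a product of primes in $\cP$, and since $\gcd(e,N')=1$ there is $f\in\Z$ with $ef\equiv 1\pmod{N'}$, whence $c\equiv zf$ in $C/N'C$. Its kernel is $N'\Z$, because for $z\in\Z$ the fraction $z/N'$ lies in $\Z[\cP^{-1}]$ only if $N'\mid z$ (as $N'$ has no prime factor in $\cP$). Hence $[C:NC] = [C:N'C] = N'$.

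For \ref{lem.good-properties-ZP.2}, writing $n = [C:D]$ we get $nC\subseteq D\subseteq C$, so $n$ divides $[C:nC]$; by (a) the integer $[C:nC]$ is the largest divisor of $n$ with no prime factor in $\cP$ and hence also divides $n$, so $[C:nC]=n$, which forces $[D:nC]=1$, i.e.\ $D=nC=[C:D]\cdot C$, and forces $n$ to have no prime divisor in $\cP$. For \ref{lem.good-properties-ZP.3}, unpacking Definition~\ref{def.virtual-iso} and using that $C$ and $D$ are torsion free (so the finite normal subgroups occurring there are trivial), I obtain finite index subgroups $D_1<D$, $C_1<C$ with $D_1\cong C_1$; by (b), $C_1 = [C:C_1]\cdot C\cong C$ since multiplication by a nonzero integer is an isomorphism of the torsion free group $C$ onto its image. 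Thus $D$ has torsion free rank one, so I may embed $D$ into $\Q = D\otimes_\Z\Q$. The image of $D_1$ is a subgroup of $\Q$ isomorphic to $\Z[\cP^{-1}]$, and such a subgroup equals $q\,\Z[\cP^{-1}]$ for the element $q\in\Q^{\times}$ that is the image of $1$ (any isomorphism $\Z[\cP^{-1}]\to D_1$ necessarily sends $1/(p_1\cdots p_k)$ to $q/(p_1\cdots p_k)$). Rescaling the embedding by $q^{-1}$, I reduce to $C\subseteq D\subseteq\Q$ with $m:=[D:C]$ finite, so $D\subseteq\tfrac1m C=\tfrac1{m'}C$, where $m'$ is the largest divisor of $m$ with no prime factor in $\cP$. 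By (a), $\tfrac1{m'}C/C\cong C/m'C$ is cyclic of order $m'$, hence $D/C$ is cyclic of some order $\ell\mid m'$ and must be the unique subgroup $\tfrac1\ell C/C$ of that order, so $D=\tfrac1\ell C\cong C$.

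For \ref{lem.good-properties-ZP.4} I would apply Lemma~\ref{lem.basic-cohomology.2} with its group $\Sigma$ taken to be $C=\Z[\cP^{-1}]$ and its coefficient group taken to be $B$. The required filtration is $\Sigma_1=\{0\}\subset\Sigma_2=\Z\subset\Sigma_3\subset\cdots$, obtained by fixing a sequence of primes $r_1,r_2,\dots\in\cP$ in which every element of $\cP$ occurs infinitely often and setting $\Sigma_{n+2}=\tfrac{1}{r_1\cdots r_n}\Z$ (if $\cP=\emptyset$ then $C=\Z$ and one simply takes $\Sigma_n=\Z$ for $n\ge 2$); the consecutive quotients are $\Z$ for $\Sigma_2/\Sigma_1$ and $\Z/r_n\Z$ with $r_n\in\cP$, and $B$ is $r_n$-divisible by hypothesis. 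Lemma~\ref{lem.basic-cohomology.2} then tells us that $\Xi:H^2(C,B)\to\BHom(C,C,B)$, $\Xi(\Om)(a,b)=\Om(a,b)-\Om(b,a)$, is well-defined and faithful. But $\Xi$ is identically zero: given a $2$-cocycle $\Om$ and $a,b\in C$, choose a common denominator, i.e.\ write $a=a_1 u$, $b=b_1 u$ with $a_1,b_1\in\Z$ and $u\in C$; then biadditivity of the bihomomorphism $\Xi(\Om)$ gives $\Xi(\Om)(a,b)=a_1 b_1\,\Xi(\Om)(u,u)=a_1 b_1\bigl(\Om(u,u)-\Om(u,u)\bigr)=0$. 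Since $\Xi$ is faithful, $H^2(C,B)=0$.

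The arithmetic in (a) and (b) is routine, and (d) is short once one observes that $\Xi$ vanishes identically because $C$ has rank one. The step needing the most care is (c): identifying the isomorphism type of a finite index torsion free overgroup of $\Z[\cP^{-1}]$ is the real content, but after embedding everything into $\Q$ it reduces to another index computation feeding on part (a).
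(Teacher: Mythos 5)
Your proof is correct in all four parts and establishes the same facts, but via somewhat different routes than the paper, chiefly in (b) and (c). For (a), you compute $[C:N'C]$ in one shot by a modular-inverse argument, while the paper reduces to the case of a single prime $p\notin\cP$ (the composite case being handled by stacking). For (b), you avoid the paper's induction on the index entirely: the key observation is that $nC\subseteq D\subseteq C$ forces $n\mid[C:nC]$, and (a) gives $[C:nC]\mid n$, so $[C:nC]=n$ and $D=nC$; this is a genuine simplification. For (c), after reducing to $D_1\cong C$ sitting with finite index in $D$, you embed $D$ in $\Q=D\otimes\Q$, pin down the image of $D_1$ as $q\,\Z[\cP^{-1}]$, rescale, and then locate $D/C$ inside the cyclic group $\tfrac1{m'}C/C$; the paper instead avoids $\Q$ altogether by noting $N\cdot D\subset C$ and that multiplication by $N$ is injective on the torsion-free $D$, so $D$ embeds as a finite index subgroup of $C$, to which (b) applies — a shorter argument, though yours makes the rank-one structure more explicit. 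For (d), the argument is essentially the same as the paper's: use Lemma~\ref{lem.basic-cohomology.2} to get the faithful homomorphism $\Xi$ into $\BHom(C,C,B)$, then observe $\Xi$ vanishes because $\Xi(\Om)(u,u)=0$ and any pair $a,b\in C$ lie in a common cyclic subgroup.
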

\begin{proof}
(a) When $p \in \cP$, clearly $p \cdot C = C$. So it suffices to prove that $p \cdot C < C$ has index $p$ for every prime number $p \not\in \cP$.

Every element $c \in C$ can be uniquely written as an irreducible fraction $c = a/b$ with $a \in \Z$, $b \in \N$ and all prime divisors of $b$ belonging to $\cP$. It then follows immediately that the subsets $i + p \cdot C$, $i \in \{0,1,\ldots,p-1\}$ are disjoint. Also for any such $c = a/b$, we have that $p$ does not divide $b$, so that we can choose $\alpha \in \Z$ and $i \in \{0,1,\ldots,p-1\}$ such that $a = i b + \alpha p$. Then, $a/b \in i + p \cdot C$. So, the index of $p \cdot C$ equals $p$.

(b) We prove this statement by induction on the index $N = [C:D]$. If $N = 1$, there is nothing to prove. Next assume that $N > 1$ and that the statement holds for all subgroups with index strictly smaller than $N$. Write $N = p k$ with $p$ a prime number. Since $C/D$ is a finite abelian group of order $N$ and $p$ divides $N$, the group $\Z/p\Z$ is a quotient of $C/D$. We can thus choose a surjective group homomorphism $\vphi : C \to \Z / p\Z$ with $D < \Ker \vphi$.

If $p \in \cP$, we have for every $c \in C$ that $\vphi(c) = p \cdot \vphi(p^{-1}\cdot c) = 0$, contradicting that $\vphi$ is surjective. So, $p \not\in \cP$. Because $p \not\in \cP$ by (a), the subgroup $p \cdot C < C$ has index $p$. Also, $p \cdot C \subset \Ker \vphi$. Since also $\Ker \vphi < C$ has index $p$, it follows that $\Ker \vphi = p \cdot C$.

So, $D < p \cdot C$ is a subgroup of index $k$. We can then view $p^{-1} \cdot D$ as a subgroup of $C$ of index $k$. By the induction hypothesis, $k$ has no prime divisor in $\cP$ and $p^{-1} \cdot D = k \cdot C$. We have proven that $D = N \cdot C$ and that $N$ has no prime divisors in $\cP$.

(c) From (b), we already get that every finite index subgroup of $C$ is isomorphic with $C$.

We next prove that when $D$ is a torsion free abelian group that contains $C$ as a finite index subgroup, then $D \cong C$. Write $N = [D:C]$. Since $N \cdot a = 0$ for every $a \in D/C$, we get that $N \cdot D \subset C$. So, $\theta : D \to C : x \mapsto N \cdot x$ is a well-defined group homomorphism. Since $D$ is torsion free, $\theta$ is faithful. By construction, $N \cdot C$ is contained in the image of $\theta$. Since $N \cdot C < C$ has finite index, $D$ is isomorphic to a finite index subgroup of $C$. By the result above, it follows that $D \cong C$.

(d) Take $\Om \in H^2(C,B)$. Define the subgroups $C_1 = \{0\}$ and $C_2 = \Z$ of $C$. Choose a sequence $p_3,p_4,p_5,\ldots$ of prime numbers in $\cP$, with lots of repetitions, such that $C$ is the union of the subgroups $C_n = (p_3 \cdots p_n)^{-1} \Z$, $n \geq 3$. Since $B$ is $p$-divisible for every $p \in \cP$, by Lemma \ref{lem.basic-cohomology.2}, it suffices to prove that $\gamma := \Theta(\Om)$ is zero.

By \eqref{eq.hom-for-abelian}, $\gamma : C \times C \to B$ is a bihomomorphism satisfying $\gamma(a,a) = 0$ for all $a \in C$. For $n \geq 3$, denote by $c_n = (p_3 \cdots p_n)^{-1}$ the generator of the cyclic group $C_n$. Since $\gamma(c_n,c_n) = 0$ and $\gamma$ is a bihomomorphism, also $\gamma(k c_n, r c_n) = 0$ for all $k,r \in \Z$. So, $\gamma$ is $0$ on $C_n \times C_n$ for all $n$. This means that $\gamma = 0$.
\end{proof}

\section{\boldmath A concrete W$^*$-superrigidity theorem for central extensions}

In Section \ref{sec.proof-main}, we deduce Theorem \ref{thm.main-A} from the following more precise result.

\begin{theorem}\label{thm.more-general}
Let $\cP_1,\cP_2,\cQ$ be sets of prime numbers with $2 \in \cQ$ and $\cP_1 \cup \cP_2 \subset \cQ$. Consider the subrings $\Gamma_i = \Z[\cP_i^{-1}]$ and $C = \Z[\cQ^{-1}]$ of $\Q$. Define $\Gamma = \Gamma_1 \ast \Gamma_2$ and define the left-right wreath product group $G = (\Z/2\Z)^{(\Gamma)} \rtimes (\Gamma \times \Gamma)$.

Denote by $\Lambda = \Gamma_1 \times \Gamma_2$ the abelianization of $\Gamma$. Let $\zeta : G \to \Lambda \times \Lambda$ be the natural surjective homomorphism. The following homomorphism is an isomorphism:
\begin{multline}\label{eq.iso-of-cohom}
\BHom(\Lambda,\Lambda,C) \to H^2(G,C) : \gamma \mapsto \Om_\gamma \\
\text{with}\quad \Om_\gamma(g,h) = \gamma(b,a') \quad\text{when $\zeta(g)=(a,b)$ and $\zeta(h) = (a',b')$.}
\end{multline}
For every $\gamma \in \BHom(\Lambda,\Lambda,C)$, we denote by $G_\gamma$ the central extension of $G$ by $C$ given by $\Om_\gamma$.

\begin{enuma}
\item\label{thm.more-general.1} If $\gamma(\Lambda \times \Lambda)$ generates $C$, then $G_\gamma$ satisfies the isomorphism W$^*$-superrigidity property of Theorem \ref{thm.main-A.1}.

\item\label{thm.more-general.2} If $\gamma(\Lambda \times \Lambda)$ generates a finite index subgroup of $C$, then $G_\gamma$ satisfies the virtual isomorphism W$^*$-superrigidity properties of Theorem \ref{thm.main-A.2} and \ref{thm.main-A.3}.

\item\label{thm.more-general.3} We have that $G_\gamma \cong G_{\gamma'}$ if and only if there exists a $\vphi \in \Aut \Lambda$ that can be lifted to $\delta_0 \in \Aut \Gamma$, and nonzero integers $r,r'$ having all their prime divisors in $\cQ$ such that
    \begin{equation}\label{eq.equivalence-for-iso}
    r \, \gamma = r' \, \gamma' \circ (\vphi \times \vphi) \quad\text{or}\quad r \, \gamma = r' \, \gamma' \circ \si \circ (\vphi \times \vphi) \; ,
    \end{equation}
    where $\si(a,b) = (b,a)$ is the flip.

\item\label{thm.more-general.4} $G_\gamma$ is virtually isomorphic to $G_{\gamma'}$ iff $G_\gamma$ is commensurable to $G_{\gamma'}$ iff there exists a $\vphi \in \Aut \Lambda$ that can be lifted to $\delta_0 \in \Aut \Gamma$, and nonzero integers $r,r'$ such that \eqref{eq.equivalence-for-iso} holds.
\end{enuma}
\end{theorem}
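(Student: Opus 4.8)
The plan is to derive parts \ref{thm.more-general.1} and \ref{thm.more-general.2} from the meta-theorems \ref{thm.inherit-iso-superrigidity} and \ref{thm.inherit-virtual-iso-superrigidity} by checking that $G$ and the cocycle $\Om_\gamma$ satisfy all their hypotheses, and to prove the purely group-theoretic statements \ref{thm.more-general.3} and \ref{thm.more-general.4} by combining the description \eqref{eq.iso-of-cohom} of $H^2(G,C)$ with an analysis of $\Aut G$ and of isomorphisms between finite index subgroups of $G$. I would first establish \eqref{eq.iso-of-cohom}. Writing $G=\Sigma\rtimes(\Gamma\times\Gamma)$ with $\Sigma=(\Z/2\Z)^{(\Gamma)}$, every nontrivial element of $\Sigma$ has order $2$ while $C=\Z[\cQ^{-1}]$ is $2$-divisible and torsion free, so Lemma \ref{lem.cohomology-semidirect-order-2.1} gives $H^2(G,C)\cong H^2(\Gamma\times\Gamma,C)$; the isomorphism $\Theta_1$ of Lemma \ref{lem.basic-cohomology.3} identifies this with $H^2(\Gamma,C)^{\oplus 2}\oplus\BHom(\Gamma,\Gamma,C)$; since $\cP_i\subseteq\cQ$ the ring $C$ is $p$-divisible for every $p\in\cP_i$, so $H^2(\Gamma_i,C)=0$ by Lemma \ref{lem.good-properties-ZP.4} and hence $H^2(\Gamma,C)=H^2(\Gamma_1\ast\Gamma_2,C)=0$ by the isomorphism $\Theta_2$ of Lemma \ref{lem.basic-cohomology.3}; finally every bihomomorphism factors through the abelianization, so $\BHom(\Gamma,\Gamma,C)=\BHom(\Lambda,\Lambda,C)$, and unwinding the explicit formulas in Lemmas \ref{lem.cohomology-semidirect-order-2} and \ref{lem.basic-cohomology} gives exactly $\gamma\mapsto\Om_\gamma$.

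For \ref{thm.more-general.1} and \ref{thm.more-general.2}: since $\Gamma=\Gamma_1\ast\Gamma_2$ is a free product of two infinite abelian groups it belongs to class $\cC$ (Section \ref{sec.class-C}), so $G$ is icc and, by \cite[Theorems A and 6.15]{DV24}, satisfies conditions \ref{cond.iso.1} and \ref{cond.iso.1prime}; by Lemma \ref{lem.left-right-wreath-product-satisfies-4} it satisfies \ref{ncond.iso.3}. The countability conditions \ref{ncond.iso.2} and \ref{ncond.iso.2prime} reduce, via the Kurosh theorems and the structure of the wreath product, to countability of $\Aut\Gamma_i$ and of the set of finite index subgroups of $\Gamma_i$ (Lemmas \ref{lem.countable-aut}, \ref{lem.good-properties-ZP.1} and \ref{lem.good-properties-ZP.2}), and \ref{ncond.iso.1primeprime} follows from Lemma \ref{lem.cohomology-semidirect-order-2.2} after passing to a finite index subgroup of $G$ of semidirect-product form. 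Since $G\ab\cong\Z/2\Z\times\Lambda\times\Lambda$ with $\Ext^1(\Z/2\Z,C)=0$ ($C$ being $2$-divisible) and $\Ext^1(\Gamma_i,C)=0$ (Lemma \ref{lem.basic-cohomology.2}), we get $\Ext^1(G\ab,C)=0$; the same computation applied to finite index subgroups of $G$, whose abelianizations are by Kurosh extensions of a finite $2$-group by a torsion free group built from copies of $\Z$ and $\Z[\cS^{-1}]$ with $\cS\subseteq\cQ$, yields \ref{ncond.iso.4} and \ref{ncond.iso.5} (the torsion contribution in \ref{ncond.iso.5} disappearing after a further passage to a finite index subgroup). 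Finally, by Lemma \ref{lem.extend-characters-C} and the injectivity of the $\T$-valued analogue of \eqref{eq.iso-of-cohom}, the kernel of $\Theta(\Om_\gamma|_{G_0})$ consists exactly of the characters of $C$ trivial on the subgroup of $C$ generated by the corresponding restriction of $\gamma$, so $\Theta(\Om_\gamma)$ is faithful when $\gamma(\Lambda\times\Lambda)$ generates $C$ and has finite kernel on every finite index subgroup when $\gamma(\Lambda\times\Lambda)$ generates a finite index subgroup; these are collected in Lemma \ref{lem.about-all-our-conditions}. Feeding this into Theorems \ref{thm.inherit-iso-superrigidity} and \ref{thm.inherit-virtual-iso-superrigidity} gives \ref{thm.more-general.1} and \ref{thm.more-general.2}.

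For \ref{thm.more-general.3}: since $G$ is icc we have $\cZ(G_\gamma)=(G_\gamma)\fc=C$, so any group isomorphism $\Theta\colon G_\gamma\to G_{\gamma'}$ restricts on centers to an automorphism $\epsilon$ of $C$, necessarily multiplication by a unit $u$ of the ring $C$, and descends to some $\Phi\in\Aut G$; the standard central extension cocycle computation gives $\epsilon\circ\Om_\gamma=\Om_{\gamma'}\circ(\Phi\times\Phi)$ in $H^2(G,C)$. The structure of $\Aut G$ (inner automorphisms, automorphisms induced by $\Aut\Gamma$ acting diagonally on the two $\Gamma$-factors, and the left-right flip) forces $\Phi$ to induce on $\Lambda\times\Lambda$ either $\vphi\times\vphi$ or $\si\circ(\vphi\times\vphi)$ for some $\vphi\in\Aut\Lambda$ lifting to $\delta_0\in\Aut\Gamma$; translating back through \eqref{eq.iso-of-cohom} this reads $u\gamma=\gamma'\circ(\vphi\times\vphi)$ or $u\gamma=\gamma'\circ\si\circ(\vphi\times\vphi)$, and writing $u=r'/r$ with $r,r'$ nonzero integers all of whose prime divisors lie in $\cQ$ yields \eqref{eq.equivalence-for-iso}. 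Conversely, given such $\vphi,\delta_0,r,r'$, the automorphism of $G$ induced by $\delta_0$ (composed with the flip if needed) pulls $\Om_{\gamma'}$ back to $\Om_{\gamma'\circ(\vphi\times\vphi)}$, and since $r,r'$ are units of $C$, multiplication by $u=r'/r$ is an automorphism of $C$ sending $\Om_{\gamma'\circ(\vphi\times\vphi)}$ to $\Om_\gamma$; together these exhibit an isomorphism of central extensions $G_{\gamma'}\cong G_\gamma$.

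For \ref{thm.more-general.4}: every finite index subgroup $H<G_\gamma$ satisfies $\cZ(H)=H\fc=C\cap H$, which is torsion free, so $H$ has no nontrivial finite normal subgroup and virtual isomorphism coincides with commensurability for the groups $G_\gamma$. If $r\gamma=r'\gamma'\circ(\vphi\times\vphi)$ (or the flipped version) for arbitrary nonzero integers $r,r'$, the injective homomorphism $(c,g)\mapsto(rc,g)$ realizes $G_\gamma$ as a finite index subgroup of $G_{r\gamma}=G_{r'(\gamma'\circ(\vphi\times\vphi))}$, which in turn contains $G_{\gamma'\circ(\vphi\times\vphi)}\cong G_{\gamma'}$ as a finite index subgroup, so $G_\gamma$ is commensurable to $G_{\gamma'}$. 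Conversely, an isomorphism between finite index subgroups yields, after shrinking and using that every finite index subgroup of $C$ is isomorphic to $C$ with index coprime to $\cQ$ (Lemmas \ref{lem.good-properties-ZP.2} and \ref{lem.good-properties-ZP.3}) and that every finite index subgroup of $G$ contains one of semidirect-product form, an isomorphism between central extensions $0\to C\to H\to G_0\to e$ and $0\to C\to H'\to G_0'\to e$ of finite index subgroups of $G$ whose cocycles are integer multiples of the restrictions of $\Om_\gamma$, resp. $\Om_{\gamma'}$, modulo the identification $C\cap H\cong C$; running the argument of \ref{thm.more-general.3} at this level, together with the injectivity of the restriction $\BHom(\Lambda,\Lambda,C)\to\BHom(\Lambda_0,\Lambda_0,C)$ and the identity $\gamma(na,nb)=n^2\gamma(a,b)$, produces the required $\vphi$ and nonzero integers $r,r'$, now with no restriction on their prime divisors. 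The step I expect to be the main obstacle is the structural analysis of $\Aut G$ and, for \ref{thm.more-general.4}, of isomorphisms between finite index subgroups of $G$ — pinning down which automorphisms of $\Lambda\times\Lambda$ occur and when the $\Lambda$-component lifts to $\Aut\Gamma$ (this is where the Kurosh theorems for $\Gamma=\Gamma_1\ast\Gamma_2$ and the rigidity of left-right wreath products enter), together with the finite-index bookkeeping relating cocycles on $G$ to cocycles on its finite index subgroups; all operator-algebraic content having been offloaded to the meta-theorems of Sections \ref{sec.generic-iso-superrigid} and \ref{sec.generic-virtual-iso-superrigid}, parts \ref{thm.more-general.1} and \ref{thm.more-general.2} amount to a lengthy but essentially routine verification of hypotheses.
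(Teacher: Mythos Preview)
Your proposal is correct and follows essentially the same route as the paper: the isomorphism \eqref{eq.iso-of-cohom} is obtained exactly via Lemmas \ref{lem.cohomology-semidirect-order-2.1}, \ref{lem.basic-cohomology.3} and \ref{lem.good-properties-ZP.4}; parts \ref{thm.more-general.1} and \ref{thm.more-general.2} are deduced from Theorems \ref{thm.inherit-iso-superrigidity} and \ref{thm.inherit-virtual-iso-superrigidity} after verifying the hypotheses through Lemma \ref{lem.about-all-our-conditions}; and parts \ref{thm.more-general.3} and \ref{thm.more-general.4} proceed by the center/quotient decomposition and the bihomomorphism map $\Xi_0$ of \eqref{eq.easy-reduction-to-bihom}. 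The one ingredient you correctly flag as the main obstacle and leave unnamed is precisely \cite[Proposition 6.10]{DV24}, which the paper invokes to show that every isomorphism between finite index subgroups of $G$ is, up to an inner automorphism, the restriction of an automorphism of $G$ of the form $(g,h)\mapsto(\delta_0(g),\delta_0(h))$ or $(g,h)\mapsto(\delta_0(h),\delta_0(g))$.
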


Since the bihomomorphisms $\gamma : \Lambda \times \Lambda \to C$ are precisely given by $2 \times 2$ matrices over $C$ and since also the equivalence relations of (virtual) isomorphism appearing in \ref{thm.more-general.3} and \ref{thm.more-general.4} have a concrete matrix description, one can reformulate Theorem \ref{thm.more-general} in a more explicit form. This requires a case-by-case formulation depending on whether $\cP_i$ is empty or not, and whether $\cP_1$ is equal to $\cP_2$ or not. We postpone this to Section \ref{sec.proof-main}, where we also prove the statements appearing in Remark \ref{rem.main}.

In Proposition \ref{prop.new-no-go-result-3}, we will see that the assumptions $\cP_1 \cup \cP_2 \subset \cQ$ and $2 \in \cQ$ are essential for the validity of Theorem \ref{thm.more-general}.

As the first step in proving Theorem \ref{thm.more-general}, we show that the groups $G$ and $C$ appearing in the theorem satisfy all the conditions of our abstract W$^*$-superrigidity theorems.

\begin{lemma}\label{lem.about-all-our-conditions}
Let $\Gamma = \Gamma_1 \ast \Gamma_2$ be any free product of countable abelian groups with $|\Gamma_1| \geq 2$ and $|\Gamma_2| \geq 3$. Consider the left-right wreath product group $G = (\Z/2\Z)^{(\Gamma)} \rtimes (\Gamma \times \Gamma)$. Let $C$ be a torsion free abelian group.
\begin{enuma}
\item\label{lem.about-all-our-conditions.1} Conditions \ref{cond.iso.1}, \ref{cond.iso.1prime} and \ref{ncond.iso.3} hold.
\item\label{lem.about-all-our-conditions.2} If for all $i \in \{1,2\}$, $\Aut \Gamma_i$ is countable and $\Gamma_i$ has only countably many finite index subgroups, then conditions \ref{ncond.iso.2} and \ref{ncond.iso.2prime} hold.
\item\label{lem.about-all-our-conditions.3} $\Ext^1(G\ab,C) = 0$ if and only if $C$ is $2$-divisible and $\Ext^1(\Gamma_i,C) = 0$ for $i = 1,2$.
\item\label{lem.about-all-our-conditions.4} If for every finite index subgroup $S < \Gamma_i$, every finite type $2$-cocycle $\om \in H^2(S,\T)$ is trivial, then condition \ref{ncond.iso.1primeprime} holds.
\item\label{lem.about-all-our-conditions.5} If $C$ is $2$-divisible and if for every torsion free abelian group $D$ that is virtually isomorphic to $C$ and for every finite index subgroup $S < \Gamma_i$, we have that $\Ext^1(S,D) = 0$, then condition \ref{ncond.iso.4} holds.
\item\label{lem.about-all-our-conditions.6} If the groups $\Gamma_i$ are torsion free and if every finite index subgroup of $\Gamma_i$ has only finitely many index $2$ subgroups, then condition \ref{ncond.iso.5} holds.
\end{enuma}
\end{lemma}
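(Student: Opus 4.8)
\emph{Overview of the proof.} The six items are essentially independent. Items (a) and (b) concern the analytic hypotheses of the abstract theorems and are reduced to earlier results of this paper together with a computation of $\Aut G$; items (c)--(f) are purely cohomological and are deduced from Lemma~\ref{lem.basic-cohomology}, the Kurosh theorem of Section~\ref{sec.kurosh}, and the special structure of $G=\Sigma\rtimes(\Gamma\times\Gamma)$ with $\Sigma=(\Z/2\Z)^{(\Gamma)}$ of exponent $2$. For (a), the plan is to note that $\Gamma=\Gamma_1\ast\Gamma_2$, being a free product of amenable (indeed abelian) groups with $|\Gamma_1|\ge 2$ and $|\Gamma_2|\ge 3$, lies in class $\cC$ (Section~\ref{sec.class-C}) and is in particular nonamenable, weakly amenable and biexact, and then to simply invoke \cite[Theorem 6.15]{DV24} for condition~\ref{cond.iso.1}, \cite[Theorem A]{DV24} for condition~\ref{cond.iso.1prime}, and Lemma~\ref{lem.left-right-wreath-product-satisfies-4} (with $\Sigma_0=\Z/2\Z$, $\Sigma_1=\Sigma$) for condition~\ref{ncond.iso.3}.

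\emph{Item (b).} First, $\Sigma$ is the amenable radical of $G$ (it is amenable and normal, and $\Gamma\times\Gamma$ has trivial amenable radical since $\Gamma=\Gamma_1\ast\Gamma_2$ does), hence characteristic, so every $\theta\in\Aut G$ induces $\bar\theta\in\Aut(\Gamma\times\Gamma)$, which is countable by Lemma~\ref{lem.countable-aut} (an automorphism of $\Gamma\times\Gamma$ permutes the two direct factors because $\Gamma$ is icc). The kernel of $\Aut G\to\Aut(\Gamma\times\Gamma)$ is $Z^1(\Gamma\times\Gamma,\Sigma)$, since $\Aut_{\F_2[\Gamma\times\Gamma]}(\Sigma)=\F_2$ (as $\Gamma$ is icc, the only element of $(\Gamma\times\Gamma)/\delta(\Gamma)$ with finite $\delta(\Gamma)$-orbit is the base point). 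By Shapiro's lemma $H^1(\Gamma\times\Gamma,\Sigma)=H^1(\Gamma,\F_2)=\Hom(\Gamma_1,\F_2)\oplus\Hom(\Gamma_2,\F_2)$, which is finite because the hypothesis that $\Gamma_i$ has countably many finite index subgroups forces $\Gamma_i/2\Gamma_i$ to be finite; together with $B^1(\Gamma\times\Gamma,\Sigma)\cong\Sigma$ countable, this shows $Z^1(\Gamma\times\Gamma,\Sigma)$ and hence $\Aut G$ is countable, giving condition~\ref{ncond.iso.2}. The same analysis applied to finite index subgroups shows that $G$ has only countably many finite index subgroups --- each $G_0$ being determined by its finite index image $\Theta_0<\Gamma\times\Gamma$, by the $\Theta_0$-invariant finite index submodule $G_0\cap\Sigma$ of $\Sigma$ (of which there are countably many, as $\Sigma|_{\Theta_0}$ is a finite sum of induced modules with infinite-index stabilisers), and by finitely many displacement classes in $Z^1(\Theta_0,\Sigma/(G_0\cap\Sigma))$, this last group being finite since $\Sigma/(G_0\cap\Sigma)$ is finite --- and that there are countably many isomorphisms between any two of them, giving condition~\ref{ncond.iso.2prime}.

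\emph{Items (c) and (d).} For (c) I would show that $[G,G]$ meets $\Sigma=\F_2[\Gamma]$ exactly in the augmentation submodule (the kernel of the homomorphism $\Sigma\to\F_2$, $\delta_g\mapsto 1$, which extends to a homomorphism $G\to\F_2$), so that the image of $\Sigma$ in $G\ab$ is $\Z/2\Z$ and there is a short exact sequence $0\to\Z/2\Z\to G\ab\to\Gamma\ab\times\Gamma\ab\to 0$; applying $\Ext^\ast(-,C)$ and using $\Hom(\Z/2\Z,C)=0$ ($C$ torsion free) and the vanishing of $\Ext^2$ on abelian groups gives the exact sequence $0\to\Ext^1(\Gamma\ab,C)^2\to\Ext^1(G\ab,C)\to C/2C\to 0$, and since $\Gamma\ab=\Gamma_1\times\Gamma_2$ this is exactly the assertion. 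For (d), given a finite index $G_0<G$ and a finite type $\Phi\in H^2(G_0,\T)$ with projective representation $\pi$, the plan is to adapt the proof of Lemma~\ref{lem.cohomology-semidirect-order-2.2} with $N_0:=G_0\cap\Sigma$ (normal in $G_0$, of exponent $2$) in place of $\Sigma$ and $\Theta_0:=$ image of $G_0$ in $\Gamma\times\Gamma$ in place of $\Gamma$: the commutator bihomomorphism on $N_0$ is $\Theta_0$-invariant with finite index radical $R\lhd G_0$ by \cite[Lemma 2.5]{DV24}, $\Phi|_R=1$ by Lemma~\ref{lem.basic-cohomology.2} (as $\T$ is $2$-divisible), and after passing to the finite index subgroup of $G_0$ stabilising an eigencharacter of $\pi|_R$ and shrinking $R$ one arranges $\pi|_R=\id$ and hence that $\Phi$ is, on a finite index subgroup $G_1<G_0$, inflated from a finite type class on a finite index subgroup $\Theta_1<\Gamma\times\Gamma$. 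Choosing $\Delta_1\times\Delta_2<\Theta_1$ with $\Delta_i<\Gamma$ of finite index, the direct-product isomorphism of Lemma~\ref{lem.basic-cohomology.3} decomposes the restricted class into finite type classes on $\Delta_1$ and $\Delta_2$; by Kurosh each $\Delta_i$ is a free product of a free group and finite index subgroups of $\Gamma_1,\Gamma_2$, and the free-product isomorphism of Lemma~\ref{lem.basic-cohomology.3} (together with $H^2(\F_k,\T)=0$) identifies these with finite type classes on finite index subgroups of $\Gamma_1,\Gamma_2$, which vanish by hypothesis; hence $\Phi$ becomes trivial after a further passage to a finite index subgroup, giving condition~\ref{ncond.iso.1primeprime}.

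\emph{Items (e) and (f).} For a finite index $G_0<G$ with image $\Theta_0<\Gamma\times\Gamma$ and $N_0=G_0\cap\Sigma$, the computation of (c) shows that $[G_0,G_0]\cap\Sigma$ contains the $\Theta_0$-augmentation submodule of $\F_2[\Gamma]$, which has finite index since $\Theta_0$ has only finitely many orbits on $\Gamma$; hence the image $\overline{N_0}$ of $N_0$ in $G_{0,\mathrm{ab}}$ is finite of exponent $2$ and $0\to\overline{N_0}\to G_{0,\mathrm{ab}}\to\Theta_{0,\mathrm{ab}}\to 0$. Passing to the preimage $G_1$ of a subgroup $\Delta_1\times\Delta_2<\Theta_0$ with $\Delta_i<\Gamma$ of finite index, I get likewise $0\to\overline{N_1}\to G_{1,\mathrm{ab}}\to\Delta_{1,\mathrm{ab}}\times\Delta_{2,\mathrm{ab}}\to 0$ with $\overline{N_1}$ finite of exponent $2$, and by Kurosh each $\Delta_{i,\mathrm{ab}}$ is a direct sum of a free abelian group and abelianisations of finite index subgroups of $\Gamma_1,\Gamma_2$. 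For (e): a torsion free abelian group $D$ commensurable to the $2$-divisible torsion free group $C$ is itself $2$-divisible (a finite index subgroup of a $2$-divisible torsion free abelian group has odd index over it and is again $2$-divisible, and a torsion free group with a $2$-divisible finite index subgroup has odd index over it, hence is $2$-divisible), so $\Hom(\overline{N_1},D)=\Ext^1(\overline{N_1},D)=0$ and $\Ext^1(G_{1,\mathrm{ab}},D)\cong\Ext^1(\Delta_{1,\mathrm{ab}}\times\Delta_{2,\mathrm{ab}},D)$, which is a sum of copies of $\Ext^1(\Z,D)=0$ and of the $\Ext^1(S,D)$ for $S<\Gamma_i$ of finite index, all zero by hypothesis; hence the pullback to $G_{1,\mathrm{ab}}$ of any element of $\Ext^1(G_{0,\mathrm{ab}},D)$ vanishes, giving condition~\ref{ncond.iso.4}. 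For (f), with $E$ finite: since $\Gamma_i$ is torsion free the $\Gamma_i$-parts of $G_{1,\mathrm{ab}}$ are torsion free, and combining the universal coefficient theorem \eqref{eq.univ-coeff} with the hypothesis that finite index subgroups $S<\Gamma_i$ have only finitely many index $2$ subgroups (equivalently $S/2S$ is finite) one kills the class of any element of $\Ext^1(G_{0,\mathrm{ab}},E)$ after restriction to a suitable finite index subgroup, the remaining finite exponent-$2$ contribution of $\overline{N_1}$ being removed by a further shrinking of $G_1$; this gives condition~\ref{ncond.iso.5}.

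\emph{The main obstacle.} I expect the genuinely delicate step to be the reduction in item (d): transporting the argument of Lemma~\ref{lem.cohomology-semidirect-order-2.2}, which is stated for the full semidirect product $G$, to an arbitrary finite index subgroup $G_0$ --- which is typically \emph{not} a semidirect product of a finite index submodule of $\Sigma$ by a finite index subgroup of $\Gamma\times\Gamma$ --- requires carefully selecting the eigencharacter of $\pi|_R$, replacing $\pi$ by an isotypic subrepresentation, arranging $R$ to be normal of finite index, and then verifying that the resulting class really does inflate from $H^2(G_1/R,\T)$ so that the analysis of $H^2$ of finite index subgroups of $\Gamma\times\Gamma$ applies. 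Secondary points requiring care are the precise automorphism and commensurator counts in (b) and the exact universal-coefficient bookkeeping in (f) for coefficient groups of finite order.
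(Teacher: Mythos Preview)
Your outline is in the right spirit, but it is missing the single structural observation that makes items (d)--(f) clean and that completely dissolves what you call ``the main obstacle''. The paper begins by proving, as a preparation, that \emph{every} finite index subgroup $G_0<G$ contains a subgroup of the special form
\[
G(\Gamma_0)=\Sigma(\Gamma_0)\rtimes(\Gamma_0\times\Gamma_0),
\]
where $\Gamma_0\lhd\Gamma$ is a finite index normal subgroup and $\Sigma(\Gamma_0)=\ker\bigl(\Sigma\to(\Z/2\Z)^{(\Gamma/\Gamma_0)}\bigr)$. Since conditions \ref{ncond.iso.1primeprime}, \ref{ncond.iso.4} and \ref{ncond.iso.5} all allow passage to a smaller finite index subgroup, one may take $G_0=G(\Gamma_0)$ from the start. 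This is a genuine semidirect product, so Lemma~\ref{lem.cohomology-semidirect-order-2.2} applies to it verbatim; there is no need to adapt its proof to an arbitrary $G_0$, and your proposed detour through eigencharacters of $\pi|_R$ and ad hoc inflation arguments is unnecessary.

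Two further gaps. In (d), after reducing to a finite type class $\omega\in H^2(\Gamma_0\times\Gamma_0,\T)$, your decomposition via Lemma~\ref{lem.basic-cohomology.3} has \emph{three} summands, not two: there is a $\BHom(\Gamma_0,\Gamma_0,\T)$ component that you do not address. The paper first uses the hypothesis to kill the two $H^2(\Gamma_0,\T)$ pieces, so that $\omega$ is induced by a bihomomorphism, and then invokes \cite[Lemma~2.5]{DV24} (finite type $\Rightarrow$ the bihomomorphism has finite-index radical) to pass to a further finite index subgroup on which it vanishes. In (f), your assertion that $\overline{N_0}$ is finite because ``$\Theta_0$ has finitely many orbits on $\Gamma$'' is not correct as stated: commutators $[g,\sigma]$ with $g\in G_0$ only involve $\sigma\in N_0$, so you obtain the $\Theta_0$-augmentation submodule of $N_0$, not of $\F_2[\Gamma]$, and the coinvariants $(N_0)_{\Theta_0}$ are not automatically finite. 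The paper shows finiteness of $\Sigma_1$ by a dual character analysis that genuinely uses the hypothesis that finite index subgroups of $\Gamma_i$ have only finitely many index~$2$ subgroups; without that hypothesis the coinvariants can be infinite.

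For (b), your direct computation of $\Aut G$ via the exact sequence $1\to Z^1(\Gamma\times\Gamma,\Sigma)\to\Aut G\to\Aut(\Gamma\times\Gamma)$ is a legitimate alternative to the paper's route, which instead cites \cite[Proposition~6.10]{DV24} (every isomorphism between finite index subgroups of $G$ extends to an automorphism of $G$) to reduce condition~\ref{ncond.iso.2prime} to condition~\ref{ncond.iso.2} together with the countability of finite index subgroups. Your sketch of \ref{ncond.iso.2prime} (``the same analysis applied to finite index subgroups'') is too terse to stand on its own; the paper's black-box reduction is shorter.
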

\begin{proof}
As a preparation, we first describe the finite index subgroups of $G$. Write $\Sigma = (\Z/2\Z)^{(\Gamma)}$. Whenever $\Gamma_0 \lhd \Gamma$ is a finite index normal subgroup, define the finite index subgroup $\Sigma(\Gamma_0) < \Sigma$ as the kernel of the homomorphism
\begin{equation}\label{eq.hom-vphi}
\vphi_{\Gamma_0} : \Sigma \to (\Z/2\Z)^{(\Gamma/\Gamma_0)} : \bigl(\vphi_{\Gamma_0}(a)\bigr)_{g \Gamma_0} = \sum_{h \in \Gamma_0} a_{gh} \; .
\end{equation}
Define $G(\Gamma_0) := \Sigma(\Gamma_0) \rtimes (\Gamma_0 \times \Gamma_0)$. Then $G(\Gamma_0) < G$ has finite index. We claim that for every finite index subgroup $G_0 < G$, there exists a finite index normal subgroup $\Gamma_0 \lhd \Gamma$ such that $G(\Gamma_0) \subset G_0$.

Since $G_0 \cap (\Gamma \times e)$ has finite index in $\Gamma \times e$ and since $G_0 \cap (e \times \Gamma)$ has finite index in $e \times \Gamma$, we can choose a finite index normal subgroup $\Gamma_0 \lhd \Gamma$ such that $\Gamma_0 \times \Gamma_0 \subset G_0$. Denote by $\al : \Gamma \times \Gamma \actson \Sigma$ the left-right Bernoulli action. Then $\Sigma_0 := G_0 \cap \Sigma$ is a finite index subgroup of $\Sigma$ that is globally invariant under $\al_{(g,h)}$ for all $g,h \in \Gamma_0$. Replacing $\Gamma_0$ by a smaller finite index normal subgroup of $\Gamma$, we may assume that the automorphisms induced by $\al_{(g,h)}$ on the finite group $\Sigma/\Sigma_0$ are trivial. We prove that $\Sigma(\Gamma_0) < \Sigma_0$. Once this is proven, we get that $G(\Gamma_0) < G_0$ and the claim follows.

Choose an arbitrary $\om \in \widehat{\Sigma}$ that is equal to $1$ on $\Sigma_0$. It suffices to prove that $\om$ is equal to $1$ on $\Sigma(\Gamma_0)$. Since the action induced by $(\al_{(g,h)})_{(g,h) \in \Gamma_0 \times \Gamma_0}$ on $\Sigma/\Sigma_0$ is trivial and since $\om$ is equal to $1$ on $\Sigma_0$, we find that $\om \circ \al_{(g,h)} = \om$ for all $g,h \in \Gamma_0$. For every $k \in \Gamma$, denote by $\si_k \in \Sigma$ the element $1 \in \Z/2\Z$ in position $k$. Note that $\al_{(g,h)}(\si_k) = \si_{gkh^{-1}}$ for all $g,h,k \in \Gamma$. For every $k \in \Gamma$, we have that $\om(\si_k) \in \{\pm 1\}$. Define $J \subset \Gamma$ by $J = \{k \in \Gamma \mid \om(\si_k) = -1\}$. Since $\om \circ \al_{(g,h)} = \om$ for all $g,h \in \Gamma_0$, we get that $gJh = J$ for all $g,h \in \Gamma_0$. This means that $J = J_0 \Gamma_0$ for some subset $J_0 \subset \Gamma/\Gamma_0$. Denote by $\om_0$ the unique character on $(\Z/2\Z)^{(\Gamma/\Gamma_0)}$ that maps the generator in position $k \Gamma_0$ to $-1$ for all $k \in J_0$. By construction, $\om = \om_0 \circ \vphi_{\Gamma_0}$. So, $\om = 1$ on $\Sigma(\Gamma_0)$ and the claim is proven.

(a) Because every free product $\Gamma = \Gamma_1 \ast \Gamma_2$ of amenable groups $\Gamma_i$ with $|\Gamma_1| \geq 2$ and $|\Gamma_2| \geq 3$ belongs to class $\cC$, condition \ref{cond.iso.1} follows from \cite[Theorem 6.15]{DV24} and \ref{cond.iso.1prime} follows from \cite[Theorem A]{DV24}, while condition \ref{ncond.iso.3} follows from Lemma \ref{lem.left-right-wreath-product-satisfies-4}.

(b) Since both $\Aut \Gamma_i$ are countable, it follows from Lemma \ref{lem.countable-aut} that $\Aut \Gamma$ is countable. By \cite[Proposition 6.10]{DV24}, every isomorphism between finite index subgroups of $G$ is the restriction of an automorphism of $G$. To conclude the proof of (b), it thus suffices to prove that $G$ has only countably many finite index subgroups. By the introductory claim, it suffices to prove that $\Gamma$ has only countably many finite index subgroups. By the Kurosh subgroup theorem (see Section \ref{sec.kurosh}), for every finite index subgroup $\Gamma_0 < \Gamma$, there exist finitely many elements $a_i, b_j, c_k \in \Gamma$ and finite index subgroups $A_i < \Gamma_1$, $B_j < \Gamma_2$ such that $\Gamma_0$ is generated by $a_i A_i a_i^{-1}$, $b_j B_j b_j^{-1}$ and the elements $c_k$. Since we assumed that the groups $\Gamma_i$ have only countably many finite index subgroups, the proof of (b) is complete.

(c) Denote by $\zeta : \Gamma \to \Gamma_1 \times \Gamma_2$ the canonical abelianization homomorphism. Also consider the $(\Gamma \times \Gamma)$-invariant homomorphism $\vphi_{\Gamma} : \Sigma \to \Z/2\Z$ given by \eqref{eq.hom-vphi}. Then $\vphi_\Gamma$ and $\zeta \times \zeta$ define the abelianization $G\ab \cong \Z/2\Z \times \Gamma_1 \times \Gamma_2 \times \Gamma_1 \times \Gamma_2$, so that
$$\Ext^1(G\ab,C) = C/2\cdot C \oplus \Ext^1(\Gamma_1,C) \oplus \Ext^1(\Gamma_2,C) \oplus \Ext^1(\Gamma_1,C) \oplus \Ext^1(\Gamma_2,C) \; ,$$
from which (c) follows.

(d) Take a finite index subgroup $G_0 < G$ and a finite type $2$-cocycle $\Phi \in H^2(G_0,\T)$. We have to find a finite index subgroup $G_1 < G_0$ such that $\Phi|_{G_1} = 1$ in $H^2(G_1,\T)$. We may assume that $G_0$ is of the form $\Sigma_0 \rtimes (\Gamma_0 \times \Gamma_0)$, where $\Gamma_0 < \Gamma$ is a finite index subgroup and $\Sigma_0 < \Sigma$ is a finite index subgroup that is globally invariant under $\Gamma_0 \times \Gamma_0$. By Lemma \ref{lem.cohomology-semidirect-order-2.2}, after making $\Sigma_0$ and $\Gamma_0$ smaller, we may assume that $\Phi$ comes from a finite type $2$-cocycle $\om \in H^2(\Gamma_0 \times \Gamma_0,\T)$.

As in the proof of (b), by the Kurosh subgroup theorem, the group $\Gamma_0$ is a free product of copies of $\Z$ and finite index subgroups of the groups $\Gamma_i$. By Lemma \ref{lem.basic-cohomology.3} and our assumption that all finite type $2$-cocycles on finite index subgroups of $\Gamma_i$ are trivial, it follows that $\om$ is induced by an element $\gamma \in \BHom(\Gamma_0,\Gamma_0,\T)$. Since $\om$ is of finite type, it then follows from \cite[Lemma 2.5]{DV24} that there exists a finite index subgroup $\Gamma'_0 < \Gamma_0$ such that $\om|_{\Gamma'_0 \times \Gamma'_0}$ is trivial. With $G_1 = \Sigma_0 \rtimes (\Gamma'_0 \times \Gamma'_0)$, we get that $\Phi|_{G_1} = 1$.

(e) First note that given a torsion free abelian group $C$ and a finite index subgroup $C_0$, we have that $C_0$ is $2$-divisible if and only if $C$ is $2$-divisible. To prove this statement, denote $N = [C:C_0]$ and write $N = 2^k(2n+1)$ where $k,n \geq 0$ are integers. First assume that $C$ is $2$-divisible and choose $c \in C_0$. Since $C$ is $2$-divisible, we can take $d \in C$ such that $2^{k+1} \cdot d = c$. It follows that $(2n+1) \cdot c = 2N \cdot d$. Since $[C:C_0] = N$, the element $f := N \cdot d - n \cdot c$ belongs to $C_0$. By construction, $2 \cdot f = c$. Conversely, assume that $C_0$ is $2$-divisible and choose $c \in C$. Since $N \cdot c \in C_0$, take $d \in C_0$ such that $N \cdot c = 2^{k+1} \cdot d$. Since $C$ is torsion free, it follows that $(2n+1) \cdot c = 2 \cdot d$ and thus, $c = 2 \cdot (d-n\cdot c)$.

Fix a torsion free abelian group $D$ that is virtually isomorphic to $C$. Since $C$ and $D$ are torsion free, they thus have isomorphic finite index subgroups. By the previous paragraph, $D$ is $2$-divisible. Fix a finite index subgroup $G_0 < G$ and $\Psi \in \Ext^1(G_{0,\text{\rm ab}},D)$. To prove that condition \ref{ncond.iso.4} holds, we may assume that $G_0 = G(\Gamma_0)$ for some finite index normal subgroup $\Gamma_0 \lhd \Gamma$. Since $G_0 = \Sigma(\Gamma_0) \rtimes (\Gamma_0 \times \Gamma_0)$, we find that $G_{0,\text{\rm ab}}$ is the direct sum of some quotient $\Sigma_1$ of $\Sigma(\Gamma_0)$ with two copies of $\Gamma_{0,\text{\rm ab}}$. Every nontrivial element of $\Sigma_1$ has order $2$. Since $D$ is $2$-divisible, it follows from Lemma \ref{lem.basic-cohomology.2} that $\Ext^1(\Sigma_1,D) = 0$. As in the proof of (b), by the Kurosh subgroup theorem, $\Gamma_{0,\text{\rm ab}}$ is the direct sum of copies of $\Z$ and finite index subgroups of the groups $\Gamma_i$. So, also $\Ext^1(\Gamma_{0,\text{\rm ab}},D) = 0$. We have thus proven that $\Ext^1(G_{0,\text{\rm ab}},D)=0$, so that $\Psi = 0$.

(f) Fix a finite abelian group $E$, a finite index subgroup $G_0 < G$ and $\Psi \in \Ext^1(G_{0,\text{\rm ab}},E)$. As in the proof of (e), we may assume that $G_0 = G(\Gamma_0)$, where $\Gamma_0 \lhd \Gamma$ is a finite index normal subgroup.

Denote by $(\al_{(g,h)})_{(g,h) \in \Gamma_0 \times \Gamma_0}$ the action of $\Gamma_0 \times \Gamma_0$ on $\Sigma_0 := \Sigma(\Gamma_0)$. Denote by $K \subset \widehat{\Sigma_0}$ the closed subgroup of characters $\om$ satisfying $\om \circ \al_{(g,h)} = \om$ for all $(g,h) \in \Gamma_0 \times \Gamma_0$. Define $\Sigma_1$ as the quotient of $\Sigma_0$ by the closed subgroup of $\si \in \Sigma_0$ satisfying $\om(\si) = 1$ for all $\om \in K$. By definition, $G_{0,\text{\rm ab}} = \Sigma_1 \times \Gamma_{0,\text{\rm ab}} \times \Gamma_{0,\text{\rm ab}}$.

Since the groups $\Gamma_i$ are torsion free, it follows from the discussion in the proof of (e) that $\Gamma_{0,\text{\rm ab}}$ is torsion free. Since $\Ext^1(T,E) = 0$ for every torsion free abelian $T$ and finite abelian $E$ (see e.g.\ \cite[Section 52, statement (F)]{Fuc70}), we get that $\Ext^1(\Gamma_{0,\text{\rm ab}} \times \Gamma_{0,\text{\rm ab}},E) = 0$. In particular, the restriction of $\Psi$ to $A := \{0\} \times \Gamma_{0,\text{\rm ab}} \times \Gamma_{0,\text{\rm ab}}$ is equal to $0$ in $\Ext^1(A,E)$. To conclude the proof of (f), it suffices to show that $\Sigma_1$ is finite, since we can then define $G_1 < G_0$ as the preimage of $A$ under the abelianization homomorphism and conclude that $\Psi$ becomes trivial in $\Ext^1(G_{1,\text{\rm ab}},E)$.

Since the groups $\Gamma_i$ admit only finitely many index $2$ subgroups and since $\Gamma_0$ can be written as a free product of finitely many copies of $\Z$ and groups that are isomorphic to finite index subgroups of the $\Gamma_i$, we get that there are only finitely many homomorphisms $\Gamma_0 \to \Z/2\Z$. To prove that $\Sigma_1$ is finite, it then suffices to prove the following statement: consider a countable group $\Lambda$ that has only finitely many index $2$ subgroups; define $\Sigma_0$ as the kernel of
$$\vphi : (\Z/2\Z)^{(\Lambda)} \to \Z/2\Z : a \mapsto \sum_{g \in \Lambda} a_g \; ;$$
then the group of $(\Lambda \times \Lambda)$-invariant characters on $\Sigma_0$ is finite.

Let $\om_0 \in \widehat{\Sigma_0}$ be $(\Lambda \times \Lambda)$-invariant. Choose a character $\om$ on $\Sigma = (\Z/2\Z)^{(\Lambda)}$ that extends $\om_0$. Since every nontrivial element of $\Sigma$ has order $2$, we view $\om$ as a homomorphism $\om : \Sigma \to \Z/2\Z$. Since $\Sigma_0 < \Sigma$ has index $2$, either $\om$ is $(\Lambda \times \Lambda)$-invariant or the orbit of $\om$ under the $(\Lambda \times \Lambda)$-action has two elements.

As above, for every $k \in \Lambda$, we consider $\si_k \in \Sigma$ given by the element $1 \in \Z/2\Z$ in position $k$. Define $J \subset \Lambda$ by $J = \{k \in \Lambda \mid \om(\si_k) = 1\}$. In the first case where $\om$ is $(\Lambda \times \Lambda)$-invariant, we find that $J = \Lambda J \Lambda$, so that $J=\emptyset$ or $J = \Lambda$. This means that $\om = 0$ or $\om = \vphi$, so that $\om_0 = 0$.

In the second case where $\om$ is invariant under an index $2$-subgroup $K < \Lambda \times \Lambda$, we find subgroups $\Lambda_0 < \Lambda$ and $\Lambda_1 < \Lambda$ of index at most $2$ such that $\Lambda_0 \times \Lambda_1 < K$. Then, $J = \Lambda_0 J \Lambda_1$. Since we are in the second case, $J \neq \emptyset$ and $J \neq \Lambda$. So both $\Lambda_i$ must be proper, index $2$ subgroups of $\Lambda$. It then also follows that $\Lambda_0 = \Lambda_1$ and $J = g_0\Lambda_0$ for an element $g_0 \in \Lambda$. This means that $\om(a) = \sum_{g \in \Lambda_0} a_{g_0 g}$ for all $a \in \Sigma$. Since $\Lambda$ has only finitely many index $2$ subgroups, we conclude that are only finitely many $(\Lambda \times \Lambda)$-invariant characters on $\Sigma_0$.
\end{proof}

Below, we use a few times the following observation, which already appeared in several versions above. If $G$ is a group containing a subgroup of the form $\Gamma_0 \times \Gamma_0$ and if $D$ is any abelian group, then
\begin{equation}\label{eq.easy-reduction-to-bihom}
\Xi_0 : H^2(G,D) \to \BHom(\Gamma_0,\Gamma_0,D) : (\Xi_0(\Om))(g,h) = \Om((e,g),(h,e)) - \Om((h,e),(e,g))
\end{equation}
is a well-defined group homomorphism. If $\Gamma_0 = \Gamma$, $D = C$, and we identify $\BHom(\Gamma,\Gamma,C) = \BHom(\Gamma\ab,\Gamma\ab,C)$, then the homomorphism $\Xi_0$ is the inverse of the isomorphism \eqref{eq.iso-of-cohom}.

\begin{proof}[{Proof of Theorem \ref{thm.more-general}}]
By Lemma \ref{lem.good-properties-ZP.2}, $\Gamma_i$ has only countably many finite index subgroups, at most one index $2$ subgroup and every finite index subgroup of $\Gamma_i$ is isomorphic with $\Gamma_i$. Since every automorphism of $\Gamma_i$ is given by multiplication with $a/b$ where $a,b \in \Z \setminus \{0\}$ have all their prime divisors in $\cP_i$, the automorphism groups $\Aut \Gamma_i$ are countable. By Lemma \ref{lem.good-properties-ZP.3}, every torsion free abelian group that is virtually isomorphic to $C$ is actually isomorphic to $C$. By Lemma \ref{lem.good-properties-ZP.4}, $\Ext^1(\Gamma_i,C) = 0$ and $H^2(\Gamma_i,\T) = 1$. So by Lemma \ref{lem.about-all-our-conditions}, all conditions \ref{cond.iso.1}, \ref{cond.iso.1prime}, \ref{ncond.iso.1primeprime}, \ref{ncond.iso.2}, \ref{ncond.iso.2prime}, \ref{ncond.iso.3}, \ref{ncond.iso.4} and \ref{ncond.iso.5} are satisfied and $\Ext^1(G\ab,C) = 0$.

By Lemma \ref{lem.good-properties-ZP.4}, $H^2(\Gamma_i,C) = 0$ for $i=1,2$. By Lemma \ref{lem.basic-cohomology.3}, also $H^2(\Gamma,C) = 0$. Combining Lemmas \ref{lem.cohomology-semidirect-order-2.1} and \ref{lem.basic-cohomology.3}, we find an isomorphism $H^2(G,C) \cong \BHom(\Gamma,\Gamma,C)$. Since $C$ is abelian, also $\BHom(\Gamma,\Gamma,C) \cong \BHom(\Lambda,\Lambda,C)$. All these isomorphisms are explicit and precisely combine into \eqref{eq.iso-of-cohom} being an isomorphism.

(a) Assume that $\gamma(\Lambda \times \Lambda)$ generates $C$. By Theorem \ref{thm.inherit-iso-superrigidity}, it only remains to prove that $\Theta(\Om_\gamma)$ is faithful. If $\mu \in \Chat$ belongs to the kernel of $\Theta(\Om_\gamma)$, we have that $\mu \circ \Om_\gamma = 1$ in $H^2(G,\T)$. Applying the map $\Xi_0$ in \eqref{eq.easy-reduction-to-bihom}, we conclude that $\mu \circ \gamma = 1$, so that $\mu = 1$.

(b) Assume that $\gamma(\Lambda \times \Lambda)$ generates a finite index subgroup of $C$. By Theorem \ref{thm.inherit-virtual-iso-superrigidity}, it only remains to prove that $\Theta(\Om_\gamma|_{G_0})$ has finite kernel for every finite index subgroup $G_0 < G$. Using the first paragraphs of the proof of Lemma \ref{lem.about-all-our-conditions}, we may assume that $G_0 = G(\Gamma_0)$, where $\Gamma_0 \lhd \Gamma$ is a finite index normal subgroup. Denote by $\zeta_0 : \Gamma \to \Lambda$ the abelianization map. When $\mu \in \Chat$ belongs to the kernel of $\Theta(\Om_\gamma|_{G_0})$, we apply the map $\Xi_0$ in \eqref{eq.easy-reduction-to-bihom} and conclude that $\mu = 1$ on $\gamma(\zeta_0(\Gamma_0) \times \zeta_0(\Gamma_0))$. It thus suffices to prove that $\gamma(\zeta_0(\Gamma_0) \times \zeta_0(\Gamma_0))$ generates a finite index subgroup of $C$.

By assumption, $\gamma(\Lambda \times \Lambda)$ generates a finite index subgroup of $C$, which by Lemma \ref{lem.good-properties-ZP.2} is of the form $N \cdot C$ for some integer $N \geq 1$. We write $\Lambda = \Z[\cP_1^{-1}] \times \Z[\cP_2^{-1}]$ additively. Since $\zeta_0(\Gamma_0) < \Lambda$ has finite index, Lemma \ref{lem.good-properties-ZP.2} also provides an integer $N_0 \geq 1$ such that $N_0 \cdot \Lambda < \zeta_0(\Gamma_0)$. It then follows that the subgroup of $C$ generated by $\gamma(\zeta_0(\Gamma_0) \times \zeta_0(\Gamma_0))$ contains $N_0^2 N \cdot C$ and therefore has finite index in $C$ by Lemma \ref{lem.good-properties-ZP.1}.

(c) First assume that $\deltatil : G_\gamma \to G_{\gamma'}$ is an isomorphism of groups. Since $\cZ(G_\gamma) = C$ and $\cZ(G_{\gamma'}) = C$, we have $\deltatil(C) = C$. We denote by $\rho \in \Aut C$ the restriction of $\deltatil$ to $C$. Since $C = \Z[\cQ^{-1}]$, we get that $\rho$ is given by multiplication with $r/r'$ where $r,r'$ are nonzero integers having all their prime divisors in $\cQ$.

We denote by $\delta \in \Aut G$, the automorphism induced by $\deltatil$ on the quotient $G = G_{\gamma}/C = G_{\gamma'}/C$. By \cite[Proposition 6.10]{DV24}, after composing $\deltatil$ with an inner automorphism, we find $\delta_0 \in \Aut \Gamma$ such that either $\delta(g,h) = (\delta_0(g),\delta_0(h))$ for all $g,h \in \Gamma$, or $\delta(g,h) = (\delta_0(h),\delta_0(g))$ for all $g,h \in \Gamma$. We denote by $\vphi \in \Aut \Lambda$ the automorphism induced by $\delta_0$ on the abelianization $\Lambda$ of $\Gamma$.

Since $\deltatil$ is an isomorphism of groups, we have that $\rho \circ \Om_{\gamma} = \Om_{\gamma'} \circ \delta$ in $H^2(G,C)$. Applying the map $\Xi_0$ of \eqref{eq.easy-reduction-to-bihom}, we get that \eqref{eq.equivalence-for-iso} holds.

To prove the converse, let $\delta_0 \in \Aut \Gamma$ with associated $\vphi \in \Aut \Lambda$ and let $r,r'$ be nonzero integers having all their prime divisors in $\cQ$ such that \eqref{eq.equivalence-for-iso} holds. Depending on which of both formulas hold, we define $\rho \in \Aut C$ by multiplication with $r/r'$, resp.\ $-r/r'$, and we define $\delta \in \Aut G$ by $\delta(\pi_e(a)) = \pi_e(a)$ for all $a \in \Z/2\Z$ and $\delta(g,h) = (\delta_0(g),\delta_0(h))$, resp.\ $\delta(g,h) = (\delta_0(h),\delta_0(g))$ for all $g,h \in \Gamma$. By definition, $\rho \circ \Om_\gamma = \Om_{\gamma'} \circ \delta$, so that $G_\gamma \cong G_{\gamma'}$.

(d) Since $G_{\gamma,\text{\rm fc}} = C$ and $C$ is torsion free, finite index subgroups of $G_\gamma$ have no nontrivial finite normal subgroups. The same holds for $G_{\gamma'}$. So $G_\gamma$ is virtually isomorphic to $G_{\gamma'}$ iff $G_\gamma$ is commensurable to $G_{\gamma'}$.

Next assume that $G_\gamma$ is commensurable to $G_{\gamma'}$. Take finite index subgroups $H < G_\gamma$, $H' < G_{\gamma'}$ and a group isomorphism $\deltatil : H \to H'$. Since $G_{\gamma,\text{\rm fc}} = C$ and $G_{\gamma',\text{\rm fc}} = C$, we get that $\deltatil(H \cap C) = H' \cap C$. We denote by $\rho$ the restriction of $\deltatil$ to $H \cap C$. By Lemma \ref{lem.good-properties-ZP.2}, $H \cap C = N \cdot C$ for some integer $N \geq 1$, so that $\rho$ must be given by multiplication with a nonzero rational number $r/r'$.

We denote by $\delta$ the isomorphism induced by $\deltatil$ between $G_0 := H / (H \cap C)$ and $G'_0 := H'/(H' \cap C)$. Note that $G_0$ and $G'_0$ are finite index subgroups of $G$. By \cite[Proposition 6.10]{DV24}, after composing $\deltatil$ with an inner automorphism, we find $\delta_0 \in \Aut \Gamma$ such that either $\delta(g,h) = (\delta_0(g),\delta_0(h))$ for all $(g,h) \in G_0 \cap (\Gamma \times \Gamma)$, or $\delta(g,h) = (\delta_0(h),\delta_0(g))$ for all $(g,h) \in G_0 \cap (\Gamma \times \Gamma)$. Passing to the abelianization, $\delta_0 \in \Aut \Gamma$ induces an automorphism $\vphi \in \Aut \Lambda$.

We have the central extensions $0 \to H \cap C \to H \to G_0 \to e$ and $0 \to H' \cap C \to H' \to G'_0 \to e$. We denote by $\Psi \in H^2(G_0,H \cap C)$ and $\Psi' \in H^2(G'_0,H' \cap C)$ the associated $2$-cocycles. Since $\deltatil : H \to H'$ is a group isomorphism that restricts to the isomorphism $\rho : H \cap C \to H' \cap C$, we find that $\rho \circ \Psi = \Psi' \circ \delta$ in $H^2(G_0,H' \cap C)$.

Since $\rho$ is given by multiplication with $r/r'$, we get in particular that $r \Psi = r' \Psi' \cap \delta$ in $H^2(G_0,C)$. By definition, we also have that $\Psi = \Om_\gamma$ in $H^2(G_0,C)$ and $\Psi' = \Om_{\gamma'}$ in $H^2(G'_0,C)$. We thus conclude that
\begin{equation}\label{eq.better-cohom}
r \Om_\gamma = r' \Om_{\gamma'} \circ \delta \quad\text{in $H^2(G_0,C)$.}
\end{equation}
Since $G_0 < G$ has finite index, we find a finite index subgroup $\Gamma_0 < \Gamma$ such that $\Gamma_0 \times \Gamma_0 < G_0$. We denote by $\Lambda_0 < \Lambda$ the image of $\Gamma_0$. Applying \eqref{eq.easy-reduction-to-bihom} to \eqref{eq.better-cohom}, it follows that \eqref{eq.equivalence-for-iso} holds on $\Lambda_0 \times \Lambda_0$. By Lemma \ref{lem.good-properties-ZP.2}, we find an integer $N_0 \geq 1$ such that $N_0 \cdot \Lambda < \Lambda_0$. It then follows that \eqref{eq.equivalence-for-iso} holds on $\Lambda \times \Lambda$.

Conversely, if $\vphi \in \Aut \Lambda$ can be lifted to $\delta_0 \in \Aut \Gamma$, and if $r,r'$ are nonzero integers such that \eqref{eq.equivalence-for-iso} holds, we proceed in the same way as in the proof of (c) and obtain that $G_\gamma$ is commensurable to $G_{\gamma'}$.
\end{proof}

\section{\boldmath Lack of W$^*$-superrigidity}\label{sec.lack-of-superrigidity}

In this section, we illustrate how all the cohomological conditions in Theorems \ref{thm.inherit-iso-superrigidity} and \ref{thm.inherit-virtual-iso-superrigidity} are essential. We start by illustrating the role of the kernel of $\Theta(\Om) : \Chat \to H^2(G,\T)$.

\begin{proposition}\label{prop.new-no-go-result-1}
Let $G$ be an icc group, $C$ a torsion free abelian group and $\Gtil$ a central extension of $G$ by $C$ with associated $2$-cocycle $\Om \in H^2(G,C)$.

\begin{enuma}
\item\label{prop.new-no-go-result-1.1} If $\Theta(\Om)$ is not faithful, there is a countable group $\Lambdatil$ such that $L(\Lambdatil) \cong L(\Gtil)$, but $\Lambdatil \not\cong \Gtil$.
More precisely, with the notation of \ref{thm.inherit-iso-superrigidity}, there is a group $\Lambdatil$ in $\cH_{\sim^1} \setminus \cH_\approx$.

\item\label{prop.new-no-go-result-1.2} If $\Theta(\Om|_{G_0})$ has infinite kernel for a finite index $G_0 < G$, there is a countable group $\Lambdatil$ and a faithful bifinite $L(\Lambdatil)$-$L(\Gtil)$-bimodule such that $\Lambdatil$ is not virtually isomorphic to $\Gtil$.

    More precisely, with the notation of \ref{thm.inherit-virtual-iso-superrigidity}, there exists a group in $\cF_{\sim}$ that is not virtually isomorphic to a group in $\cF_{\approx\ve}$.
\end{enuma}
\end{proposition}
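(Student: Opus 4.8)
The plan is to build the counterexample groups $\Lambdatil$ directly from the cocycle $\Om$ by modifying the coefficient group, exploiting that $L(\Gtil)$ only "sees" the characters $\mu \circ \Om$ for $\mu \in \Chat$, as recorded in the direct integral decomposition \eqref{eq.concrete-direct-integral}. Concretely, for part \ref{prop.new-no-go-result-1.1}: since $\Theta(\Om)$ is not faithful, there is a nontrivial $\mu_0 \in \Chat$ with $\mu_0 \circ \Om = 1$ in $H^2(G,\T)$. Let $C_0 < C$ be the annihilator of the closure of the subgroup generated by $\mu_0$ (equivalently, choose $C_0$ so that $\widehat{C/C_0}$ is a nontrivial closed subgroup of $\Ker \Theta(\Om)$); then $\mu \circ \Om$ depends only on $\mu|_{C_0}$. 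The idea is to form a \emph{different} abelian group $D$ together with an isomorphism $\zeta : C_0 \to D_0$ onto a finite-index (in fact full, with $[C:C_0]=1$ after the identification below) subgroup $D_0 < D$ realizing the equivalence $\Om \sim^1 \Om'$ of Definition \ref{def.equiv-2-cocycle}, but with $D \not\cong C$ as groups. The cleanest choice: take $D = C_0 \oplus E$ for a suitable nonzero finite or torsion-free abelian group $E$ that is absorbed by the von Neumann algebra — but one must be careful to keep $[C:C_0]=[D:D_0]$, so in the case $C_0 = C$ one instead takes $D$ to be a group containing $C$ as an index-one (i.e. equal) subgroup but realized via a nontrivial extension, which forces the use of $\Ext^1(G\ab,C)\ne 0$. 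I expect the correct move is: let $\Om' \in H^2(G,C)$ be $\Om + \Upsilon(\Psi)$ for a nonzero $\Psi \in \Ext^1(G\ab,C)$ chosen so that the resulting central extension $\Lambdatil$ is not isomorphic to $\Gtil$ (the existence of such $\Psi$ is exactly where $\Ext^1$-nonvanishing enters, and it is forced by $\Theta(\Om)$ not being faithful only after one also arranges $C$ to admit the needed non-split extension — here is the subtle point requiring care, see below). Since $\Upsilon(\Psi)$ lies in $\Ker \Theta$, we get $\mu \circ \Om = \mu \circ \Om'$ for all $\mu \in \Chat$, i.e. $\Om \approx \Om'$, hence $\Lambdatil \in \cH_\approx$ and $L(\Lambdatil) \cong L(\Gtil)$ by Theorem \ref{thm.inherit-iso-superrigidity.1}; and $\Lambdatil \not\cong \Gtil$ because the two central extensions of $G$ by $C$ are inequivalent and, using $\Gtil\fc = C = \cZ(\Gtil)$ (from $G$ icc), any group isomorphism $\Lambdatil \to \Gtil$ would respect the centers and descend to an automorphism of $G$ intertwining the cocycles.

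For part \ref{prop.new-no-go-result-1.2} the approach is the virtual-isomorphism analogue. Given a finite index $G_0 < G$ with $\Theta(\Om|_{G_0})$ of infinite kernel, pick a subgroup $C_0 < C$ of \emph{infinite} index such that $\mu \circ \Om|_{G_0}$ depends only on $\mu|_{C_0}$ (take $C_0$ so that $\widehat{C/C_0} = \Ker\Theta(\Om|_{G_0})$, which is infinite by hypothesis). Now I would build $\Lambdatil$ as a central extension of $G_0$ by a group $D$ with a subgroup $D_0$ and isomorphism $\zeta : C_0 \to D_0$ with \emph{both} $[C:C_0]$ and $[D:D_0]$ infinite and $\eqref{eq.cocycle-equal-bis}$ satisfied, so that $\Om|_{G_0} \sim \Om'$ with $\Lambdatil \in \cF_\sim$; concretely one can take $D = C_0$ itself (so $D_0 = D$, $[D:D_0]=1$ — \emph{not} infinite), which fails the index condition, so instead one must take $D = C_0 \oplus \Z^{(\N)}$ or similar with $D_0 = C_0 \oplus 0$ of infinite index, realizing $\Om'$ as $\zeta \circ \Om|_{G_0}$ pushed into $D$. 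Then Theorem \ref{thm.inherit-virtual-iso-superrigidity.1} gives a faithful bifinite $L(\Lambdatil)$-$L(\Gtil)$-bimodule (after passing from $G_0$ back up to $G$ via $\ell^2(G)$ as in Lemma \ref{lem.from-virtual-iso-to-bimodule}). The point is that $\Lambdatil$ is \emph{not} virtually isomorphic to $\Gtil$: its center is torsion free and infinitely generated over the relevant subgroup in a way that differs from $C$, and more precisely, if $\Lambdatil$ were virtually isomorphic to $\Gtil$ then by the argument above (centers are preserved up to commensurability, both being the fc-centers) we would get a commensurability $\Om|_{G_1} \approx\ve \Om'|_{\text{finite index}}$ forcing $[C:C_0]$ finite, contradicting infiniteness. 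This is where I would invoke the no-go direction carefully: $\Lambdatil \in \cF_\sim \setminus \{\text{v.i. to a group in }\cF_{\approx\ve}\}$ precisely because $\approx\ve$ demands finite index while our $C_0$ has infinite index and this index is a commensurability invariant of the center-vs-abelianization interaction.

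\textbf{Main obstacle.} The genuine difficulty is not the von Neumann algebra side — that is immediate from Theorems \ref{thm.inherit-iso-superrigidity.1} and \ref{thm.inherit-virtual-iso-superrigidity.1} once the cocycle equivalences $\approx$, resp. $\sim$, are exhibited — but rather \emph{producing the group $\Lambdatil$ with the prescribed non-rigidity} while controlling its center. In part \ref{prop.new-no-go-result-1.1} one must ensure the new central extension $\Lambdatil$ genuinely has center exactly $C$ (so that $\Gtil\fc=\cZ(\Gtil)=\Lambdatil\fc=\cZ(\Lambdatil)=C$ and a hypothetical isomorphism descends to $G$), yet is not isomorphic to $\Gtil$ as a central extension — and the cleanest way to guarantee non-isomorphism is to make the two $2$-cocycles differ in $H^2(G,C)$ by an element of $\Ext^1(G\ab,C)$, which requires knowing $\Ext^1(G\ab,C)\neq 0$; when $G\ab$ is free abelian this group vanishes, so in that case the non-faithfulness of $\Theta(\Om)$ must come entirely from the $\Hom(\Chat,H^2(G,\T))$-part, and one instead changes $C$ to a non-isomorphic group $D$ realizing the same $\sim^1$-class, using that a subgroup $C_0$ of infinite or merely different index exists. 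Pinning down, in full generality, which of these two mechanisms applies — and writing a single uniform construction — is the technical heart; I would handle it by splitting into the cases "$C_0$ has finite index in $C$" (use $\Ext^1$) and "$C_0$ has infinite index" (replace $C$ by $D = C_0$, adjusting indices), exactly mirroring the dichotomy built into Definition \ref{def.equiv-2-cocycle-bis}. For part \ref{prop.new-no-go-result-1.2} the analogous obstacle is verifying that the constructed $\Lambdatil \in \cF_\sim$ is \emph{not} virtually isomorphic to any member of $\cF_{\approx\ve}$, which reduces to showing that the infinite index $[C:C_0]$ is an invariant — this follows by running Theorem \ref{thm.inherit-virtual-iso-superrigidity.2} backwards, but one should double-check it does not require the extra conditions \ref{ncond.iso.4},\ref{ncond.iso.5}, only \ref{cond.iso.1prime}, \ref{ncond.iso.1primeprime}, \ref{ncond.iso.2prime}, \ref{ncond.iso.3}; since the hypothesis of the proposition is only "$G$ icc, $C$ torsion free," a cleaner route is to verify directly that any virtual isomorphism $\Lambdatil \to $ (group in $\cF_{\approx\ve}$) would yield a commensurability of the corresponding centers incompatible with the infinite index, i.e., argue purely group-theoretically using $\Lambdatil\fc = \cZ(\Lambdatil)$ and the matching fc-centers.
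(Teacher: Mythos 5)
Your proposal correctly identifies the big-picture strategy — build a central extension of $G_0$ by a group $D$ that is ``in the same $\sim$-class'' as $(C,\Om|_{G_0})$ but has a structurally different center — and your initial instinct that the right move is $D = C_0 \oplus E$ with $|E| = [C:C_0]$ is exactly what the paper does. But the proposal then derails on several concrete points, and the construction is never actually pinned down.

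First, a false alarm: you worry about the case $C_0 = C$ (forcing a detour through $\Ext^1$), but that case cannot occur. $C_0$ is by definition the subgroup with $\widehat{C/C_0} = \Ker \Theta(\Om|_{G_0})$; non-faithfulness in \ref{prop.new-no-go-result-1.1} (resp.\ infinite kernel in \ref{prop.new-no-go-result-1.2}) means this kernel is nontrivial (resp.\ infinite), hence $C_0 \subsetneq C$ (resp.\ $[C:C_0]=\infty$). The entire $\Ext^1(G\ab,C)$ fallback is therefore moot — and moreover it would not do what you want: replacing $\Om$ by $\Om + \Upsilon(\Psi)$ leaves the coefficient group equal to $C$, so the resulting $\Lambdatil$ would still lie in $\cH_\approx$ (indeed $\Om + \Upsilon(\Psi) \approx \Om$ by construction). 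You would get a non-isomorphic group in $\cH_\approx$, which is a different and weaker statement than $\cH_{\sim^1} \setminus \cH_\approx$.

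Second, and more seriously, you never say how to produce the cocycle $\Om' \in H^2(G_0,D)$. This is the technical heart of the construction. The paper proceeds via the surjectivity of $\Theta$ in the universal coefficient theorem \eqref{eq.univ-coeff}: since $\mu \circ \Om|_{G_0}$ depends only on $\mu|_{C_0}$, one gets a continuous $\al : \widehat{C_0} \to H^2(G_0,\T)$, and surjectivity of $\Theta$ lets one lift $\al$ to some $\Om_0 \in H^2(G_0,C_0)$. Then $\Om'$ is obtained by composing $\Om_0$ with the inclusion $C_0 \hookrightarrow C_0 \oplus E$. Without this lifting step, the construction of $\Lambdatil$ is not defined.

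Third, your hedge ``nonzero finite \emph{or torsion-free} abelian group $E$'' matters: torsion-free $E$ does not work in general. The reason $\Lambdatil$ falls outside $\cH_\approx$ (part (a)) and outside everything v.i.\ to $\cF_{\approx\ve}$ (part (b)) is that its center $C_0 \oplus E$ has a \emph{nontrivial torsion subgroup}, whereas any group in $\cH_\approx$ must have center $\cong C$ (torsion-free), and the torsion subgroup of the center is — after the paper's short argument — a virtual isomorphism invariant up to finiteness. If you take $E = \Z^{(\N)}$ as proposed for (b), the center of $\Lambdatil$ is torsion-free and you have no such invariant; for suitable $C$ (e.g.\ $C$ of infinite rank) it can happen that $C_0 \oplus \Z^{(\N)} \cong C$, in which case the obstruction disappears entirely. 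The paper chooses $E = T$ an abelian torsion group with $|T| = |C/C_0|$, which is uniform across both parts.

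Finally, your argument for the ``not virtually isomorphic'' conclusion in (b) is only sketched and proves too little: the statement requires that $\Lambdatil$ is not v.i.\ to \emph{any} group in $\cF_{\approx\ve}$, not merely not v.i.\ to $\Gtil$. The clean proof is group-theoretic and avoids the heavy hypotheses of \ref{thm.inherit-virtual-iso-superrigidity.2} entirely (correctly anticipated in your last sentence, but not carried out): if $\Gamma$ is v.i.\ to a central extension $\Gamma'$ of a finite index $G_1 < G$ by $D$ with $D$ commensurable to $C$, then $D\tor$ is finite, $\Gamma'/D\tor$ and its finite index subgroups have no nontrivial finite normal subgroups (by icc-ness of $G$), a virtual isomorphism $\theta : \Gamma_0 \to \Gamma'$ sends $\cZ(\Gamma_0)$ into $D$, and torsion-freeness of $D$ forces $\theta$ to kill the torsion of $\cZ(\Gamma_0)$; finiteness of $\Ker\theta$ and of $[\Gamma:\Gamma_0]$ then bound the torsion of $\cZ(\Gamma)$. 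Since $\Lambdatil$ has infinite torsion in its center (for (b), as $|T| = |C/C_0| = \infty$), this is a contradiction.
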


\begin{proof}
We prove (a) and (b) in parallel. For the proof of (a), we define $G_0 := G$. For the proof of (b), we choose a finite index subgroup $G_0 < G$ such that $\Theta(\Om|_{G_0})$ has infinite kernel. We define the subgroup $C_0 < C$ such that $\mu \in \Chat$ belongs to the kernel of $\Theta(\Om|_{G_0})$ if and only if $\mu$ is equal to $1$ on $C_0$. In (a), we have that $C_0$ is a proper subgroup of $C$, while in (b), we have that $C_0 < C$ has infinite index.
Denote by $\al : \widehat{C_0} \to H^2(G_0,\T)$ the unique group homomorphism such that $\Theta(\Om|_{G_0}) = \al \circ \res$, where $\res : \Chat \to \widehat{C_0}$ is the restriction homomorphism. Note that $\al$ is continuous.

By the surjectivity of $\Theta$ in the universal coefficient theorem \eqref{eq.univ-coeff}, we can take $\Om_0 \in H^2(G_0,C_0)$ such that $\Theta(\Om_0) = \al$. Choose an abelian torsion group $T$ such that $|C/C_0| = |T|$, which might be $+\infty$. Define $\Om_1 \in H^2(G_0,C_0 \times T)$ by composing $\Om_0$ with the embedding $C_0 \to C_0 \times T$. Denote by $\Lambdatil$ the central extension of $G_0$ by $C_0 \times T$ defined by $\Om_1$.

By construction, $\Om|_{G_0} \sim^1 \Om_1$ in the sense of Definition \ref{def.equiv-2-cocycle}. Define $\Gtil_0 < \Gtil$ as the preimage of $G_0 < G$. By Theorem \ref{thm.inherit-iso-superrigidity.1}, we have that $L(\Lambdatil) \cong L(\Gtil_0)$. By construction, the torsion group $T$ sits in the center of $\Lambdatil$.

In (a), we have $G_0 = G$ and thus $\Gtil_0 = \Gtil$. We find that $\Lambdatil \in \cH_{\sim^1}$. Since the center of $\Lambdatil$ has a nontrivial torsion subgroup, while $G$ is icc and $C$ torsion free, $\Lambdatil$ is not isomorphic to a central extension of $G$ by $C$, so that $\Lambdatil$ does not belong to $\cH_\approx$. In particular, $\Lambdatil \not\cong \Gtil$.

To prove (b), we already have that $\Lambdatil$ belongs to $\cF_\sim$. By Theorem \ref{thm.inherit-virtual-iso-superrigidity.1}, there exists a faithful bifinite $L(\Lambdatil)$-$L(\Gtil)$-bimodule. To prove that $\Lambdatil$ is not virtually isomorphic to a group in $\cF_{\approx\ve}$, it suffices to prove the following statement: if $\Gamma$ is a countable group that is virtually isomorphic to a central extension $\Gamma'$ of a finite index subgroup $G_1 < G$ by a countable abelian group $D$ that is commensurable to $C$, then the torsion subgroup $T$ of $\cZ(\Gamma)$ is finite.

Since $D$ is commensurable to $C$ and $C$ is torsion free, the group $D\tor$ is finite. Replacing $\Gamma'$ by $\Gamma'/D\tor$, we may assume that $D$ is torsion free. Since $G$ is icc, no finite index subgroup of $\Gamma'$ has a nontrivial finite normal subgroup. Since $\Gamma$ is virtually isomorphic to $\Gamma'$, we thus find a finite index subgroup $\Gamma_0 < \Gamma$ and a group homomorphism $\theta : \Gamma_0 \to \Gamma'$ such that $\Ker \theta$ is finite and $\theta(\Gamma_0) < \Gamma'$ has finite index. Since $G$ is icc, $\theta(\cZ(\Gamma_0)) \subset D$. Since $D$ is torsion free, it follows that $\theta(T \cap \Gamma_0) = \{0\}$. Since $\Gamma_0 < \Gamma$ has finite index and $\Ker \theta$ is finite, we conclude that $T$ is finite.
\end{proof}

Our next no-go result shows that extensions by $C$ of a generalized wreath product group with base $\Z/n\Z$ will never be W$^*$-superrigid if $n \cdot C \neq C$. We use the equivalence relation $\approx$ from Definition \ref{def.equiv-2-cocycle}.

\begin{proposition}\label{prop.new-no-go-result-2}
Let $\Gamma$ be a countable group and $\Gamma \actson I$ a faithful transitive action such that $(\Stab_\Gamma i) \cdot j$ is infinite for all $i \neq j$ in $I$. Let $n \in \N$ and consider the generalized wreath product group $G = (\Z/n\Z)^{(I)} \rtimes \Gamma$.

If $C$ is an abelian group with $n \cdot C \neq C$, then no central extension $\Gtil$ of $G$ by $C$ is W$^*$-superrigid.

More precisely, for every $\Om \in H^2(G,C)$, there exists an $\Om' \in H^2(G,C)$ such that $\Om \approx \Om'$ and the corresponding central extensions $\Gtil$ and $\Lambdatil$ satisfy $L(\Gtil) \cong L(\Lambdatil)$ and $\Gtil \not\cong \Lambdatil$.
\end{proposition}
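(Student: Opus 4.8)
Since $\Gamma\curvearrowright I$ is transitive, the coinvariants of the base $\Sigma:=(\Z/n\Z)^{(I)}$ under $\Gamma$ are $\Z/n\Z$, realised by the sum homomorphism $\operatorname{sum}\colon\Sigma\to\Z/n\Z$, while $\Gamma\hookrightarrow G$ splits off a copy of $\Gamma\ab$. Hence $G\ab\cong\Z/n\Z\times\Gamma\ab$, and by additivity of $\Ext^1$ in the first variable, $\Ext^1(G\ab,C)\cong C/nC\oplus\Ext^1(\Gamma\ab,C)$; the assumption $n\cdot C\neq C$ makes the summand $\Ext^1(\Z/n\Z,C)\cong C/nC$ nonzero. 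Given $\Om\in H^2(G,C)$, I would choose a nonzero $\Psi$ in this summand (depending on $\Om$, in the way specified below) and set $\Om':=\Om-\Upsilon(\Psi)$, with $\Upsilon$ the map of the universal coefficient theorem \eqref{eq.univ-coeff}; let $\Gtil$, $\Lambdatil$ be the central extensions of $G$ by $C$ with cocycles $\Om$, $\Om'$.

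\textbf{The von Neumann algebras coincide.}
For any such $\Psi$, exactness of \eqref{eq.univ-coeff} gives $\Upsilon(\Psi)\in\Ker\Theta$, i.e.\ $\Theta(\Om')=\Theta(\Om)$, which is exactly $\Om\approx\Om'$ of Definition \ref{def.equiv-2-cocycle}. Since $\approx$ is the special case $C_0=D_0=D=C$, $\zeta=\id$, $t=1$ of the relation $\sim^1$, the group $\Lambdatil$ lies in $\cH_{\sim^1}$, so Theorem \ref{thm.inherit-iso-superrigidity.1} produces a projection $p\in L(\Gtil)$ with $\tau_{\Gtil}(p)=1$, hence $p=1$, and $L(\Gtil)\cong L(\Lambdatil)$. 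It thus remains to arrange $\Gtil\not\cong\Lambdatil$.

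\textbf{An invariant separating $\Gtil$ from $\Lambdatil$.}
Because $G$ is icc (this is exactly what the stabilizer hypothesis guarantees), $\Gtil\fc=\cZ(\Gtil)=C$ and likewise for $\Lambdatil$, so any group isomorphism $\Gtil\to\Lambdatil$ would restrict to some $\rho\in\Aut C$ and descend to some $\delta\in\Aut G$ with $\rho\circ\Om=\Om\circ\delta-\Upsilon(\Psi\circ\delta\ab)$ in $H^2(G,C)$; equivalently, $\Om$ and $\Om'$ would lie in one orbit of $\Aut C\times\Aut G$ acting on $H^2(G,C)$. To obstruct this, pass to abelianizations: the five-term exact sequence for $0\to C\to\Gtil\to G\to e$ presents $\Gtil\ab$ as an extension of $G\ab\cong\Z/n\Z\times\Gamma\ab$ by $Q:=C/\partial_\Om(H_2(G))$, where $\partial_\Om\colon H_2(G)\to C$ is precisely the $\Hom(H_2(G),C)$-component of $\Om$ under the universal coefficient theorem. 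As $\Theta(\Om)=\Theta(\Om')$, this component is unchanged, so $\Lambdatil\ab$ is an extension of $G\ab$ by the \emph{same} $Q$, and the two classes in $\Ext^1(G\ab,Q)$ differ by the image $\bar\Psi$ of $\Psi$ under the natural surjection $C/nC=\Ext^1(\Z/n\Z,C)\twoheadrightarrow Q/nQ=\Ext^1(\Z/n\Z,Q)$, sitting in the $\Z/n\Z$-slot. Writing $e_\Om\in Q/nQ$ for the $\Z/n\Z$-component of the class of $\Gtil\ab$, I would pick $\Psi$ so that $\bar\Psi=e_\Om$ if $e_\Om\neq 0$ and $\bar\Psi$ any nonzero element otherwise; then exactly one of $e_\Om,e_{\Om'}=e_\Om-\bar\Psi$ vanishes, i.e.\ exactly one of $\Gtil\ab,\Lambdatil\ab$ has the image of $\Sigma$ splitting off as a direct $\Z/n\Z$-summand. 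Using that $\delta\in\Aut G$ carries $\Sigma$ into a conjugate of $\Sigma$ (the analogue of \cite[Proposition 6.10]{DV24} for these generalised wreath products) and hence induces on $\Z/n\Z\subset G\ab$ an automorphism compatible with $\operatorname{sum}$, the vanishing of $e_\Om$ is invariant under $\Aut C\times\Aut G$, so $\Om$ and $\Om'$ lie in distinct orbits and $\Gtil\not\cong\Lambdatil$.

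\textbf{The main obstacle.}
The delicate points are, first, the input that every automorphism of $G$ respects the base $\Sigma$ up to conjugacy (without it, one must argue that the two \emph{abstract} abelian groups $\Gtil\ab,\Lambdatil\ab$ are non-isomorphic, which is clean when $\Gamma\ab$ is torsion free — then $\Z/n\Z=(G\ab)\tor$ is characteristic — but awkward in general); and second, the degenerate case where $Q$ is $n$-divisible, so that $Q/nQ=0$ and this abelianization-level invariant is empty. Since $H_k(\Sigma)$ is killed by $n$ for $k\ge 1$, one checks that $n$-divisibility of $Q$ forces $\partial_{\Om|_\Gamma}(H_2(\Gamma))$ to surject onto $C/(nC+C\tor\cap C[n])$; in particular it never happens when $C$ is torsion free and $H_2(\Gamma)=0$ — precisely the situation of Remark \ref{rem.main}, where $C=\Z[\cP^{-1}]$ and $\Gamma$ is a free product of such groups, so $Q=C$ and the invariant is simply whether $\Gtil\ab$ carries a $\Z/n\Z$-summand compatible with the base. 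In the remaining cases I would fall back on the finer invariant $h_\Om\colon\Sigma\to C/nC$, $\sigma\mapsto[\tilde\sigma^{\,n}]$ for a lift $\tilde\sigma\in\Gtil$, whose values for $\Gtil$ and $\Lambdatil$ differ by the nonzero homomorphism $m\cdot\operatorname{sum}$ (with $\Psi\leftrightarrow m\in C/nC\setminus\{0\}$), and argue that this correction survives the $\Aut C\times\Aut G$-action; making that last step unconditional is where the real work lies.
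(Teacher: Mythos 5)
Your choice of $\Om'$ is the right one and matches the paper's: pick $\Psi$ in the $\Ext^1(\Z/n\Z,C)\cong C/nC$ summand so that exactly one of the invariants vanishes. The step $L(\Gtil)\cong L(\Lambdatil)$ is also correct (the paper does it more directly from \eqref{eq.central-decomp}, but invoking Theorem \ref{thm.inherit-iso-superrigidity.1} via $\cH_{\sim^1}$ is fine). The gap is in proving $\Gtil\not\cong\Lambdatil$, and you have diagnosed it accurately yourself, but the two obstacles you name are not corner cases --- they are the whole content of that step.

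First, your abelianization-level invariant $e_\Om\in Q/nQ$ with $Q=C/\partial_\Om(H_2(G))$ can simply vanish: nothing in the hypotheses prevents $\partial_\Om(H_2(G))$ from being large enough that $Q$ is $n$-divisible, and when $Q/nQ=0$ there is no nonzero $\bar\Psi$ to inject and the invariant distinguishes nothing. Remark \ref{rem.main} is only one application of the proposition; the statement is for arbitrary countable $\Gamma$ and arbitrary $\Om$, so the degenerate case must be handled. Second, your fallback invariant $h_\Om(\sigma)=[\tilde\sigma^n]$, evaluated on a coordinate generator $\sigma=\pi_{i_0}(1)$, is exactly what the paper uses: under the identification $H^2(\Z/n\Z,C)\cong C/nC$ it is $\Phi(\Om)=\Om\circ\pi_{i_0}$ of \eqref{eq.my-hom-Phi}, which always lives in the nonzero group $C/nC$. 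But to conclude you need that any automorphism of $G$, up to inner, carries $\pi_{i_0}(\Z/n\Z)$ back to itself via some $s\in(\Z/n\Z)^\times$ --- otherwise $\Om\circ\pi_{i_0}$ is not an invariant of the isomorphism class of $\Gtil$. You cite ``the analogue of \cite[Proposition 6.10]{DV24}'', but that result is specific to left-right wreath products $(\Z/2\Z)^{(\Gamma)}\rtimes(\Gamma\times\Gamma)$; here $\Gamma\actson I$ is an arbitrary faithful transitive action with infinite stabilizer orbits and no further constraints on $\Gamma$. The paper proves exactly the needed statement as Lemma \ref{lem.aut-generalized-wreath}, via a three-step argument (automorphisms preserve $\Sigma$, then send coordinate generators to coordinate generators of single support, then identify $\delta\circ\pi_{i_0}=\pi_{i_0}\circ\delta_0$ after an inner adjustment), and explicitly notes that such a statement was not available in the literature at this generality. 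With that lemma in hand, $\rho\circ\Om=\Om'\circ\delta$ implies $\rho(\Phi(\Om))=s^{-1}\Phi(\Om')$ for some $s\in(\Z/n\Z)^\times$, which is absurd since exactly one side vanishes. The missing piece in your argument is therefore Lemma \ref{lem.aut-generalized-wreath}, and it is genuinely non-trivial; without it neither of your two routes closes.
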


Before proving Proposition \ref{prop.new-no-go-result-2}, we need a lemma describing the automorphism group of a generalized wreath product group. Although starting with \cite{Hou62}, there is a large body of literature computing the automorphism group of such a generalized (i.e.\ permutational) wreath product group $\Sigma_0 \wr_I \Gamma$, we need a version of this that, as far as we know, is not available in the literature, but can be proven rather easily. The reason why we prove the following lemma in a very high generality, in particular without making any assumptions on the acting group $\Gamma$, is to obtain the no-go Proposition \ref{prop.new-no-go-result-2} in the largest possible generality.

\begin{lemma}\label{lem.aut-generalized-wreath}
Let $\Gamma$ be a countable group and $\Gamma \actson I$ a faithful transitive action such that $(\Stab_\Gamma i) \cdot j$ is infinite for all $i \neq j$ in $I$. Let $\Sigma_0$ be a nontrivial abelian group, write $\Sigma = \Sigma_0^{(I)}$ and consider the generalized wreath product group $G = \Sigma \rtimes \Gamma$.

For every automorphism $\delta \in \Aut G$ and every $i_0 \in I$, there exist $g_0 \in G$ and $\delta_0 \in \Aut \Sigma_0$ such that $(\Ad g_0) \circ \delta \circ \pi_{i_0} = \pi_{i_0} \circ \delta_0$, where $\pi_{i_0} : \Sigma_0 \to \Sigma$ is the embedding in position $i_0$.

In particular, every automorphism of $G$ globally preserves $\Sigma$.
\end{lemma}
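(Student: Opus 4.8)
The plan is to identify $\Sigma$ intrinsically inside $G$, up to the ambiguity of an inner automorphism, so that an arbitrary $\delta \in \Aut G$ must carry $\Sigma$ into a conjugate of itself, and then to pin down what $\delta$ does to a single coordinate copy $\pi_{i_0}(\Sigma_0)$. The natural intrinsic handle is the \emph{FC-radical} (virtual center) together with centralizers. Concretely, first I would fix $i_0 \in I$ and observe that the centralizer $C_G(\pi_{i_0}(\Sigma_0))$ contains $\Sigma$ (since $\Sigma$ is abelian) and contains $\Stab_\Gamma i_0$; conversely, using the hypothesis that $(\Stab_\Gamma i)\cdot j$ is infinite for $i \neq j$, an element $a\gamma$ with $\gamma \neq e$ and $a \in \Sigma$ that commutes with all of $\pi_{i_0}(\Sigma_0)$ would force $\gamma \in \Stab_\Gamma i_0$, and then a support argument on $a$ forces $a \in \Sigma$ as well (any coordinate $j \neq i_0$ in the support of $a$ gets spread over the infinite set $(\Stab_\Gamma i_0)\cdot j$ by conjugation, contradicting $a \in \Sigma_0^{(I)}$ having finite support, unless the relevant stabilizer condition collapses it). Thus $C_G(\pi_{i_0}(\Sigma_0)) = \Sigma \rtimes \Stab_\Gamma i_0$. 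This already shows $\Sigma$ is, up to the $\Stab_\Gamma i_0$ factor, algebraically visible.

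Next I would extract $\Sigma$ exactly. One clean way: $\Sigma$ is generated by $\bigcup_{g \in G} g\,\pi_{i_0}(\Sigma_0)\,g^{-1}$, i.e.\ $\Sigma$ is the normal closure of $\pi_{i_0}(\Sigma_0)$, because transitivity of $\Gamma \actson I$ means the $\Gamma$-conjugates of the position-$i_0$ copy already sweep out all positions. Hence to prove the final ``in particular'' statement it suffices to prove the main statement, since $\delta(\Sigma)$ is the normal closure of $\delta(\pi_{i_0}(\Sigma_0))$, and if the latter lies in $\pi_{i_0}(\Sigma_0)$ after conjugating by some $g_0$, its normal closure lies in $\Sigma$; but $\delta$ is an automorphism, so equality $\delta(\Sigma) = \Sigma$ follows (the conjugation $\Ad g_0$ also preserves $\Sigma$). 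So the whole lemma reduces to: after an inner automorphism, $\delta$ maps $\pi_{i_0}(\Sigma_0)$ into some single coordinate copy $\pi_{j_0}(\Sigma_0)$, and then by transitivity we may conjugate further to arrange $j_0 = i_0$.

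For that core step, consider $\delta(\pi_{i_0}(\Sigma_0))$. It is an abelian subgroup isomorphic to $\Sigma_0$, and I would study its image under the quotient map $G \to \Gamma$: call the image $\Gamma'$. Then $\delta$ induces a map on quotients only if $\delta(\Sigma) = \Sigma$, which is what we are trying to prove, so instead I would argue directly with centralizers. Since $C_G(\pi_{i_0}(\Sigma_0))$ has the specific form $\Sigma \rtimes \Stab_\Gamma i_0$ computed above, applying $\delta$ gives that $C_G(\delta\pi_{i_0}(\Sigma_0))$ is isomorphic to $\Sigma \rtimes \Stab_\Gamma i_0$; in particular its FC-radical is $\Sigma$ (here I use that $\Gamma$, or at least $\Stab_\Gamma i_0$, has trivial FC-radical intersection behaviour — more carefully, $\Sigma$ is precisely the FC-radical of $\Sigma \rtimes \Stab_\Gamma i$ when the relevant orbit condition holds, because any element of $\Gamma\setminus\{e\}$ has an infinite conjugacy class inside $\Sigma \rtimes \Stab_\Gamma i$ by the orbit hypothesis). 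Consequently $\delta(\Sigma) = \Sigma$ follows from $\delta$ mapping FC-radicals to FC-radicals together with the fact that $\Sigma$ is generated by the FC-radicals of the centralizers $C_G(\pi_i(\Sigma_0))$ as $i$ ranges over $I$. Once $\delta(\Sigma) = \Sigma$, the induced automorphism $\overline\delta$ of $\Gamma$ and standard generalized-wreath-product rigidity (the action $\Gamma \actson I$ is faithful and its ``coordinate'' structure is recovered from the $G$-conjugacy classes of minimal normal-in-$\Sigma$ pieces) let me conjugate by an element of $G$ to send position $i_0$ to position $i_0$ and obtain $\delta_0 \in \Aut\Sigma_0$ with $(\Ad g_0)\circ\delta\circ\pi_{i_0} = \pi_{i_0}\circ\delta_0$.

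The main obstacle I anticipate is the bookkeeping in the support argument showing $C_G(\pi_{i_0}(\Sigma_0)) = \Sigma\rtimes\Stab_\Gamma i_0$ and, relatedly, that $\Sigma$ is exactly the FC-radical of that centralizer: this is where the precise hypothesis ``$(\Stab_\Gamma i)\cdot j$ infinite for $i\neq j$'' is used, and one must be careful that it is used symmetrically enough to also control how $\delta(\pi_{i_0}(\Sigma_0))$ sits relative to the coordinate decomposition — i.e.\ to rule out $\delta$ ``diagonally smearing'' one coordinate across several. I expect this to be handled by noting that $\delta(\pi_{i_0}(\Sigma_0)) \subset \Sigma$ is an abelian subgroup whose $G$-conjugates generate $\Sigma$ and whose centralizer in $\Sigma$ has finite corank only if it is supported on finitely many coordinates, combined with the fact that $\Gamma$ permutes coordinates transitively with the stated infinite-orbit property, forcing the support to be a single coordinate.
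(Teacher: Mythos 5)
There is a genuine gap at the crux of your argument. You assert that $\Sigma$ is the FC-radical (virtual center) of $C_G(\pi_{i_0}(\Sigma_0)) = \Sigma \rtimes \Stab_\Gamma i_0$, and derive $\delta(\Sigma)=\Sigma$ from this. That assertion is false. Take $a \in \Sigma$ with some $j \in \supp a$, $j \neq i_0$. By hypothesis $(\Stab_\Gamma i_0)\cdot j$ is infinite; since $\supp a$ is finite, the translates $\gamma \cdot \supp a$ for $\gamma \in \Stab_\Gamma i_0$ form an infinite family of distinct finite sets, so the $\Stab_\Gamma i_0$-orbit of $a$ (hence its conjugacy class in $\Sigma \rtimes \Stab_\Gamma i_0$) is infinite. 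Thus the FC-radical of $\Sigma \rtimes \Stab_\Gamma i_0$ is contained in the subgroup of elements supported at $i_0$, not all of $\Sigma$. You only checked that elements of $\Stab_\Gamma i_0 \setminus\{e\}$ lie outside the FC-radical, but most of $\Sigma$ lies outside it too. This kills the proposed intrinsic characterization of $\Sigma$ and with it your route to $\delta(\Sigma)=\Sigma$.

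Beyond that, even if one had a correct characterization, the inference from ``the FC-radical of $C_G(\delta\pi_{i_0}(\Sigma_0))$ is abstractly isomorphic to $\Sigma$'' to ``$\delta(\Sigma)=\Sigma$'' is unjustified: it tells you which subgroup of $C_G(\delta\pi_{i_0}(\Sigma_0))$ is hit, but to conclude that this subgroup of $G$ is $\Sigma$ you would need to already understand how $\delta\pi_{i_0}(\Sigma_0)$ sits relative to the coordinate decomposition, which is what you are trying to prove. Finally, your closing appeal to ``standard generalized-wreath-product rigidity'' hides the real content: showing that $\delta(\pi_{i_0}(\Sigma_0))$, which a priori could be smeared across several coordinates, in fact lands in a single $\pi_j(\Sigma_0)$. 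The paper proves $\delta(\Sigma)=\Sigma$ directly (Step 1), by deriving a contradiction from $\delta(\Sigma)\not\subset\Sigma$ using normality and abelianness of $\delta(\Sigma)$ together with the infinite-orbit hypothesis applied to three distinct indices, and then establishes single-coordinate support (Steps 2 and 3) through a careful analysis of centralizers and finite-index subgroups — none of which is short-circuited by FC-radical considerations.
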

\begin{proof}
Fix an automorphism $\delta \in \Aut G$.

{\bf Step 1~:} we prove that $\delta(\Sigma) = \Sigma$. Since also $\delta^{-1}$ is an automorphism of $G$, it suffices to prove that $\delta(\Sigma) \subset \Sigma$. Assume the contrary and take $a \in \Sigma$ and $g \in \Gamma \setminus \{e\}$ such that $a g \in \delta(\Sigma)$. Since the action of $\Gamma$ on $I$ is faithful, we can take $i_0 \in I$ with $g \cdot i_0 \neq i_0$. Since $\delta(\Sigma)$ is a normal subgroup of $G$, we get that $\pi_{i_0}(b) a g \pi_{i_0}(b)^{-1} \in \delta(\Sigma)$ for all $b \in \Sigma_0$. Write $j_0 = g \cdot i_0$. Since $a g \in \delta(\Sigma)$ and $\Sigma$ is abelian, also
$$\pi_{i_0}(b) \pi_{j_0}(b^{-1}) = \pi_{i_0}(b) a g \pi_{i_0}(b)^{-1} (ag)^{-1} \in \delta(\Sigma) \quad\text{for all $b \in \Sigma_0$.}$$
Define
$$S = \{(i,j) \in I \times I \mid i \neq j \;\; , \;\; \forall b \in \Sigma_0 : \pi_i(b) \pi_j(b^{-1}) \in \delta(\Sigma) \} \; .$$
Since $\delta(\Sigma)$ is a normal subgroup of $G$, it follows that $(h \cdot i, h \cdot j) \in S$ for all $(i,j) \in S$ and $h \in \Gamma$.

Denote by $\zeta : G \to \Gamma$ the natural quotient homomorphism. We claim that for every $h \in \zeta(\delta(\Sigma))$ and $(i,j) \in S$, we have $\{h \cdot i,h \cdot j\} = \{i,j\}$. Indeed, taking $b \in \Sigma_0 \setminus \{e\}$, we find that $\pi_i(b) \pi_j(b^{-1}) \in \delta(\Sigma)$. Since $h \in \zeta(\delta(\Sigma))$, we can take $c \in \Sigma$ such that $c h \in \delta(\Sigma)$. Since $\delta(\Sigma)$ is abelian, $ch$ commutes with $\pi_i(b) \pi_j(b^{-1})$. Since also $\Sigma$ is abelian, it follows that
$$\pi_{h \cdot i}(b) \pi_{h \cdot j}(b^{-1}) = (ch) \pi_i(b) \pi_j(b^{-1}) (ch)^{-1} = \pi_i(b) \pi_j(b^{-1}) \; .$$
Considering the support of the left and right hand side, it follows that $\{h \cdot i,h \cdot j\} = \{i,j\}$, so that the claim is proven.

Choose any $i \in I$ and $h \in \zeta(\delta(\Sigma))$. Since $S$ is nonempty and invariant under the diagonal action of $\Gamma$ and since the action of $\Gamma$ on $I$ is transitive, we find $j \neq i$ such that $(i,j) \in S$. For every $s \in \Stab_\Gamma i$, we also get that $(i,s \cdot j) = (s \cdot i,s \cdot j) \in S$. Since $(\Stab_\Gamma i) \cdot j$ is infinite, we can thus take $k \in I$ such that $(i,k) \in S$ and such that the three elements $i,j,k$ are distinct. By the claim above, $\{h \cdot i,h \cdot j\} = \{i,j\}$ and $\{h \cdot i,h\cdot k\} = \{i,k\}$. Since $i,j,k$ are distinct, it follows that $h \cdot i = i$. Since this holds for all $i \in I$ and the action $\Gamma \actson I$ is faithful, it follows that $h = e$. So, $\zeta(\delta(\Sigma)) = \{e\}$. This means that $\delta(\Sigma) \subset \Sigma$. So the assumption $\delta(\Sigma) \not\subset \Sigma$ implies that $\delta(\Sigma) \subset \Sigma$, which is absurd. This concludes the proof of step~1.

{\bf Step 2~:} for every $i \in I$ and $b_0 \in \Sigma_0 \setminus \{e\}$, there exists a $j \in I$ such that $\delta(\pi_i(b_0)) \in \pi_j(\Sigma_0 \setminus \{e\})$.

Define the finite nonempty subset $\cF_i \subset I$ as the support of the element $\delta(\pi_i(b_0)) \in \Sigma \setminus \{e\}$. The centralizer of $\pi_i(b_0)$ in $G$ equals $\Sigma \cdot \Stab_\Gamma(i)$. Since the support of $\delta(\pi_i(b_0))$ equals $\cF_i$, the centralizer of $\delta(\pi_i(b_0))$ in $G$ is contained in $\Sigma \cdot \Norm_\Gamma(\cF_i)$, where
$$\Norm_\Gamma(\cF_i) = \{h \in \Gamma \mid h \cdot \cF_i = \cF_i\} \; .$$
So, $\delta(\Sigma \cdot \Stab_\Gamma(i)) \subset \Sigma \cdot \Norm_\Gamma(\cF_i)$. Choose $j \in \cF_i$. Since $\cF_i$ is finite, $\Norm_\Gamma(\cF_i) \cap \Stab_\Gamma(j)$ has finite index in $\Norm_\Gamma(\cF_i)$. We can thus choose a finite index subgroup $\Lambda_0 < \Stab_\Gamma(i)$ such that $\delta(\Sigma \cdot \Lambda_0) \subset \Sigma \cdot \Stab_\Gamma(j)$.

Applying the same reasoning to $j \in I$ and $\delta^{-1}$, we also find a finite index subgroup $\Lambda_1 < \Stab_\Gamma(j)$ and a $k \in I$ such that $\delta^{-1}(\Sigma \cdot \Lambda_1) \subset \Sigma \cdot \Stab_\Gamma(k)$. Making $\Lambda_0$ smaller, we may assume that $\delta(\Sigma \cdot \Lambda_0) \subset \Sigma \cdot \Lambda_1$. Applying $\delta^{-1}$ it follows that $\Sigma \cdot \Lambda_0 \subset \Sigma \cdot \Stab_\Gamma(k)$. Since $\Lambda_0 < \Stab_\Gamma(i)$ has finite index and $\Stab_\Gamma(i) \cdot r$ is infinite for all $r \neq i$, we must have that $k = i$.

Write $\Lambda_2 = \Norm_\Gamma(\cF_i) \cap \Lambda_1$. We have $\delta(\Sigma \cdot \Lambda_0) \subset \Sigma \cdot \Lambda_2$. Applying $\delta^{-1}$, we get that
\begin{equation}\label{eq.chain-inclusions}
\Sigma \cdot \Lambda_0 \subset \delta^{-1}(\Sigma \cdot \Lambda_2) \subset \delta^{-1}(\Sigma \cdot \Lambda_1) \subset \Sigma \cdot \Stab_\Gamma(i) \; .
\end{equation}
Since $\Lambda_0 < \Stab_\Gamma(i)$ has finite index, also $\Sigma \cdot \Lambda_0 < \Sigma \cdot \Stab_\Gamma(i)$ has finite index. It then follows from \eqref{eq.chain-inclusions} that $\Sigma \cdot \Lambda_2 < \Sigma \cdot \Lambda_1$ has finite index, so that $\Lambda_2 < \Lambda_1$ has finite index. Since $\Stab_\Gamma(j) \cdot r$ is infinite for all $r \neq j$, it follows that $\cF_i = \{j\}$. This means that $\delta(\pi_i(b_0)) \in \pi_j(\Sigma_0 \setminus \{e\})$ and concludes the proof of step~2.

{\bf Step 3~:} for every $i \in I$, there exists a $j \in I$ such that $\delta(\pi_i(\Sigma_0)) = \pi_j(\Sigma_0)$. Choose any $b_0 \in \Sigma_0 \setminus \{0\}$. By the previous step, we can take $j \in I$ such that $\delta(\pi_i(b_0)) \in \pi_j(\Sigma_0 \setminus \{e\})$. Taking centralizers, it follows that $\delta(\Sigma \cdot \Stab_\Gamma(i)) = \Sigma \cdot \Stab_\Gamma(j)$. Taking once more the centralizer, we conclude that $\delta(\pi_i(\Sigma_0)) = \pi_j(\Sigma_0)$.

{\bf End of the proof.} Fix $i_0 \in I$. By the previous step, take $j \in I$ such that $\delta(\pi_{i_0}(\Sigma_0)) = \pi_j(\Sigma_0)$. Since $\Gamma \actson I$ is transitive, take $g_0 \in \Gamma$ such that $g_0 \cdot j = i_0$. Then, $(\Ad g_0) \circ \delta$ globally preserves $\pi_{i_0}(\Sigma_0)$, thus leading to an automorphism $\delta_0 \in \Aut \Sigma_0$ satisfying the conclusion of the lemma.
\end{proof}

When $\Gamma \actson I$ is a transitive action, $\Sigma_0 = \Z/n\Z$ with $n \geq 2$, $\Sigma = \Sigma_0^{(I)}$ and $G = \Sigma \rtimes \Gamma$ is the corresponding generalized wreath product group, the canonical quotient homomorphisms $\Gamma \to \Gamma\ab$ and $\Sigma \to \Sigma_0 : a \mapsto \sum_{i \in I} a_i$ combine into the homomorphism $G \to \Sigma_0 \times \Gamma\ab$ that identifies $G\ab = \Sigma_0 \times \Gamma\ab$.

Let $C$ be an abelian group. Composing the identification $C/n\cdot C = \Ext^1(\Sigma_0,C) \subset \Ext^1(G\ab,C)$ with the embedding $\Upsilon : \Ext^1(G\ab,C) \to H^2(G,C)$ given by \eqref{eq.univ-coeff}, we find a faithful group homomorphism
\begin{equation}\label{eq.my-hom-vphi}
\vphi : C/n\cdot C \to H^2(G,C) \; .
\end{equation}
Still identifying $C/n\cdot C = \Ext^1(\Sigma_0,C) = H^2(\Sigma_0,C)$ and denoting, for a given $i_0 \in I$, by $\pi_{i_0} : \Sigma_0 \to \Sigma \subset G$ the embedding in the $i_0$-th coordinate, we find the homomorphism
\begin{equation}\label{eq.my-hom-Phi}
\Phi : H^2(G,C) \to C/n\cdot C : \Phi(\Om) = \Om \circ \pi_{i_0} \; ,
\end{equation}
satisfying $\Phi \circ \vphi = \id$.

\begin{proof}[{Proof of Proposition \ref{prop.new-no-go-result-2}}]
Let $C$ be any abelian group such that $n \cdot C \neq C$ and let $\Gtil$ be an arbitrary central extension of $G$ be $C$ with associated $2$-cocycle $\Om \in H^2(G,C)$. Using the notation of \eqref{eq.my-hom-Phi}, define $c_0 = \Phi(\Om)$ and fix any nonzero element $c_1 \in C/n \cdot C$. If $c_0 = 0$, use \eqref{eq.my-hom-vphi} to define $\Om' = \Om + \vphi(c_1)$. If $c_0 \neq 0$, define $\Om' = \Om - \vphi(c_0)$.

In both cases, we have found $\Om' \in H^2(G,C)$ such that among the elements $\Phi(\Om)$, $\Phi(\Om')$ of $C/n \cdot C$, one is zero and the other is nonzero.

Denote by $\Lambdatil$ the central extension corresponding to $\Om'$. By construction (cf. \eqref{eq.univ-coeff}), $\mu \circ \vphi(c) = 1$ in $H^2(G,\T)$ for all $\mu \in \Chat$ and all $c \in C/n \cdot C$. It follows that $\mu \circ \Om' = \mu \circ \Om$ for all $\mu \in \Chat$, so that $\Om \approx \Om'$. It follows from \eqref{eq.central-decomp} that $L(\Gtil) \cong L(\Lambdatil)$.

It remains to prove that $\Lambdatil \not\cong \Gtil$. Assume that $\deltatil : \Gtil \to \Lambdatil$ is an isomorphism. Since $G$ has trivial center, $\deltatil(C) = C$. We denote by $\rho$ the restriction of $\deltatil$ to $C$. Since $\deltatil(C) = C$, we get the factor automorphism $\delta \in \Aut G$. Since $\deltatil$ is an isomorphism between the two central extensions, $\rho \circ \Om = \Om' \circ \delta$ in $H^2(G,C)$.

Fix $i_0 \in I$. By Lemma \ref{lem.aut-generalized-wreath}, after composing $\deltatil$ with an inner automorphism, we may assume that $\delta \circ \pi_{i_0} = \pi_{i_0} \circ \delta_0$ for some $\delta_0 \in \Aut(\Z/n\Z)$.

This automorphism $\delta_0 \in \Aut(\Z/n\Z)$ is given by multiplication with an invertible element $s \in \Z/n\Z$. One then checks that $\Phi(\Om' \circ \delta) = s^{-1} \cdot \Phi(\Om')$. Since $\Phi(\rho \circ \Om) = \rho(\Phi(\Om))$ and since $\rho \circ \Om = \Om' \circ \delta$, we conclude that $\rho(\Phi(\Om)) = s^{-1} \cdot \Phi(\Om')$. This is absurd because one of these two elements of $C/n \cdot C$ is zero and the other one is nonzero. We have thus proven that $\Lambdatil$ and $\Gtil$ are not isomorphic.
\end{proof}

In our final no-go theorem, we illustrate the role of the vanishing condition $\Ext^1(G\ab,C)=0$ in Theorem \ref{thm.inherit-iso-superrigidity.4} and the role of conditions \ref{ncond.iso.4} and \ref{ncond.iso.5} in Theorem \ref{thm.inherit-virtual-iso-superrigidity.4} and \ref{thm.inherit-virtual-iso-superrigidity.5}.

\begin{proposition}\label{prop.new-no-go-result-3}
Let $\cP$ be a set of prime numbers and $q$ a prime number. Consider the subring $C = \Z[\cP^{-1}]$ of $\Q$ and let $F$ be either $\Z[q^{-1}]$ or its quotient $\Z[q^{-1}]/\Z$. Define $\Gamma = C \ast F$ and the left-right wreath product group $G = (\Z/2\Z)^{(\Gamma)} \rtimes (\Gamma \times \Gamma)$.

Denote by $\zeta : G \to C \times C$ the surjective homomorphism given by composing the natural maps $G \to \Gamma \times \Gamma \to C \times C$. Let $\gamma : C \times C \to C$ be a bihomomorphism such that $\gamma(C \times C)$ generates the group $C$. Define $\Om_\gamma \in H^2(G,C)$ by $\Om_\gamma(g,h) = \gamma(b,a')$ when $\zeta(g)=(a,b)$ and $\zeta(h) = (a',b')$.

Define $\Gtil$ as the central extension of $G$ by $C$ given by $\Om_\gamma$. We use the notations of Theorems \ref{thm.inherit-iso-superrigidity} and \ref{thm.inherit-virtual-iso-superrigidity}.

\begin{enuma}
\item\label{prop.new-no-go-result-3.1} For every countable group $\Lambdatil$, the conclusions of \ref{thm.inherit-iso-superrigidity.2}, \ref{thm.inherit-iso-superrigidity.3}, \ref{thm.inherit-virtual-iso-superrigidity.2} and \ref{thm.inherit-virtual-iso-superrigidity.3} hold:
\begin{itemlist}
\item $p L(\Gtil) p \cong L(\Lambdatil)$ iff $p = 1$ and $\Lambdatil$ belongs to $\cH_\approx$~;
\item there is a nonzero\slash faithful bifinite $L(\Gtil)$-$L(\Lambdatil)$-bimodule iff $\Lambdatil$ is virtually isomorphic\slash commensurable to a group in $\cF_{\approx\ve}$.
\end{itemlist}
\item\label{prop.new-no-go-result-3.2} If $2 \not\in \cP$, $q \in \cP$ and $F = \Z[q^{-1}]$, then \ref{thm.inherit-iso-superrigidity.4} fails, but \ref{thm.inherit-virtual-iso-superrigidity.4} and \ref{thm.inherit-virtual-iso-superrigidity.5} hold: $\cH_\approx$ contains groups that are not isomorphic with $\Gtil$, but every group in $\cF_{\approx\ve}$ is commensurable to $\Gtil$.
\item\label{prop.new-no-go-result-3.3} If $q \not\in \cP$ and $F = \Z[q^{-1}]$, then \ref{thm.inherit-iso-superrigidity.4}, \ref{thm.inherit-virtual-iso-superrigidity.4} and \ref{thm.inherit-virtual-iso-superrigidity.5} all fail: $\cH_\approx$ contains a group that is not virtually isomorphic to $\Gtil$.
\item\label{prop.new-no-go-result-3.4} If $2,q \in \cP$ and $F = \Z[q^{-1}]/\Z$, then \ref{thm.inherit-iso-superrigidity.4} and \ref{thm.inherit-virtual-iso-superrigidity.4} hold, but \ref{thm.inherit-virtual-iso-superrigidity.5} fails: all groups in $\cH_\approx$ are isomorphic with $\Gtil$, all groups in $\cF_{\approx\ve}$ are virtually isomorphic to $\Gtil$, but $\cF_{\approx\ve}$ contains a group that is not commensurable to $\Gtil$.
\end{enuma}
\end{proposition}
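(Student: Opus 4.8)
The plan is to prove Proposition \ref{prop.new-no-go-result-3} by a case-by-case analysis, where in each case the mechanism is the same: verify the hypotheses of the abstract theorems using Lemma \ref{lem.about-all-our-conditions} and the arithmetic facts in Lemma \ref{lem.good-properties-ZP}, and then \emph{construct explicit counterexamples} witnessing the failure of the stronger rigidity conclusions. First I would establish the common ground. By Lemma \ref{lem.good-properties-ZP}, $C = \Z[\cP^{-1}]$ and $F$ (either $\Z[q^{-1}]$ or $\Z[q^{-1}]/\Z$) have countably many finite index subgroups and countable automorphism groups, so $\Gamma = C \ast F$ does too by Lemma \ref{lem.countable-aut}. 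Thus by Lemma \ref{lem.about-all-our-conditions.1}, \ref{lem.about-all-our-conditions.2}, and \ref{lem.about-all-our-conditions.4}, conditions \ref{cond.iso.1}, \ref{cond.iso.1prime}, \ref{ncond.iso.1primeprime}, \ref{ncond.iso.2}, \ref{ncond.iso.2prime}, \ref{ncond.iso.3} hold. Since $\gamma(C \times C)$ generates $C$, the argument from the proof of Theorem \ref{thm.more-general.1} shows $\Theta(\Om_\gamma)$ is faithful, and the argument from Theorem \ref{thm.more-general.2} (using Lemma \ref{lem.good-properties-ZP.1} and \ref{lem.good-properties-ZP.2}) shows $\Theta(\Om_\gamma|_{G_0})$ has finite — in fact trivial — kernel for every finite index $G_0 < G$. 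This gives part \ref{prop.new-no-go-result-3.1} directly from Theorems \ref{thm.inherit-iso-superrigidity.2}, \ref{thm.inherit-iso-superrigidity.3}, \ref{thm.inherit-virtual-iso-superrigidity.2}, \ref{thm.inherit-virtual-iso-superrigidity.3}, with the one subtlety that for the virtual statements I should invoke Remark \ref{rem.relax-iso} and Remark \ref{rem.relax-v-iso}, since $\Aut \Gamma$ need not be literally countable if $F = \Z[q^{-1}]/\Z$ has an uncountable automorphism group — but only automorphisms fixing the relevant cohomology class matter, and those form a countable family because $\gamma$ determines a rigid structure on $C \times C$.

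For part \ref{prop.new-no-go-result-3.2}, the point is that $\Ext^1(G\ab,C) \neq 0$: since $2 \not\in \cP$, $C/2\cdot C \cong \Z/2\Z \neq 0$, so by the description of $G\ab$ in the proof of Lemma \ref{lem.about-all-our-conditions.3}, there is a nonzero class $\Psi \in \Ext^1(G\ab,C)$ supported on the $\Z/2\Z$ factor coming from $\vphi_\Gamma$. Setting $\Om' = \Om_\gamma + \Upsilon(\Psi)$, we get $\Om_\gamma \approx \Om'$ (as $\Theta(\Upsilon(\Psi)) = 1$), so the corresponding $\Lambdatil$ lies in $\cH_\approx$ and has $L(\Lambdatil) \cong L(\Gtil)$, but $\Lambdatil \not\cong \Gtil$ because $\Upsilon$ is faithful — the two central extensions are genuinely different. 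On the other hand, since $q \in \cP$ and $F = \Z[q^{-1}]$ is torsion free, the hypotheses of Lemma \ref{lem.about-all-our-conditions.5} and \ref{lem.about-all-our-conditions.6} are met (every finite index subgroup of $\Gamma_i$ is torsion free with at most one index $2$ subgroup, and $\Ext^1$ of a torsion free group into a torsion free group commensurable to $C$ can be killed by passing to a finite index subgroup — here one uses that $2 \cdot C$ has index $2$ but $\Z[q^{-1}]$ has the relevant $\Ext$ vanishing since $q \in \cP$ makes $C$ $q$-divisible). So conditions \ref{ncond.iso.4} and \ref{ncond.iso.5} hold, and \ref{thm.inherit-virtual-iso-superrigidity.4}, \ref{thm.inherit-virtual-iso-superrigidity.5} give commensurability of every group in $\cF_{\approx\ve}$ with $\Gtil$. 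I expect the main obstacle here, and the place requiring the most care, is verifying \ref{ncond.iso.4}: one must check that for a torsion free $D$ commensurable to $C$ (hence $D \cong C$ by Lemma \ref{lem.good-properties-ZP.3}) and a finite index $G_0 < G$, the class $\Psi \circ \pi$ dies on some finite index $G_1 < G_0$; this reduces via the structure of $G_{0,\text{ab}}$ (two copies of $\Gamma_{0,\text{ab}}$ plus a $2$-torsion piece) to Lemma \ref{lem.basic-cohomology.2} together with the fact that $C$ is $2$-divisible when — wait, here $2 \not\in \cP$ so $C$ is \emph{not} $2$-divisible, and this is precisely why \ref{thm.inherit-iso-superrigidity.4} fails; the subtle point is that \ref{ncond.iso.4} is about \emph{virtual} isomorphism, so the $2$-torsion obstruction in $C/2\cdot C$ is finite and can be absorbed, which is exactly the content of the torsion-handling in the proof of Theorem \ref{thm.inherit-virtual-iso-superrigidity.4} and condition \ref{ncond.iso.5}.

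For part \ref{prop.new-no-go-result-3.3}, now $q \not\in \cP$, so $F = \Z[q^{-1}]$ is \emph{not} virtually isomorphic to $C$: indeed $F$ is $q$-divisible but $C$ is not, and no finite index subgroup of $C$ is $q$-divisible (by Lemma \ref{lem.good-properties-ZP.2}, a finite index subgroup of $C$ is $N \cdot C$ with $N$ having no prime factors in $\cP$). The counterexample: take $\gamma$ and build from a nonzero $\Psi \in \Ext^1(G\ab,C)$ a class $\Om'$ with $\Om_\gamma \approx \Om'$, but now choose $\Psi$ cleverly — using the $F$-copies inside $\Gamma\ab \cong C \times F$ — so that the resulting central extension $\Lambdatil$ has a center structure that cannot be matched by any finite index subgroup of $\Gtil$; concretely the obstruction $\Ext^1(F, C) \neq 0$ (which holds since $F$ is $q$-divisible and $C$ is not, so the natural surjection $\Z^{(\N)} \to F$ has non-free-abelian-splitting kernel landing inside... more simply, $\Ext^1(\Z[q^{-1}],\Z[\cP^{-1}]) \neq 0$ when $q \notin \cP$) persists under all finite index restrictions, defeating \ref{ncond.iso.4}. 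So $\cH_\approx$ contains a group not virtually isomorphic to $\Gtil$. Finally for part \ref{prop.new-no-go-result-3.4}, with $2, q \in \cP$ and $F = \Z[q^{-1}]/\Z$: now $C$ is $2$-divisible, $\Ext^1(\Gamma_i, C) = 0$ by Lemma \ref{lem.good-properties-ZP.4} (both $C$ and $F$ are $p$-divisible for $p \in \cP \supseteq \{2,q\}$), so $\Ext^1(G\ab,C) = 0$ by Lemma \ref{lem.about-all-our-conditions.3} and \ref{thm.inherit-iso-superrigidity.4} holds; similarly \ref{ncond.iso.4} holds by Lemma \ref{lem.about-all-our-conditions.5}. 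But $F = \Z[q^{-1}]/\Z$ is a torsion group (the Prüfer-type group $\Z[q^{-1}]/\Z$), so $\Gamma = C \ast F$ has a finite index subgroup — a free product of copies of $C$ and finite cyclic $q$-groups — whose abelianization has a nontrivial \emph{finite} piece, and condition \ref{ncond.iso.5} \emph{fails} because a finite cyclic $q$-group has $\Ext^1$ into $\Z/q\Z$ that does not vanish on any finite index subgroup (finite cyclic groups have no interesting finite index subgroups). I would then exhibit a $\gamma' \neq \gamma$ or an $\Om' \approx\ve \Om_\gamma$ arising from a finite $E = D/D_0$ with $D_0$ of finite index, such that the central extension is virtually isomorphic but not commensurable to $\Gtil$ — the torsion ambiguity $D\tor$ is nonzero and cannot be removed by passing to finite index subgroups. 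Throughout, I should double-check the hypothesis $2 \in \cQ$-type conditions are not silently needed here; in this proposition there is no separate $\cQ$, so I must be careful that the coefficient group is $C$ itself and $2 \in \cP$ is exactly what makes $C$ $2$-divisible in cases \ref{prop.new-no-go-result-3.4} (and its absence in \ref{prop.new-no-go-result-3.2} is the source of the failure of \ref{thm.inherit-iso-superrigidity.4}).
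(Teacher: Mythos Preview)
Your overall strategy matches the paper's, but there are several genuine gaps that would block the proof as written.

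In part \ref{prop.new-no-go-result-3.2}, the claim that $\Lambdatil \not\cong \Gtil$ ``because $\Upsilon$ is faithful'' is insufficient: faithfulness of $\Upsilon$ only shows $\Om_\gamma \neq \Om'$ in $H^2(G,C)$, i.e.\ the two central extensions differ \emph{as extensions}, not that the groups are non-isomorphic. An abstract isomorphism $\Gtil \to \Lambdatil$ would induce $\rho \in \Aut C$ and $\delta \in \Aut G$ with $\rho \circ \Om_\gamma = \Om' \circ \delta$, and you must rule this out. The paper does this via Proposition \ref{prop.new-no-go-result-2}, whose proof uses Lemma \ref{lem.aut-generalized-wreath} to control $\delta$ and then compares the invariant $\Phi(\Om) \in C/2C$. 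Your argument for condition \ref{ncond.iso.4} is also confused: Lemma \ref{lem.about-all-our-conditions.5} genuinely requires $C$ to be $2$-divisible and does not apply here. The paper instead verifies \ref{ncond.iso.4} directly: for $G_0 = G(\Gamma_0)$ one has $G_{0,\text{ab}} = \Sigma_1 \times \Gamma_{0,\text{ab}} \times \Gamma_{0,\text{ab}}$ with $\Sigma_1$ \emph{finite} (from the proof of Lemma \ref{lem.about-all-our-conditions.6}), so passing to the finite index preimage of $\{0\} \times \Gamma_{0,\text{ab}} \times \Gamma_{0,\text{ab}}$ kills the $\Sigma_1$-contribution, reducing to $\Ext^1(\Gamma_{0,\text{ab}},C)=0$.

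In part \ref{prop.new-no-go-result-3.3}, you correctly identify that $\Ext^1(\Z[q^{-1}],C) \neq 0$ is the source of failure, but you are missing the hardest step: showing that a suitable class survives restriction to \emph{every} finite index subgroup and multiplication by \emph{every} nonzero integer. The paper constructs an explicit abelian extension $0 \to C \to B_\infty \to F \to 0$ from a $q$-adic integer $b \in \Z_q \setminus \Q$ and proves by a divisibility argument in $\Z_q$ that $r \cdot \Psi_1|_{F_0} \neq 0$ for all finite index $F_0 < F$ and all $r \neq 0$; this persistence is what makes the resulting $\Lambdatil$ fail to be even virtually isomorphic to $\Gtil$ (using \cite[Proposition 6.10]{DV24} to control the relevant automorphisms). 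In part \ref{prop.new-no-go-result-3.4}, your claim that $\Gamma = C \ast F$ has finite index subgroups that are free products of copies of $C$ and ``finite cyclic $q$-groups'' is wrong: $F = \Z[q^{-1}]/\Z$ has \emph{no} proper finite index subgroups, so Kurosh gives back copies of $F$ itself. The paper's counterexample instead uses the nontrivial extension $0 \to \Z/q\Z \to F \to F \to 0$ (multiplication by $q$) to build $\Om_1 = \Om_\gamma \oplus \Psi \in H^2(G, C \times \Z/q\Z)$; the resulting $\Lambdatil$ lies in $\cF_{\approx\ve}$, but any finite index subgroup isomorphic to one of $\Gtil$ would force the $\Z/q\Z$-component of $\Om_1$ to die on some $G_0$, contradicting $\Psi_1 \neq 0$ in $\Ext^1(F,\Z/q\Z)$.
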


Note that Proposition \ref{prop.new-no-go-result-3.2} provides groups $\Gtil$ that are not W$^*$-superrigid, but nevertheless satisfy the virtual W$^*$-superrigidity properties of Theorem \ref{thm.main-A.2} and \ref{thm.main-A.3}.

The groups $\Gtil$ in Proposition \ref{prop.new-no-go-result-3.3} do not satisfy any of the (virtual) W$^*$-superrigidity properties of Theorem \ref{thm.main-A}, but nevertheless Proposition \ref{prop.new-no-go-result-3.1} describes all countable groups $\Lambdatil$ for which $L(\Lambdatil)$ is (virtually) isomorphic to $L(\Gtil)$.

The groups $\Gtil$ in Proposition \ref{prop.new-no-go-result-3.4} satisfy the (virtual) W$^*$-superrigidity properties of Theorem \ref{thm.main-A.1} and \ref{thm.main-A.2}, but do not satisfy the virtual W$^*$-superrigidity property of Theorem \ref{thm.main-A.3}.

Finally note that when $2,q \in \cP$ and $F = \Z[q^{-1}]$, the groups $\Gtil$ in Proposition \ref{prop.new-no-go-result-3} satisfy all the W$^*$-superrigidity properties of Theorem \ref{thm.main-A}, because they are covered by Theorem \ref{thm.more-general}. So Proposition \ref{prop.new-no-go-result-3} also illustrates that the assumptions $\cP_1 \cup \cP_2 \subset \cQ$ and $2 \in \cQ$ in Theorem \ref{thm.more-general} are essential.

\begin{proof}
First note that $\Z[q^{-1}]/\Z$ has no proper finite index subgroups, because they would be given by proper finite index subgroups of $\Z[q^{-1}]$ containing $\Z$, which do not exist by Lemma \ref{lem.good-properties-ZP.2}.

(a) The proof is entirely similar to the proof of Theorem \ref{thm.more-general}. By Lemma \ref{lem.about-all-our-conditions.1}, the group $G$ satisfies conditions \ref{cond.iso.1}, \ref{cond.iso.1prime} and \ref{ncond.iso.3}.

When $A$ is a finite index subgroup of $C$ or $\Z[q^{-1}]$, we know from Lemma \ref{lem.good-properties-ZP.2} that $A$ is isomorphic to $C$ or $\Z[q^{-1}]$, so that Lemma \ref{lem.good-properties-ZP.4} says that $H^2(A,\T) = 1$. When $A$ is a finite index subgroup of $\Z[q^{-1}]/\Z$, we have that $A = \Z[q^{-1}]/\Z$, so that $A$ is a direct limit of cyclic groups. By the same argument in as in the proof of Lemma \ref{lem.good-properties-ZP.4}, we get that $H^2(A,\T) = 1$. By Lemma \ref{lem.about-all-our-conditions.4}, we get that the group $G$ satisfies condition \ref{ncond.iso.1primeprime}.

Since $\Z[q^{-1}]/\Z$ has an uncountable automorphism group, the group $G$ does not satisfy conditions \ref{ncond.iso.2} and \ref{ncond.iso.2prime} in case $F = \Z[q^{-1}]/\Z$, but we can see as follows that $G$ satisfies the weaker conditions appearing in Remarks \ref{rem.relax-iso} and \ref{rem.relax-v-iso}.

Every automorphism $\delta_0 \in \Aut F$ extends to an automorphism $\delta_1$ of $\Gamma = C \ast F$ by acting as the identity on $C$. Then $\delta_1 \times \delta_1$ uniquely extends to an automorphism $\delta_2$ of $G$ by acting as the identity on the copy of $\Z/2\Z$ in position $e \in \Gamma$. By construction, $\Om_\gamma \circ \delta_2 = \Om_\gamma$. Viewing in this way $\Aut F$ as a subgroup of $\Aut G$ and using that $\Aut C$ is countable, the proof of Lemma \ref{lem.countable-aut} shows that the set $\Aut G / \Aut F$ is countable. Since $\Z[q^{-1}]/\Z$ has no proper finite index subgroups and since the groups $C$ and $\Z[q^{-1}]$ have only countably many finite index subgroups by Lemma \ref{lem.good-properties-ZP.2}, the proof of Lemma \ref{lem.about-all-our-conditions.2} says that $G$ has only countably many finite index subgroups. So all conditions of Remarks \ref{rem.relax-iso} and \ref{rem.relax-v-iso} are satisfied.

The proof of Theorems \ref{thm.more-general.1} and \ref{thm.more-general.2} shows that $\Theta(\Om_\gamma)$ is faithful and that $\Theta(\Om_\gamma|_{G_0})$ has finite kernel for every finite index subgroup $G_0 < G$.

So (a) follows from Theorems \ref{thm.inherit-iso-superrigidity.2}, \ref{thm.inherit-iso-superrigidity.3}, \ref{thm.inherit-virtual-iso-superrigidity.2} and \ref{thm.inherit-virtual-iso-superrigidity.3}, combined with Remarks \ref{rem.relax-iso} and \ref{rem.relax-v-iso}.

(b) Since $2 \not\in \cP$, it follows from Proposition \ref{prop.new-no-go-result-2} that $\cH_\approx$ contains groups that are not isomorphic with $\Gtil$.

By Lemma \ref{lem.about-all-our-conditions.6}, the group $G$ satisfies condition \ref{ncond.iso.5}. We prove that, because $q \in \cP$, the group $G$ also satisfies condition \ref{ncond.iso.4}. Take a finite index subgroup $G_0 < G$ and an element $\Psi \in \Ext^1(G_{0,\text{\rm ab}},D)$, where $D$ is a torsion free abelian group that is commensurable to $C$. We have to prove that there exists a finite index subgroup $G_1 < G_0$ such that $\Psi \circ \pi = 0$ in $\Ext^1(G_{1,\text{\rm ab}},D)$, where $\pi : G_{1,\text{\rm ab}} \to G_{0,\text{\rm ab}}$ is the canonical homomorphism.

By Lemma \ref{lem.good-properties-ZP.3}, we may assume that $D=C$. Using the first paragraphs of the proof of Lemma \ref{lem.about-all-our-conditions} and the notation introduced there, we may assume that $G_0 = \Sigma(\Gamma_0) \rtimes (\Gamma_0 \times \Gamma_0)$, where $\Gamma_0 \lhd \Gamma$ is a finite index normal subgroup. From the proof of \ref{lem.about-all-our-conditions.6}, we get that $G_{0,\text{\rm ab}} = \Sigma_1 \times \Gamma_{0,\text{\rm ab}} \times \Gamma_{0,\text{\rm ab}}$, with $\Sigma_1$ finite. Defining $G_1$ as the preimage of $\{0\} \times \Gamma_{0,\text{\rm ab}} \times \Gamma_{0,\text{\rm ab}}$, it thus suffices to prove that $\Ext^1(\Gamma_{0,\text{\rm ab}},C) = 0$.

In the proof of Lemma \ref{lem.good-properties-ZP.2}, we have seen that $\Gamma_{0,\text{\rm ab}}$ is a direct sum of copies of $\Z$ and finite index subgroups of $C$ and $F$. By Lemma \ref{lem.good-properties-ZP.2} and \ref{lem.good-properties-ZP.4}, using at this point that $q \in \cP$, we find that $\Ext^1(\Gamma_{0,\text{\rm ab}},C) = 0$.

So $G$ satisfies conditions \ref{ncond.iso.4} and \ref{ncond.iso.5}, and the conclusion follows from Theorem \ref{thm.inherit-virtual-iso-superrigidity.4} and \ref{thm.inherit-virtual-iso-superrigidity.5}.

(c) We start by constructing a sufficiently nontrivial abelian extension $0 \to C \to B_\infty \to F \to 0$. Fix a $q$-adic integer $b \in \Z_q$ that does not belong to $\Q$ inside $\Q_q$. Uniquely define, for every $n \geq 0$, the elements $b_n \in \{0,1,\ldots,q-1\}$ such that $b \in b_0 + q b_1 + \cdots + q^k b_k + q^{k+1} \Z_q$ for all $k \geq 0$. Define $B_n = C \times \Z$ and consider the faithful homomorphisms
$$\vphi_n : B_n \to B_{n+1} : \vphi_n(x,y) = (x + b_n y , q y) \; .$$
Define $B_\infty$ as the direct limit of this sequence of abelian groups. Since the canonical embeddings $C \to B_n : x \mapsto (x,0)$ are compatible with the maps $\vphi_n$, we view $C$ in this way as a subgroup of $B_\infty$. The homomorphisms $\theta_n : B_n \to F : \theta_n(x,y) = q^{-n} y$ satisfy $\theta_{n+1} \circ \vphi_n = \theta_n$ and thus combine into a well-defined surjective homomorphism $\theta : B_\infty \to F$, whose kernel is equal to $C$. So we get the abelian extension $0 \to C \to B_\infty \to F \to 0$ with associated $2$-cocycle $\Psi_1 \in \Ext^1(F,C) = H^2\sym(F,C)$.

We claim that for every finite index subgroup $F_0 < F$ and nonzero integer $r \in \Z$, $r \cdot \Psi_1|_{F_0}$ is nonzero in $\Ext^1(F_0,C)$. By Lemma \ref{lem.good-properties-ZP.2}, we can take an integer $N \geq 1$ such that $F_0 = N \cdot F$. Assume that $r \cdot \Psi_1|_{F_0} = 0$ in $\Ext^1(F_0,C)$. This means that there exists a group homomorphism $\psi : F_0 \to B_\infty$ satisfying $\theta(\psi(z)) = r z$ for all $z \in F_0$. Define $c_0 := \psi(N)$, so that $\theta(c_0) = r N$. For every $k \geq 1$, the element $N \in F_0$ is divisible by $q^k$. Since $\psi$ is a group homomorphism, $c_0$ is divisible by $q^k$ in $B_\infty$.

Viewing every $B_n$ as a subgroup of $B_\infty$, we have that $B_\infty = \bigcup_n B_n$. Fix $n \in \N$ such that $c_0 \in B_n$. We thus represent $c_0$ by the element $(x_0,q^n r N) \in B_n$ for some $x_0 \in C$. Since $C = \Z[\cP^{-1}]$, we can take an integer $\kappa \geq 1$ such that $\kappa x_0 \in \Z$. Define $c_1 = \kappa c_0$. Then $c_1$ is still divisible by $q^k$ for every $k \geq 1$. Also, $c_1$ belongs to $B_n$ and is represented by the element $(x_1,y_1)$, where $x_1 = \kappa x_0 \in \Z$ and $y_1 = \kappa q^n r N$, so that $y_1 \neq 0$.

Fix $k \geq 0$. Since $c_1$ is divisible by $q^{k+1}$, for all $l \geq 1$ large enough, the image of $(x_1,y_1)$ in $B_{n+l+1}$ is divisible by $q^{k+1}$ inside $B_{n+l+1}$. This implies that for all $l \geq 1$ large enough, the integer
$$x_1 + (b_n + q b_{n+1} + \cdots + q^l b_{n+l}) y_1$$
is divisible by $q^{k+1}$ inside $C$, and thus also inside $\Z$ because $q \not\in \cP$. It follows that
\begin{equation}\label{eq.almost-in-Qq}
q^{k+1} \;\;\text{divides}\;\; x_1 + (b_n + q b_{n+1} + \cdots + q^k b_{n+k}) y_1 \quad\text{for all $k \geq 0$.}
\end{equation}
Define the integer $d = b_0 + q b_1 + \cdots + q^{n-1} b_{n-1}$. Then \eqref{eq.almost-in-Qq} says that $x_1 + q^{-n}(b-d)y_1 = 0$ in $\Q_q$. Since $y_1$ is a nonzero integer, it follows that $b \in \Q$, contrary to our assumption. So the claim is proven.

Define the quotient homomorphism $\zeta_1 : G \to F$ by composing $G \to \Gamma \times \Gamma \to F \times F \to F$, where in the last step, we project onto the first coordinate. Viewing $\Psi_1$ as a symmetric $2$-cocycle on $F$, we define the symmetric $2$-cocycle $\Psi \in H^2(G,C)$ by $\Psi = \Psi_1 \circ \zeta_1$. Since $\Psi_1$ is defined by an element of $\Ext^1(F,C)$, we have that $\mu \circ \Psi_1 = 1$ in $H^2(F,\T)$ for all $\mu \in \Chat$. So we also have that $\mu \circ \Psi = 1$ in $H^2(G,\T)$ for all $\mu \in \Chat$. Define $\Lambdatil$ as the extension of $G$ by $C$ associated with the $2$-cocycle $\Om_\gamma + \Psi$. Since by construction, $\mu \circ \Psi = 1$ in $H^2(G,\T)$ for all $\mu \in \Chat$, we have that $\Om_\gamma + \Psi \approx \Om_\gamma$, so that $\Lambdatil$ belongs to $\cH_\approx$.

To conclude the proof of (c), we have to show that $\Lambdatil$ is not virtually isomorphic to $\Gtil$. Assume the contrary. Since $G$ is icc and $C$ is torsion free, no finite index subgroup of $\Lambdatil$ or $\Gtil$ has a nontrivial finite normal subgroup. So, $\Lambdatil$ and $\Gtil$ are commensurable. Reasoning as in the proof of Theorem \ref{thm.more-general.4}, we find a finite index subgroup $G_0 < G$, an automorphism $\delta \in \Aut G$ and nonzero integers $r,s$ such that
\begin{equation}\label{eq.good-cohom-here}
r \cdot (\Om_\gamma + \Psi)|_{G_0} = s \cdot \Om_\gamma \circ \delta|_{G_0} \quad\text{in $H^2(G_0,C)$.}
\end{equation}
By \cite[Proposition 6.10]{DV24}, after composing $\delta$ with an inner automorphism, we find $\al \in \Aut \Gamma$ such that either $\delta(g,e) = (\al(g),e)$ for all $g \in \Gamma$, or $\delta(g,e) = (e,\al(g))$ for all $g \in \Gamma$. Since $q \not\in \cP$, by the argument in the proof of Lemma \ref{lem.countable-aut}, we may further compose $\delta$ with an inner automorphism and assume that $\al(F) = F$. Define the finite index subgroup $F_0 < F$ such that $F_0 \times \{e\} = (F \times \{e\}) \cap G_0$. Restricting \eqref{eq.good-cohom-here} to $F_0 \times \{e\}$ now implies that $r \cdot \Psi_1|_{F_0} = 0$. This contradicts the statement proven above.

(d) The group $F$ has no proper finite index subgroups. Since $q \in \cP$, it follows from Lemma \ref{lem.basic-cohomology.2} that $\Ext^1(F,C) = 0$. By Lemma \ref{lem.about-all-our-conditions.3}, we get that $\Ext^1(G\ab,C) = 0$. By Lemma \ref{lem.about-all-our-conditions.5}, also condition \ref{ncond.iso.4} holds. So the conclusions of \ref{thm.inherit-iso-superrigidity.3} and \ref{thm.inherit-virtual-iso-superrigidity.4} hold, meaning that every group in $\cH_\approx$ is isomorphic with $\Gtil$ and every group in $\cF_{\approx\ve}$ is virtually isomorphic to $\Gtil$.

Write $E = \Z/q\Z$, which we view as a subgroup of $F$. Then $E$ is the kernel of the map $F \to F : x \mapsto q x$. This defines the abelian extension $0 \to E \to F \to F \to 0$, that we denote as $\Psi_1 \in \Ext^1(F,E)$. Using the same quotient homomorphism $\zeta_1 : G \to F$ as in the proof of (c), we define $\Psi \in H^2(G,E)$ by $\Psi = \Psi_1 \circ \zeta_1$. Define $\Om_1 \in H^2(G,C \times E)$ as $\Om_1 = \Om_\gamma \oplus \Psi$. Define $\Lambdatil$ as the central extension of $G$ by $C \times E$ associated with $\Om_1$. By construction, $\Om_1 \approx\ve \Om_\gamma$, so that $\Lambdatil$ belongs to $\cF_{\approx\ve}$.

To conclude the proof of (d), assume by contradiction that $\Lambdatil_0 < \Lambdatil$ is a finite index subgroup that is isomorphic to a finite index subgroup of $\Gtil$. Define the finite index subgroup $G_0 < G$ as the image of $\Lambdatil_0$ under the quotient homomorphism $\Lambdatil \to G$. Define the finite index subgroup $D < C \times E$ by $D = (C \times E) \cap \Lambdatil_0$. By construction, $\Lambdatil_0$ is a central extension of $G_0$ by $D$. We denote by $\Om_0 \in H^2(G_0,D)$ the associated $2$-cocycle. We also have by construction that $\Om_0 = \Om_1|_{G_0}$ in $H^2(G_0,C \times E)$.

Since $\Lambdatil_0$ is isomorphic to a finite index subgroup of $\Gtil$, the center $\cZ(\Lambdatil_0)$ is torsion free. So, $D$ intersects $\{0\} \times E$ trivially and the projection $\lambda : C \times E \to C$ on the first coordinate restricts to an isomorphism between $D$ and a finite index subgroup of $C$. By Lemma \ref{lem.good-properties-ZP.2}, $D \cong C$. In particular, $D$ is $p$-divisible.

Denote by $\rho : C \times E \to E$ the projection on the second coordinate. Since $D$ is $p$-divisible and $E = \Z/p\Z$, we get that $\rho(D) = 0$. It follows that $\rho \circ \Om_0 = 0$. Since $\Om_0 = \Om_1|_{G_0}$ in $H^2(G_0,C \times E)$, it follows that $\rho \circ \Om_1|_{G_0} = 0$ in $H^2(G_0,E)$. This means that $\Psi|_{G_0} = 0$ in $H^2(G_0,E)$. Since $F$ has no proper finite index subgroups, we have $F \times \{e\} \subset G_0$. Further restricting $\Psi$ to $F \times \{e\}$, we get that $\Psi_1 = 0$ in $\Ext^1(F,E)$, which is absurd.
\end{proof}

\section{Proof of Theorem \ref{thm.main-A} and Remark \ref{rem.main}}\label{sec.proof-main}

Theorem \ref{thm.main-A} and Remark \ref{rem.main} are immediate consequences of Theorem \ref{thm.more-general} and Proposition \ref{prop.new-no-go-result-1}, once we have proven the following elementary lemma.

\begin{lemma}\label{lem.liftable-aut}
For $i \in \{1,2\}$, let $\cP_i$ be a set of prime numbers and put $C_i = \Z[\cP_i^{-1}]$ with unit group $C_i^\times$. Define $\Gamma = C_1 \ast C_2$ with abelianization $\Lambda = C_1 \times C_2$.
\begin{enuma}
\item\label{lem.liftable-aut.1} If $\cP_2$ is a proper subset of $\cP_1$, every automorphism $\vphi \in \Aut \Lambda$ is of the form $\vphi(a,b) = (ca + eb,d b)$ for some $c \in C_1^\times$, $e \in C_1$ and $d \in C_2^\times$.

    If $\cP_2 = \emptyset$, all these automorphisms can be lifted to an automorphism of $\Gamma$. If $\cP_2 \neq \emptyset$, such an automorphism can be lifted to an automorphism of $\Gamma$ if and only if $e=0$.

\item\label{lem.liftable-aut.2} If both $\cP_1 \setminus \cP_2$ and $\cP_2 \setminus \cP_1$ are nonempty, every automorphism $\vphi \in \Aut \Lambda$ is of the form $\vphi(a,b) = (ca,d b)$ for some $c \in C_1^\times$ and $d \in C_2^\times$. All these automorphisms can be lifted to an automorphism of $\Gamma$.

\item\label{lem.liftable-aut.3} If $\cP_1 = \cP_2 \neq \emptyset$, we write $\cP = \cP_i$ and $C = C_i$. The automorphisms of $\Lambda$ are given by multiplication with a matrix $A \in C^{2 \times 2}$ with determinant in $C^\times$. Such an automorphism $\vphi$ can be lifted to an automorphism of $\Gamma$ if and only if it is either of the form $\vphi(a,b) = (c a,d b)$ or the form $\vphi(a,b) = (d b,c a)$ for some $c,d \in C^\times$.
\end{enuma}
\end{lemma}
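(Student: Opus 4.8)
\textbf{Proof plan for Lemma \ref{lem.liftable-aut}.}

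The strategy is to first compute $\Aut\Lambda$ in each of the three regimes, then decide liftability using the Kurosh subgroup theorem together with the commutativity constraints it forces. For the computation of $\Aut\Lambda$, recall that $\Lambda = C_1\times C_2$ with $C_i = \Z[\cP_i^{-1}]$, so an endomorphism of $\Lambda$ is a $2\times 2$ matrix $\left(\begin{smallmatrix} c & e \\ f & d\end{smallmatrix}\right)$ whose entries must be homomorphisms $C_j \to C_i$. A homomorphism $C_j \to C_i$ is multiplication by an element $x\in\Q$ with $x\cdot C_j\subset C_i$; by Lemma \ref{lem.good-properties-ZP.2} this forces $x\in\Z[(\cP_i\cap\cP_j)^{-1}]$ after clearing the denominator, and more precisely $\Hom(C_j,C_i)$ consists of those rationals whose denominator uses only primes in $\cP_j$ and whose numerator-after-reduction is allowed in $C_i$. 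From this one reads off: $f$ (the $C_1\to C_2$ entry) must lie in the image of $C_1$ inside $C_2$; $e$ (the $C_2\to C_1$ entry) must lie in the image of $C_2$ inside $C_1$; and invertibility of the matrix over this ring of homomorphisms pins down $c\in C_1^\times$, $d\in C_2^\times$ in the relevant cases. When $\cP_2\subsetneq\cP_1$, every prime of $C_2$ is already invertible in $C_1$, so $e$ can be an arbitrary element of $C_1$, while $f=0$ is forced because a nontrivial $C_1\to C_2$ homomorphism would invert a prime in $\cP_1\setminus\cP_2$ inside $C_2$; this gives (i). When $\cP_1\setminus\cP_2\neq\emptyset$ and $\cP_2\setminus\cP_1\neq\emptyset$, both off-diagonal entries are forced to be $0$, giving (ii). When $\cP_1=\cP_2$, all four entries range over $C$ and the matrix is invertible over $C$ exactly when $\det A\in C^\times$, giving the matrix description in (iii).

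Next, for the liftability direction I would use the Kurosh subgroup theorem (Section \ref{sec.kurosh}), exactly as in the proof of Lemma \ref{lem.countable-aut}. Given $\delta_0\in\Aut\Gamma$, after composing with an inner automorphism we may assume $\delta_0(C_1) < C_i$ for some $i\in\{1,2\}$, and since $C_i$ centralizes $\delta_0(C_1)$ and $\delta_0$ is an automorphism, in fact $\delta_0(C_1) = h C_i h^{-1}$ for some $h$; similarly $\delta_0(C_2) = k C_j k^{-1}$. The crucial commutation constraint is: in a free product $A\ast B$, two conjugates $gAg^{-1}$ and $g'Bg'^{-1}$ (with $A,B$ the free factors) generate the whole group and have trivial intersection, whereas their images in the abelianization $\Lambda$ become $A\oplus B$, which fixes $\{i,j\}=\{1,2\}$ in cases where $C_1\not\cong C_2$ as abstract groups — i.e. whenever $\cP_1\neq\cP_2$. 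In case $\cP_1=\cP_2$ the two factors are isomorphic so we may also have $i=2,j=1$, which is precisely the source of the ``flip'' alternative in (iii). After this, the induced map on abelianizations $\vphi$ must come from automorphisms of the free factors (elements of $C_i^\times$) together with an inner automorphism, whose effect on $\Lambda$ is trivial since $\Lambda$ is abelian. This shows: in case (i) with $\cP_2\neq\emptyset$, a liftable $\vphi$ has $e=0$; in case (ii), $\vphi$ diagonal always lifts; in case (iii), $\vphi$ lifts iff it is diagonal or anti-diagonal with entries in $C^\times$.

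Finally, for the converse (explicit lifts exist) in the relevant cases: a diagonal $\vphi(a,b)=(ca,db)$ with $c\in C_1^\times$, $d\in C_2^\times$ lifts via $\delta_0 = (\text{mult. by }c \text{ on } C_1)\ast(\text{mult. by }d\text{ on }C_2)$, which is an automorphism of $\Gamma=C_1\ast C_2$ because each factor is an isomorphism of the corresponding free factor. The flip lift in (iii) (when $C_1=C_2=C$) uses that $C\ast C$ has the factor-exchange automorphism composed with the diagonal one. For case (i) with $\cP_2=\emptyset$, so $C_2=\Z$ and $\Gamma = C_1\ast\Z$ with $C_1$ $2$-divisible, ... wait — actually the claim there is that \emph{all} automorphisms of $\Lambda$ lift, including those with $e\neq 0$; the lift sends the generator $t$ of the $\Z$ factor to $t\cdot w$ for a suitable word $w$ in the $C_1$-factor realizing the shear, and one checks this extends to an automorphism of the free product. \textbf{The main obstacle} I anticipate is precisely this last point — constructing, for the shear automorphisms in case (i) with $\cP_2=\emptyset$, an explicit automorphism of $C_1\ast\Z$ lifting it, and verifying it is bijective; the cleaner route is to observe that an automorphism of $C_1\ast\Z$ can send the $\Z$-generator $t$ to $c_0 t$ for any $c_0\in C_1$ (since $\{c_0 t\}\cup C_1$ still freely generates, as $c_0 t$ has infinite order and together with $C_1$ generates $\Gamma$ with the free product relations intact), and that composing with an automorphism of the $C_1$-factor realizes every $\vphi$ of the stated form. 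All the arithmetic on $\Hom(C_j,C_i)$ and $C_i^\times$ is routine once Lemma \ref{lem.good-properties-ZP.2} is invoked.
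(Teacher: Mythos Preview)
Your proposal is correct and follows essentially the same route as the paper: compute $\Aut\Lambda$ via divisibility constraints on the matrix entries, use Kurosh (exactly as in Lemma~\ref{lem.countable-aut}) for the ``only if'' direction of liftability, and build explicit lifts as free products of factor automorphisms, together with the shear $t\mapsto \eta_1(e)\,t^{\pm 1}$ in case~(a) with $\cP_2=\emptyset$. The only minor slip is that your shear lift $t\mapsto c_0 t$ covers only $d=1$; you also need $t\mapsto c_0 t^{-1}$ for $d=-1$, which the paper writes uniformly as $\eta_2(b)\mapsto(\eta_1(e)\eta_2(d))^b$.
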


\begin{proof}
To avoid confusion, we explicitly denote by $\eta_i : C_i \to C_1 \ast C_2$ the natural embedding.

(a) Fix a prime $p \in \cP_1 \setminus \cP_2$ and fix $\vphi \in \Aut \Lambda$. Since $\vphi(a,0)$ is $p$-divisible for every $a \in C_1$, we must have that $\vphi(C_1,0) \subset C_1 \times 0$. Since the same holds for $\vphi^{-1}$, it follows that $\vphi$ has the form described in (a).

If $\cP_2 = \emptyset$, we lift $\vphi$ to the unique automorphism $\delta \in \Aut \Gamma$ satisfying $\delta(\eta_1(a)) = \eta_1(ca)$ for all $a \in C_1$ and $\delta(\eta_2(b)) = (\eta_1(e)\eta_2(d))^b$ for all $b \in \Z = C_2$.

Next assume that $\cP_2 \neq \emptyset$. If $e = 0$, we define $\delta_i \in \Aut C_i$ by $\delta_1(a) = ca$ and $\delta_2(b) = db$. Then $\delta = \delta_1 \ast \delta_2$ is a lift of $\vphi$.

Conversely, if $\vphi$ lifts to an automorphism $\delta \in \Aut \Gamma$, by the Kurosh subgroup theorem (see Section \ref{sec.kurosh}), $\delta(\eta_2(C_2))$ is contained in a conjugate of $\eta_i(C_i)$ for some $i \in \{1,2\}$. It follows that $\vphi(C_2) \subset C_i$. Given the form of $\vphi$, this means that $i=2$ and $e=0$.

(b) The same argument as in the proof of (a) shows that every automorphism $\vphi$ of $\Lambda$ is of the form stated in (b). As in (a), such an automorphism always lifts to an automorphism of the form $\delta = \delta_1 \ast \delta_2$.

(c) Let $\vphi \in \Aut \Lambda$. Writing elements of $C$ as column matrices, we define the matrix $A \in C^{2 \times 2}$ with first column $\vphi(1,0)$ and second column $\vphi(0,1)$. It follows that $\vphi(a,b) = A \cdot \bigl(\begin{smallmatrix} a \\ b \end{smallmatrix}\bigr)$. Using the matrix associated with $\vphi^{-1}$, it follows that $\det A \in C^\times$.

If $\vphi$ has one of the two forms described in (c), we can lift $\vphi$ to an automorphism $\delta \in \Aut \Gamma$ that is either of the form $\delta_1 \ast \delta_2$ or $\sigma \circ (\delta_1 \ast \delta_2)$, where $\sigma \in \Aut(C \ast C)$ flips the two free factors.

Conversely, if an arbitrary automorphism $\vphi$ given by a matrix $A \in C^{2 \times 2}$ lifts to an automorphism $\delta \in \Aut \Gamma$, by the Kurosh subgroup theorem (see Section \ref{sec.kurosh}), we find $i,j \in \{1,2\}$ such that $\delta(\eta_1(C))$ is contained in a conjugate of $\eta_i(C)$ and $\delta(\eta_2(C))$ is contained in a conjugate of $\eta_j(C)$. It follows that both columns of the matrix $A$ have only one nonzero entry. This forces $\vphi$ to have one of the forms described in (c).
\end{proof}

To prove Theorem \ref{thm.main-A} and Remark \ref{rem.main}, it now suffices to put all pieces together.

\begin{proof}[{Proof of Theorem \ref{thm.main-A}}]
Denote by $\cJ$ the set of all square free integers $n \geq 1$. We use the notation of Theorem \ref{thm.more-general} with $\cP_1 = \cQ = \cP$ and $\cP_2 = \emptyset$. So, $C = \Z[\cP^{-1}]$, $\Gamma = C \ast \Z$ and $\Lambda = C \times \Z$. We define for every $n \in \cJ$ the bihomomorphism $\gamma_n : \Lambda \times \Lambda \to C$ by $\gamma_n((a,b),(a',b')) = aa' + n bb'$.

It follows from Theorem \ref{thm.more-general.1}, resp.\ \ref{thm.more-general.2} that the associated central extensions $G_{\gamma_n}$ satisfy the W$^*$-super\-rigidity properties of Theorem \ref{thm.main-A.1}, resp.\ \ref{thm.main-A.2}.

Combining Theorem \ref{thm.more-general.4} and Lemma \ref{lem.liftable-aut.1}, it follows that for distinct elements $n \in \cJ$, the corresponding groups $G_{\gamma_n}$ are not virtually isomorphic.
\end{proof}

\begin{proof}[{Proof of Remark \ref{rem.main}}]
We continue to use the notation introduced at the start of the proof of Theorem \ref{thm.main-A}.

First assume that $\cP$ is a proper set of prime numbers and fix a prime $p \not\in \cP$.

(a) For every $n \in \cJ$, define the bihomomorphism $\gamma'_n = p \cdot \gamma_n$, whose image equals the proper finite index subgroup $p \cdot C < C$. It follows from Theorem \ref{thm.more-general.2} that the associated central extensions $G_{\gamma'_n}$ satisfy the virtual isomorphism W$^*$-superrigidity property of Theorem \ref{thm.main-A.2}. By Proposition \ref{prop.new-no-go-result-1.1}, these groups $G_{\gamma'_n}$ are not isomorphism W$^*$-superrigid. By construction, the groups $G_{\gamma'_n}$ are virtually isomorphic to $G_{\gamma_n}$. Since we proved above that the groups $G_{\gamma_n}$ are mutually not virtually isomorphic, the same holds for $G_{\gamma'_n}$.

(b) Denote by $\cK$ the set of all integers $n \geq 1$ having all their prime divisors outside $\cP$. For every integer $n \in \cK$, define the bihomomorphism $\tau_n : \Lambda \times \Lambda \to C : \tau_n((a,b),(a',b')) = n b b'$, with associated central extension $G_{\tau_n}$. Since the image of $\tau_n$ equals $n \Z < C$, it follows from Proposition \ref{prop.new-no-go-result-1.2} that there exist countable groups $\Lambda_n$ such that $\Lambda_n$ is not virtually isomorphic to $G_{\tau_n}$, but nevertheless $L(\Lambda_n) \cong L(G_{\tau_n})$. Combining Theorem \ref{thm.more-general.3} and Lemma \ref{lem.liftable-aut.1}, the groups $G_{\tau_n}$ are mutually nonisomorphic. Note however that the groups $G_{\tau_n}$ are all virtually isomorphic.

Next assume that $\cP$ contains all prime numbers, so that $C = \Q$.

(c) By the isomorphism \eqref{eq.iso-of-cohom}, every central extension of $G$ by $C$ is given by a bihomomorphism $\Lambda \times \Lambda \to \Q$. Except when $\gamma((a,b),(a',b')) = q bb'$ for some $q \in \Q$, the image of $\gamma$ equals $\Q$, so that by Theorem \ref{thm.more-general.1} and \ref{thm.more-general.2}, the corresponding central extension satisfies both the isomorphism and virtual isomorphism W$^*$-superrigidity property. In the exceptional case, the image of $\gamma$ equals $\Z q$ and has infinite index in $\Q$, so that by Proposition \ref{prop.new-no-go-result-1.2}, the corresponding extension $\Gtil_q$ is not W$^*$-superrigid in any sense. By Theorem \ref{thm.more-general.3}, all the groups $\Gtil_q$ with $q \neq 0$ are isomorphic.

We finally take $\Gamma = C \ast C$, and thus $\Lambda = C \times C$, with $C = \Z[\cP^{-1}]$ and $2 \in \cP$.

(c) Since every nonzero group homomorphism $C \to C$ has an image of finite index, the image of any nonzero bihomomorphism $\gamma : C \times C \to C$, generates a finite index subgroup of $C$. It then follows from Theorem \ref{thm.more-general.2} and the isomorphism \eqref{eq.iso-of-cohom} that every nontrivial central extension of $G$ by $C$ satisfies virtual isomorphism W$^*$-superrigidity.

If $p$ is a prime that does not belong to $\cP$, we can as above consider bihomomorphisms of the form $\gamma = p \gamma_0$ to obtain infinitely many nonisomorphic central extensions that are not isomorphism W$^*$-superrigid.

(e) When $C = \Q$ every nonzero bihomomorphism $\Lambda \times \Lambda \to \Q$ is surjective. It follows from Theorem \ref{thm.more-general} that every nontrivial central extension of $G$ by $\Q$ is both isomorphism W$^*$-superrigid and virtual isomorphism W$^*$-superrigid.
\end{proof}

\end{document}